\newtheorem{theorem}{Theorem}[section]
\newtheorem{lema}[theorem]{Lemma}
\newtheorem{claim}[theorem]{Claim}
\newtheorem{prop}[theorem]{Proposition}
\newtheorem{prob}[theorem]{Problem}
\newtheorem{coro}[theorem]{Corollary}
\theoremstyle{definition}
\newtheorem{definition}[theorem]{Definition}
\newtheorem{conj}[theorem]{Conjecture}
\theoremstyle{remark}
\newtheorem{remark}[theorem]{Remark}
\numberwithin{equation}{section}
\theoremstyle{plain}
\newtheorem{maintheorem}{Theorem}
\newcommand{\R}{\ensuremath{\mathbb{R}}}
\newcommand{\Z}{\ensuremath{\mathbb{Z}}}
\newcommand{\nt}{\ensuremath{\mathbb{N}}}
\newcommand{\eqdef}{\stackrel{\scriptscriptstyle\rm def}{=}}
\newcommand{\cA}{\ensuremath{\mathcal{A}}}
\DeclareMathOperator{\sen}{sin}
\DeclareMathOperator{\supp}{supp}
\DeclareMathOperator{\dif}{Diff}
\DeclareMathOperator{\sur}{S^2}
\DeclareMathOperator{\difp}{Diff_{+}^{\infty}(\R^2)}
\newcommand{\eps}{\varepsilon}
\newcommand{\tcap}{\pitchfork}
\newcommand{\Id}{\operatorname{Id}}
\newcommand{\interior}{\operatorname{int}}
\newcommand{\fix}{\operatorname{Fix}}
\newcommand{\Diff}{\operatorname{Diff}}
\newcommand{\gr}{\operatorname{graph}}
\newcommand{\cB}{\mathcal{B}}
\newcommand{\cU}{\mathcal{U}}
\newcommand{\cS}{\mathcal{S}}
\newcommand{\cP}{\mathcal{P}}
\newcommand{\cM}{\mathcal{M}}
\newcommand{\Ba}{\mathbf{B}}
\newcommand{\rB}{B}
\begin{document}

\title{Dirac physical measures on saddle-type fixed points}

\author{Pablo Guarino}
\address[Pablo Guarino]{Instituto de Matem\'atica e Estat\'istica, Universidade Federal Fluminense}
\curraddr{Rua Professor Marcos Waldemar de Freitas Reis, s/n, Bloco H - Campus do Gragoat\'a - Niter\'oi - RJ - Brazil CEP 24.210-201}
\email{pablo\_\,guarino@id.uff.br}

\author{Pierre-Antoine Guih{\'e}neuf}
\address[Pierre-Antoine Guih{\'e}neuf]{
Sorbonne Université, Université de Paris, CNRS, Institut de Mathématiques de Jussieu-Paris Rive Gauche, IMJ-PRG, F-75005 Paris, France}
\email{pierre-antoine.guiheneuf@imj-prg.fr}

\author{Bruno Santiago}
\address[Bruno Santiago]{Instituto de Matem\'atica e Estat\'istica, Universidade Federal Fluminense}
\curraddr{Rua Professor Marcos Waldemar de Freitas Reis, s/n, Bloco H - Campus do Gragoat\'a - Niter\'oi - RJ - Brazil CEP 24.210-201}
\email{brunosantiago@id.uff.br}

\thanks{We warmly thank Sylvain Crovisier for his ideas concerning Theorem \ref{mt.generic}, Christian Bonatti for the attention he paid to this work and Davi Obata for his reading of a first draft. We express our very great appreciation to the referee for very keen remarks leading to a considerable improvement of our exposition.\\
P.G. and P.A.G were partially supported by Coordena\c{c}\~ao de Aperfei\c{c}oamento de Pessoal de N\'ivel Superior -- Brasil (CAPES) grant 23038.009189/2013-05. P.A.G. was partially funded by the France-Brazil network (RFBM). P.A.G and B.S. were partially supported by a PEPS (CNRS) grant. B.S. was partially financed by the Coordena\c{c}\~ao de Aperfei\c{c}oamento de Pessoal de N\'ivel Superior - Brasil (CAPES) - Finance Code 001 and also acknowledges the support of Fondation Louis D -- Institut de France (project coordinated by M. Viana)}

\subjclass[2010]{Primary 37C40, 37C20, 37D05}

\keywords{surface diffeomorphisms, generic dynamics, physical measures}

\begin{abstract} In this article we study some statistical aspects of surface diffeomorphisms. We first show that for a $C^1$ generic diffeomorphism, a Dirac invariant measure whose \emph{statistical basin of attraction} is dense in some open set and has positive Lebesgue measure, must be supported in the orbit of a sink. We then construct an example of a $C^1$-diffeomorphism having a Dirac invariant measure, supported on a saddle-type hyperbolic fixed point, whose statistical basin of attraction is a nowhere dense set with positive Lebesgue measure. Our technique can be applied also to construct a $C^1$ diffeomorphism whose set of points with historic behaviour has positive measure and is nowhere dense.
\end{abstract}

\maketitle
\setcounter{tocdepth}{1}

\tableofcontents

\section{Introduction}

A general issue in Ergodic Theory is to describe the space of all probability measures which are invariant under some given dynamical system. There is a large variety of different types of such spaces, ranging from being a singleton (as for irrational rotations) to infinite dimensional simplices with dense extreme points (as for transitive Anosov systems).

This variety of behaviours can also be detected in a pointwise fashion. For instance let $f:M^d \to M^d$ be an arbitrary (say continuous) map on a smooth compact manifold $M^d$ of dimension $d\geq 1$. Given $x\in M$, consider the \emph{empirical probability measure} of $x$ at time $k$: $\mu_k(x)\eqdef\frac{1}{k}\sum_{l=0}^{k-1}\delta_{f^l(x)}$\,, where $\delta_y$ is the Dirac mass at the point $y$. An \emph{asymptotic measure} of $x$ is an accumulation point of the sequence $\mu_k(x)$, in the weak-* topology. By elementary arguments, every asymptotic measure is invariant. We denote by $\cM(x)$ the set of asymptotic measures of $x$. Again, this set can be rather complicated sometimes, as in the case of transitive Anosov diffeomorphisms for which one finds points $x\in M$ such that $\cM(x)$ equals the whole space of invariant measures\footnote{This is a non-trivial consequence of the \emph{specification property}, see \cite[Theorem 4]{sigmund1974dynamical}. The fact that specification holds for transitive Anosov diffeomorphisms was proved by Bowen \cite{bowen1971periodic}.}. In particular, it is not possible to detect clear statistics from the orbit of $x$. The opposite situation occurs when $\cM(x)$ is a singleton, and so the statistics of the orbit is captured by a single measure. Conversely, given an invariant probability measure $\mu$, one can look for the set of points $x$ whose statistics is captured by $\mu$. This leads to the notion of the \emph{statistical basin of attraction} of a measure $\mu$: 
$$\cB_f(\mu)=\big\{x\in M;\,\cM(x)=\{\mu\}\big\}.$$
In other words, the statistical basin of $\mu$ is the set of points $x \in M$ such that$$\lim_{n\to+\infty}\left(\frac{1}{n}\sum_{j=0}^{n-1}\phi\big(f^j(x)\big)\right)=\int_{M}\phi\,d\mu$$ for every continuous function $\phi:M\to\R$. An important and difficult question arises: given a dynamical system, pick a ``random" initial condition $x\in M$. Does $x$ has its statistics well described by some measure? How many such measures do there exist? This question is a part of the well known \emph{Palis conjecture}: for any $f$ in a $C^r$ dense set of diffeomorphisms, one has a finite number of physical measures, whose statistical basins cover a full Lebesgue measure set on $M$ \cite{palis2000global}. Recall that $\mu$ is a \emph{physical measure} for $f$ if $m\big(\cB_f(\mu)\big)>0$, where $m$ denotes the Lebesgue measure on the compact manifold $M$. Thus, a major problem in smooth Ergodic Theory is whether any given dynamical system supports a physical measure, and what properties this measure possesses.

By Birkhoff's Ergodic Theorem, an invariant ergodic probability measure which is absolutely continuous with respect to the Lebesgue measure is automatically a physical measure. As it is well-known, such measures always exist for $C^{1+\alpha}$ uniformly expanding maps on compact manifolds \cite[Chapter III.1]{M}. Furthermore, in the early eighties, Jakobson proved that \emph{many} one-dimensional maps, presenting a critical point, also preserve an absolutely continuous ergodic probability measure \cite{jakobson} (many papers have addressed the problem of the existence of absolutely continuous invariant probability measures, most notably for one-dimensional dynamics with critical points. See for instance \cite{ALdM, BC, bowen, large, CE, CGP, ledra, lyubich, WY2006} and references therein).

In higher dimensions, ergodic invariant measures whose Lyapunov exponents are non-zero and which are absolutely continuous with respect to volume along unstable manifolds are special types of physical measures, called \emph{SRB measures} after Sinai-Ruelle-Bowen. For $C^2$ uniformly hyperbolic systems, SRB measures are the sole physical measures. A survey on this subject may be found in \cite{Young}.

On the other hand, there exist physical measures without any geometrical structure, e.g. Dirac measures on fixed points. Of course the most trivial example would be the Dirac measure supported on a topologically attracting fixed point (whenever it exists). Furthermore, Dirac physical measures may be supported on an indifferent fixed point (for instance, for the well known Manneville-Pomeau map, see \cite{young.israel}), or even on a hyperbolic repelling fixed point (for instance, for some quadratic polynomials leaving invariant the unit interval, see \cite{hk}).

More examples of Dirac physical measures may be obtained from deformations of Anosov diffeomorphism on the two-dimensional torus, with indifferent unstable direction at the fixed point \cite{hu.young}, or from transitive flows on surfaces \cite{vargas}.

Although examples as in \cite{hu.young} have positive topological entropy and are topologically mixing, the fixed point where the Dirac physical measure is supported is not hyperbolic, due to the indifferent unstable direction. Also, a physical measure on a saddle type hyperbolic fixed point is easily built for some systems with zero topological entropy, such as the \emph{figure eight attractor} (see \cite{Young}). 

Further examples of a Dirac physical measure supported on a saddle-type hyperbolic fixed point, whose statistical basin contains wandering domains, are built in \cite{CV} and \cite{japas}, inside Newhouse domains.

In light of all the examples mentioned above, we pose the following questions:

\begin{prob}\label{questionprob}
What dynamical configuration/mechanisms are responsible for the existence of a Dirac physical measure supported on a saddle-type hyperbolic periodic orbit? What is the relation between such a physical measure and the presence of homoclinic tangencies associated with the given periodic orbit?
\end{prob}   

One should not expect a simple answer since, for instance, there exist Cherry flows presenting Dirac physical measures of saddle type and no tangency \cite{saghin2013invariant}. Notice, however, that one can create a tangency by small perturbations.

Nonetheless, all above mentioned examples suggest that a Dirac physical measure supported on a saddle-type hyperbolic fixed point is a highly non-generic phenomena. Therefore, as a \textit{testing} conjecture for Problem~\ref{questionprob}, we propose the following: 

\begin{conj}\label{conj.generic}
The set of $f\in\dif^1(M)$ having a Dirac physical measure supported on a saddle-type hyperbolic fixed (or periodic) point, is meagre (\emph{i.e.} is a countable union of closed sets with empty interior).   
\end{conj}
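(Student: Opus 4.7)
The plan is to decompose the set $\cS$ of diffeomorphisms in the statement as a countable union $\cS = \bigcup_{n, k \geq 1} \cS_{n,k}$, where $\cS_{n,k}$ consists of $f \in \dif^1(M)$ admitting a saddle-type periodic point $p$ of period at most $n$ with $m\big(\cB_f(\delta_p)\big) \geq 1/k$, and to show that each $\cS_{n,k}$ is nowhere dense in the $C^1$ topology. By the Baire category theorem this proves the conjecture.

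Fix $n,k$ and let $f_0 \in \cS_{n,k}$ with associated saddle $p_0$. The goal is to find $g$ arbitrarily $C^1$-close to $f_0$ admitting a $C^1$-open neighborhood disjoint from $\cS_{n,k}$. First, one would invoke Theorem~\ref{mt.generic}: after a preliminary arbitrarily small perturbation we may assume that $f_0$ lies in the residual set produced by that theorem, so that $\cB_{f_0}(\delta_{p_0})$ is nowhere dense. This reduces the problem to ruling out the subtle ``positive measure but nowhere dense basin" configuration. Next, applying Hayashi's $C^1$-connecting lemma together with Pugh's closing lemma, a $C^1$-small perturbation $g$ of $f_0$ should be produced so that the continuation $p_g$ of $p_0$ admits a transverse homoclinic point and is homoclinically related to a further hyperbolic periodic orbit $q$; this is feasible because the positive-measure basin $\cB_{f_0}(\delta_{p_0})$ supplies a dense source of recurrent orbits accumulating at $p_0$, which is exactly the material required by the connecting/closing lemmas. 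The homoclinic class $H(p_g,g)$ is then non-trivial and remains so in a $C^1$-open neighborhood $\mathcal{V}$ of $g$.

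The final step would consist in showing that no $h \in \mathcal{V}$ belongs to $\cS_{n,k}$. Applying the $\lambda$-lemma to the transverse intersection forces forward orbits starting in a large portion of the former basin to spend a definite fraction of their time in a neighborhood of the orbit of $q$, thereby preventing the empirical measures from converging to $\delta_{p_h}$ on that portion and lowering $m\big(\cB_h(\delta_{p_h})\big)$ below $1/k$ uniformly on $\mathcal{V}$. The core difficulty, and the main obstacle of the whole strategy, is making this ``transfer of mass" \emph{quantitative} when the original basin is nowhere dense and may concentrate along a thin invariant set: turning the $\lambda$-lemma into a uniform Lebesgue estimate will presumably require a careful use of Pesin-type theory combined with the absolute continuity of the stable foliations of $p_g$ and $q$, and possibly a refinement of Mañé's ergodic closing lemma to handle the specific shape of the basin exhibited by the construction of the present paper.
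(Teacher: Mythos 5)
The statement you are trying to prove is a \emph{conjecture}; the paper explicitly states that the authors were unable to prove it and offers only the weaker Theorem~\ref{mt.generic}, which assumes in addition that the basin is dense in some open set. So there is no ``paper's own proof'' to compare against; instead your proposal is an (independent) attempt to close the gap that the authors themselves acknowledge. The relevant passage in the paper is: ``The main reason why our proof does not solve Conjecture~\ref{conj.generic} is because it is based on an entropy estimation coming from \cite{catsigeras2015pesin}, which demands the saddle point to be inside some non-trivial Lyapunov stable set. To obtain this we need some denseness assumption.''

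Against this background, your proposal does not in fact close the gap, and you honestly flag the problem yourself. The decisive step --- deducing from the transverse homoclinic intersection and the $\lambda$-lemma a \emph{uniform, robust} lower bound on the Lebesgue mass that orbits must spend near $q$, thereby pushing $m(\cB_h(\delta_{p_h}))$ below $1/k$ on a $C^1$-open set --- is not carried out, and it is exactly where the difficulty lives. A transverse homoclinic point (or a non-trivial homoclinic class) is not by itself incompatible with $\delta_p$ being physical: the paper's own example in Theorem~\ref{main.exemplonovo} has a saddle $O$ whose basin has positive measure while the nearby dynamics has positive topological entropy and infinitely many periodic orbits (Proposition~\ref{PropCoding}), and the Colli--Vargas and Kiriki--Soma examples live inside Newhouse domains, where transverse homoclinic points abound. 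The $\lambda$-lemma controls the accumulation of pieces of $W^u$ on $W^u(q)$, which is a zero-Lebesgue-measure statement; turning it into a positive-Lebesgue-measure ``mass transfer'' statement is not a routine consequence, and there is currently no Pesin-theoretic or ergodic-closing-lemma argument known to do this for $C^1$ diffeomorphisms (absolute continuity of stable foliations, which you invoke, is not available in $C^1$ regularity). In short, the plan reduces the conjecture to an unproved quantitative claim that is at least as hard as the conjecture itself.

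Two secondary issues are worth recording. First, you need the sets $\cS_{n,k}$ (or their closures) to be nowhere dense, but it is not clear that $\cS_{n,k}$ is closed: the map $f\mapsto m\big(\cB_f(\delta_{p_f})\big)$ is neither upper nor lower semicontinuous in general, so ``$m(\cB_f(\delta_p))\ge 1/k$'' need not be a closed condition, and an argument that a $C^1$-open set avoids $\cS_{n,k}$ must address the possibility of physical measures on \emph{other} saddles of period $\le n$ appearing after perturbation. Second, invoking Theorem~\ref{mt.generic} ``to assume $\cB_{f_0}(\delta_{p_0})$ is nowhere dense'' does not help directly: after perturbing $f_0$ into the residual set, $\delta_{p_{f_0}}$ may no longer be physical (which is favorable) or the basin may move entirely; the non-robustness of statistical basins cuts both ways and you still need the hard, uniform estimate at the end.

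To summarize: the decomposition into $\cS_{n,k}$ and the idea of creating homoclinic intersections via the connecting lemma are reasonable preliminaries (and the latter is already used in the paper's proof of Theorem~\ref{mt.generic}), but the final quantitative step is a genuine missing ingredient and is, as far as known, the open heart of the conjecture.
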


In \cite{santiago2018dirac}, a special case of this conjecture has been proved, by assuming that the basin of the physical measure is dense in $M$. We were not able to fully prove Conjecture \ref{conj.generic}, but we were able to prove the following, which is our first main result.

\begin{maintheorem}
	\label{mt.generic}
	For any closed manifold $M^d$, of dimension $d\geq 2$, there exists a dense $G_{\delta}$ (residual) subset $\mathcal{R}$ of $\dif^1(M)$ such that for every $f\in\mathcal{R}$, if $\sigma$ is a fixed point\footnote{The same statement holds for periodic points, with a similar proof.} of $f$ such that $\delta_{\sigma}$ is a physical measure whose statistical basin $\cB_f(\delta_{\sigma})$ is dense in some open set, then $\sigma$ is a sink. In particular, if $\sigma$ is a saddle such that 
$\delta_{\sigma}$ is a physical measure, then the basin $\cB_f(\delta_{\sigma})$ must be a nowhere dense set. 
\end{maintheorem}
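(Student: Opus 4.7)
My plan is to proceed by contradiction. I would define $\mathcal{R}$ as the intersection of the Kupka-Smale diffeomorphisms (so every fixed point is hyperbolic and invariant manifolds of hyperbolic periodic points meet in general position) with an additional residual property to be specified below. Fix $f\in\mathcal{R}$ and suppose $\sigma$ is a hyperbolic fixed point with $\delta_\sigma$ physical and $\cB_f(\delta_\sigma)$ dense in some open set $U$; the goal is to show $\sigma$ is a sink. Assume for contradiction it is not. Then $W^s(\sigma)$ has positive codimension in $M$ (equal to $d$ in the source case and strictly between $0$ and $d$ in the saddle case), so $m(W^s(\sigma))=0$, and consequently $m(\cB_f(\delta_\sigma)\setminus W^s(\sigma))>0$.

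For $x$ in this positive-measure set, the forward orbit of $x$ enters every neighborhood of $\sigma$ with asymptotic frequency one yet does not converge to $\sigma$. Linearizing $f$ in a small chart around $\sigma$ and denoting by $\lambda_u>1$ the smallest modulus of the unstable eigenvalues of $Df(\sigma)$, a standard hyperbolic estimate shows that any consecutive sojourn of the orbit in a ball $B(\sigma,\eps)$ of length $N$ forces the orbit at the entry instant to lie within distance $C\lambda_u^{-N}$ of $W^s_{loc}(\sigma)$. The density-one condition then forces the sequence of consecutive sojourn lengths to tend to infinity, and therefore the orbit of $x$ returns to $W^s_{loc}(\sigma)$ with exponentially small error along an infinite subsequence of times.

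The remaining ingredient is a generic statement to the effect that such exponentially precise recurrence to $W^s_{loc}(\sigma)$ cannot occur on a positive-measure set disjoint from $W^s(\sigma)$. More precisely, I would include in $\mathcal{R}$ a residual subset of $\dif^1(M)$ for which, for every hyperbolic non-sink fixed point $\sigma$, the set of $x\in M\setminus W^s(\sigma)$ whose forward orbit realizes infinitely many returns to $W^s_{loc}(\sigma)$ with exponentially decaying error (quantified as above) has zero Lebesgue measure. Combined with the previous paragraph this yields the desired contradiction. Establishing this last generic property is the main obstacle. The natural scheme is a Baire category argument: for each finite-length quantitative profile of the bad return behavior, one must verify that the set of diffeomorphisms realizing it on a set of Lebesgue measure at least $1/n$ is nowhere dense in $\dif^1(M)$. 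The required density should follow from $C^1$-perturbations supported in thin annuli disjoint from $\sigma$ (hence preserving $\sigma$ and its local invariant manifolds) that disrupt any prescribed finite-length precise-return configuration, in the spirit of the connecting-lemma machinery introduced by Hayashi and subsequently systematized by Bonatti and Crovisier; the authors credit Crovisier with the key ideas of this step, consistent with his expertise in such generic dynamical constructions.
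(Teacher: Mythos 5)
Your proposal takes a genuinely different route from the paper, but it contains a fundamental gap, and there is a structural warning sign that the route cannot work as stated.

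The most telling issue is that your argument never uses the hypothesis that $\cB_f(\delta_\sigma)$ is dense in some open set: you only use physicality, i.e.\ $m(\cB_f(\delta_\sigma))>0$. If your outline could be completed, it would prove that for a $C^1$-generic $f$ no non-sink hyperbolic fixed point supports a Dirac physical measure -- that is, (the fixed-point case of) Conjecture~\ref{conj.generic}. The authors state explicitly that they cannot prove this conjecture and that the density hypothesis is precisely what allows them to place $\sigma$ inside a non-trivial Lyapunov stable set, which is the only known way to access the entropy estimate of Catsigeras--Cerminara--Enrich (Theorem~\ref{t.eleonora}). When a blind argument omits a hypothesis that the authors describe as essential, that is almost always a sign that the omitted hypothesis is doing real work somewhere the blind argument glossed over.

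The actual gap in your outline is the final step, which is also the crux. Steps (1)--(5) of your reduction (hyperbolic sojourn estimate, forcing sojourn lengths to diverge along a subsequence, exponentially fast recurrence to $W^s_{loc}(\sigma)$) are plausible and standard. But then you invoke a generic exclusion statement: ``for a residual set of $f$, the set of points in $M\setminus W^s(\sigma)$ with infinitely many exponentially precise returns to $W^s_{loc}(\sigma)$ has zero Lebesgue measure.'' You acknowledge this is the main obstacle, and your proposed proof of it is only a gesture. The Baire-category scheme you sketch does not obviously close: the set of diffeomorphisms for which the measure of the bad set is at least $1/n$ is not in any evident way a closed (or even $G_\delta$-complemented) subset of $\dif^1(M)$, because the bad set is defined by an infinitary recurrence condition and Lebesgue measure of such dynamically defined sets is not semi-continuous in $f$. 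Moreover, a $C^1$-small perturbation ``supported in thin annuli disjoint from $\sigma$'' that disrupts one finite precise-return block will generally create new ones, and there is no bookkeeping here controlling how the measure of the bad set evolves under repeated perturbation. Theorems~\ref{main.exonsurfaces} and~\ref{main.exemplonovo} of the paper show that a positive-measure basin for a Dirac mass at a saddle genuinely occurs for some $C^1$ maps; thus your generic exclusion is a statement about generically destroying a phenomenon that does persist on a nonempty set of diffeomorphisms, and proving such a thing requires delicate machinery that your outline does not supply.

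The paper's proof proceeds quite differently: density of the basin in an open set, combined with a Baire intersection with the residual set $\mathcal R_f\subset M$ of points with Lyapunov stable $\omega$-limit (Theorem~\ref{t.generic}(\ref{itemfour})), shows that the homoclinic class $H(\sigma)$ is Lyapunov stable, hence admits a dominated splitting $E\oplus F$. A perturbative argument (Theorem~\ref{t.bobo}, combining Bochi--Bonatti with Gourmelon's version of Franks' lemma) rules out $|\det Df(\sigma)|_F|\le 1$ by creating a sink inside the class, which is forbidden. The entropy inequality of Catsigeras--Cerminara--Enrich then gives $h_{\delta_\sigma}(f)>0$, contradicting the triviality of $\delta_\sigma$. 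This is a much more global, measure-theoretic-entropy argument, and it exploits exactly the density hypothesis that your outline discards.
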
  
  
The main reason why our proof does not solve Conjecture~\ref{conj.generic} is because it is based on an entropy estimation coming from \cite{catsigeras2015pesin}, which demands the saddle point to be inside some non-trivial Lyapunov stable set (see Section~\ref{Sec2}). To obtain this we need some denseness assumption. Nevertheless, we are not aware of any other technique that can be used to prove non-existence of Dirac physical measures on saddle-type hyperbolic fixed points for any given class of systems. For instance, even for partially or uniformly hyperbolic systems, the entropy estimation in \cite{catsigeras2015pesin} is the only tool we know (see Section \ref{subseceleonora} for details).

Moreover, to the best of our knowledge, in all known examples of Dirac physical measures, the basin of attraction either contains wandering domains (and so it has non-empty interior) or has full Lebesgue measure. Thus, one may ask if there exists at least one $f\in\dif^1(M)$ having a fixed point $\sigma$ such that $\cB_{\delta_{\sigma}}$ is a nowhere dense set of positive Lebesgue measure (and thus, $\delta_{\sigma}$ is a physical measure). We prove in this paper that the answer is yes. Our second main result is the following.

\begin{maintheorem}\label{main.exonsurfaces} Let\, $\sur$ be a closed  surface. There exists $f\in\dif^1(\sur)$ having a saddle-type hyperbolic fixed point $p$ whose statistical basin of attraction $\cB_f(\delta_{p})$ is a nowhere dense set (in particular, it has empty interior) with positive Lebesgue measure in $\sur$.
\end{maintheorem}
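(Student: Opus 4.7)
My plan is to construct $f$ as a $C^1$-limit of diffeomorphisms $f_n$ obtained from a base model $f_0$ realizing a \emph{figure-eight attractor}: a saddle fixed point $p$ whose two unstable branches return as homoclinic loops, together bounding an open set $U\subset\sur$ of positive Lebesgue measure. A classical argument shows that for every $x\in U$ the $f_0$-orbit spends exponentially long intervals near $p$ between consecutive homoclinic excursions, so its empirical measures converge to $\delta_p$; hence $\cB_{f_0}(\delta_p)\supset U$ has nonempty interior. The task is to \emph{perforate} this open basin by a countable dense family of small sinks while keeping a positive measure set outside the new basins.

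I pick a dense sequence $(x_n)$ in $U\setminus\Gamma$, where $\Gamma=\overline{W^u(p)}$ is the figure-eight, and inductively choose pairwise disjoint closed disks $D_n\ni x_n$, with $D_n\subset U\setminus\Gamma$ and rapidly shrinking diameters. The map $f_n$ is obtained from $f_{n-1}$ by a perturbation supported in $D_n$, arranged so that $f_n(D_n)\subset\interior(D_n)$ and $f_n|_{D_n}$ has a unique attracting fixed point $q_n$; the $C^1$-size of the perturbation is at most $2^{-n}$, so $f_n\to f$ in the $C^1$ topology, with $p$ still a saddle and each $q_n$ still a sink for $f$. Since the $D_n$'s are trapping regions for $f$ and the perturbations are supported inside them, the $f$-orbit of a point $x\in U$ coincides with its $f_0$-orbit as long as it stays outside $\bigcup_n D_n$. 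Consequently, any $x$ whose $f_0$-orbit never enters $\bigcup_n D_n$ still satisfies $\cM_f(x)=\{\delta_p\}$, while any $x$ entering some $D_n$ is trapped by $q_n$ and so satisfies $\cM_f(x)=\{\delta_{q_n}\}$. Denoting by $B_n$ the topological basin of $q_n$ under $f$, we obtain
\[
\cB_f(\delta_p)\;=\;U\setminus\bigcup_{n\geq 1}B_n,
\]
and since $D_n\subset B_n$ with $(x_n)$ dense in $U$, the complement is nowhere dense in $\sur$.

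For positive measure, the task reduces to arranging $\sum_n\Leb(B_n)<\tfrac12\Leb(U)$. Since the perturbations do not affect preimages outside the $D_n$'s, one has $B_n=\bigcup_{k\geq 0}f_0^{-k}(D_n)$, and I expect to obtain the bound $\sum_{k\geq 0}\Leb(f_0^{-k}(D_n))\leq C\Leb(D_n)$ by placing each $D_n$ inside a \emph{fundamental domain} for the $f_0$-action on $U\setminus\Gamma$, namely a thin cross-section transverse to $W^u(p)$ traversed exactly once per homoclinic loop. The backward iterates $f_0^{-k}(D_n)$ are then pairwise disjoint, and a Dulac-type volume-distortion estimate for the return map near the saddle controls their total area. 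Choosing $\Leb(D_n)\leq 2^{-n}\Leb(U)/(4C)$ yields $\Leb(\cB_f(\delta_p))\geq\Leb(U)/2>0$.

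The main obstacle is precisely this quantitative measure estimate: one must control the growth of $\Leb(f_0^{-k}(D_n))$ as the backward iterates migrate from the interior of $U$ toward the repelling boundary $\partial U$, converting the qualitative dynamical picture of the figure-eight into concrete area bounds. The standard way to handle it is to combine normal-form coordinates near the hyperbolic saddle $p$ with a detailed description of the Poincar\'e return map of the homoclinic loop. A secondary difficulty is inductive compatibility: the traps $D_m$ constructed at stages $m<n$ might invalidate the estimate for $B_n$; this is handled by choosing each $D_n$ inside the open set $U\setminus\bigcup_{m<n}\overline{B_m}$ and small enough relative to the distance to those $\overline{B_m}$'s, so that the Borel--Cantelli-type summability survives the inductive construction.
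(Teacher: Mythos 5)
The proposal breaks at the sink-creation step, and this cannot be repaired within the perturbation scheme you describe. Since each $D_n$ is chosen inside a wandering region for $f_0$, the image $f_{n-1}(D_n)$ is disjoint from $D_n$. A perturbation $g$ supported in $D_n$ therefore cannot achieve $f_n(D_n)\subset\interior(D_n)$: whether you write $f_n=g\circ f_{n-1}$ or $f_n=f_{n-1}\circ g$, you get $f_n(D_n)=f_{n-1}(D_n)$, still disjoint from $D_n$ (in the first case because $g=\Id$ on $f_{n-1}(D_n)$, in the second because $g(D_n)=D_n$). To turn $D_n$ into a trapping region, the support of $g$ must contain a neighbourhood of $f_{n-1}(D_n)$, and $g$ must displace points there by a distance comparable to $\operatorname{dist}\bigl(D_n,f_{n-1}(D_n)\bigr)$, which is bounded below uniformly in $n$ (the nearest fixed point of $f_0$ is the saddle on the boundary of $U$). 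If you then shrink the support to keep the $D_n$ small, pairwise disjoint and dense, $\|g-\Id\|_{C^1}$ blows up and $(f_n)$ cannot converge in $\Diff^1$; if instead you enlarge the supports so that $g$ stays $C^1$-tame, they cannot remain disjoint once the $x_n$ accumulate. In short, there is no way to implant a countable dense family of genuine sinks inside the interior of the figure-eight attractor by local $C^1$-small surgery.

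This is exactly the obstruction the paper engineers around, and the mechanism is qualitatively different from yours. The exclusion is not local trapping: a first perturbation $h_2\circ h_1$ (huge in $C^1$, by design) produces a \emph{single} periodic trapping region $\mathcal Q$, which serves as a global garbage collector, together with a Markov-coded residual set $\Gamma$ that must be shown nowhere dense and null (Sections~4--5). The subsequent perturbation $h_3$, supported in the pairwise disjoint boxes $\tilde S_n$ of the wandering tower, does not try to capture orbits; it nudges the fibre coordinate by $\epsilon_n=\kappa\sigma^{-n}/\log n$, scaled by a bump $\varphi$ vanishing exactly on a fat Cantor set $K$. No single nudge expels an orbit from the tower, but the cumulative drift over the infinitely many quadruple returns (the divergence of $\sum_d 1/\log(16^d m)$, Lemma~\ref{LemFinal}) slowly pushes every point with first coordinate outside $K$ out of the tower and into $\mathcal Q$. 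The non-summability of these nudges at the box scale $\sigma^{-n}$ is also why the final map is only $C^1$ and not $C^2$ (Lemma~\ref{l.cum}) --- a phenomenon your scheme cannot account for. Your secondary worry about a uniform bound $\sum_k\Leb(f_0^{-k}(D_n))\le C\,\Leb(D_n)$ is legitimate (the constant degenerates as $D_n$ approaches the homoclinic loop), but it is repairable by a dominated-convergence argument that lets one choose $\Leb(D_n)$ adaptively small; the sink-implantation step, however, cannot be salvaged.
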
 

The statement of Theorem~\ref{main.exonsurfaces} can be reduced to a local construction on the plane. With this purpose, recall that a diffeomorphism $f\in\Diff^1(\R^2)$ of the plane is said to be \emph{compactly supported} if there exists a ball $B(O,R)$, centred at the origin $O=(0,0)$, such that $f|_{\R^2\setminus B(O,R)}=\operatorname{Id}$. Given a closed surface $\sur$ and a compactly supported diffeomorphism $f\in\Diff^1(\R^2)$, we can choose a local chart $U$ and embed $f$ as a diffeomorphism of $\sur$, which is going to be equal to $f$ in the chart $U$, sending $U$ to itself, and the identity outside $U$. Therefore, the result below implies Theorem~\ref{main.exonsurfaces}. 

\begin{maintheorem}\label{main.exemplonovo} There exists a compactly supported diffeomorphism $f\in\dif^1(\R^2)$ having a saddle-type hyperbolic fixed point at the origin $O$, whose statistical basin of attraction $\cB_{f}(\delta_O)$ is a nowhere dense set of positive two-dimensional Lebesgue measure.	
\end{maintheorem}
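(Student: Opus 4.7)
The plan is to start from a model planar diffeomorphism carrying a dissipative saddle at the origin $O$ attached to itself by a homoclinic loop $\Gamma$, in the spirit of the figure-eight attractor (see \cite{Young}). Concretely, take $f$ linear near $O$ with eigenvalues $0<\mu<1<\lambda$ satisfying the dissipation condition $\mu\lambda<1$, and close $W^u(O)$ back to $W^s(O)$ by a smooth return map. For such a model there is an open trapping neighborhood $U$ of $\{O\}\cup\Gamma$ in which the $k$-th sojourn of a generic orbit near $O$ has duration $\asymp -\log|y_k|/\log\lambda$, where $y_k$ is the distance to $W^s_{\mathrm{loc}}(O)$ at the $k$-th entrance; dissipation forces these sojourns to dominate the time spent far from $O$, and a standard computation then yields $\cB_f(\delta_O)\supset U$ modulo a Lebesgue-null set.

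Next, I would drill a dense family of open ``holes'' into this full-measure basin to make it nowhere dense while preserving positive measure. Fix a cross-section $\Sigma$ transverse to $\Gamma$ with first-return map $R:\Sigma\to\Sigma$. Choose a countable family $\{I_n\}$ of pairwise disjoint open sub-intervals of $\Sigma$, dense in $\Sigma$, with $\sum_n|I_n|<|\Sigma|$, so that $K:=\Sigma\setminus\bigcup_n I_n$ is a fat Cantor set. Along pairwise disjoint shrinking flow boxes $B_n$ containing the forward $f$-orbits of the $I_n$'s before their next return close to $O$, perturb $f$ in a $C^1$ way so as to deflect those orbits into newly created, tiny hyperbolic sinks $s_n$ placed far from $\Gamma$; with a careful gluing, one obtains a compactly supported $f\in\Diff^1(\R^2)$ still fixing $O$ as a saddle, preserving $\Gamma$, and agreeing with the unperturbed model on the planar saturation $\widehat K$ of $K$ under the unperturbed flow. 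Orbits issued from $\widehat K$ never enter the boxes $B_n$, hence follow the unperturbed dynamics and retain $\delta_O$ as unique asymptotic measure, so $\widehat K\subset \cB_f(\delta_O)$, which therefore has positive planar Lebesgue measure. On the other hand, the saturation of $\bigcup_n I_n$ is open and dense in $U$ and consists of points attracted to some $s_n$, so $\cB_f(\delta_O)$ is disjoint from a dense open set and therefore nowhere dense in $U$, as required (and trivially so outside $U$).

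The main obstacle I anticipate is the measure-theoretic bookkeeping needed to ensure $\widehat K$ truly has positive planar measure. Because $R$ is highly contracting/expanding near $W^s_{\mathrm{loc}}(O)$, forward iterates and pre-images of the $I_n$'s under $R$ can be long and thin, and the set of points whose entire forward $R$-orbit on $\Sigma$ avoids $\bigcup_n I_n$ may collapse to a null set unless the widths and positions of the $I_n$'s are chosen compatibly with the Jacobians of $R$ at the intended locations. A second delicacy is to glue the countably many deflection perturbations into a single $C^1$-smooth compactly supported diffeomorphism with pairwise disjoint supports (by shrinking the $C^1$-size of the perturbations fast enough with $n$), while ensuring that no accidental recurrence is created that would corrupt the statistics along $\widehat K$ or destabilize the homoclinic loop.
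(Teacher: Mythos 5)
Your plan reproduces the paper's high-level strategy: start from a figure-eight attractor with a dissipative saddle whose basin is large, then excise a dense open set from that basin (creating wandering trajectories that leave $O$'s neighbourhood forever) while preserving a fat Cantor-like residual set. You also correctly flag the critical obstacle. But the proposal does not resolve that obstacle, and the resolution is exactly where the paper's real work lies.

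The gap is your passage from $K=\Sigma\setminus\bigcup I_n$ to the claim that the saturation $\widehat K$ lies in $\cB_f(\delta_O)$. A point of $K$ whose next return $R(K)$ lands in some $I_n$ does enter $B_n$ and is deflected; so what you actually need in the basin is the saturation of $\bigcap_{k\ge 0}R^{-k}(K)$, not of $K$. For a generic return map the set $\bigcap_k R^{-k}(K)$ will be Lebesgue-null (exactly as you warn): the iterated preimages of the open holes $I_n$ typically fill up $\Sigma$ up to measure zero, because the Jacobian of $R$ is wildly non-constant near $W^s_{\mathrm{loc}}(O)$. Choosing the $I_n$ ``compatibly with the Jacobians of $R$'' is the problem, not a footnote to the problem, and the proposal offers no mechanism for it. The paper's way out is to build a figure-eight diffeomorphism $f_0$ with \emph{affine returns} (Proposition~\ref{buildTowers}): the first return to the boxes $S_n$, after rescaling by the maps $L_n$, is literally a rotation of angle $\pi/2$. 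As a consequence, after four returns the renormalized $x$-coordinate is \emph{fixed}: if a point starts with $x$-coordinate $x_0$, all its fourth returns keep $x$-coordinate $x_0$. Placing the fat Cantor set $K$ in this coordinate then makes $\{x_0\in K\}$ invariant under the 4th return map \emph{exactly}, with no distortion estimates to control, and its (positive) two-dimensional measure survives. Without some such rigidity your $\bigcap_k R^{-k}(K)$ is not demonstrably of positive measure, and the argument stalls.

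Two further points where your route diverges from the paper's. First, you want to deflect the escaping orbits into countably many new hyperbolic sinks $s_n$ with $C^1$-small perturbations supported in flow boxes $B_n$ along the orbit of $I_n$. As those orbits approach $O$, the boxes become exponentially thin in the stable direction, so ``deflecting across them'' forces derivatives of size comparable to $\lambda^{\text{time near }O}$ unless you are very careful; shrinking the $C^1$-size with $n$ is in tension with actually moving the points out. The paper avoids this by never deflecting across a thin box: the perturbation $h_3$ is a vertical push of size $\epsilon_n=\kappa\sigma^{-n}/\log n$, vanishing on $K$, which is $C^1$-small on $S_n$ and, thanks to the affine-return rigidity, accumulates deterministically over successive 4th returns (the renormalized displacement $\sum\beta_d$ diverges because $\beta_d\sim 1/\log(16^d m)$), eventually pushing $x_0\notin K$ out of the wandering tower. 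Second, the paper has to construct a trapping region $\mathcal{Q}$ beforehand (via the ``huge'' pushes $h_1,h_2$ in Section~\ref{sec.orbitexcum}), precisely so that points tossed out of the tower have a controlled fate. Even then there survives a nontrivial ``persistent set'' $\Gamma$ with a Markov structure, positive topological entropy, and orbits that shadow $W^s(O)$ forever; showing that the full orbit of $\Gamma$ is nowhere dense and Lebesgue-null occupies Section~\ref{SecGeom}. In your proposal there is no analogue of this residual set, which is another indication that the excision step is substantially more delicate than a ``deflect to sinks'' picture suggests.
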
 

Although it only uses elementary tools from real analysis, the proof of Theorem \ref{main.exemplonovo} contains the majority of the technical part of this paper. We begin with an explicitly devised \emph{figure eight attractor} (see \S \ref{sec.figoito}), where we can ensure that the statistical basin of the saddle fixed point contains wandering domains (in particular, it contains open sets). This first part of the construction has already been done in much more generality, see \cite{CV} and \cite{japas} and the references therein. The main task and novelty in this paper is to remove a big set of points from the basin, in order to obtain a nowhere dense set with positive Lebesgue measure. This is done by an \textit{orbit exclusion procedure}, which consists of two different parts. In the first and most difficult one, we create a trapping region by pushing points away from the stable manifold of the saddle fixed point (see \S \ref{sec.orbitexcum}). After this deformation (which is \emph{huge} in the $C^1$ topology), we are able to prove that the statistical basin of the saddle fixed point consists of a specific tower of wandering domains up to a nowhere dense zero Lebesgue measure set. An interesting feature is that by the very form of those deformations, we end up creating positive topological entropy as well as infinitely many periodic points (these are intrinsic properties of our construction, see Proposition \ref{PropCoding}). In the second part of the orbit exclusion procedure, we remove from the statistical basin the complement of a nowhere dense set with positive measure (see \S \ref{sec.orbitexcii}). This is achieved by composing infinitely many arbitrarily small pushes with disjoint supports that accumulate in a \emph{flat tangency interval}. Unfortunately, for this perturbation to be able to really remove points from the basin, the resulting map is $C^1$ but not $C^2$ (see Lemma \ref{l.cum}). Finally, we notice that, with the same proof, it is possible to get a similar result for points with historic behaviour instead of points in $\cB(\delta_{p})$ (see Proposition \ref{FORA2!}).

\subsection{Organization of the paper}
This paper is organized as follows: in Section~\ref{Sec2} we first introduce some notation and give the basic definitions we shall use, and then we present the proof of Theorem~\ref{mt.generic}. The remaining sections are devoted to the construction of the example in Theorem \ref{main.exemplonovo}. In Section~\ref{sec.figoito} we construct a specific \emph{figure eight attractor}, with suitable affine returns. In Section~\ref{sec.orbitexcum} we perform the first part of the orbit exclusion procedure. This procedure gives rise to a set of \emph{persistent points}, that will be carefully described in Section \ref{SecGeom}. In Section~\ref{sec.orbitexcii} we complete the orbit exclusion procedure, and we finally prove Theorem~\ref{main.exemplonovo}. 

\section{Generic diffeomorphisms: proof of Theorem~\ref{mt.generic}}\label{Sec2}

The proof of Theorem~\ref{mt.generic} is a modification of the argument given by the third author in \cite{santiago2018dirac}. Here we only have the denseness assumption of \cite{santiago2018dirac} in a small part of $M$ and we manage to obtain the same conclusions using Gourmelon's version of Franks Lemma \cite{gourmelon2016franks}. The argument is by contradiction, and the idea behind it is to show that there exists some point in the manifold whose $\omega$-limit set is Lyapunov stable and contains the support of the physical measure. Being Lyapunov stable, it admits a dominated splitting and the result of \cite{catsigeras2015pesin} allows us to perform an entropy estimation leading to a contradiction. 

\subsection{Notations and definitions}\label{notations}
Let $M$ be a closed manifold of dimension $d\ge 2$. We denote by $\dif^1(M)$ the space of $C^1$ diffeomorphisms over $M$, endowed with the $C^1$ topology. Given $f\in\dif^1(M)$ and $x\in M$, the orbit of $x$ is the set $O(x)=\{f^n(x);\,n\in\Z\}$. We denote by $\fix(f)$ the set of fixed points of $f$. Recall that a periodic point is an element $p\in\fix(f^n)$, for some integer $n>0$. The smallest of such $n$ is called the period of $p$, and is denoted by $\pi(p)$. Finally, we denote by $m$ the normalised Lebesgue measure of $M$.

\begin{remark}\label{rem.suportes}
It will be convenient for us to work with two different notions of the support of a function. For a \emph{real valued function} $\varphi:\R^2\to\R$, its support is the set $\supp(\varphi)=\overline{\{x\in\R^2;\,\varphi(x)\neq 0\}}$. For a \emph{diffeomorphism} $h:\R^2\to\R^2$, its support is the set 
$\supp(h)=\overline{\{x\in\R^2;\,h(x)\neq x\}}$. Of course, which notion we are going to use will be clear from the context.     
\end{remark}

\subsubsection{The weak star topology}
$\cP(M)$ denotes the set of probability measures on $M$, endowed with the weak-star topology. 


\subsubsection{Lyapunov exponents} For $x\in M$ and $v\in T_xM\setminus\{0\}$, the \emph{Lyapunov exponent} of $f$ at $x$ in the direction of $v$ is 
$$\lambda(x,v):=\lim_{n\to\infty}\frac{1}{n}\log\|Df^n(x)v\|,$$whenever the limit exists. By Oseledets' theorem, given $\mu\in\cP_f(M)$ there exists a full measure set, the set of \emph{regular points}, and measurable functions
$\chi_1\leq\dots \leq\chi_d:M\to\R$, such that given a regular point $x\in M$, for every $v\in T_xM\setminus\{0\}$ there exists $i$ such that 
$$\chi_i(x)=\lambda(x,v).$$
In the particular case $\mu=\delta_{\sigma}$, for some $\sigma\in\fix(f)$, the Lyapunov exponents are the logarithm of the modulus of the eigenvalues of $Df(\sigma)$. For details, see \cite{M}.

\subsubsection{Homoclinic classes and dominated splitting} Given a compact set $\Lambda\subset M$, invariant under $f\in\dif^1(M)$, we say that $\Lambda$ admits a \emph{dominated splitting} if there exists a decomposition of the tangent bundle $T_{\Lambda}M=E\oplus F$, which is invariant under the derivative $Df$, and numbers $C>0$, $0<\lambda<1$ such that for every $x\in\Lambda$ and $n>0$ one has 
$$\big\|Df^n(x)|_E\big\|\,\big\|Df^{-n}(f^n(x))|_F\big\|\leq C\lambda^n.$$

In particular, the orbit of any saddle-type hyperbolic periodic point $p$ (\emph{i.e.}, no eigenvalue of $Df^{\pi(p)}(p)$ have modulus equal to $1$, and the spectrum of $Df^{\pi(p)}(p)$ intersects both components of $\mathbb{C}\setminus\mathbb{S}^1$) admits a dominated splitting $E^s\oplus E^u$. The stable manifold theorem \cite{palis} assures the existence of submanifolds $W^s(p)$ and $W^u(p)$, which are tangent to $E^s$ and $E^u$, respectively, at $p$. We denote 
$W^a(O(p))=\bigcup_{l=0}^{\pi(p)-1}W^a(f^l(p))$, $a=s,u$.

Given two hyperbolic periodic points $p$ and $q$ we say that they are \emph{homoclinically related} if $W^s(O(p))\tcap W^u(O(q))\neq\emptyset$ and
$W^u(O(p))\tcap W^s(O(q))\neq\emptyset$. The \emph{homoclinic class} of a periodic point $p$, denoted by $H(p)$, is the closure of the set of periodic points $q$, homoclinically related with $p$. Every homoclinic class $H(p)$ is a transitive invariant set, \emph{i.e.}, contains a point whose orbit is dense in $H(p)$, see Proposition 3.2 of \cite{newhouse1980lectures}. If $p$ is a saddle-type hyperbolic periodic point and $\big|\det{Df^{\pi(p)}}(p)\big|<1$, we say that $p$ is \emph{dissipative}.

\subsubsection{Lyapunov stable sets} A compact set $\Lambda\subset M$, invariant under $f\in\dif^1(M)$, is said to be \emph{Lyapunov stable} if for every neighbourhood $U$ of $\Lambda$ it is possible to
find a neighbourhood $V$ of $\Lambda$ such that if $x\in V\cap U$ then $f^n(x)\in U$, for every $n \geq 1$. 

\subsection{Tools for the proof} Let us begin by summarizing the results we shall invoke in our proof. 

\subsubsection{Lyapunov stable sets and unstable manifolds} We begin with an easy lemma linking Lyapunov stable sets and unstable manifolds of fixed points.

\begin{lema}\label{LemLyapUnst}
Let $A$ be a compact invariant Lyapunov stable set for $f\in\dif^1(M)$, and $\sigma\in \fix(f)\cap A$. Then $W^u(\sigma)\subset A$. 
\end{lema}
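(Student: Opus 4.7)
The plan is to show directly that any $x \in W^u(\sigma)$ lies in every open neighbourhood of $A$, and then invoke compactness (hence closedness) of $A$ to conclude.

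First, I would fix an arbitrary $x \in W^u(\sigma)$. By the very definition of the unstable manifold of a fixed point, $f^{-n}(x) \to \sigma$ as $n \to +\infty$. Since $\sigma \in A$ by hypothesis, the backward orbit of $x$ accumulates on $A$.

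Next, given any open neighbourhood $U$ of $A$, I would apply the Lyapunov stability property to produce a neighbourhood $V$ of $A$ (which, up to shrinking, we may take inside $U$) such that $f^n(V) \subset U$ for every $n \geq 1$. Because $\sigma \in A \subset V$ and $f^{-n}(x) \to \sigma$, there exists $N \geq 1$ with $f^{-N}(x) \in V$. Applying $f^N$ to this point and using the Lyapunov stability conclusion for $n = N$, I obtain
\[
x = f^{N}\bigl(f^{-N}(x)\bigr) \in U.
\]
Since $U$ was an arbitrary open neighbourhood of $A$, the point $x$ lies in the intersection of all such neighbourhoods, and because $A$ is compact (hence closed) this intersection equals $A$ itself. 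Therefore $x \in A$, and as $x \in W^u(\sigma)$ was arbitrary we conclude $W^u(\sigma) \subset A$.

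There is no real obstacle here: the argument is a one-shot combination of the definitions of Lyapunov stability, unstable manifold, and the closedness of a compact invariant set. The only minor care is to ensure that the Lyapunov-stability neighbourhood $V$ can be chosen inside $U$, which is immediate by replacing $V$ with $V \cap U$ (still a neighbourhood of $A$).
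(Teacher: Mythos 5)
Your proof is correct and is essentially the same argument as the paper's, just phrased directly rather than by contradiction: the paper assumes $x\in W^u(\sigma)\setminus A$, picks a neighbourhood $V$ of $A$ missing $x$, applies Lyapunov stability, and derives a contradiction from $f^{-m}(x)\to\sigma$; you instead show $x$ lies in every neighbourhood of $A$ and conclude by closedness. Same ingredients, same structure, contrapositive packaging.
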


\begin{proof}
Suppose that there exists $x\in W^u(\sigma)\setminus A$. By compactness, one can find a neighbourhood $V$ of $A$ such that $x\notin V$. As $A$ is Lyapunov stable, there exists a neighbourhood $U$ of $A$ such that any $y\in U$ satisfies $f^n(y)\in V$ for any $n\ge 0$.

But $x\in W^u(\sigma)$, so there exists $m\ge 0$ such that $y=f^{-m}(x)\in U$. Hence, $f^m(y) = x\in V$, which is a contradiction.
\end{proof}

\subsubsection{Generic results} We collect in a single statement the $C^1$-generic results we shall use.

\begin{theorem}
	\label{t.generic}
	There exists a residual set $\mathcal{R}$ of $C^1$ diffeomorphisms such that every $f\in\mathcal{R}$ satisfies:
	\begin{enumerate}
		\item\label{itemKS} $f$ is Kupka-Smale (see \cite{palis}).
		\item If $\sigma\in\fix(f)$ and $\delta_{\sigma}$ is a physical measure then $\big|\det(Df(\sigma))\big|<1$ (see \cite[Lemma 4.7]{santiago2018dirac}). Moreover, the homoclinic class $H(\sigma)$ is non-trivial (this follows from the connecting lemma and a standard semicontinuity argument). 
		\item\label{itemfour} There exists a residual set $\mathcal{R}_f\subset M$ such that
		if $x\in\mathcal{R}_f$, then $\omega(x)$ is a Lyapunov stable set. Moreover, if a homoclinic class intersects $\omega(x)$, then they must coincide (\cite{MP,Bonatti-Crovisier}).
		\item If $\sigma\in\fix(f)$ is such that $|\det(Df(\sigma))|<1$ and the homoclinic class $H(\sigma)$ is Lyapunov stable, then $H(\sigma)$ admits a dominated splitting $E\oplus F$ (see \cite[Theorem 1.2]{potrie2010generic}) 
		\item\label{locallygenericlyap} Locally generically Lyapunov stability is robust: there exists a neighbourhood $\cU$ of $f$ such that for every $g\in\cU$ there exists a continuation $\sigma_g$. Moreover, if $g\in\mathcal{R}\cap\cU$, then $H_g(\sigma_g)$ is Lyapunov stable (\cite{Bonatti-Crovisier} and Conley theory \cite{conley1978isolated}). 
	\end{enumerate}
\end{theorem}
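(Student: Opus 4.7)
The statement is a finite conjunction of known $C^1$-generic properties, together with one semi-local assertion (item~(\ref{locallygenericlyap})). The natural plan is to exhibit, for each item $(i)=1,\dots,5$, a residual subset $\mathcal{R}_i\subset\dif^1(M)$ witnessing the property, and then set $\mathcal{R}=\bigcap_{i=1}^{5}\mathcal{R}_i$; since $\dif^1(M)$ is a Baire space, the intersection remains residual. Each $\mathcal{R}_i$ will be taken straight from the literature cited in the statement, and the only real work is to check that the five properties are jointly compatible, and that item~(\ref{locallygenericlyap}) can be made to hold for every fixed point $\sigma$ simultaneously.

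First I would dispatch (\ref{itemKS}) by invoking the classical Kupka--Smale theorem. For (2), dissipativeness is the content of \cite[Lemma 4.7]{santiago2018dirac}: if $|\det Df(\sigma)|\ge 1$, a Pesin-block argument shows that the local stable set of $\sigma$ has zero Lebesgue measure, so $\delta_\sigma$ cannot be physical. Non-triviality of $H(\sigma)$ comes from Hayashi's connecting lemma, which allows one to create a transverse homoclinic intersection at $\sigma$ by an arbitrarily small perturbation; a standard lower-semicontinuity argument then shows that $H(\sigma)\neq\{\sigma\}$ persists on a residual set. For (\ref{itemfour}), the existence of the residual subset $\mathcal{R}_f\subset M$ with $\omega(x)$ Lyapunov stable is Mañé's theorem as refined in \cite{MP}, while the coincidence of $\omega(x)$ with any homoclinic class it meets is the Bonatti--Crovisier description of chain-recurrence classes \cite{Bonatti-Crovisier}. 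For (4), one appeals directly to \cite[Theorem 1.2]{potrie2010generic}, which provides the dominated splitting on the Lyapunov stable homoclinic class of a dissipative saddle.

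The one item that requires slightly more care is (\ref{locallygenericlyap}): one needs a \emph{single} open neighbourhood $\cU$ of $f$ on which the continuation $\sigma_g$ exists (immediate by hyperbolicity and the implicit function theorem) and on which the residual condition ``$H_g(\sigma_g)$ is Lyapunov stable'' can be imposed. Here I would combine the Bonatti--Crovisier result that, generically, $H(\sigma)$ coincides with the chain-recurrence class of $\sigma$ with Conley's theorem \cite{conley1978isolated} that chain-recurrence classes that are attractors admit a stable filtration, hence are Lyapunov stable; the existence of such a filtration is robust, and one plugs into the local genericity the requirement that $H_g(\sigma_g)$ coincide with the corresponding chain-recurrence class.

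The main obstacle is bookkeeping rather than geometric: the fixed points involved in (2)--(\ref{locallygenericlyap}) form a countable set over all of $\mathcal{R}$ (a priori one must quantify over all possible continuations $\sigma_g$ as $g$ varies in open sets covering $\mathcal{R}$), and one has to ensure that intersecting the residual sets indexed by these countably many fixed points still leaves a residual subset on which \emph{every} relevant $\sigma$ simultaneously satisfies the five conditions. This is handled by the standard device of covering $\dif^1(M)$ by countably many open sets $\cU_n$ on each of which the continuation is well defined, and taking the intersection of the corresponding locally residual sets.
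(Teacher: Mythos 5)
The paper supplies no proof of Theorem~\ref{t.generic}: it is stated as a compilation of known $C^1$-generic results, each tagged with a citation, and your plan has exactly that structure — attach a residual set to each item and intersect them via Baire category — so your approach coincides with the paper's. The bookkeeping remark at the end (cover $\dif^1(M)$ by countably many open sets on which the continuation $\sigma_g$ is defined, and intersect the locally residual sets) is the standard and correct device to promote item~(\ref{locallygenericlyap}) to a globally residual statement.

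One place where your sketch is slightly off-register is item~(\ref{locallygenericlyap}). The correct intermediate notion from Bonatti--Crovisier and Conley theory is a \emph{quasi-attractor}: a chain-recurrence class admitting a neighbourhood basis of trapping regions $V$ with $\overline{f(V)}\subset \operatorname{int}V$. The logic runs: item~(\ref{itemfour}) gives Lyapunov stability of $H(\sigma)$, hence a trapping region $V$ around it; a trapping region is $C^1$-robust; and generically $H_g(\sigma_g)$ coincides with the chain-recurrence class of $\sigma_g$, which remains the maximal quasi-attractor in $V$ and is therefore again Lyapunov stable. Your phrasing ("chain-recurrence classes that are attractors admit a stable filtration, hence are Lyapunov stable") is a true statement but points the implication in the unused direction; what is needed is that Lyapunov stability yields a robust filtration, not the converse. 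A minor side remark, for accuracy rather than as a gap: item~(2) as stated is vacuous for sinks (for a sink $\sigma$, $\delta_\sigma$ is trivially physical but $H(\sigma)=\{\sigma\}$); the intended reading, clear from its use in the proof of Theorem~\ref{mt.generic}, is that $\sigma$ is a saddle, and neither your plan nor the paper flags this restriction explicitly.
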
 

\subsubsection{A perturbative result}

Theorem \ref{t.bobo} below (which borrows ideas used in \cite{potrie2010generic}) gives a criterion to mix the stable manifold of a fixed point with the stable manifold of a sink, by a $C^1$ small perturbation. The application of this result is the main technical difference of our proof with the previous work \cite{santiago2018dirac} of the third author. The proof is a combination of a result in Bochi-Bonatti \cite{bochi2012perturbation} with Gourmelon's version of Franks' Lemma \cite{gourmelon2016franks}. 

\begin{theorem}
	\label{t.bobo}
	Let $f$ be a diffeomorphism of a $d$-dimensional compact manifold, and $\gamma_n=O(p_n)$ be a sequence of periodic orbits whose periods 
	$\pi(p_n)$ tend to infinity and that converge in the Hausdorff topology to a compact set $\Lambda$. Assume that 
	\begin{enumerate}[(a)]
		\item $\Lambda$ admits a dominated splitting 
		$$E\oplus F.$$
		\item There exists $0<s\leq\dim(E)$ such that for every $n$, the point $p_n$ is hyperbolic of saddle type with stable index $s$. 
		\item There exists a hyperbolic fixed point $\sigma\in\fix(f)$ of saddle type such that 
		$$W^u(\sigma)\tcap W^s(\gamma_n)\neq\emptyset,$$
		for every $n$.
		\item For every $\delta>0$, one has $|\det(Df^{\pi(p_n)}(p_n)|_F)|<1+\delta$, for every $n$ large enough.
	\end{enumerate}
	Then, given $\eps>0$ there is $N$ such that for every $n\geq N$ there exists an	$\eps$-$C^1$-perturbation $g$ of $f$ with support in an arbitrarily small neighbourhood of $\gamma_n$ (in particular, $\sigma$ is also fixed by $g$), preserving the orbit $\gamma_n$,
	and such that $\gamma_n$ is a sink for $g$ and 
	$$W^u_g(\sigma)\tcap W^s_g(\gamma_n)\neq\emptyset.$$ 
\end{theorem}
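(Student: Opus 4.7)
The plan is to combine Bochi-Bonatti's perturbation theorem for long-period cocycles with Gourmelon's refinement of Franks' Lemma, the latter being what ensures that the heteroclinic intersection with $W^u(\sigma)$ survives the perturbation. The strategy splits into (i) designing a small perturbation of the linear cocycle $Df$ along $\gamma_n$ that makes the orbit a sink, and (ii) realizing it by a $C^1$-small diffeomorphism perturbation supported near $\gamma_n$ that preserves a specific piece of the new stable manifold carrying the heteroclinic point.

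For step (i), by persistence of dominated splittings, for $n$ large $\gamma_n$ lies in a neighborhood of $\Lambda$ to which $E\oplus F$ extends. Combining (b) with domination, the stable bundle of $p_n$ sits inside $E$, so all Lyapunov exponents of $Df$ along $F$ at $p_n$ are positive; by (d) their sum is bounded by $\tfrac{\log(1+\delta)}{\pi(p_n)}$, hence each of them is small for $n$ large. The domination inequality then forces the top $E$-exponent to lie below the smallest $F$-exponent by at least $|\log\lambda|>0$, so every $E$-exponent is already negative once $n$ is large enough. I next choose $\eta\in(0,\eps/2)$ and modify $Df|_F$ at a single point of $\gamma_n$ by multiplying it by $1-\eta$ (a cocycle perturbation of $C^0$-size $\eta$, of the sort allowed by Bochi-Bonatti in presence of a dominated splitting); this multiplies $Df^{\pi(p_n)}(p_n)|_F$ by $1-\eta$, so each of its eigenvalue moduli becomes bounded by $(1-\eta)(1+\delta)$, and picking $\delta<\eta/(1-\eta)$ makes them all strictly less than $1$. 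All Lyapunov exponents at $p_n$ are now negative, so $\gamma_n$ is a sink for the prescribed cocycle.

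For step (ii), I invoke Gourmelon's version of Franks' Lemma, whose essential feature is the preservation of a prescribed finite collection of invariant-manifold arcs through the $C^1$-realization. Before perturbing, I fix a point $q\in W^u(\sigma)\tcap W^s(\gamma_n)$ given by (c), and choose $k$ large so that $f^k(q)$ lies in an arbitrarily small neighborhood of $\gamma_n$; I then require Gourmelon's construction to preserve a local weak-stable-manifold arc of $\gamma_n$ through $f^k(q)$. The resulting $g$ agrees with $f$ outside a small neighborhood of $\gamma_n$, so $\sigma\in\fix(g)$ and $q,f(q),\ldots,f^{k-1}(q)$ are untouched; it realizes the prescribed cocycle, so $\gamma_n$ is a sink for $g$; and it preserves the chosen stable arc through $f^k(q)$, which is automatically absorbed in the open basin $W^s_g(\gamma_n)$ of the new sink. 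Hence $g^k(q)=f^k(q)\in W^s_g(\gamma_n)$, i.e.\ $q\in W^s_g(\gamma_n)$, and transversality of the intersection with $W^u_g(\sigma)$ is automatic from the openness of the sink basin.

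The main obstacle is precisely the compatibility required by step (ii): a generic small cocycle perturbation along $\gamma_n$ would move the local stable manifold of $\gamma_n$ and destroy the heteroclinic intersection with $W^u(\sigma)$ demanded in the conclusion. The classical Franks' Lemma gives no control over invariant manifolds, so one cannot simultaneously modify the spectrum of $\gamma_n$ (to make it a sink) and preserve the heteroclinic intersection without a refined realization. Gourmelon's version of Franks' Lemma, tailored exactly for this type of constraint, is the decisive input without which the argument could not be carried out.
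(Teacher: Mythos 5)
Your proof is correct and follows the same overall architecture as the paper's (a cocycle perturbation along $\gamma_n$, then Gourmelon's refinement of Franks' Lemma to realize it while preserving the heteroclinic intersection), but step (i) is handled by a genuinely more elementary route. The paper first invokes Bochi--Bonatti (Theorem 4.11) to construct a path of cocycles equalizing the Lyapunov exponents inside each subbundle $E_\ell$ of the finest dominated splitting contained in $F$, and only then uses hypothesis (d) (via a variant of Lemma~4.2 of \cite{santiago2018dirac}) to push those exponents negative. You bypass the equalization step entirely by the direct observation that, since the stable bundle sits inside $E$, \emph{every} eigenvalue of $Df^{\pi(p_n)}(p_n)|_F$ has modulus $>1$, hence the determinant bound in (d) forces each such modulus to lie in $(1,1+\delta)$; a single homothety by $1-\eta$ in the $F$-direction at one point of $\gamma_n$ then turns $\gamma_n$ into a sink once $\delta<\eta/(1-\eta)$ (the negativity of the $E$-exponents surviving by the domination gap, as you note). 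This is neater than the paper's route and does not sacrifice anything that step (ii) requires: your interpolating path $(1-t\eta)Df|_F$, $t\in[0,1]$, preserves the invariant splitting and domination throughout, which is what Gourmelon's Theorem~1 needs to keep a compact piece of $W^s(\gamma_n)$ (the paper's $K_n$, your arc through $f^k(q)$) inside the new sink's basin. Two small imprecisions worth fixing: the $C^0$-size of your cocycle perturbation is $\eta\,\|Df|_F\|$, not $\eta$, so $\eta$ should be taken smaller than $\eps/\sup\|Df\|$; and the arc you want Gourmelon to preserve is the \emph{original} ($s$-dimensional) stable manifold of the saddle $\gamma_n$ — after the perturbation it becomes a \emph{strong}-stable arc of the sink, so ``weak-stable'' is the wrong adjective, though the argument is unaffected.
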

\begin{proof}
Observe, to begin, that by standard properties of domination, and by condition (b), for each $n$ the tangent space to the stable manifold $W^{s}(\gamma_n)$ along $\gamma_n$ is contained in the bundle $E$. Now, fixes a compact piece $K_n\subset W^s(\gamma_n)$ so that $K_n$ intersects transversally $W^u(\sigma)$. Consider the finest dominated splitting $E_1\oplus\dots\oplus E_k$ over $\Lambda$ and write $F=E_j\oplus\dots\oplus E_k$. Conditions (a) and (b) allow us to apply Theorem 4.11 of \cite{bochi2012perturbation} to create a continuous path of linear cocycles over the finite set $\gamma_n$, starting with the derivative cocycle of $f$ along $\gamma_n$ end ending with a cocycle $A$ having all Lyapunov exponents equal inside each bundle $E_\ell$, for $\ell=j,\dots,k$, while those in the bundle $E$ remain untouched. By condition (d) the sums of these Lyapunov exponents is very small, provided that $n$ is large. This allows us to produce another continuous path of cocycles which starts at $A$ and ends in a cocycle $B$ having only negative Lyapunov exponents in the bundle $F$, and the same Lyapunov exponents as those of $\gamma_n$ inside $E$ (this argument is a variant of Lemma 4.2 in \cite{santiago2018dirac}). Applying Theorem 1 in \cite{gourmelon2016franks}, there exists a diffeomorphism $g$ which is $C^1$ close to $f$, coincides with $f$ along $\gamma_n$ and outside a small neighbourhood of $\gamma_n$, which we can assume do not contains $K_n$. In particular, $g$ preserves $\sigma$ and moreover 
\begin{itemize}
	\item $K_n\subset W^s_g(\gamma_n)$;
	\item the derivative of $g$ along $\gamma_n$ equals the cocycle $B$.
\end{itemize}
Therefore, $\gamma_n$ is a sink for $g$ and $W^u_g(\sigma)\tcap W^s_g(\gamma_n)\neq\emptyset,$ concluding.      
\end{proof}

\subsubsection{Entropy estimation}\label{subseceleonora} The main tool we shall employ is the result below.

\begin{theorem}[Catsigeras-Cerminara-Enrich \cite{catsigeras2015pesin}]
	\label{t.eleonora}
	Let $\Lambda$ be a Lyapunov stable set for $f\in\dif^1(M)$. Suppose that there exists a dominated splitting $T_{\Lambda}M=E\oplus F$. Then, for every physical measure $\mu$ supported in $\Lambda$ one has $$h_{\mu}(f)\geq\int\sum_{l=1}^{\dim F}\chi_l(x)d\mu,$$
	where $h_\mu(f)$ is the metric entropy of $f$ with respect to $\mu$.
\end{theorem}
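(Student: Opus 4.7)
The plan is to derive the lower bound from Katok's entropy formula via $(n,\eps)$-separated sets, fed by a volume-growth estimate along local plaques tangent to the dominating bundle $F$.

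First, using continuity of $Df$ and the domination, I would extend $F$ to a $Df$-invariant cone field $\mathcal{C}^F$ over a neighbourhood $U$ of $\Lambda$. Applying the Hirsch--Pugh--Shub plaque family theorem for dominated splittings, I obtain a family $\{D^{cu}(x)\}_{x\in\Lambda}$ of $C^1$ embedded discs of dimension $\dim F$, tangent at $x$ to $F(x)$, with uniform inner radius $\delta_0>0$ and satisfying $f(D^{cu}(x))\subset D^{cu}(f(x))$ up to an arbitrarily small controlled error. Lyapunov stability then provides a smaller neighbourhood $V\subset U$ with $f^n(V)\subset U$ for every $n\geq 0$, so in particular every forward iterate $f^n(D^{cu}(x))$ remains trapped in the bounded set $U$ once $x$ is close enough to $\Lambda$.

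The next step is the volume estimate. For $\mu$-almost every $x$ the Oseledets decomposition refines $E(x)\oplus F(x)$, so Birkhoff's theorem applied to $\varphi(x)=\log\bigl|\det(Df(x)|_{F(x)})\bigr|$ yields
$$\lim_{n\to\infty}\frac{1}{n}\log\bigl|\det(Df^n(x)|_{F(x)})\bigr| \;=\; \int\varphi\,d\mu \;=\; \int\sum_{l=1}^{\dim F}\chi_l\,d\mu.$$
Controlling distortion along $D^{cu}(x)$ via the $\mathcal{C}^F$-cone (so that tangent spaces stay uniformly close to $F$ along a $\mu$-generic orbit), I would deduce that the $\dim F$-dimensional volume of $f^n(D^{cu}(x))$ grows like $\exp\!\bigl(n\int\sum_l\chi_l\,d\mu\bigr)$ on $\mu$-generic plaques.

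Since each $f^n(D^{cu}(x))$ is confined to the bounded set $U$ by Lyapunov stability, its $\dim F$-volume is uniformly bounded in $n$, which forces the iterate to fold back on itself to accommodate the exponential growth. Covering $U$ by finitely many balls of radius $\eps/2$ and pulling back under $f^n$, one extracts at least $\exp\!\bigl(n\bigl[\int\sum_l\chi_l\,d\mu - o(1)\bigr]\bigr)$ points of $D^{cu}(x)$ that are pairwise $(n,\eps)$-separated. Because $\mathcal{B}_f(\mu)$ has positive Lebesgue measure and is eventually absorbed in $V$, a positive $\mu$-measure family of plaques meets the basin, and plugging the separated-point count into Katok's formula
$$h_\mu(f) \;=\; \lim_{\eps\to 0}\limsup_n \tfrac{1}{n}\log N_\mu(n,\eps,1-\eta)$$
gives the claimed inequality. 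The main obstacle is the $C^1$-only hypothesis: there is no Hölder structure underwriting genuine Pesin unstable laminations, so the plaques $D^{cu}(x)$ are only approximately invariant and the distortion bounds along them must be extracted solely from the domination and the Lyapunov trapping --- this is exactly the place where the two hypotheses of the theorem must be used in tandem, and it is what prevents one from simply invoking the Ledrappier--Young formula.
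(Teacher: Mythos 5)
This theorem is quoted in the paper from Catsigeras--Cerminara--Enrich~\cite{catsigeras2015pesin} without proof, so there is no in-text argument to compare against; your sketch must be judged on its own. The outline --- $F$-tangent plaques via Hirsch--Pugh--Shub, domination to keep tangent planes in an $F$-cone, Lyapunov stability to trap forward iterates in a bounded region, volume growth of plaques forcing $(n,\eps)$-separated points --- is the right shape for a Ma\~{n}\'{e}-type $C^1$ Pesin inequality, and the roles you assign to the two hypotheses are sensible.

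The sketch does not close, however, precisely where you elide the details: converting the separated-point count into a lower bound on $h_\mu$. Katok's formula estimates $h_\mu(f)$ through minimal $(n,\eps)$-spanning numbers of sets of \emph{$\mu$-measure} at least $1-\eta$; a large $(n,\eps)$-separated set inside a single plaque $D^{cu}(x)\cap\cB_f(\mu)$ --- a set of positive $\dim F$-dimensional Lebesgue measure in the plaque, which a priori carries no $\mu$-mass whatsoever --- does not by itself force the spanning number $N_\mu(n,\eps,1-\eta)$ to be large. As written, your count bounds topological entropy, not metric entropy. Bridging from Lebesgue-positivity of $\cB_f(\mu)$ to a genuine $\mu$-spanning bound is the technical heart of~\cite{catsigeras2015pesin} (their pseudo-physical machinery), and your closing paragraph correctly identifies the $C^1$-only hypothesis as the source of difficulty without supplying that bridge. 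A secondary gap: the asserted growth rate $\operatorname{vol}\bigl(f^n(D^{cu}(x))\bigr)\sim\exp\bigl(n\int\log\bigl|\det(Df|_F)\bigr|\,d\mu\bigr)$ is read off from Birkhoff at the base point $x$, but once $n$ is large the iterated disc scatters across the trapping region and no longer shadows the orbit of $x$, so the Jacobian integrated over the disc is not controlled by the orbit of $x$ alone; one must integrate over the disc, split by $\eps$-itinerary, and invoke physicality of $\mu$ on a positive-measure sub-disc to recover the exponent.
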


\medskip

We are now in position to give the proof of Theorem~\ref{mt.generic}.

\subsection{Proof of Theorem~\ref{mt.generic}}
Let $f\in\mathcal{R}$, where $\mathcal{R}$ is the residual set of Theorem~\ref{t.generic}. Let $\sigma\in\fix(f)$ be such that $\delta_{\sigma}$ is a physical measure. Assume by contradiction that $\sigma$ is not a sink, and that $\cB(\delta_{\sigma})$ is not a nowhere dense set. Then, since $f$ is Kupka-Smale (\eqref{itemKS} of Theorem~\ref{t.generic}) and since $\sigma$ cannot be a source, we have that $\sigma$ is a a saddle-type hyperbolic fixed point. Moreover, as $\operatorname{Int}\overline{\cB(\delta_{\sigma})}\neq\emptyset$, there exists some ball ${B(x,r)}\subset M$ such that $\cB(\delta_{\sigma})$ is dense inside $B(x,r)$. By Lemma 3.3 of \cite{santiago2018dirac} the set $\{y\in B(x,r);\,\delta_{\sigma}\in\cM(y)\}$ is residual in $\overline{B}(x,y)$. Thus, there exists a point $y\in\mathcal R_f$ (the generic subset of $M$ given in \eqref{itemfour} of Theorem~\ref{t.generic}) such that $\delta_{\sigma}\in\cM(y)$ and, in particular, $\sigma\in\omega(y)$. By Theorem~\ref{t.generic}, this implies that the homoclinic class $H(\sigma)$ is a Lyapunov stable set, and then gives a dominated splitting $T_{H(\sigma)}M=E\oplus F$. 

Let $\cU$ be the $C^1$ neighbourhood of $f$ given by \eqref{locallygenericlyap} of Theorem~\ref{t.generic}. We have that for every $g\in\mathcal R\cap\cU$, the homoclinic class $H(\sigma_g)$ is a Lyapunov stable transitive set which is non-trivial (\emph{i.e.}, not reduced to a single periodic orbit). We claim that $\big|\det(Df(\sigma))|_F\big|>1$. Indeed, let us assume on the contrary that
$$\big|\det(Df(\sigma))|_F\big|\leq 1.$$
Then, as the homoclinic class is not trivial we can create a sequence of periodic orbits $\gamma_n=O(p_n)$, all of them heteroclinically related with $\sigma$, and which spend arbitrarily large portions of their orbit as close as we may please to $\sigma$ (this fact is contained in \cite[Lemma 1.10]{MR2018925}, see \cite[Claim 4.4]{santiago2018dirac} for a sketch of proof). Thus, this sequence satisfies the assumptions of Theorem~\ref{t.bobo} and we can create a sink $\gamma=\gamma_n$ for an arbitrarily small perturbation $g$ of $f$, with $\sigma\in\fix(g)$, and such that the unstable manifold of $\sigma$ intersects the basin of the sink $\gamma$. Since this is an open condition, we can require that $g\in\mathcal{R}\cap\cU$. However, this implies that $H_g(\sigma)$ is Lyapunov stable and so, by Lemma \ref{LemLyapUnst}, $W^u_g(\sigma)\subset H_g(\sigma)$. Since $g$ was created such that $\gamma\subset\overline{W^u_g(\sigma)}$, this implies that $\gamma\subset H_g(\sigma)$. But $H_g(\sigma)$, being a transitive set, contains no sinks. This gives a contradiction, and proves that $|\det(Df(\sigma)|_F)|>1$. Now, since $\delta_{\sigma}$ is a physical measure we can apply Theorem~\ref{t.eleonora} and obtain that$$h_{\delta_{\sigma}}(f)\geq\sum_{i=1}^{\dim F}\log\lambda_i,$$where the numbers $\lambda_i$ are the modulus of the eigenvalues of $Df(\sigma)$ in the subspace $F$. Since $\det(Df(\sigma)|_{F})>1$, one obtains that $h_{\delta_{\sigma}}(f)>0$. This contradiction ends the proof of Theorem \ref{mt.generic}.\qed

\section{A figure-eight attractor with affine returns}
\label{sec.figoito}

In this section we construct a suitably devised figure-eight like attractor in $\R^2$. We shall \emph{glue} a hyperbolic linear flow with a rotation, with the main feature being a perturbation that \emph{undoes} a part of the non-linearity raised by the gluing. This will gives us a diffeomorphism having a saddle type hyperbolic fixed point, whose statistical basin contains a wandering domain (an open set whose future iterates never intersects it back), and has an important technical feature: every time that this wandering domain comes close to the stable manifold of the saddle, the ``return map'' is \emph{affine} (see Proposition~\ref{buildTowers} below where we state this property in precise terms). 

More than that, after a suitable rescaling, this kind of \emph{first return map} is just a rotation by $\pi/2$. This nice property will be crucial to enable us to further perturb this diffeomorphism in order to withdraw points from the wandering domain in the subsequent sections.

Hence, despite this initial step being rather simple, we need to be very careful with the dynamics in the region that will compose our wandering domains. Some additional work is also needed to complete the dynamics appropriately with sinks and sources, a feature which will be important later for the construction of \emph{nowhere dense invariant sets}.

\subsection{Boxes and towers}\label{subsecboxes}
Fix a constant $\sigma>1$\nomenclature{$\sigma$}{Eigenvalue of $f_0$ at $O$, with $\sigma>1$} and numbers $\tilde{a},\tilde{b}$ such that $1<\tilde{a}<\tilde{b}<\sigma$. Given these constants we take 
\[a\eqdef\tilde{a}+\frac{\tilde{b}-\tilde{a}}{4}\qquad
\textrm{and}\qquad
b\eqdef\tilde{b}-\frac{\tilde{b}-\tilde{a}}{4}.\]
In particular, $b-a=\frac{\tilde{b}-\tilde{a}}{2}$ and $a+b=\tilde{a}+\tilde{b}$. We assume further that $\tilde{b}-\tilde{a}<\frac{\sigma-1}{5}$. This last requirement, which is not sharp at all, is important to give us enough space to dissolve appropriately the perturbations we are going to make in the next sections. For simplicity we denote $I=[a,b]$ and $\widetilde{I}=[\tilde{a},\tilde{b}]$. Notice that, by our choice of $a$ and $b$, these two intervals are centred around the same point. We are going to use this later. We denote by $s_v:\R^2\to\R^2$ the symmetry with respect to the vertical axis, \emph{i.e.}, the linear map $s_v(x,y)=(x,-y)$. We use $s_h:\R^2\to\R^2$ to denote similarly the linear symmetry with respect to the horizontal axis, \emph{i.e.}, $s_h(x,y)=(-x,y)$. 

\begin{definition}[Boxes]
\nomenclature{$I$}{$I = [a,b]\subset (1,\sigma)$, basis of the tower}\nomenclature{$\tilde I$}{$\tilde I = [\tilde a,\tilde b]\subset (1,\sigma)$, basis of the extended tower}\nomenclature{$\tilde a$}{Number such that $\tilde I = [\tilde a,\tilde b]$}\nomenclature{$\tilde b$}{Number such that $\tilde I = [\tilde a,\tilde b]$}\nomenclature{$a$}{Number such that $I = [a,b]$}\nomenclature{$b$}{Number such that $I = [a,b]$} For each $n\in\mathbb{N}$ the sets     
\begin{equation}\label{eqBox}
S_n=I\times\sigma^{-n} I,\quad
\tilde{S}_n=\tilde{I}\times\sigma^{-n}\tilde{I}\quad\mbox{and}\quad \tilde{U}_n=\sigma^{-n}\tilde{I}\times \tilde{I},
\end{equation}
are called, respectively, \emph{stable boxes}\nomenclature{$S_n$}{$S_n=I\times\sigma^{-n} I$, stable boxes}, \emph{extended stable boxes}\nomenclature{$\tilde S_n$}{$\tilde{S}_n=\tilde{I}\times\sigma^{-n}\tilde{I}\supset S_n$, extended stable boxes} and \emph{unstable boxes}\nomenclature{$\tilde U_n$}{$\tilde{U}_n=\sigma^{-n}\tilde{I}\times \tilde{I}$, extended unstable boxes}.	We shall use also their images by the symmetry maps $s_h$ and $s_v$, and shall denote by $S_n^e\eqdef s_v(S_n)$, which we call the \emph{exterior stable boxes}\footnote{The adjective \emph{exterior} refers to the fact the boxes $S_n^e$ will be contained in the exterior connected components of the complement of the figure-eight attractor we shall produce in this section.}.
\end{definition}
\begin{remark}
Notice that $n$ successive applications of the map $(x,y)\in\R^2\mapsto(\sigma^{-2}x,\sigma y) \in \R^2$ send the stable extended box $\tilde{S}_n$ diffeomorphicaly onto the corresponding unstable box $\tilde{U}_{2n}$. 
\end{remark}
The space between the boxes $S_n$ and $\tilde{S}_n$ will be used in the next section to dissolve some perturbations we are going to make.

\subsubsection{Rescaling boxes} For each $n\in\nt$ we consider the affine map\nomenclature{$L_n$}{Linear map mapping the stable box $S_n$ to $[-1,1]^2$}
\begin{equation}
\label{e.ln}
L_{n} \begin{pmatrix} x\\y \end{pmatrix}
= \begin{pmatrix} 
\frac{2}{b-a}x - \frac{a+b}{b-a}\\
\frac{2\sigma^{n}}{b-a}y - \frac{a+b}{b-a}\\
\end{pmatrix}
\:\text{and its inverse}\:
L_n^{-1} \begin{pmatrix} x\\y \end{pmatrix}
= \begin{pmatrix} 
\frac{b-a}{2}x + \frac{b+a}{2}\\
\sigma^{-n}\frac{b-a}{2}y + \sigma^{-n}\frac{b+a}{2}
\end{pmatrix}
\end{equation}
which identifies each stable box $S_n$ with the square $[-1,1]^2$. Notice that the map $L_n$ also identifies each extended stable box $\widetilde{S}_n$ with a square $[\alpha,\zeta]^2$, for some appropriate choice of $\alpha<-1$ and $\zeta>1$.

\subsubsection{Affine returns} Before stating precisely the main result of this section, we need a couple more definitions. To avoid introducing heavier notations, we denote the map $(x,y)\in\R^2\mapsto(\sigma^{-2}x,\sigma y)\in\R^2$ simply by $\operatorname{Diag}(\sigma^{-2},\sigma)$. Recall that a diffeomorphism of the plane is said to be compactly supported if it equals the identity outside a ball centred at the origin.

\begin{definition} Let $g\in\Diff^1(\R^2)$ with a saddle-type hyperbolic fixed point $O$. We say that $g$ has a \emph{figure eight attractor} at $O$ if $W^s(O)=W^u(O)$.
\end{definition} 

We are ready to state the main result of this section.

\begin{prop}\label{buildTowers} There exist a compactly supported diffeomorphism $f_0\in \Diff^\infty(\R^2)$ and two positive integers $n_0,k_0\in\nt$ such that the origin $O=(0,0)\in\R^2$ is a saddle-type hyperbolic fixed point for $f_0$\nomenclature{$f_0$}{Initial diffeomorphism, with a Bowen loop}, $f_0$ has a figure eight attractor at $O$ and moreover, the following seven additional properties are satisfied.
\begin{enumerate}[(i)]
\item $f_0(p,p)=(p,p)$, where $p=\sigma^4$.
\item\label{p1} There exists a neighbourhood $\mathcal V$ of $O$ (depicted in Figure~\ref{FigD}) such that $f_0|_{\mathcal V}=\operatorname{Diag}(\sigma^{-2},\sigma)$. Moreover, if $n\geq n_0$ then $f_0^{\ell}(\tilde{S}_n)\subset\mathcal V$, for all $\ell=0,\dots,n$.
\item\label{p1'} For any $(x,y)\in\mathcal{V}\cap (\R_+\times [\sigma^3,\sigma^4])$, one has $f_0^\ell (x,y)\notin\mathcal V$ for any $0<\ell<k_0-5$ and $f_0^{k_0-5} (x,y)\in\mathcal V$;
\item\label{p3}If $(x,y)\in(-\sigma^{-2n_0},\sigma^{-2n_0})\times[\tilde a,\tilde b]$, then\[f_0^{k_0}(x,y)=R(x,y),\]where $R$ is the rotation of angle $\pi/2$ and centre $(\frac{a+b}{2},\frac{a+b}{2})$, in the positive (counter-clockwise) sense. If $(x,y)\in(-\sigma^{-2n_0},\sigma^{-2n_0})\times[-\tilde{a},-\tilde{b}]$ then$$f_0^{k_0}(x,y)=s_h\,R\big(s_v(x,y)\big)\,.$$In particular, for all $n \geq n_0$, if $(x,y)\in\tilde{S}_n$, then
$$f_0^{n+k_0}(x,y)=\big(a+b-\sigma^ny\,,\,\sigma^{-2n}x\big) \in \tilde{S}_{2n}$$
and if $(x,y)\in s_h(\tilde{S}_n)$,
$$f_0^{n+k_0}(x,y)=\big(-a-b+\sigma^ny\,,\,\sigma^{-2n}x\big);$$
\item\label{p5} 
$f_0^{k_0}\left([-\sigma^{-2n_0},\sigma^{-2n_0}]\times[\sigma^{-1}\tilde{b},\tilde{a}]\right)\subset [\tilde{b},\sigma^2\tilde{a}]\times[-\sigma^{-n_0-4},\sigma^{-n_0-4}]$ and also 
\[f_0^{k_0}\left([-\sigma^{-2n_0},\sigma^{-2n_0}]\times[-\sigma^{-1}\tilde{b},-\tilde{a}]\right)\subset [-\tilde{b},-\sigma^2\tilde{a}]\times[-\sigma^{-n_0-4},\sigma^{-n_0-4}];\]

 
\item\label{p6} There exists a neighbourhood $\mathcal{W}^{\prime}$ of $O$ such that $f_0|_{\R^2\setminus\mathcal{W}^{\prime}}=\operatorname{Id}$  
Moreover, if $\mathcal{L}$ denotes the figure-eight attractor of $f_0$, then $\R^2\setminus\mathcal{L}$ has three connected components, two of which, $\mathcal{L}_r^i$ and $\mathcal{L}_\ell^i$ (the \emph{interior} components), are topological disks with $O$ at their boundary and one, $\mathcal{L}^e$, which is unbounded (the \emph{exterior} component). There exists a $C^{\infty}$ disk $Q_R$ such that, such that $\mathcal{V}$ is contained in the unbounded connected component of $\R^2\setminus \partial Q_R$.
If $z\in\mathcal{L}_{r}^{i}\setminus Q_R$ then $\alpha_{f_0}(z)=\partial Q_R$ and $\omega_{f_0}(z)\subset\mathcal{L}$. There exists also a $C^{\infty}$ disk $Q_L$ contained in $\mathcal{L}^i_\ell$ with similar properties. 
\item\label{p8} There exists a $C^{\infty}$ disk $Q_e$ whose boundary is contained in $\mathcal{L}^e$ such that if $z\in\mathcal{L}^e$ belongs to the bounded component of $\R^2\setminus \partial Q_e$ then $\alpha_{f_0}(z)\subset \partial Q_e$ and $\omega_{f_0}(z)=\mathcal{L}$.
\end{enumerate}
\end{prop}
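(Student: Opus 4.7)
The plan is to define $f_0$ by splicing together four ingredients: the linear hyperbolic diffeomorphism $T(x,y)=(\sigma^{-2}x,\sigma y)$ on a neighbourhood $\mathcal{V}$ of the origin; an ``excursion'' map that sends the neighbourhood of the positive $y$-axis inside $\mathcal{V}$ back into the neighbourhood of the positive $x$-axis inside $\mathcal{V}$ after $k_0-5$ iterates; the symmetric excursion obtained by conjugating with $s_h$; and sinks/sources that fill up the complement components. The linear part makes $O$ a saddle with stable axis $\{y=0\}$ and unstable axis $\{x=0\}$, and item (ii) reduces to choosing $\mathcal{V}$ as a rectangle of height at least $\tilde b$ (possible since $\tilde b<\sigma$) and $n_0$ large enough that every iterate $T^\ell(\tilde S_n)$ for $0\leq\ell\leq n$ and $n\geq n_0$ sits well inside $\mathcal{V}$.

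For the excursion I would use an explicit concatenation of affine pieces---rotations, translations, and a single compensating rescaling by $\sigma$ to absorb the eigenvalue mismatch $\sigma^{-2}$ vs.\ $\sigma$---arranged so that $k_0-5$ consecutive iterates take the thin vertical strip $(-\sigma^{-2n_0},\sigma^{-2n_0})\times[\tilde a,\tilde b]$ rigidly around a loop and deposit it back into a thin horizontal strip inside $\mathcal{V}$, and so that the full composition $f_0^{k_0}$ on this strip equals exactly the rotation $R$ of angle $\pi/2$ centred at $(\tfrac{a+b}{2},\tfrac{a+b}{2})$. The trick is that each intermediate affine map can be modified outside the strip by $C^\infty$ bump functions without disturbing its restriction to the strip; in this way the pieces can be smoothly glued to each other (and to the identity outside a large ball) to produce a single compactly supported diffeomorphism $f_0$. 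Item (iv) then follows by direct computation; items (iii) and (v) are consequences of the fact that the five iterates after re-entry absorb the transition into $\mathcal{V}$, and the excursion is dimensioned to deliver a horizontal output strip of the prescribed size $\sigma^{-n_0-4}$. The symmetric loop, obtained by conjugation, produces the figure-eight structure $W^s(O)=W^u(O)$.

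To realize the remaining items, I would place the fixed point $(p,p)$ with $p=\sigma^4$ on the outer loop in a region far from the strips above, and pin it by a tiny local $C^\infty$ modification, giving (i). For (vi)--(vii), the two bounded complement components $\mathcal{L}_r^i,\mathcal{L}_\ell^i$ and the exterior component $\mathcal{L}^e$ are each equipped with a smooth repelling disk ($Q_R$, $Q_L$, $Q_e$) by a standard construction: inside the disk, $f_0$ is the time-one map of a linear source in local coordinates, while outside the disk but within the component the orbits flow outward and accumulate on $\mathcal{L}$. All these perturbations are supported in small sets disjoint from $\mathcal{V}$ and from the strips relevant for (iv), and thus leave the linear and affine-return behaviour untouched. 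Compact support (vi) is enforced at the end by cutting off with a bump function outside a sufficiently large ball.

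The main obstacle is realizing the return map as an \emph{exact} affine rotation $R$ on the full strip, not merely up to a small $C^\infty$ error. This rules out the standard approach of defining the excursion by an isotopy to the identity, which would generically yield a non-affine return. The remedy is precisely the rigidity gained by building the excursion from affine pieces and gluing them via bumps supported \emph{outside} the strip of interest, so that the affine structure is propagated through every iterate. The compensating rescaling by $\sigma$ somewhere along the loop, which is needed to reconcile the hyperbolic eigenvalues of $T$ with the isometry $R$, and the careful placement of the rotation centre at $(\tfrac{a+b}{2},\tfrac{a+b}{2})$, are the two bookkeeping constraints that in turn pin down the required lower bound on $n_0$ and the value of $k_0$.
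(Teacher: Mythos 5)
Your outline differs structurally from the paper's proof: the paper first builds a \emph{vector field} $F$ (by gluing the hyperbolic linear field $X$ with a rotational field $Y$ around $(p,p)$ via bump functions), arranges the homoclinic loop $W^s(O)=W^u(O)$ by an intermediate-value "shooting" argument on the flow (Lemma~\ref{l.loop}), establishes the global dynamics (vi)--(vii) via Poincar\'e--Bendixson applied to $F$ (Lemma~\ref{l.lemaum}), and only \emph{then} composes the time-one map $F_1$ with a single corrector diffeomorphism $\phi$ (built with Lemma~\ref{pegandodifeos}) to make the return on the thin strip exactly the rotation $R$. You instead propose to assemble the diffeomorphism directly from affine pieces glued by bumps, without an underlying flow. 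That leaves several genuine gaps.

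First, you do not address how to get $W^s(O)=W^u(O)$ \emph{exactly}. Gluing affine pieces along a loop will generically return the unstable separatrix near, but not on, the stable separatrix. The paper fixes this with the one-parameter family of gluing functions $\alpha^s,\alpha^u$ and an intermediate-value argument on the flow return map (Claim~\ref{claim.intermediate.value.argument}); your sketch has no analogous free parameter and no argument that the separatrices actually match up. This is not a cosmetic point --- the figure-eight attractor is the source of items (vi)--(vii), of Corollary~\ref{c.bacia}, and of the entire tower structure.

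Second, items (vi) and (vii) ask for a complete description of $\alpha$- and $\omega$-limit sets in each complementary component, with $\omega_{f_0}(z)\subset\mathcal L$ or $=\mathcal L$. In the paper this is extracted from Poincar\'e--Bendixson for the planar flow $F$, which forbids wandering behaviour in the complement. For a diffeomorphism built ``by hand'' as you propose, there is nothing preventing an orbit of a point in, say, $\mathcal L^i_r\setminus Q_R$ from accumulating on some invariant set other than $\mathcal L$ and $\partial Q_R$. You would need a substitute argument (e.g.\ show that your diffeomorphism is itself the time-one map of a vector field, or construct an explicit Lyapunov function decreasing towards $\mathcal L$), and you give none.

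Third, item (v) is not ``a consequence of dimensioning.'' The box $[-\sigma^{-2n_0},\sigma^{-2n_0}]\times[\sigma^{-1}\tilde b,\tilde a]$ extends above the interval $[\tilde a,\tilde b]$ where item (iv) gives an exact rotation, so the image of this box is \emph{not} controlled by the affine return formula. The paper proves (v) by analysing the boundary of the image (using (iv) to pin down two sides, plus $W^s(O)=W^u(O)$ to pin down a third) and then invoking the mean-value inequality to bound $\|f_0^{k_0}\|_{C^1}\geq\sigma^{n_0-4}$ in case of failure --- which is absurd for $n_0$ large precisely because $k_0$ does not depend on $n_0$. That quantitative step is indispensable and is missing from your sketch. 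Similar care is required for (iii), which again concerns a band partly outside the affine-return strip.

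Your intuition about the need for exact affine returns (and the remark that modifying pieces outside the strip preserves the affine structure inside it) is correct and matches the paper's use of Lemma~\ref{pegandodifeos}; but the missing IVT step, the missing control of global $\omega$-limit sets without a flow, and the missing estimate for (v) are real gaps rather than routine bookkeeping.
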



\subsubsection{Towers} As indicated in item \eqref{p6} the global dynamics of $f_0$ is quite simple. In what concerns the perturbations of $f_0$ that we are going to perform later, the most crucial feature is \eqref{p3}. Item \eqref{p5} is a technical feature that we shall use in the next section. As we shall see in the next subsection, item \eqref{p3} implies the following.

\begin{coro}\label{c.bacia} Let $f_0\in\dif^{\infty}(\R^2)$ be given by Proposition~\ref{buildTowers}. Then $\tilde{S}_n\subset\cB(\delta_O)$, for every $n\geq n_0$.
\end{coro}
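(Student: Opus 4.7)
The plan is to follow the forward orbit of any point $z \in \tilde S_n$ through the sequence of boxes $\tilde S_n, \tilde S_{2n}, \tilde S_{4n},\dots$ given by iterating property (\ref{p3}), and then use the linearization property (\ref{p1}) to show that inside each loop the orbit spends all but a bounded number of iterates inside an arbitrary neighbourhood of $O$.

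First I would set $n_i = 2^i n$ and $N_j = \sum_{i=0}^{j-1}(n_i + k_0) = (2^j-1)n + jk_0$, and verify by induction on $j$, using (\ref{p3}), that $f_0^{N_j}(z) \in \tilde S_{n_j}$ for every $j \geq 0$. Indeed (\ref{p3}) says precisely that $f_0^{n_i + k_0}$ sends the tower $\tilde S_{n_i}$ into the next tower $\tilde S_{2 n_i} = \tilde S_{n_{i+1}}$.

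Next I would analyse the $i$-th ``loop'', namely the iterates $N_i \leq \ell < N_{i+1}$. By (\ref{p1}), the first $n_i$ iterates starting from $f_0^{N_i}(z) = (x_i,y_i) \in \tilde S_{n_i}$ stay in the neighbourhood $\mathcal V$ on which $f_0 = \operatorname{Diag}(\sigma^{-2},\sigma)$. Since $|x_i| \leq \tilde b$ and $|y_i| \leq \sigma^{-n_i}\tilde b$, the $\ell$-th such iterate equals $(\sigma^{-2\ell}x_i, \sigma^\ell y_i)$, whose $\|\cdot\|_\infty$ is at most $\tilde b \max(\sigma^{-2\ell},\sigma^{\ell - n_i})$. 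For any fixed $\delta > 0$ this quantity is $\leq \delta$ except for $\ell$ in a set of cardinality at most $\tfrac{3}{2}\log_\sigma(\tilde b/\delta) + 2$. The remaining $k_0$ iterates of the loop (those lying outside $\mathcal V$) are in any case contained in the fixed compact set $\mathcal W'$ given by (\ref{p6}), since $f_0$ is compactly supported. Hence the number of iterates in the $i$-th loop falling outside $B_\infty(O,\delta)$ is bounded by a constant $C(\delta)$ \emph{independent of $i$}.

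Finally I would assemble the estimates. Given $k \geq 1$, pick $j$ with $N_j \leq k < N_{j+1}$. Then the number of iterates $\ell \in [0,k)$ for which $f_0^\ell(z)\notin B_\infty(O,\delta)$ is at most $(j+1)C(\delta)$, while $k \geq N_j \geq (2^j-1)n$, so this ratio tends to $0$ as $k \to \infty$. For any continuous $\phi:\R^2 \to \R$,
\[
\left| \frac{1}{k}\sum_{\ell=0}^{k-1}\phi\bigl(f_0^\ell(z)\bigr) - \phi(O) \right|
\leq \sup_{B_\infty(O,\delta)}|\phi - \phi(O)| + 2\|\phi\|_\infty\,\frac{(j+1)C(\delta)}{k}.
\]
Letting $k \to \infty$ first, then $\delta \to 0$, yields $\mu_k(z) \to \delta_O$ in the weak-$*$ topology, i.e.\ $z \in \cB_{f_0}(\delta_O)$. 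I do not anticipate a serious obstacle: the one point that requires care is the verification that the ``bad'' count per loop is bounded uniformly in $i$, which relies crucially on property (\ref{p1}) guaranteeing that the whole first half of the loop (of length $n_i$) really lies in the linear region $\mathcal V$.
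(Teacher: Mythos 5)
Your argument is correct, and it is essentially the paper's proof of Corollary~\ref{c.bacia} with Lemma~\ref{suf} unfolded: the paper first proves the general sufficient criterion of Lemma~\ref{suf} (long stays in a Hartman--Grobman neighbourhood with gaps of bounded length, and landing points approaching $W^s_{loc}(O)$) and then applies it to $\tilde S_n$ with $a_d=(2^d-1)n+dk_0$ and $b_d=(2^{d+1}-1)n+dk_0$ — your $N_j$ are exactly these $a_j$. The only real difference is that you use the exact linearity of $f_0$ on $\mathcal V$ to make the per-loop visit count explicit, whereas Lemma~\ref{suf} is phrased under the weaker assumption of topological conjugacy to the linear part; both routes give the same uniform bound on the ``bad'' iterates per loop and hence the same conclusion.
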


For future reference, we introduce the following definition

\begin{definition}[Towers]\label{def.towers} We define the \emph{stable tower of height $n_0$} as the orbit under $f_0$ of all the boxes $S_n$ for $n\geq n_0$
$$\mathcal{S}\eqdef\bigcup_{n\ge n_0} \bigcup_{k\in\mathbb{Z}} f_0^k(S_n).$$
We define similarly the \emph{stable extended tower of height $n_0$}, which we denote\footnote{The dependence of $\cS$ and $\tilde{\cS}$ on $n_0$ is not explicit in our notation because we shall fix once and for all the integer $n_0$.} just $\tilde{\cS}$ by replacing $S_n$ by $\tilde{S}_n$. We shall also use the \emph{exterior stable tower} 
$$\mathcal{S}^e\eqdef\bigcup_{n\ge n_0} \bigcup_{k\in\mathbb{Z}} f_0^k(S^e_n).$$
\end{definition}

Thus, Corollary~\ref{c.bacia} says that the stable extended tower is contained in the statistical basin of the saddle at the origin $O$. In the subsequent sections of the paper our strategy is to introduce new perturbations of $f_0$ which preserve both a neighbourhood of $O$ and the union of the boxes $S_n$. Roughly speaking, the complicated region will be the space between $\tilde{S}_n$ and $S_n$ where these perturbations need to be dissolved. To deal with this difficulty, Property \eqref{p3} will play a central role since it allows us to define a first return map to the tower of boxes $\tilde{S}_n$.

\begin{definition}\label{defgzero} We introduce the \emph{first return map} $g_0:\bigcup_{n=n_0}^{\infty}\tilde{S}_n\to\bigcup_{n=n_0}^{\infty}\tilde{S}_n$ defined by
\[g_0=f_{0}^{n+k_0}\quad\mbox{on $\tilde{S}_n$ for each $n \geq n_0$.}\]
\end{definition}

\begin{remark}
Note that item \eqref{p3} of Proposition~\ref{buildTowers} says that for all $n \geq n_0$, we have $g_0(\tilde{S}_n)=\tilde{S}_{2n}$ and $g_0(S_n)=S_{2n}$. Moreover, $L_{2n}\circ g_0\circ (L_n)^{-1}$ is a rotation of angle $\pi/2$, well defined in some square $[\alpha,\zeta]^2$ where $\alpha<-1$ and $\zeta>1$. 
\end{remark}

Now fix $n \geq n_0$ and some initial point $(x_0,y_0) \in S_n$. For each $d\in\nt$ let$$(x_d,y_d)=g_{0}^{4d}(x_0,y_0)\,,$$and note that $(x_d,y_d) \in S_{16^dn}$ for all $d \in \nt$. A straightforward computation, based on item \eqref{p3} of Proposition~\ref{buildTowers}, shows that$$x_d=x_0 \quad\mbox{and}\quad y_d=\sigma^{(1-16^d)n}y_0\quad\mbox{for all $d\in\nt$.}$$Therefore, the horizontal coordinate $x_d$ is constant in $d$, and moreover, if for each $d$ we define $\tau_{0}^{d}\in[a,b]$ as
\begin{equation}\label{EqTau0d}
\tau_{0}^{d}=\sigma^{16^dn}\,y_d,
\end{equation}
we see that $\tau_{0}^{d}$ is also constant in $d$ (equal to $\sigma^ny_0$). This rigidity of the first return map $g_0$ will be crucial in Section~\ref{sec.orbitexcii}, where we shall give the proof of Theorem~\ref{main.exemplonovo}.

The configuration used to construct the diffeomorphism $f_0$ (that is: to prove Proposition \ref{buildTowers}) is depicted in Figure~\ref{f.figzero}. As we said at the beginning of this section, the idea is simply to glue a rotation with a hyperbolic linear map. This gluing procedure is simpler if we work with vector fields and the first step of the construction is to define a vector field in this way (see Section \ref{seccampoF}). To obtain item \eqref{p3}, however, we need to perturb the time one map of this vector field (see Section \ref{S:connectingtowers}).

\begin{figure}
\begin{center}

\begin{tikzpicture}[scale=.9]
\draw (0,0) node{$\bullet$} node[below left]{$O$};

\draw (3,3) node{$\bullet$} node[right]{$(p,p)$};
\draw (-.5,0) -- (3,0);
\draw (0,-.5) -- (0,3);
\draw (3,0) arc (-90:180:3);
\fill[color=gray, opacity=.06] (0,0) -- (3,0) arc (-90:180:3) -- cycle;

\fill[color=yellow, opacity=.2] (4,1) -- (3,1) to[out=180, in=-90] (1,3) -- (1,4) -- (1.5,4) to[out=-100, in=190] (4,1.5) -- cycle;
\fill[color=red, opacity=.1] (3,0) -- (3,1) to[out=180, in=-90] (1,3) -- (0,3) -- (0,0) -- cycle;

\fill[opacity=.2, color=yellow] (4,1) -- (3,1) -- (3,0) arc (-90:-70.5:3) -- cycle;
\fill[opacity=.2, color=yellow] (1,4) -- (1,3) -- (0,3) arc (180:160.5:3) -- cycle;

\foreach \i in {0,...,4}
{\foreach \j in {0,...,4}
{\draw[fill=blue!20!white] (0.5^\i*.9,0.5^\j*.9) rectangle (0.5^\i*1.5*.9,0.5^\j*1.5*.9);}}
\foreach \i in {1,...,4}
{\draw[fill=blue!20!white] (0.5^\i*.9,2*.9) rectangle (0.5^\i*1.5*.9,3*.9);
\draw[fill=blue!20!white] (2*.9,0.5^\i*.9) rectangle (3*.9,0.5^\i*1.5*.9);}

\draw [->,>=latex, color=red!70!black] (-.5,1.5)node[left]{linear hyperbolic part} to[bend left] (.5,1.55);
\draw [->,>=latex, color=yellow!50!black] (-.5,3.5)node[left]{mixed part} to[bend left] (.5,3.5);
\draw [->,>=latex, color=gray!70!black] (6.2,1.7)node[right]{rotational part} to[bend left] (5.2,1.9);
\draw [->,>=latex, color=blue!80!black] (2.15,1.5)node[right]{$S_n$} to[bend left] (1.15,1.1);
\draw (6.1,3) node[right]{$W^s(O) = W^u(O)$};

\draw[->] (1.8,0) -- (1.45,0);
\draw[->] (1.8,0) -- (1.5,0);
\draw[->] (0,0) -- (0,.85);

\end{tikzpicture}
\caption{\label{f.figzero} The construction of the vector field $F$ in Section \ref{seccampoF}. The time one map of its flow will be perturbed in Section \ref{S:connectingtowers} in order to obtain the desired diffeomorphism $f_0$ of Proposition \ref{buildTowers}.}
\end{center}
\end{figure}
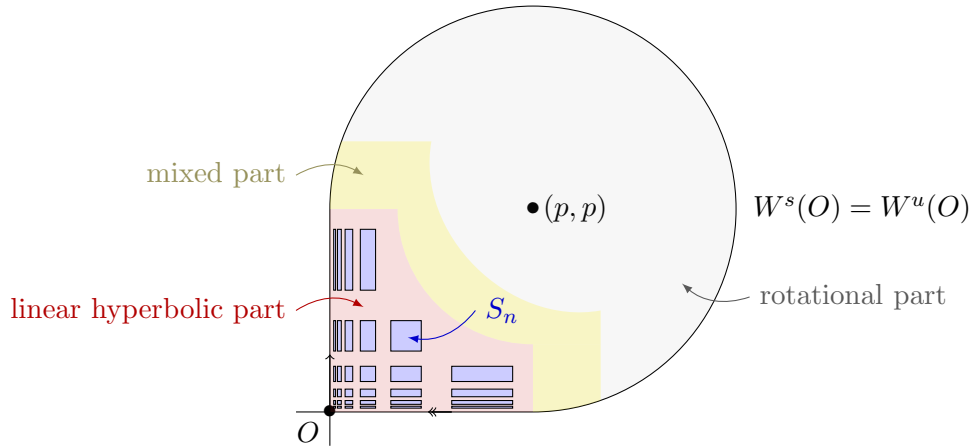

\subsection{Detecting points in the statistical basin} We shall give in this subsection a criterion to check that a point belongs to the statistical basin of a Dirac measure at a hyperbolic fixed point of saddle type, which will prove Corollary~\ref{c.bacia}. Moreover, we will apply this criterion to all subsequent constructions we are going to make. Despite the fact that our constructions are made in $\R^2$, the statement is fairly general (since it relies only on Hartman-Grobman's theorem\footnote{See \cite{palis} for instance.}) and so we state it in this more flexible context. 

\begin{lema}\label{suf} 
Let $M$ be a manifold and let $f\in \dif^1(M)$ be a $C^1$ diffeomorphism over $M$. Assume the existence of a hyperbolic fixed point of saddle type $O\in M$ and consider a Hartmann-Grobman neighbourhood\footnote{Recall that this means that, inside $V$, the map $f$ is topologically conjugate to its linear part $Df$ at $O$.} $V$ of $O$. Suppose that there exists a point $x\in V$ whose forward orbit has the following property: there exists $N\in\nt$ and a sequence of integers $0=a_0<b_0<a_1<b_1<a_2\dots$ such that $a_n\to+\infty$, $a_{n+1}-b_n<N$ and 
\begin{enumerate}
	\item if $\ell\in[a_n,b_n]$ then $f^{\ell}(x)\in V$, while if $\ell\in(b_n,a_{n+1})$ then $f^{\ell}(x)\notin V$
	\item $d(f^{a_{n}}(x), W^s_{loc}(O))\to 0$. 
\end{enumerate}
Then, $x\in\cB(\delta_O)$.
\end{lema}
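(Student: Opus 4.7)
The plan is to prove directly that for every continuous bounded $\phi : M \to \R$, the Birkhoff averages $\frac{1}{k}\sum_{\ell=0}^{k-1}\phi(f^\ell(x))$ converge to $\phi(O)=\int\phi\,d\delta_O$. I split the orbit segment $\{0,\ldots,k-1\}$ into three types of iterates: those in a small ball $B_\eta(O)\subset V$, those in $V\setminus B_\eta(O)$, and those outside $V$ (the ``excursions''). The first group will contribute something close to $\phi(O)$ by continuity, and the other two will turn out to have zero density in $k$.

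The excursions are easy: hypothesis (1) gives at most $N$ iterates outside $V$ between two consecutive entries, so if $n(k)\eqdef\max\{n:a_n\le k\}$ the total number of excursion iterates up to time $k$ is at most $N(n(k)+1)$. To handle iterates in $V\setminus B_\eta(O)$, I would use Hartman--Grobman to pass to linearizing coordinates, in which the dynamics is that of a linear saddle. Standard estimates then show that \emph{any} finite orbit segment entirely contained in $V$ contains at most $C_\eta$ iterates outside $B_\eta(O)$, with $C_\eta$ depending only on $\eta$ and $f$. Indeed, in linearizing coordinates the stable coordinate contracts geometrically, so the orbit enters $B_\eta(O)$ within $O(\log(1/\eta))$ iterates (or leaves $V$ first), and the unstable coordinate expands geometrically, so the orbit exits $B_\eta(O)$ only during the last $O(\log(1/\eta))$ iterates before leaving $V$. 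Applied to each block $[a_n,b_n]$, this gives at most $C_\eta(n(k)+1)$ such iterates up to time $k$.

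It remains to show that $n(k)/k\to 0$, which is where hypothesis (2) enters: since $d(f^{a_n}(x),W^s_{loc}(O))\to 0$, in linearizing coordinates the unstable coordinate of the image of $f^{a_n}(x)$ tends to $0$, so an arbitrarily large number of iterations is needed before it reaches the boundary of $V$. Hence $b_n-a_n\to\infty$; combined with $a_{n+1}-b_n<N$ this yields $a_n/n\to\infty$, so $n(k)/k\to 0$. Putting the three pieces together,
\[
\left|\frac{1}{k}\sum_{\ell=0}^{k-1}\phi(f^\ell x)-\phi(O)\right|
\;\le\;\sup_{d(y,O)\le\eta}\bigl|\phi(y)-\phi(O)\bigr|
\;+\;\frac{2\|\phi\|_\infty\,(n(k)+1)(C_\eta+N)}{k},
\]
whose $\limsup_{k\to\infty}$ can be made arbitrarily small by first choosing $\eta$ small (uniform continuity of $\phi$). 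Since $\phi$ is arbitrary, $x\in\cB(\delta_O)$. The only real technical point is the uniform bound $C_\eta$ for iterates in $V\setminus B_\eta(O)$, which follows from standard hyperbolic estimates on a linear saddle once linearizing coordinates are chosen; everything else is bookkeeping.
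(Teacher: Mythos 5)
Your proof is correct and follows essentially the same route as the paper's: both pass through the topological linearization at $O$ to control the iterates inside $V$, and both use hypothesis (1) to absorb the excursions of length $<N$ between consecutive visits. The bookkeeping is organized a bit differently — you obtain a uniform-in-$n$ bound $C_\eta$ on the number of iterates in $V\setminus B_\eta(O)$ per visit to $V$, shifting the entire role of hypothesis (2) to the density estimate $n(k)/k\to 0$ (via $b_n-a_n\to\infty$; the clean way to justify this step is lower semicontinuity of the first exit time from the open set $V$ together with $W^s_{loc}(O)$ being closed and having infinite exit time) — whereas the paper establishes the per-block ratio $\frac{(b_n-a_n)-T+N}{T}\le\delta$ for all $n\ge n_0$ and then sums over blocks; both decompositions deliver the same density-one estimate for visits to small neighbourhoods of $O$.
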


Let us quickly show how Lemma~\ref{suf} implies Corollary~\ref{c.bacia}: one takes the neighbourhood $\mathcal V$ of $O$ and $k_0$, as given in Proposition~\ref{buildTowers}, and observes that the equality $f_0^{n+k_0}(\tilde{S}_n)=\tilde{S}_{2n}$ implies that all points in $\tilde{S}_n$ satisfy the assumptions of Lemma~\ref{suf}, with $N=k_0-4$, $a_d=(2^d-1)n+dk_0$ and $b_d=(2^{d+1}-1)n+dk_0$.

The proof of Lemma~\ref{suf} is fairly simple but we include it here for the sake of completeness.

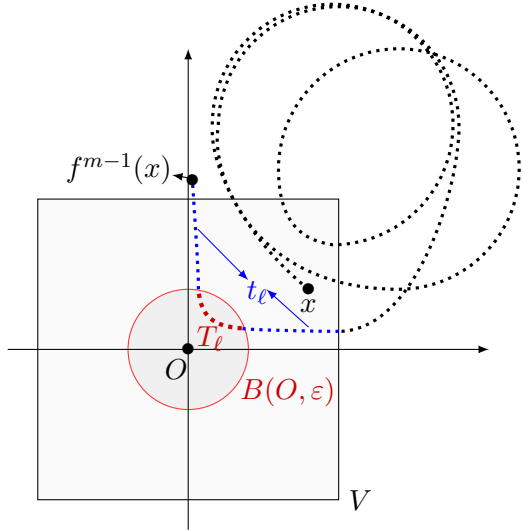
\begin{figure}[h!]
	\centering
    \begin{tikzpicture}[scale=.8]

    \filldraw[fill=black!2!white,draw=black] (-2.5,-2.5) rectangle (2.5,2.5); 
    \filldraw[fill=black!6!white, draw=red!80!white] (0,0) circle (1cm);
    \draw [->,>=latex] (-3,0) -- (5,0);
    \draw [->,>=latex] (0,-3) -- (0,5);
    \draw (0,0) node {$\bullet$} ;
    \draw (-0.2,0) node[below]{$O$};
    \draw[dotted,very thick] (2,1) .. controls (1,1.8) and (0.36,3.0) .. (0.4,3.7);
    \draw[dotted, very thick] (0.4,3.74) arc (180:-90:2) .. controls (2,1.8) and   (1.5,2)  .. (1.5,3) arc (180:-90:2) .. controls (1,1) and (0.36,3.0) .. (0.5,3.7) arc (180:0:2) .. controls (4.5,2.7) and (3.8,0.3) .. (2.5,0.3);
    \draw [blue, dotted, very thick] (2.5,0.3) .. controls (2.2,0.3) and (1,0.32).. (0.89,0.35);
    \draw[color=red!80!black, ultra thick,dotted] (0.89,0.35).. controls (0.37,0.38) and (0.2,0.6) .. (0.17,1);
    \draw[blue,dotted, very thick] (0.17,1).. controls (0.17,1.4) and (0.1,2.3) .. (0.07,2.8);
    \draw (0.37,0.55) node[below,color=red!80!black]{$T_{\ell}$};
    \draw (0.7,-0.7) node[right, color=red!80!black]{$B(O,\eps)$};
    \draw [->,>=latex,blue] (2,0.37) -- (1.3,1);
    \draw [->,>=latex,blue] (0.16,2) -- (1,1.15);
    \draw (1.15,1.3) node[below,blue]{$t_{\ell}$};
    \draw (2,1) node{$\bullet$}; 
    \draw (2,1) node[below]{$x$};
    \draw (2.5,-2.5) node[right]{$V$};
    \draw (0.07,2.8) node {$\bullet$};
    \draw (-0.1,3) node[left]{$f^{m-1}(x)$};
    \draw [->,>=latex] (0.02,2.85) -- (-0.27,2.9);
    \end{tikzpicture}
	\caption{Proof of Lemma \ref{suf}: after a very large iterate, every visit excursion of the orbit of $x$ inside $V$ has a big proportion inside $B(O,\eps)$.}
	\label{f.boxes}
\end{figure}

\begin{proof}
Let $\delta,\eps>0$ be arbitrarily chosen. For each $m\in\mathbb{N}$ denote by
$$v(\eps,m)\eqdef\#\{0\leq \ell\leq m-1;\,f^{\ell}(x)\in B(O,\eps)\},$$
the number of visits of the finite orbit segment $x,\dots,f^{m-1}(x)$ to the ball $B(O,\eps)$. Also, for each $n$ denote by $T\eqdef\#\left\{[a_n,b_n]\cap\{0\leq \ell\leq m-1;\,f^{\ell}(x)\in B(O,\eps)\}\right\}$  the amount of integers within the interval $[a_n,b_n]$ which correspond to one of these visits. For simplicity, we omit the dependence of $T$ on $n$ and on $m$.  
The fact that the dynamics is $C^0$ conjugated with the linear map $Df(O)$ inside $V$ together with condition (2) imply the existence of $n_0$ so that if $n\geq n_0$ then (see Figure~\ref{f.boxes}) 
\[\frac{(b_n-a_n)-T+N}{T}\leq\delta.\]
Thus, by condition (1), every $m>a_{n_0+1}$ can be written as
\[m=a_{n_0}+\sum_{\ell=1}^{k(m)}T_{\ell}+\sum_{\ell=1}^{k(m)}t_\ell,\]
so that $v(\eps,m)=\sum_{\ell=1}^{k(m)}T_{\ell}$ and $\frac{t_\ell}{T_{\ell}}<\delta$, for every $\ell$. 

Also, one has that $v(\eps,m)\to\infty$ as $m\to\infty$, again due to condition (2) and the conjugation with the linear part. Therefore, there exists $n_1>a_{n_0+1}$ large enough so that $m>n_1$ implies that 
\[\frac{v(\eps,m)}{m}=\frac{\sum_{\ell=1}^{k(m)}T_{\ell}}{a_{n_0}+\sum_{\ell=1}^{k(m)}T_{\ell}+\sum_{\ell=1}^{k(m)}t_{\ell}}>\frac{1}{1+2\delta}.\]
Since $\delta$ and $\eps$ are arbitrary, this proves that $x\in\cB(\delta_O)$, as desired.   	
\end{proof}

The rest of this section is devoted to the proof of Proposition~\ref{buildTowers}.

\subsection{A figure-eight attractor}\label{seccampoF} Our goal in this subsection is to build a vector field $F$ on $\R^2$ exhibiting a figure-eight attractor. We shall first explain how to to define $F$ in the first quadrant $\R_+^2$. Consider the linear vector field $X$ on $\R^2$ induced by the diagonal matrix$$\begin{bmatrix}
-\,2\log\sigma & 0\\
0 & \log\sigma
\end{bmatrix}.$$The origin is \emph{dissipative} for the vector field $X$ (the divergence of $X$ is negative, equal to $-\log\sigma$). The associated flow $\{X_t\}_{t\in\R}$ is given by
\begin{equation}\label{EqXt}
X_t(x,y)=(\sigma^{-2t}x,\sigma^{t}y)
\end{equation}
for all $t\in\R$ and $(x,y)\in\R^2$. Notice that the trajectories of the flow are contained in the level curves of $(x,y) \mapsto xy^{2}$, and that the time one map of the flow is the linear diffeomorphism $\operatorname{Diag}(\sigma^{-2},\sigma)$, that is: $X_1(x,y)=(\sigma^{-2}x,\sigma y)$. Fix $p=\sigma^4$ and some $\beta>0$, and consider the affine vector field $Y$ on $\R^2$ given by$$Y(x,y)=\begin{bmatrix}
0 & \beta\\
-\beta & 0
\end{bmatrix}(x-p,y-p)=\beta(y-p,-x+p).$$The associated flow $\{Y_t\}_{t\in\R}$ is the rotational flow around the point $(p,p)$:
$$Y_t(x,y)=(p,p)+\begin{bmatrix}
\cos(\beta t) & \sen(\beta t)\\
-\sen(\beta t) & \cos(\beta t)
\end{bmatrix}(x-p,y-p)\,,$$
for all $t\in\R$ and $(x,y)\in\R^2$. The time one map of this flow is of course the rotation of angle $\beta$ around the point $(p,p)$ in the negative (clockwise) sense. With the vector fields $X$ and $Y$ at hand, we are going to build a new vector field $F$ in the first quadrant. The sets we are about to define are shown in Figure~\ref{FigD}. 

\subsubsection{Choice of constants}\label{secregionsF} We fix $T\in (0,p/2)$ and $\varepsilon\in (0,p/3)$, and consider the two rectangles:
$$R_s=(p,p+T)\!\times\!(0,2\varepsilon)\quad\mbox{and}\quad R_u=(0,2\varepsilon)\!\times\!(p,p+T)\,.$$

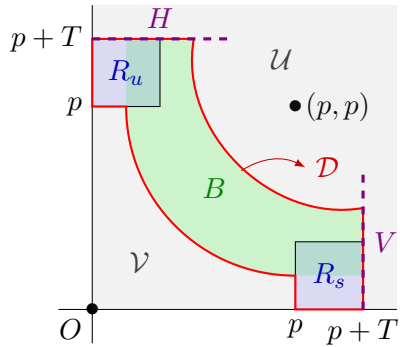
\begin{figure}[h!]
\centering
\begin{tikzpicture}[scale=.9]
\draw (0,0) node{$\bullet$} node[below left]{$O$};

\draw (3,3) node{$\bullet$} node[right]{$(p,p)$};
\draw (-.5,0) -- (4.5,0);
\draw (0,-.5) -- (0,4.5);
\fill[color=gray, opacity=.1] (0,0) rectangle (4.5,4.5);
\draw[color=gray!50!black] (.7,.7) node{$\mathcal V$};
\draw[color=gray!50!black] (2.8,3.7) node{$\mathcal U$};

\fill[color=green, opacity=.15] (4,.5) -- (3,.5) to[out=180, in=-90] (.5,3) -- (.5,4) -- (1.5,4) to[out=-100, in=190] (4,1.5) -- cycle;
\draw[color=green!50!black] (1.8,1.8) node{$\rB$};

\draw (3,0) rectangle (4,1);
\draw[color=blue!70!black] (3.5,.5) node{$R_s$};
\fill[opacity=.1, color=blue] (3,0) rectangle (4,1);
\draw (0,3) rectangle (1,4);
\draw[color=blue!70!black] (.5,3.5) node{$R_u$};
\fill[opacity=.1, color=blue] (0,3) rectangle (1,4);
\draw (3,0) node[below]{$p$};
\draw (4,0) node[below]{$p+T$};
\draw (0,3) node[left]{$p$};
\draw (0,4) node[left]{$p+T$};

\draw[color=red, thick] (4,0) -- (3,0) -- (3,.5) to[out=180, in=-90] (.5,3) -- (0,3) -- (0,4) -- (1.5,4) to[out=-100, in=190] (4,1.5) -- cycle;
\draw[color=red!80!black, <-, >=latex] (3.15,2.1) node[right]{$\mathcal D$} to[bend right] (2.2,2.05);

\draw[color=violet, very thick, dashed] (4,0) -- (4,2)node[midway, right]{$V$};
\draw[color=violet, very thick, dashed] (0,4) -- (2,4)node[midway, above]{$H$};

\end{tikzpicture}
\caption{The two rectangles $R_s$ and $R_u$, and the band $\rB$, for the construction of the vector field $F$. The region $\mathcal{D}=R_s\cup\rB\cup R_u$ is bounded by the red curves. Recall that $p=\sigma^4$.}
\label{FigD}
\end{figure}

In the square $[0 ,p+T]^2$, consider the band $\rB$ determined by the positive orbit of the point $(p+T,\varepsilon)$ under $X$, with the positive orbit of the point $(p+T,3\varepsilon)$ under $Y$. Let
$$\mathcal{D}=R_s\cup\rB\cup R_u\,,$$
and note that $\mathcal{D}$ is a topological disk (open, connected and simply connected). The complement of $\mathcal{D}$ in the first quadrant has two connected components: let $\mathcal{U}$ be the one which contains the point $(p,p)$ and let $\mathcal{V}$ be the one which contains the origin on its boundary.

\subsubsection{Bump functions} Let $\alpha^u:[0,+\infty)\to[0,1]$ be a $C^{\infty}$ bump function satisfying:
\begin{itemize}
	\item $\alpha^u(t)=1$ for $t\in[0,p]$,
	\item $(\alpha^u)'\leq 0$ in $(p,p+T)$ and
	\item $\alpha^u(t)=0$ for $t \geq p+T$.
\end{itemize}
In the same way, let $\alpha^s:[0,+\infty)\to[0,1]$ be a $C^{\infty}$ bump function satisfying:
\begin{itemize}
	\item $\alpha^s(t)=1$ for $t\in[0,p]$,
	\item $(\alpha^s)'\leq 0$ in $(p,p+T)$ and
	\item $\alpha^s(t)=0$ for $t \geq p+T$.
\end{itemize}
Finally, let $\rho:\mathcal{V}\cup\mathcal{D}\cup\mathcal{U}\to[0,1]$ be a $C^{\infty}$ function satisfying:
\[\rho(x,y) = \left\{\begin{array}{ll}
1 & \quad \text{if } (x,y)\in \mathcal{V}\\
\alpha^s(x) & \quad \text{if } (x,y)\in R_s\\
\alpha^u(y) & \quad \text{if } (x,y)\in R_u\\
0 & \quad \text{if } (x,y)\in \mathcal{U}
\end{array}\right.\]
(note that $\rho$ is not defined on $\operatorname{int}(\rB)$, we just make an arbitrary choice on this set so that $\rho(x,y)$ is of class $C^\infty$ and strictly positive in the interior of $B$). 
\subsubsection{Creating the homoclinic loop}
Now, consider the $C^{\infty}$ vector field $Z$ in the first quadrant given by
\[Z=\rho X+(1-\rho)Y.\]
Notice that $Z \equiv X$ in $\mathcal{V}$, and $Z \equiv Y$ in $\mathcal{U}$. By symmetry, we can perform the same construction on the third quadrant. In the sequel we shall prove that we can construct $Z$ in such a way that it has a homoclinic loop in the first quadrant. Therefore, by symmetry, $Z$ will present another homoclinic loop in the third quadrant.

\begin{lema}\label{l.loop} For every $T>0$ there exists a choice of $\alpha^s$ and $\alpha^u$ such that the saddle singularity of $Z$ at the origin has a homoclinic loop.
\end{lema}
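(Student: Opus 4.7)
The plan is to use an intermediate value argument on a one-parameter family of bump functions. Inside the region $\mathcal V$ the field $Z$ agrees with the linear hyperbolic field $X$, so the unstable separatrix of $O$ for $Z$ initially lies on the positive $y$-axis and enters $R_u$ at the point $(0,p)$; symmetrically, the stable separatrix lies on the positive $x$-axis and enters $R_s$ at $(p,0)$. A homoclinic loop is produced as soon as the forward orbit of the unstable separatrix re-enters $\mathcal V$ precisely at $(p,0)$, since it will then travel along the positive $x$-axis (the stable manifold) back to $O$.

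I would begin by fixing $\alpha^u$ and the interior values of $\rho$ on $\rB$ once and for all. These can be chosen so that the unstable separatrix, having entered $R_u$ along the $y$-axis, is pushed off the $y$-axis by the $Y$-component and carried through $\rB$ to enter $R_s$ through some specified portion of its boundary. The existence of such a choice relies on the fact that $\rB$ is bounded below by an $X$-orbit and above by a $Y$-orbit, so that orbits of any positive convex combination $Z=\rho X+(1-\rho)Y$ entering $\rB$ through its $R_u$-end necessarily exit through its $R_s$-end. I would then consider a continuous one-parameter family $\{\alpha^s_t\}_{t\in[0,1]}$ of bump functions interpolating between a ``hyperbolic'' extreme (where $\alpha^s_t$ is close to $1$ on $[p,p+T]$, so $Z\approx X$ inside $R_s$) and a ``rotational'' extreme (where $\alpha^s_t$ is close to $0$ on $[p,p+T]$, so $Z\approx Y$ inside $R_s$). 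By standard continuous dependence of ODE solutions on parameters, the ordinate $y_1(t)$ at which the (continued) unstable separatrix meets the vertical line $\{x=p\}$ after traversing $R_s$ is a continuous function of $t$.

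The heart of the proof is to check that the two extremes produce opposite signs of $y_1$. In the hyperbolic extreme the trajectory essentially follows a level curve of $xy^2$, which stays bounded away from the $x$-axis and meets $\{x=p\}$ at a positive height, so $y_1(t)>0$. In the rotational extreme the trajectory follows, approximately, a clockwise arc of a circle centred at $(p,p)$; by tuning the previously-frozen $\alpha^u$ and $\rho|_{\operatorname{int}(\rB)}$ so that the separatrix enters $R_s$ close to its right boundary $x=p+T$, the radius $r=\sqrt{(x_\ast-p)^2+(p-2\varepsilon)^2}$ of this circle can be arranged to be at least $p$ (equivalently $(x_\ast-p)^2\ge 4\varepsilon(p-\varepsilon)$), so that the arc crosses the $x$-axis before returning to $\{x=p\}$ and $y_1(t)<0$. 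The intermediate value theorem then furnishes $t^\ast$ with $y_1(t^\ast)=0$, giving the homoclinic loop in the first quadrant; the symmetric construction in the third quadrant yields the figure-eight. The main obstacle is the quantitative geometric condition that the rotational arc reaches below the $x$-axis, which amounts to a simple inequality between $T$, $\varepsilon$ and $p$ -- this is secured by choosing $\varepsilon$ small enough at the outset of Section~\ref{seccampoF} and by adjusting $\alpha^u$ and $\rho|_{\operatorname{int}(\rB)}$ so that the entry point $x_\ast$ is close to $p+T$.
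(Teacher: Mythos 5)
Your strategy is genuinely different from the paper's. The paper exploits the symmetry of the construction: Claim~\ref{claim.intermediate.value.argument} shows (via an intermediate value argument on $\alpha^u$) that the unstable separatrix can be made to exit $R_u$ into $\mathcal{U}$ through $H$ at any prescribed point $(\delta,p+T)$ with $\delta\in(0,\delta_0)$, and symmetrically (via $\alpha^s$, applied to the backward flow) that the stable separatrix can be made to exit $R_s$ into $\mathcal{U}$ through $V$ at $(p+T,\delta)$. These two points lie on the same $Y$-circle and the $Y$-arc joining them is contained in $\mathcal{U}$, where $Z\equiv Y$, which forces the two separatrices to coincide. It is a matching of two half-loops at $\partial\mathcal{D}$, with neither separatrix ever followed across the corner region near $(p,0)$. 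You instead freeze $\alpha^u$ and $\rho|_{\operatorname{int}(\rB)}$, route the unstable separatrix through $\rB$ into $R_s$, and apply a single intermediate value argument to $\alpha^s$ alone by tracking the full loop.

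There is a genuine gap in your rotational-extreme step. At the stage of Lemma~\ref{l.loop} the field $Z$ is constructed only on $\{(x,y):xy\geq 0\}$, and if the unstable separatrix reaches a point $(x_0,0)$ with $p<x_0<p+T$ and $\alpha^s(x_0)<1$, then $Z(x_0,0)=\alpha^s(x_0)\,X(x_0,0)+\bigl(1-\alpha^s(x_0)\bigr)\,\beta(-p,\,p-x_0)$ has second component $(1-\alpha^s(x_0))\beta(p-x_0)<0$, so the forward orbit immediately leaves the domain of definition. The value ``$y_1(t)<0$'' is therefore undefined, and the continuity claim for $y_1$ does not literally make sense. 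One could try to repair this by tracking the first exit of the separatrix from $R_s$ along the broken boundary $\bigl(\{p\}\times[0,2\varepsilon]\bigr)\cup\bigl([p,p+T]\times\{0\}\bigr)$ and arguing by openness and connectedness, but then the corner $(p,0)$ becomes delicate: $Z(p,0)=X(p,0)=(-2p\log\sigma,\,0)$ is tangent to $\{y=0\}$, exactly where the intermediate value argument must terminate, so transversality of the exit map degenerates at the target. You also rely on the auxiliary inequality $(x_\ast-p)^2\geq 4\varepsilon(p-\varepsilon)$, which couples $T$ and $\varepsilon$ and is not implied by the choices $T\in(0,p/2)$, $\varepsilon\in(0,p/3)$ made in Section~\ref{secregionsF}; the paper's proof imposes no such constraint. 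The symmetric matching in $\mathcal{U}$ is precisely what lets the paper avoid all of these difficulties.
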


For the proof of this lemma, the terminology below will be helpful.

\begin{definition}
	\label{def.hit}
Let $X$ be a vector field on $\R^2$ and let $A\subset\R^2$ be a non-empty set. We say that the positive (resp. negative) $X$-orbit of $x\in\R^2$ \emph{hits $A$ for the first time} at a point $a\in A$ if there exists $T>0$ (resp. $T<0$) such that $X_T(x)=a$ and $X_t(x)\notin A$ for every $0< t <T$ (resp. $T<t< 0$). 
\end{definition}

\begin{proof}[Proof of Lemma \ref{l.loop}] 
Let $H$ denote the unitary horizontal line segment through the point $(0,p+T)$, \emph{i.e.}, $H\eqdef\{(x,p+T);\,x\in[0,1]\}$. We consider similarly the unitary vertical segment $V$ through $(p+T,0)$. 
	
Let $\delta_0\in [0,1]$ be such that the positive $Y$-orbit of $(0,p)$ hits $H$ for the first time at $(\delta_0,p+T)$. Observe that $\delta_0>0$, and that the negative orbit of $(0,p)$ hits $V$ for the first time at $(T+p,\delta_0)$.



%

\begin{claim}
\label{claim.intermediate.value.argument}
For every $\delta^{\prime}\in(0,\delta_1)$ there exists a choice of $\alpha^s$ such that the negative $Z$-orbit of $(p,0)$ hits $V$ for the first time at $(p+T,\delta^{\prime})$, and a choice of $\alpha^u$ such that the positive $Z$-orbit of $(0,p)$ hits $H$ for the first time at $(\delta,p+T)$.
\end{claim}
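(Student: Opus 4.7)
My plan is to apply the intermediate value theorem to a continuous one-parameter family of admissible bump functions. Since the two assertions in the claim have identical flavour, I will describe the argument in detail for the existence of $\alpha^s$ realising the hit point $(p+T,\delta')$, and indicate at the end how the same scheme produces $\alpha^u$.

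First, I would construct a continuous one-parameter family $\{\alpha^s_\lambda\}_{\lambda\in[0,1]}$ of admissible bump functions (\emph{i.e.} of class $C^\infty$, non-increasing, equal to $1$ on $[0,p]$ and to $0$ on $[p+T,+\infty)$), interpolating between the two caricatural extremes $\alpha^s_0\equiv 1$ on $[0,p+T-\eta]$ (so that the corresponding field $Z_0$ coincides with $X$ on most of $R_s$) and $\alpha^s_1\equiv 0$ on $[p+\eta,+\infty)$ (so that $Z_1$ coincides with $Y$ on most of $R_s$), where $\eta>0$ is a small fixed parameter. Writing $Z_\lambda$ for the resulting vector field on $R_s$ and $\delta(\lambda)$ for the $y$-coordinate of the first hit of the negative $Z_\lambda$-orbit of $(p,0)$ with $V$, the function $\delta\colon[0,1]\to\R_{\geq 0}$ is continuous: this is a direct consequence of continuous dependence of ODE solutions on parameters, combined with the transversality of the flow at $V$ (since $\alpha^s_\lambda(p+T)=0$, the $x$-component of $Z_\lambda$ on $V$ coincides with that of $Y$, which is strictly negative for $y<p$, in particular at the hit point).

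Next, I would identify the values $\delta(0)$ and $\delta(1)$. For $\lambda=0$ the orbit stays on $\{y=0\}$ until it enters the thin slab $[p+T-\eta,p+T]\times[0,2\varepsilon]$, and a Gronwall-type bound on the vertical displacement inside this slab yields $\delta(0)=O(\eta)$; in particular $\delta(0)\to 0$ as $\eta\to 0$. For $\lambda=1$ the orbit is first pushed slightly off the horizontal axis by the $X$-component acting on the thin strip $[p,p+\eta]\times[0,2\varepsilon]$, and then closely follows the backward $Y$-orbit starting near $(p,0)$. This $Y$-orbit is an arc of the circle of radius $p$ centred at $(p,p)$, and an elementary computation shows that this circle meets $\{x=p+T\}$ at $y=p-\sqrt{p^2-T^2}$; hence $\delta(1)$ can be taken arbitrarily close to $\delta_1:=p-\sqrt{p^2-T^2}$ by shrinking the initial strip. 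Given any $\delta'\in(0,\delta_1)$, by picking $\eta$ small enough so that $\delta(0)<\delta'<\delta(1)$, the intermediate value theorem applied to the continuous function $\delta$ produces $\lambda_\star\in(0,1)$ with $\delta(\lambda_\star)=\delta'$; the desired bump function is $\alpha^s:=\alpha^s_{\lambda_\star}$.

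The companion statement concerning $\alpha^u$ follows by the exact same scheme applied in $R_u$, with the positive $Z$-orbit of $(0,p)$ in place of the negative $Z$-orbit of $(p,0)$, and with the roles of the horizontal and vertical axes interchanged. The step I expect to be most delicate is to check that in the extreme regime $\lambda=1$ the orbit actually remains inside $R_s$ (respectively $R_u$) until it reaches $V$ (respectively $H$); this is a purely quantitative issue controlled by the size constraints on $T$ and $\varepsilon$ prescribed at the beginning of Section~\ref{secregionsF}, and ultimately boils down to verifying an inequality of the form $p-\sqrt{p^2-T^2}<2\varepsilon$.
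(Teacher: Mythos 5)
Your proposal is correct and follows essentially the same route as the paper: both proofs apply the intermediate value theorem to a continuous one-parameter family of bump functions whose two extremes make $Z$ look like $X$ (hit point near $(p+T,0)$) and like $Y$ (hit point near $(p+T,\delta_0)$) respectively. You supply more detail than the paper — the explicit form of the endpoints, the transversality at $V$, the continuous dependence on parameters — and your computation $\delta_1 = p - \sqrt{p^2-T^2}$ usefully identifies the (otherwise undefined, and presumably mis-typed) constant $\delta_1$ in the claim's statement, which should be the $\delta_0$ introduced just before it. The caveat you flag at the end is a real one and is not resolved by the paper either: the inequality $p - \sqrt{p^2-T^2} < 2\eps$ does \emph{not} follow from the constraints $T\in(0,p/2)$, $\eps\in(0,p/3)$ of Section~\ref{secregionsF}, so one should either impose it as an additional smallness hypothesis on $T$ (which is harmless for the rest of the construction) or argue that the backward orbit, after leaving $R_s$ through the top, enters the band $\rB$ where $Z$ is still defined; the paper's terse ``close to $(0,p+T)$''/``close to $(\delta_0,p+T)$'' argument glosses over this point.
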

	
Let us complete the proof of the lemma assuming Claim~\ref{claim.intermediate.value.argument}. Choose $0<\delta<\delta_0$. With this choice, take $\alpha^s$ and $\alpha^u$ given by Claim~\ref{claim.intermediate.value.argument}, and let $Z$ be the corresponding vector field.
Then, $(\delta,p+T)$ is in the positive orbit under $Z$ of $(0,p)$ and $(p+T,\delta)$ in the negative orbit of $(p,0)$. Since the $Y$ orbit segment joining $(\delta,p+T)$ to $(p+T,\delta)$ is contained in $\cU$ and since $Z=Y$ inside $\cU$, this proves that $(p,0)$ is in the positive $F$ orbit of $(0,p)$, and thus that $O$ exhibits a homoclinic loop.  
\medskip

We are left to prove Claim~\ref{claim.intermediate.value.argument}. For this, notice that if we choose $\alpha^u$ constant and equal to $1$ in an interval $[p,p+t^{*}]$, with $t^{*}$ very close to $T$, then the positive $Z$-orbit of $(0,p)$ will hit $H$ for the first time at a point close to $(0,p+T)$.
On the other hand, if we choose $\alpha^u$ to be constant equal to zero in an interval $[p+t^{*},p+T]$, with $t^{*}$ very close to $0$, then the positive $Z$-orbit of $(0,p)$ will hit $H$ for the first time at a point close to $(\delta_0,p+T)$. By the Intermediate Value Theorem any point between $(0,p+T)$ and $(\delta_0,p+T)$ is within reach. A similar argument applies for the choice of $\alpha^s$. This finishes the proof of the claim and the lemma.
\end{proof}

%

\subsubsection{Creating the figure-eight attractor}

We have constructed a vector field $Z$ on the closed set $\{(x,y)\in\R^2;xy\geq0\}$. It presents a homoclinic loop, which is the set
\[
\mathcal{L}\eqdef\{Z_{t}(0,p);\,t\in\R\}\cup\{Z_t(0,-p);\,t\in\R\}\cup\{O\}.
\] 
Working with bump functions on the second and fourth quadrants, in a very similar way as we did above, the definition of $Z$ can be extended to a bounded neighbourhood $\mathcal{W}$ of the origin so that $\mathcal{L}\subset\mathcal{W}$ and moreover $Z=X$ along an open set which we shall, by abuse of notation, call $\mathcal{V}$, where the boundary of $\mathcal{V}$ is made of four line segments and four hyperbola segments. More precisely:
\begin{itemize}
	\item $\mathcal{V}\subset\mathcal{W}$;
	\item $Z|_{\overline{\mathcal{V}}}=X$;
	\item for every $0<y<\eps$ the $X$ orbit of $(p,y)$ hits the horizontal line $\R\times\{p\}$ for the first time at a time $T>0$ and the orbit segment $X_{(0,T)}(p,y)$ is entirely contained in $\mathcal{V}$;
	\item the sets $\mathcal V$ and $\mathcal{W}$, as well as the field $Z$, are invariant under $-Id$ the and $\mathcal V$ is invariant under $s_h$ e $s_v$.
\end{itemize}

We remark that to perform this construction one has to work with rotations centred at $(-p,p)$ and $(p,-p)$, but in the second and in the fourth quadrants we do not care about the precise form of the vector field far from $\mathcal{V}$. In fact, in this way we define a smooth vector field in a closed set of the plane and apply Whitney's extension theorem \cite{whitney1934analytic} to define a vector field (which for simplicity we shall still denote by $Z$) on the entire plane. 


To complete the construction of our desired vector field we shall ``stop'' $Z$ far from $\mathcal{L}$, by taking a smooth bump function. More precisely, we use the fact that $Z$ has no zeros in $\partial \mathcal{W}$ to take a slightly larger open set $\mathcal{W}^{\prime}$ so that $Z$ has no zeros in the closure of $\mathcal{W}^{\prime}$ either. We can assume that $\mathcal{W}^{\prime}$ is a topological disk whose boundary is smooth. We take then the function $u:\R^2\to[0,1]$ satisfying $u^{-1}(0)=\R^2\setminus\mathcal{W}^{\prime}$ and $u^{-1}(1)=\overline{\mathcal{W}}$. Thus, the vector field $F=uZ$ has no zeros inside $W^{\prime}$ (other than $O$), but vanishes identically on its boundary.

We will prove in the next lemma that $F$ has a well described global dynamics.

Notice that $\R^2\setminus\mathcal{L}$ has three connected components, two of which, say $\mathcal{L}_{\ell}^i$ and $\mathcal{L}_r^i$, are topological disks having $O$ on their boundary, and one $\mathcal{L}^e$ which is unbounded. 

\begin{lema}
	\label{l.lemaum}
The vector field $F=uZ$ satisfies the following.
\begin{enumerate}[(a)]
\item There exists a point $q^r\in\mathcal{D}$ in the first quadrant whose orbit $O_F(q^{r})$ is periodic and satisfies 
\[O_F(q^{r})\cap\mathcal{V}=\emptyset.\]
Moreover, for every $z\in\mathcal{L}^i_{r}$ belonging to the unbounded component of $\R^2\setminus O_F(q^{r})$ it holds that $\alpha_F(z)=O_F(q^{r})$ and $\omega_F(z)\subset\mathcal{L}$. There exists also a point $q^\ell$ in the third quadrant with similar properties;  
\item There exists a topological disk which contains $\mathcal{L}$ and whose boundary is a curve $\gamma$ of class $C^{\infty}$ contained in $\overline{B}(O,R)$, and such that if $z\in\mathcal{L}^e$ belongs to the bounded component of $\R^2\setminus \gamma$ then $\omega_F(z)=\mathcal{L}$ and $\alpha_F(z)\subset\gamma$.	
\end{enumerate}
\end{lema}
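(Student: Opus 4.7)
The plan is to take $q^r$ as a point on the $Y$-orbit of $(p+T,3\varepsilon)$, which by construction forms the boundary between the band $\rB$ and the region $\mathcal{U}$. Since $\rho$ is continuous and identically zero on $\mathcal{U}$, it vanishes along this separating $Y$-orbit; hence $Z = \rho X + (1-\rho)Y = Y$ along the entire circle around $(p,p)$ through $q^r$. This circle lies in $\overline{\mathcal{W}}$, where $u \equiv 1$, so $F = uZ = Y$ along it, making it a periodic orbit $O_F(q^r)$ of $F$ contained in $\overline{\mathcal{U}}$ and thus disjoint from $\mathcal{V}$.

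For the limit analysis in part (a), write $\mathcal{L}_r$ for the right loop of $\mathcal{L}$ (the boundary of $\mathcal{L}_r^i$) and set $A \eqdef \mathcal{L}_r^i \setminus \overline{D(q^r)}$, where $D(q^r)$ is the open disk bounded by $O_F(q^r)$. Both components of $\partial A$, namely $O_F(q^r)$ and $\mathcal{L}_r$, are $F$-invariant, so $\overline{A}$ is flow-invariant in both time directions; moreover the only equilibrium of $F$ in $\overline{A}$ is $O \in \mathcal{L}_r$, and so $A$ itself contains none. By Poincaré--Bendixson applied to $\overline{A}$, for every $z \in A$ both $\omega_F(z)$ and $\alpha_F(z)$ are either a periodic orbit in $\overline{A}$ or a polycycle through $O$; the latter can only be $\mathcal{L}_r$. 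It remains to exclude periodic orbits of $F$ in the open annulus $A$: by Bendixson--Dulac, since $\operatorname{div}(F) = -\rho\log\sigma + \nabla\rho\cdot(X-Y)$ can be made non-positive throughout $A$ and strictly negative on a non-empty open subset (by a careful monotone choice of $\rho$ in the direction transversal to $Y$), no such periodic orbit can exist. Combined with the dissipativity of $O$ (since $\lambda_s + \lambda_u = -\log\sigma < 0$, the loop $\mathcal{L}_r$ is attracting from the interior), this yields $\omega_F(z) = \mathcal{L}_r$ and $\alpha_F(z) = O_F(q^r)$ for every $z \in A$. The point $q^\ell$ in the third quadrant is obtained symmetrically.

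For part (b), the function $u$ vanishes identically outside $\mathcal{W}'$, so every point of $\partial \mathcal{W}'$ is a fixed point of $F$. Choose $\gamma \eqdef \partial \mathcal{W}'$, which is $C^\infty$ and contained in any sufficiently large ball $\overline{B}(O,R)$. Let $A^e$ be the intersection of $\mathcal{L}^e$ with the bounded component of $\R^2 \setminus \gamma$; its outer boundary $\gamma$ consists of equilibria of $F$ and its inner boundary $\mathcal{L}$ is invariant, so $\overline{A^e}$ is flow-invariant. Applying the same Poincaré--Bendixson plus Bendixson--Dulac scheme rules out periodic orbits in $A^e$, and the attracting character of $\mathcal{L}$ from the outside (again by dissipativity) forces $\omega_F(z) = \mathcal{L}$ and $\alpha_F(z) \subset \gamma$, with the latter collapsing to a single fixed point since orbits exit each sufficiently small neighbourhood of $\gamma$ in finite time.

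The main obstacle is the exclusion of interior periodic orbits in the annuli $A$ and $A^e$, since $F$ is a convex combination of the hyperbolic field $X$ and the rotational field $Y$, and the sign of $\operatorname{div}(F)$ in the transition region depends delicately on the bump function $\rho$. A robust alternative, which avoids any hypothesis on $\rho$, is to analyse the Poincaré return map on a radial transversal to $\mathcal{L}$: dissipativity forces this map to be a strict contraction near $\mathcal{L}$, while near $O_F(q^r)$ (where $F = Y$) it tends to the identity; tracking the radial displacement along such a transversal excludes additional fixed points and produces the desired spiralling behaviour from $O_F(q^r)$ outwards to $\mathcal{L}_r$.
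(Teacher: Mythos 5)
There is a genuine gap. Your choice of $q^r$ — a point on the $Y$-orbit of $(p+T,3\varepsilon)$ — presupposes that the \emph{entire} circle of centre $(p,p)$ through that point lies in the closed region where $\rho\equiv 0$ (hence $F=Y$). But the only constraints the construction imposes are $T\in(0,p/2)$ and $\varepsilon\in(0,p/3)$; under these, the circle (of radius $\sqrt{T^2+(p-3\varepsilon)^2}$) can perfectly well dip into $R_s$, $R_u$ or even $\mathcal{V}$ — for instance the lowest point $(p,p-r)$ falls below $y=2\varepsilon$ whenever $T^2\ge\varepsilon(2p-5\varepsilon)$. Inside $R_s,R_u$ one has $\rho=\alpha^s(x)$ or $\alpha^u(y)$, which are not identically zero, and inside $\mathcal{V}$ one has $\rho=1$; so $Z\ne Y$ along parts of the circle and it is not a priori a periodic orbit of $F$. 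Your claim is thus valid only under implicit parameter restrictions that the paper never imposes.

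The second and more fundamental gap is the Bendixson--Dulac step. You need to rule out all other closed orbits in the annulus $A$, and you propose to do so by arranging $\operatorname{div}(F)\le 0$ there. But $\rho$ is fixed by the construction ($\rho=\alpha^u(y)$ on $R_u$, $\rho=\alpha^s(x)$ on $R_s$, only the choice on $\operatorname{int}(\rB)$ is free), and the term $\nabla\rho\cdot(X-Y)=(\alpha^u)'(y)\,[y\log\sigma+\beta(x-p)]$ on $R_u$ (and its analogue on $R_s$) has ambiguous sign: since $(\alpha^u)'\le 0$ and $x<p<y$, it is non-positive only if $\beta(p-x)<y\log\sigma$, which fails when $\beta$ is too large relative to $\log\sigma$ (the paper only asks $\beta\in(0,\pi/8)$ and $\sigma>1$, so $\log\sigma$ can be arbitrarily small). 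You acknowledge this and fall back on a ``robust alternative'' via a Poincaré section, which is indeed the paper's actual argument — the paper takes a transversal $V'=\{(p,y):y\in[0,1]\}$, shows that the hyperbolic passage gives $P^Z(\lambda)\approx\lambda^2/p = o(\lambda)$ so the return map lies strictly below the diagonal near $0$, and then defines $q^r$ as the \emph{lowest} fixed point of $P^Z$, so that no other periodic orbit needs to be ruled out — but you leave it as a sketch rather than a proof. The same issue recurs in part (b): taking $\gamma=\partial\mathcal{W}'$ unconditionally is only correct if $P^F:V''\to V''$ has no fixed point; the paper explicitly distinguishes this case from the one where a fixed point exists (and then takes the orbit of the lowest one). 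Your argument does not handle the second case, and again falls back on the unjustified divergence estimate to exclude it.
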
 

\begin{proof}
Since $F|_{\overline{\mathcal{W}}}=Z$, to prove (a) its enough to argue with $Z$ directly.

We will use the vertical segment $V'=\{(p,y);y\in[0,1]\}$. Observe that from the definition of $Z$, for every $q=(p,y)\in V'$, there exists $v=(p,y^{\prime})\in V'$ such that the $Z$ orbit of $q$ hits $V^{\prime}$ for the first time at $v$ (recall Definition~\ref{def.hit}). This allows to consider the first return map $P^Z:V'\to V'$. 

Notice that $P^Z$ is \emph{a priori} not defined at $(p,0)$. The other boundary point of the segment $V'$ is fixed by $P^Z$. We claim that there exists a smallest fixed point $q^r$ of $P^Z$, \emph{i.e.}, we claim the existence of $q^r=(p,y)\in V'$ such that if $z=(p,y^{\prime})$ with $0<y^{\prime}<y$, then $z$ is not fixed by $P^Z$.

Indeed, since $Z|_{\overline{\mathcal{V}}}=X$ and by \eqref{EqXt} we can write $Z_t(x,y)=(\sigma^{-2t}x,\sigma^ty)$ for every $(x,y)\in\mathcal{V}$. Therefore, a direct calculation shows that any point $(p,\lambda)\in \mathcal V$, with $\lambda>0$, hits $\R\times \{p\}$ for the first time at $(\lambda^2/p,p)\in\mathcal V$. Remarking that $\lambda^2/p = o(\lambda)$, as $\lambda\to 0$, and using the fact that the vector field $Z$ is continuous, we deduce that the map $P^Z$ is decreasing in some neighbourhood of the bottom boundary point of $V'$. It suffices now to take $y=\inf\{\lambda>0;(p,\lambda)\in\fix(P^Z)\}$ and the claim is established. Notice that this argument also allows us to extend continuously $P^Z$ to the point $(p,0)$ by declaring it fixed. Observe also that the point $(p,p)$ is contained in the bounded component of $\R^2\setminus O_Z(q^r)$ (as well as the whole orbit of $(p,1)$).

Take now $z\in\mathcal{L}^i_r$ and suppose it belongs to the unbounded connected component of $O_Z(q^r)$. By definition of $Z$, there must exist a point $z^{\prime}\in V'$ (below $q^r$) so that the $Z$ orbit of $z$ hits $V'$ for the first time at $z^{\prime}$. Remark that the restriction of $P^Z$ to $[(p,0),(p,y)]$ is an interval map which fixes the boundaries and whose graph is below the diagonal in the interior of its domain of definition. Thus every positive orbit accumulates on the lower boundary of its domain, and every negative orbit accumulates on the upper boundary of its domain. This proves that $\omega_Z(z^{\prime})\subset\mathcal{L}$ and $\alpha_Z(z^{\prime})=O_Z(q^r)$, and completes the proof of item (a).

To prove (b), consider a vertical segment $V^{\prime\prime}=\{(p,y);-\delta< y<0\}$. We choose $\delta$ so that $\partial V^{\prime\prime}\subset\partial\mathcal{W}^{\prime}$. We claim that there is a well defined first return map $P^F:V^{\prime\prime}\to V^{\prime\prime}$.
Assume this is not the case. Then, there must exits a point $z\in V^{\prime\prime}$ whose future orbit never hits $V^{\prime\prime}$. In particular, the orbit of $z$ do not accumulate on $\mathcal{L}$. Since $F$ has no zeros inside $W^{\prime}\cap\mathcal{L}_e$ this allows to apply Poincar\'e-Bendixon's theorem \cite{palis} to conclude that the orbit of $z$ accumulates on a periodic orbit. This periodic orbit is not allowed to cross $V^{\prime\prime}$ for otherwise $z$ would do so, and it is contained in $\mathcal{W'}$ because $F\equiv 0$ on $\mathcal{W'}^\complement$. Therefore, this periodic orbit bounds a disk contained in $\mathcal{W'}$ which do not contains $O$ in its interior. However, this implies that $F$ has another zero inside $\mathcal{W}^{\prime}$, a contradiction\footnote{We have used here the well known fact that every periodic orbit of a planar vector field bounds a disk containing a zero of the vector field inside. This can be proved combining Poincar{\'e}-Bendixon's theorem with Zorn's lemma.}.  

Now, as $Z|_{\mathcal{V}}=X$, we can argue as in item (a) to prove that $P^F:V^{\prime\prime}\to V^{\prime\prime}$ \emph{increases} the vertical coordinate of points, and thus $P^F$ either has a lowest fixed point or it fixes no point. In the latter case we declare $\gamma$ to be the boundary of $\mathcal{W}^{\prime}$ and in the former we declare $\gamma$ to be the $F$ orbit of the lowest fixed point of $P^F$. In either case the conclusion now follows, as in item (a), due to the dynamics of the one-dimensional map $P^F$. 
\end{proof}
  
\subsection{Affine returns: proof of Proposition~\ref{buildTowers}}\label{S:connectingtowers} To complete the proof of Proposition~\ref{buildTowers}, we shall now perturb the time one map of the vector field $F$ in order to ``undo" some of the raised non-linearities. This will ensure the announced properties of the first return map to the extended stable boxes (more precisely, item \eqref{p3} of Proposition~\ref{buildTowers}). With these purposes, we will use the following general fact (a connected open set in $\R^2$ will be called a \emph{domain}). We denote by $\difp$ the group of $C^{\infty}$ orientation preserving diffeomorphisms of the plane. 

\begin{lema}\label{pegandodifeos} Let\, $U$, $V$ and $W$ be bounded and convex domains in $\R^2$, whose boundaries are $C^{\infty}$-circles and such that$$\overline{U \cup V} \subset W.$$Then for any given $\phi_0\in\difp$ such that $\phi_0(U)=V$, there exists $\phi\in\difp$ satisfying$$\phi|_{U}=\phi_0|_{U}\quad\mbox{and}\quad\phi|_{\R^2\setminus\overline{W}}=\Id.$$Moreover, assume that there exists some straight line $\ell\subset\R^2$ such that $\phi_0(U\cap\mathbb{L}^{\pm})=V\cap\mathbb{L}^{\pm}$, where $\mathbb{L}^{+}$ and $\mathbb{L}^{-}$ are the two half-planes in $\R^2$ determined by $\ell$ (in particular, $\phi_0(\ell \cap U)=\ell \cap V$). Then $\phi$ can also be chosen to preserve $\mathbb{L}^{+}$ and $\mathbb{L}^{-}$ (and then $\phi(\ell)=\ell$).
\end{lema}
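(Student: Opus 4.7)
The plan is to apply Palais's isotopy extension theorem: it suffices to exhibit a smooth isotopy $(\phi_t)_{t\in[0,1]}$ of orientation-preserving embeddings $\overline{U}\hookrightarrow W$ with $\phi_0$ equal to the canonical inclusion $\iota$ and $\phi_1=\phi_0|_{\overline{U}}$. Palais's theorem then produces an ambient isotopy $(\Phi_t)_{t\in[0,1]}$ of $\R^2$ such that $\Phi_t|_{\overline{U}}=\phi_t$ for every $t\in[0,1]$ and each $\Phi_t$ is supported in a fixed compact subset of $W$. Taking $\phi:=\Phi_1$ will give the desired diffeomorphism: $\phi|_U=\phi_0|_U$ from the agreement on $\overline{U}$, and $\phi|_{\R^2\setminus\overline{W}}=\operatorname{Id}$ since the support of $\Phi_1$ lies in a compact subset of $W$. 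The map $\phi$ is orientation-preserving as the time-one map of an isotopy issued from the identity, hence lies in $\difp$.

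To construct the isotopy of embeddings, I would use the convexity hypotheses in a \emph{shrink--translate--expand} scheme. Pick points $v_0\in V$ and $u_0\in U$. First, radially contract $\overline{V}=\phi_0(\overline{U})$ toward $v_0$ inside $V$ down to a tiny convex disk $D_0$ around $v_0$; composing with $\phi_0|_{\overline{U}}$ yields an isotopy of $W$-embeddings from $\phi_0|_{\overline{U}}$ to an embedding with image $D_0$. Second, translate $D_0$ along the straight segment from $v_0$ to $u_0$, which remains inside $W$ by convexity of $W$, landing on a tiny convex disk $D_1\subset U$ around $u_0$. Third, perform the symmetric construction starting from the inclusion $\iota$: contract $\overline{U}$ radially toward $u_0$ inside $U$ down to a tiny disk around $u_0$. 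One is left with two orientation-preserving embeddings of $\overline{U}$ into small convex neighbourhoods of $u_0$, which can be joined by a smooth isotopy using the classical fact that the space of orientation-preserving smooth embeddings of a closed disk into a convex open subset of $\R^2$ is path-connected (this can itself be verified by a linear interpolation after rescaling). Concatenating and reparametrising yields the required isotopy $(\phi_t)$ from $\iota$ to $\phi_0|_{\overline{U}}$, with image remaining in a fixed compact subset of $W$.

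For the equivariant case, pick $v_0\in\ell\cap V$ and $u_0\in\ell\cap U$ (both non-empty by the hypothesis $\phi_0(\ell\cap U)=\ell\cap V$), and perform every step of the above construction symmetrically with respect to $\ell$: radial contractions and expansions are centred on $\ell$ and chosen to preserve $\ell$, the translation is along $\ell$, and the final connecting isotopy inside the tiny disks is done equivariantly. The resulting isotopy $(\phi_t)$ then satisfies $\phi_t(\ell\cap\overline{U})\subset\ell$ and $\phi_t(\overline{U}\cap\mathbb{L}^{\pm})\subset\mathbb{L}^{\pm}$ for every $t$. Palais's isotopy extension can then be carried out with a time-dependent vector field whose restriction to $\ell$ is tangent to $\ell$ (the standard partition-of-unity extension of the velocity can be chosen symmetric under reflection across $\ell$), so that its flow preserves each half-plane; the time-one map is then the desired $\ell$-preserving $\phi$. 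The main technical point throughout is Step 1 -- constructing the explicit isotopy of embeddings with image staying inside $W$ and remaining injective at all times -- and the convexity hypotheses of $U$, $V$, $W$ make this essentially mechanical via the shrink--translate--expand construction.
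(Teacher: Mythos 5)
Your proof is correct in substance but takes a genuinely different route from the paper's. The paper does not phrase the argument in terms of an isotopy of embeddings at all: it constructs the extension $\phi$ explicitly, by fixing an intermediate convex domain $W_0$ with $\overline{U\cup V}\subset W_0\subset\overline{W_0}\subset W$, building orientation-reversing diffeomorphisms $\psi_U\colon W_0\setminus\overline{U}\to U\setminus\{p\}$ and $\psi_V\colon W_0\setminus\overline{V}\to V\setminus\{q\}$ via radial foliations from a marked point, and gluing $\phi_0$ on $\overline{U}$ with $\psi_V^{-1}\circ\phi_0\circ\psi_U$ on $W_0\setminus\overline{U}$; the only soft input is a smoothing/isotopy-extension result from Hirsch to repair the seam along $\partial U$. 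Your plan instead reduces everything at once to the Palais--Cerf ambient isotopy extension theorem, constructing a path of embeddings $\overline{U}\hookrightarrow W$ from the inclusion to $\phi_0|_{\overline{U}}$ by the shrink--translate--expand scheme and invoking the (standard) connectedness of orientation-preserving smooth embeddings of a disk into a convex open set. Both proofs use convexity in essentially the same places (to make the radial contraction or radial foliation stay inside $U$, $V$, $W$), and both ultimately call on a nontrivial ``black box'' from differential topology (Palais's theorem for you, Hirsch's smoothing theorem for them), so neither is strictly more elementary; yours is perhaps more conceptual, theirs more explicit. One small inaccuracy in your treatment of the $\ell$-preserving case: you say the partition-of-unity extension of the velocity field can be chosen ``symmetric under reflection across $\ell$'', but the velocity field on $\phi_t(\overline{U})$ need not itself be reflection-symmetric (since $\phi_0$ need not commute with the reflection), so one cannot simply symmetrize without destroying the prescribed restriction. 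What one actually has and needs is the weaker tangency condition: the velocity along $\phi_t(\overline{U})\cap\ell$ is tangent to $\ell$, and the extended vector field should be chosen tangent to $\ell$ along all of $\ell$ (a standard relative form of isotopy extension, done by killing the normal component away from $\phi_t(\overline{U})$). With that correction the argument goes through.
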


\begin{proof}[Proof of Lemma \ref{pegandodifeos}] Fix some convex domain $W_0$ such that $$\overline{U \cup V} \subset W_0 \subset \overline{W_0} \subset W,$$and mark some point $p \in \ell \cap U$. We will consider an orien\-tation-reversing $C^{\infty}$-diffeomorphism $\psi_{U}:W_0\setminus\overline{U} \to U\setminus\{p\}$ that maps $\ell\cap\big(W_0\setminus\overline{U}\big)$ onto $\ell \cap \big(U\setminus\{p\}\big)$ and that can be continuously extended to $\partial U$ satisfying $\psi_{U}|_{\partial U}=\Id$. One possible way to construct such a diffeomorphism is to consider the foliation of $W_0\setminus\{p\}$ given by straight rays $\ell_w$ joining $p$ with each point $w \in \partial W_0$. Since, by hypothesis, each of these lines crosses $\partial U$ only once (say, at $u_w$), we can consider an orientation-reversing one-dimensional diffeomorphism between the connected component of $\ell_w\setminus\{u_w\}$ outside $U$ and the one inside. Each of these diffeomorphisms can be chosen smooth in $w$ (because both $\partial U$ and $\partial W_0$ are $C^{\infty}$ circles) and then they jointly produce\footnote{Just as an example, fix some $R>1$ and let $p=(0,0) \in U=B(0,1) \subset W_0=B(0,R)$. Consider first the real function $g:W_0\setminus\overline{U}\to(0,1)$ given by$$g(x,y)=\frac{1}{R-1}\,\frac{1}{\sqrt{x^2+y^2}}\,\big(R-\sqrt{x^2+y^2}\big)\,,$$and then let $\psi_{U}:W_0\setminus\overline{U} \to U\setminus\{p\}$ be given by\, $\psi_{U}(x,y)=g(x,y)\,\big(x,y\big)$\,.} the desired bi-dimensional diffeomorphism $\psi_{U}:W_0\setminus\overline{U} \to U\setminus\{p\}$. In the same way, let $q=\phi_0(p) \in \ell \cap V$ and consider an orientation-reversing $C^{\infty}$-diffeomorphism $\psi_{V}:W_0\setminus\overline{V} \to V\setminus\{q\}$, mapping $\ell\cap\big(W_0\setminus\overline{V}\big)$ onto $\ell \cap \big(V\setminus\{q\}\big)$, which can be continuously extended to $\partial V$ as the identity. Now let $\phi:W_0 \to W_0$ be the homeomorphism given by
\[\phi = \left\{\begin{array}{ll}
\phi_0 & \quad \mbox{in $\overline{U}$}\\
\psi_{V}^{-1}\circ\phi_0\circ\psi_{U} & \quad \mbox{in $W_0\setminus\overline{U}$}
\end{array}\right.\]
Note that $\phi$ is an orientation-preserving $C^{\infty}$ diffeomorphism between $U$ and $V$, equal to $\phi_0$, and it is also an orientation-preserving $C^{\infty}$ diffeomorphism between $W_0\setminus\overline{U}$ and $W_0\setminus\overline{V}$. Moreover, $\phi(W_0\cap\mathbb{L}^{\pm})=W_0\cap\mathbb{L}^{\pm}$ and in particular $\phi(\ell \cap W_0)=\ell \cap W_0$. Therefore $\phi$ is almost what we want in $W_0$, but not quite because it may not be smooth at the boundary of $U$. To correct this flaw, we can use a standard \emph{isotopy extension} as in \cite[Section 8.1]{hirsch} (more precisely, see Theorem 1.9 in page 182, the \emph{smoothing theorem}, and the remark right after it). The resulting diffeomorphism, that we still denote by $\phi$, can be chosen so as to coincide with $\phi_0$ in $U$ (however, this perturbation could breakdown the fact that $\phi$ preserves $\ell$, so we may need another perturbation, post-composing $\phi$ with a suitable diffeomorphism of $W_0$ supported in a neighbourhood of $\partial V$ in the closure of $W_0 \setminus V$, to recover this property). Finally, since $\phi(W_0)=W_0$ and $\overline{W_0} \subset W$, $\phi$ can easily be extended to the whole plane satisfying $\phi|_{\R^2\setminus\overline{W}}=\Id$. Indeed, note first that $\phi$ extends to an orientation-preserving diffeomorphism $\eta$ of $\partial W_0$ to itself, that must be isotopic to $\Id|_{\partial W_0}$. Moreover, since $\phi(W_0\cap\mathbb{L}^{\pm})=W_0\cap\mathbb{L}^{\pm}$, the extension $\eta$ fixes both points of $\ell\cap\partial W_0$. Then we use the isotopy between $\eta$ and $\Id$ to deform $\phi|_{\partial W_0}$ to the identity in $\partial W$ along some smooth foliation by smooth circles in the annulus $W \setminus \overline{W_0}$. This can be done in such a way that $\phi$ preserves both segments of $\ell\cap\big(W \setminus \overline{W_0}\big)$ (because their corresponding boundary points are already fixed).
\end{proof}

\begin{remark}\label{obsmob} The statement of Lemma \ref{pegandodifeos} is quite flexible. For instance, the straight line $\ell$ could be an arc of a circle as well. Even the convexity assumption is not strictly needed. Indeed, to prove Lemma \ref{pegandodifeos} we just need some point $p \in \ell \cap U$ such that the two components of $\big(\ell\setminus\{p\}\big) \cap W_0$ belong to a smooth foliation of $W_0\setminus\{p\}$ of smooth curves joining $p$ with each point of $\partial W_0$, such that each leaf of this foliation intersects $\partial U$ at a single point (and the analogous property for the point $\phi_0(p) \in \ell \cap V$). This remark will be useful in the proof of Proposition \ref{buildTowers} below, where $\ell$ will be given by an orbit of the rotational vector field $Y$ constructed in the previous section.
\end{remark}

Recall the right extended stable boxes $\tilde{S}_n=\tilde{I}\times\sigma^{-n}\tilde{I}$, where $\tilde{I}=[\tilde{a},\tilde{b}]\subset (1,\sigma)$. Let $F_t$ be the time-$t$ map of the flow associated to the vector field $F$ constructed in the previous section. Notice that $F_t$ is a smooth diffeomorphism of the plane, isotopic to the identity, and that $F_1$ is linear on $\mathcal V$, equal to $\operatorname{Diag}(\sigma^{-2},\sigma)$, that is: $F_1(x,y)=X_1(x,y)=(\sigma^{-2}x,\sigma y)$. In particular, $O=(0,0)$ is a saddle fixed point of $F_1$. Denote by $W^{s}(O)$ and $W^{u}(O)$ its corresponding stable and unstable manifolds. By Lemma~\ref{l.loop}, $F_1$ presents a homoclinic loop associated to them. Note that $F_{n}(\tilde{S}_n)=\tilde{U}_{2n}$ for all $n\in\nt$, where $\tilde{U}_m=\sigma^{-m}\tilde{I}\times\tilde{I}$ are the unstable boxes.

\begin{proof}[Proof of Proposition~\ref{buildTowers}]
Let $n_1=n_1(\varepsilon,T)\in\nt$ be large enough so that the open rectangle $(1,\sigma^2)\times(0,\sigma^{-n_1}\tilde{b})$ is contained in $\mathcal{V}$, and consider $\widehat{W}_1=(1,\sigma^2)\times(-\sigma^{-n_1}\tilde{b},\sigma^{-n_1}\tilde{b})$. We choose $n_2 \geq n_1$ in $\nt$ and $\beta\in(0,\pi/8)$ so that
$$F_{k_0}\big([-\sigma^{-n_2}\tilde{b},\sigma^{-n_2}\tilde{b}]\times\tilde{I}\big)\subset\widehat{W}_1$$
for some $k_0\in\nt$. Notice that $k_0$ only depends on the angle of rotation of the (time one map of the) vector field $Y$, \emph{i.e.}, $k_0=k_0(\beta)$. Let $\widehat{W}_0$ be an open rectangle, compactly contained in $\widehat{W}_1$, satisfying
$$F_{k_0}\Big([-\sigma^{-n_2}\tilde{b},\sigma^{-n_2}\tilde{b}]\times\tilde{I}\Big)\cup \left(\tilde{I}\times[-\sigma^{-n_2}\tilde{b},\sigma^{-n_2}\tilde{b}]\right) \subset\widehat{W}_0\,.$$Since $\beta<\pi/8$, there exists $j_0\le k_0$ such that, denoting $W_i=F_{-j_0}(\widehat{W}_i)$ for $i\in\{0,1\}$, one has $\overline{W_1}\subset\mathcal{U}$, where $\mathcal{U}$ is the \emph{rotational region} from Section \ref{secregionsF} (see Figure \ref{f.figW}).
Moreover, denoting $i_0 = k_0-j_0$, we obtain the following two properties for all $n \geq n_2$:
\begin{itemize}
\item $\left(F_{i}(\tilde{U}_n)\cup F_i\big(s_h(\tilde{U}_n)\big)\right)\cap\overline{W_1}=\emptyset$ for $i\in\nt\cap [0,i_0)$, and $F_{i_0}\big(\tilde{U}_n \cup s_h(\tilde{U}_n)\big)\subset W_0$;
\item $\left(F_{-j}(\tilde{S}_n)\cup F_i\big(s_v(\tilde{S}_n)\big)\right)\cap\overline{W_1}=\emptyset$ for $j\in\nt\cap [0,j_0)$ and $F_{-j_0}\big(\tilde{S}_n \cup s_v(\tilde S_n)) \subset W_0$.
\end{itemize}

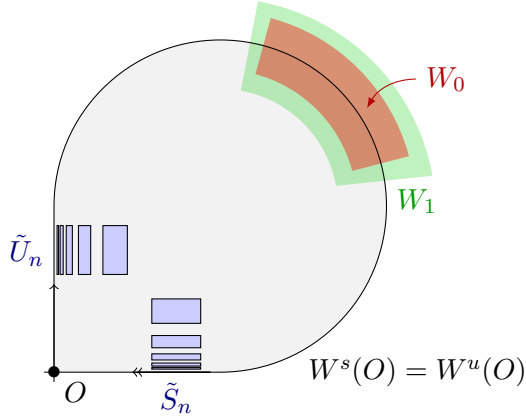
\begin{figure}
\begin{center}
\begin{tikzpicture}[scale=1.3]
\fill[color=gray, opacity=.1] (0,0) -- (1.7,0) arc (-90:180:1.7) -- cycle;
\fill[color=green!80!black, opacity=.25] (3.87,2.01) arc (10:80:2.2) -- (1.91,2.88) arc (80:10:1.2) -- cycle;
\fill[color=red, opacity=.4] (3.63,2.21) arc (15:75:2) -- (2.06,3.05) arc (75:15:1.4) -- cycle;
\draw[color=green!60!black] (3.4, 1.75) node[right]{$W_1$};
\draw [->,>=latex, color=red!70!black] (3.7, 3) node[right]{$W_0$}to[bend right] (3.2,2.7);
\draw (-.1,0) -- (1.7,0) arc (-90:180:1.7) -- (0,-.1);
\draw[->] (1.6,0) -- (0.8,0);
\draw[->] (1.6,0) -- (0.85,0);
\draw[->] (0,0) -- (0,.9);
\draw (2.5,0) node[right]{$W^s(O) = W^u(O)$};
\draw (0,0) node{$\bullet$} node[below right]{$O$};

\foreach \i in {1,...,5}				
	{\draw[fill=blue!20!white] (0.5^\i,1) rectangle (0.5^\i*1.5,1*1.5);
	\draw[fill=blue!20!white] (1,0.5^\i) rectangle (1*1.5,0.5^\i*1.5);}
	
\draw[color=blue!50!black] (1.25,0) node[below]{$\tilde S_n$};
\draw[color=blue!50!black] (0,1.25) node[left]{$\tilde U_n$};
\end{tikzpicture}
\caption{\label{f.figW} The region $W_1$, where we perturb the time one map of the flow associated to $F$, in order to obtain the diffeomorphism $f_0$ of Proposition \ref{buildTowers}.}
\end{center}
\end{figure}

Denote by $\mathcal{R}:\R^2\to\R^2$ the rotation of angle $\pi/2$ around the point $\big((\tilde{a}+\tilde{b})/2,(\tilde{a}+\tilde{b})/2\big)$ in the positive sense (counter-clockwise). In other words, $\mathcal{R}(x,y)=(\tilde{a}+\tilde{b}-y\,,x)$. Now consider the orientation preserving $C^{\infty}$ diffeomorphism of the plane defined by
\[
\phi_0 = F_{-j_{0}}\circ\mathcal{R}\circ F_{-i_{0}}\,.
\]
Then, it holds
\begin{equation}
\label{e.uvainos}
F_{j_0} \circ \phi_0 \circ F_{i_0}(x,y)=\mathcal{R}(x,y),\:\:\:\textrm{for all}\:\:\:(x,y)\in [-\sigma^{-n_2}\tilde{b},\sigma^{-n_2}\tilde{b}]\times\tilde{I}.
\end{equation}
In particular, 
\[
F_{j_0} \circ \phi_0 \circ F_{i_0}\big(\tilde{U}_n\big)=\tilde{S}_n,\:\:\:\textrm{for all}\:\:\:n\geq n_2.
\]
and 
\[
F_{j_0} \circ \phi_0 \circ F_{i_0}\big(s_h(\tilde{U}_n)\big)=s_v(\tilde{S}_n),\:\:\:\textrm{for all}\:\:\:n\geq n_2.
\]
Let $U$ be a convex domain, with smooth boundary, containing $F_{i_0}\big([-\sigma^{-n_2}\tilde{b},\sigma^{-n_2}\tilde{b}]\times\tilde{I}\big)$ and compactly contained in $W_0$. Let $V=\phi_0(U)$, a domain containing $$\phi_0\big(F_{i_0}\big([\sigma^{-n_2}\tilde{b},\sigma^{-n_2}\tilde{b}]\times\tilde{I}\big)\big)=F_{-j_0}\big(\tilde{I}\times[\sigma^{-n_2}\tilde{b},\sigma^{-n_2}\tilde{b}]\big)$$ and compactly contained in $W_0$. By taking the corresponding convex hulls of $W_0$ and $W_1$ as well, we can apply Lemma \ref{pegandodifeos} (see also Remark \ref{obsmob}) in order to obtain an orientation preserving $C^{\infty}$ diffeomorphism $\phi:\R^2\to\R^2$ such that $\phi=\phi_0$ in $F_{i_0}\big([-\sigma^{-n_2}\tilde{b},\sigma^{-n_2}\tilde{b}]\times\tilde{I}\big)$ and $\phi=\Id$ in $\R^2\setminus\overline{W_1}$. Moreover, since $\phi_0\big(W^{u}(O) \cap U\big)=W^{u}(O) \cap V$, we can choose $\phi$ preserving $W^{u}(O)$. Finally, let $f_0\in\dif^{\infty}(\R^2)$ be given by $f_0=F_1\circ\phi$, and note the following properties:
\begin{itemize}
\item $f_0=F_1$ in $\R^2 \setminus W_1$. In particular $f_0$ is linear equal to $\operatorname{Diag}(\sigma^{-2},\sigma)$ in the set $\mathcal V$, and then the origin is a saddle-type hyperbolic fixed point for $f_0$.
\item The homoclinic loop constructed in Lemma \ref{l.loop} for $F_1$ is also a homoclinic loop for $f_0$.
\item If $(x,y)\in(-\sigma^{-2n_2},\sigma^{-2n_2})\times[\tilde a,\tilde b]$, then\[f_0^{k_0}(x,y)=R(x,y),\]where $R$ is the rotation of angle $\pi/2$ and centre $(\frac{a+b}{2},\frac{a+b}{2})$, in the positive (counter-clockwise) sense. If $(x,y)\in(-\sigma^{-2n_2},\sigma^{-2n_2})\times[-\tilde{a},-\tilde{b}]$ then $f_0^{k_0}(x,y)=s_hR\big(s_v(x,y)\big)$.
\item $f_0^{k_0}\big(\{0\}\times\tilde{I}\big)=\tilde{I}\times\{0\}$ and $f_0^{k_0}(\tilde{U}_n)=\tilde{S}_n$ for every $n \geq n_2$.
\item $f_0^{k_0}\left(s_h(\tilde{U}_n)\right)=s_v(\tilde{S}_n)$, for every $n\geq n_2$.
\item If $(x,y)\in\tilde{S}_n$, then$$f_0^{n+k_0}(x,y)=\big(a+b-\sigma^ny,\sigma^{-2n}x\big)\in\tilde{S}_{2n}.$$
\end{itemize}
It only remains to establish properties \eqref{p5} and \eqref{p6}.

Let us prove now property \eqref{p5}. We consider $c_0\eqdef\|f^{k_0}_0\|_{C^1}$ (remark that the integer $k_0$ does not depend upon $n_2$). Fix arbitrarily $n_0\geq n_2$ and let us analyse the boundary of the set $f_0^{k_0}\big([0,\sigma^{-2n_0}]\times[\sigma^{-1}\tilde{b},\tilde{a}]\big)$. 

By property \eqref{p3} we deduce directly that 
\[
f_0^{k_0}\left([0,\sigma^{-2n_0}]\times\{\tilde{a}\}\right)=\{\tilde{b}\}\times[0,\sigma^{-2n_0}].
\]
Moreover, as $f_0|_{\mathcal V}$ is the map $\operatorname{Diag}(\sigma^{-2},\sigma)$, we have that $f_0\big([0,\sigma^{-2n_0}]\times\{\sigma^{-1}\tilde{b}\}\big)=[0,\sigma^{-2n_0-2}]\times\{\tilde{b}\}$. Therefore, applying \eqref{p3} again and iterating backwards once we conclude that

\[
f_0^{k_0}\left([0,\sigma^{-2n_0}]\times\{\sigma^{-1}\tilde{b}\}\right)=\{\sigma^2\tilde{a}\}\times[0,\sigma^{-2n_0-3}].
\]

This implies that $f_0^{k_0}\big(\{0\}\times[\sigma^{-1}\tilde{b},\tilde{a}]\big)$ is a smooth curve joining $(\tilde{b},0)$ to $(\sigma^2\tilde{a},0)$. Since $\{0\}\times[\sigma^{-1}\tilde{b},\tilde{a}]\subset W^{u}_{f_0}(O)$ and $W^s_{f_0}(O)=W^u_{f_0}(O)$, we deduce that such smooth curve must be contained in $W^s_{f_0}(O)$, from which we conclude that  

\begin{equation}
\label{e.fundamentaldomainreturns}
f_0^{k_0}\left(\{0\}\times[\sigma^{-1}\tilde{b},\tilde{a}]\right)=[\tilde{b},\sigma^2\tilde{a}]\times\{0\}. 
\end{equation}

This shows that the boundary of $f_0^{k_0}\big([0,\sigma^{-2n_0}]\times[\sigma^{-1}\tilde{b},\tilde{a}]\big)$ has two vertical sides, respectively over the points with abscissa $\sigma^2\tilde{a}$ and $\tilde{b}$, and a horizontal side $[\tilde b,\sigma^2\tilde a]\times\{0\}$. The remainder of this boundary is a smooth curve joining the points $(\tilde{b},\sigma^{-2n_0})$ and $(\sigma^2\tilde{a},\sigma^{-2n_0-3})$, as indicated in Figure~\ref{FigAmeba}. 

Now, $n_0\geq n_2$ is such that 
\[
f_0^{k_0}\left([0,\sigma^{-2n_0}]\times[\sigma^{-1}\tilde{b},\tilde{a}]\right)\not\subset [\tilde{b},\sigma^2\tilde{a}]\times[0,\sigma^{-n_0-4}].
\] 
Notice that we proved above that the boundary of $f_0^{k_0}\big([0,\sigma^{-2n_0}]\times[\sigma^{-1}\tilde{b},\tilde{a}]\big)$ has an overlap with the boundary of $[\tilde{b},\sigma^2\tilde{a}]\times[0,\sigma^{-n_0-4}]$. In particular, the two sets are not disjoint.     
We claim that, for every $n_0\geq n_2$ the set $f_0^{k_0}\big([0,\sigma^{-2n_0}]\times[\sigma^{-1}\tilde{b},\tilde{a}]\big)$ crosses the boundary of $[\tilde{b},\sigma^2\tilde{a}]\times[0,\sigma^{-n_0-4}]$ at a point with $(x,y)$ with $y\geq\sigma^{-n_0-4}$. In other words, we claim that there exists $z=(\tilde{x},\tilde{y})\in(0,\sigma^{-2n_0})\times[\sigma^{-1}\tilde{b},\tilde{a}]$ such that $f_0^{k_0}(z)=(x,y)$ with $y=\sigma^{-n_0-4}$.   

\begin{figure}
\centering
\begin{tikzpicture}[scale=.7]

\fill [color=gray, opacity=.08] (0,0) rectangle (7,6.2);
\draw[->] (-.7,0)--(7,0);
\draw[->] (0,-.7)--(0,6.2);
\draw [color=red!60!black, thick] (4,1)--(4,2)--(6,2)--(6,.6);
\filldraw[fill=green, opacity=.25,draw=green!50!black, very thick] (0,4) rectangle (1,6);
\draw [thick, dashed, opacity=.6] (5,2) ellipse (20pt and 10pt); 
\draw [thick, dashed, opacity=.6] (3.9,1.6) ellipse (8pt and 10pt);
\draw [thick, dashed, opacity=.6] (6.1,1.2) ellipse (8pt and 10pt);
\draw (4,0) node[below]{$\tilde{b}$};
\draw (6,0) node[below]{$\sigma^2\tilde{a}$};
\draw (1,-0.1) -- (1,0.1);
\draw (1,-.089) node[below]{$\sigma^{-2n_0}$};
\draw [dotted] (1,.1)--(1,4);
\draw (-.1,1)--(.1,1);
\draw (-.089,1.1) node[left]{$\sigma^{-2n_0}$};
\draw [dotted] (.1,1)--(4,1);
\draw (-.1,.6)--(.1,.6);
\draw (-.089,.5) node[left]{$\sigma^{-2n_0-3}$};
\draw [dotted] (.1,.6)--(6,.6);
\draw (0,4) node[left]{$\sigma^{-1}\tilde{b}$};
\draw (0,6) node[left]{$\tilde{a}$};
\draw (-.1,2)--(.1,2);
\draw (-.089,2) node[left]{$\sigma^{-n_0-4}$};
\draw[dotted] (.1,2)--(4,2);
\filldraw[fill=green, opacity=.25,draw=green!50!black, very thick]  (6,.6)--(6,0)--(4,0)-- (4,1)to[out=0,in=0] (3.9,1.4) to[out=180,in=-180] (3.9,1.8) to[out=0,in=-90](4.5,2) to[out=90,in=-270] (5.5,2) to[out=-90,in=-180] (6.1,1.4) to[out=0, in=0] (6.1,1) to[out=180,in=-180] (6,.6);
\draw (3.7, 3) node[color=green!50!black,font=\footnotesize]{$f_0^{k_0}\big([0,\sigma^{-2n_0}]\times[\sigma^{-1}\tilde{b},\tilde{a}]\big)$};
\draw [->,>=latex, color=green!50!black] (3.8,2.7) to[bend right] (5,1.4);
\end{tikzpicture}
\caption{\label{FigAmeba}Proof of Property~\eqref{p5}: if it does not hold then the image of the green rectangle by $f_0^{k_0}$ has to cross at least one of the three red segments, and each case leads to a contradiction.}
\end{figure}
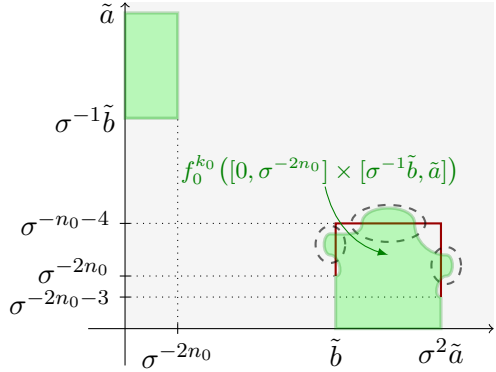

Indeed, by the conclusion about the boundary of $f_0^{k_0}\big([0,\sigma^{-2n_0}]\times[\sigma^{-1}\tilde{b},\tilde{a}]\big)$ that we saw above, the only possibility for this not to happen is if the set $f_0^{k_0}\big([0,\sigma^{-2n_0}]\times[\sigma^{-1}\tilde{b},\tilde{a}]\big)$ crosses one of the vertical segments $\{\sigma^2\tilde{a}\}\times(\sigma^{-2n_0-3},\sigma^{-n_0-4})$ or $\{\tilde{b}\}\times (\sigma^{-2n_0},\sigma^{-n_0-4})$ (see Figure~\ref{FigAmeba}). However, applying \eqref{p3} as we did above, we see that the images under $f_0^{-k_0}$ of these segments are (respectively)
$(\sigma^{-2n_0},\sigma^{-n_0-4})\times\{\sigma^{-1}\tilde{b}\}$ or $(\sigma^{-2n_0},\sigma^{-n_0-4})\times\{\tilde{a}\}$, which are both disjoint from $[0,\sigma^{-2n_0}]\times[\sigma^{-1}\tilde{b},\tilde{a}]$. 

Therefore $f_0^{k_0}\big([0,\sigma^{-2n_0}]\times[\sigma^{-1}\tilde{b},\tilde{a}]\big)$ cannot cross the vertical segments $\{\sigma^2\tilde{a}\}\times(\sigma^{-2n_0},\sigma^{-n_0-4})$ or $\{\tilde{b}\}\times(\sigma^{-2n_0},\sigma^{-n_0-4})$. But by our assumption on $n_0$ it must cross the boundary of $[\tilde{b},\sigma^2\tilde{a}]\times[0,\sigma^{-n_0-4}]$, so this should happen at the segment $\{\sigma^{-n_0-4}\}\times[\tilde{b},\sigma^2\tilde{a}]$, proving the claim. 

Thus, there must exist $z=(\tilde{x},\tilde{y})$, with $\tilde{x}\in(0,\sigma^{-2n_0})$ and $\tilde{y}\in [\sigma^{-1}\tilde{b},\tilde{a}]$ such that $f_0^{k_0}(z)=(x,y)$, with $y=\sigma^{-n_0-4}$. 
Let $z^{\prime}=(0,\tilde{y})$. Using \eqref{e.fundamentaldomainreturns}, we get $f_0^{k_0}(z^{\prime})\in[\tilde{b},\sigma^2\tilde{a}]\times\{0\}$. By the mean value inequality, one deduces then
\[
c_0\geq\frac{\sigma^{-n_0-4}}{\sigma^{-2n_0}}=\sigma^{n_0-4},
\]
for every $n_0\geq n_2$, which is absurd if $n_0$ is large enough. This proves that for every $n_0$ large, it holds 
\[
f_0^{k_0}\left([0,\sigma^{-2n_0}]\times[\sigma^{-1}\tilde{b},\tilde{a}]\right)\subset [\tilde{b},\sigma^2\tilde{a}]\times[0,\sigma^{-n_0-4}].
\] 
With a similar argument, one establishes that
\[
f_0^{k_0}\left([-\sigma^{-2n_0},0]\times[\sigma^{-1}\tilde{b},\tilde{a}]\right)\subset [\tilde{b},\sigma^2\tilde{a}]\times[-\sigma^{-n_0-4},0],
\] 
for every $n_0$ large enough. By the symmetry of $f_0$ with respect to $-Id$, the proof of property \eqref{p5} is completed.

This argument also proves that forward iterations under $f_0$ decreases the distance to the homoclinic loop $\mathcal{L}$ and thus we have perturbed the time-one map of the vector field $F$ while retaining the dynamical properties of items (e) and (f) in Lemma~\ref{l.lemaum}. This concludes the proof of Proposition~\ref{buildTowers}.  
\end{proof}

\subsection{Historic behaviour}

By a similar construction, one can connect the stable/un\-stable manifolds of two different dissipative hyperbolic saddle fixed points with the same eigenvalues, getting a similar configuration of heteroclinic connections together with a family of boxes (see Figure \ref{FigBowen}). More precisely, one connects the linear map $(x,y)\mapsto (\sigma^{-2}x,\sigma y)$ having the origin $O$ as a dissipative hyperbolic saddle fixed point, with the affine map $(x,y)\mapsto (\sigma^{-2}(x-2p),\sigma (y-2p))$ having the point $P = (2p,2p)$ as a dissipative hyperbolic saddle fixed point. The properties of the obtained diffeomorphism are summarized in the following lemma.

\begin{lema}\label{buildTowers2}
	For any $\sigma>1$, there exists $\hat f_0\in \Diff^\infty(\R^2)$\nomenclature{$\hat f_0$}{Alternative diffeomorphism for $f_0$, of Bowen-eye type} compactly supported, such that the origin $O\in\R^2$ and the point $P = (2p,2p)\in\R^2$ are hyperbolic fixed points for $\hat f_0$, with the following properties:
	\begin{enumerate}[(i)]
		\item $\hat f_0$ is linear in a neighbourhood $V$ of $O$, equal to the map $(x,y)\mapsto (\sigma^{-2}x,\sigma y)$, and linear in a neighbourhood $V'$ of $P$, equal to the map $(x,y)\mapsto (\sigma^{-2}(x-2p),\sigma (y-2p))$;
		\item  $O$ and $P$ are heteroclinically related, meaning that $W^s(O) = W^u(P)$, and $W^s(P) = W^u(O)$;
		\item  Denoting $s_{(p,p)}$ the symmetry with respect to the point $(p,p)$, there are integers $n_0,k_0\in\nt$ such that, for every $n\geq n_0$,
\[\hat f_0^{k_0}(\tilde{U}_n) = s_{(p,p)}(\tilde{S}_n)\:\:\textrm{and thus}\:\: \hat f^{n+k_0}(\tilde{S}_n) = s_{(p,p)}(\tilde{S}_{2n})\]
and
\[\hat f_0^{k_0}\big(s_{(p,p)}(\tilde{U}_n)\big) = \tilde{S}_n\:\:\textrm{and thus}\:\: \hat f^{n+k_0}\big(s_{(p,p)}(\tilde{S}_n)\big) = \tilde{S}_{2n};\]
		\item For $n\geq n_0$, there is a first return map $\hat g \eqdef f_{2}^{k_0+n}:\tilde{S}_n \to s_{(p,p)}(\tilde{S}_{2n})$ which
		satisfies $$\hat g(x,y)=s_{(p,p)}\big(a+b-\sigma^ny,\sigma^{-2n}x\big).$$
		In particular, $s_{(p,p)}\circ L_{2n}\circ \hat g\circ(L_{n})^{-1}$ is a rotation by $\pi/2$ (and the same for orbits from $s_{(p,p)}(\tilde{S}_n)$ to $\tilde{S}_{2n}$;)
		\item $\hat f_0$ is symmetric with respect to $O$, \emph{i.e.}, it commutes with $-\operatorname{Id}$.
\end{enumerate}
\end{lema}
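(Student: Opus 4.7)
The plan is to mimic the construction of Proposition~\ref{buildTowers}, replacing the single-saddle figure-eight by a two-saddle \emph{Bowen eye} configuration. First, I would build a smooth vector field $\hat F$ on $\R^2$ by gluing three pieces with bump functions exactly as in Section~\ref{seccampoF}: the linear hyperbolic vector field $X$ of Section~\ref{seccampoF} in a neighbourhood $\mathcal V$ of $O$, the translated copy $X^P(z)=X(z-(2p,2p))$ in a neighbourhood $\mathcal V'$ of $P$ (which has the same eigenvalues and integrates to the desired linear time-one map at $P$), and a rotational field $Y$ supported in two connecting regions. The first connecting region carries the positive horizontal unstable branch of $O$ around until it meets the vertical stable branch of $P$ coming from below; the second, obtained from the first by the point symmetry $s_{(p,p)}$, sends the horizontal unstable branch of $P$ around until it meets the vertical stable branch of $O$ coming from above. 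This produces the heteroclinic loop required in (ii).

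To actually realise the heteroclinic connections I would run the intermediate-value argument of Lemma~\ref{l.loop} verbatim in each of the two connecting regions: tuning the bump functions $\alpha^s,\alpha^u$ ensures that the positive orbit of the appropriate branch of $W^u(O)$ hits the vertical line transverse to the local stable branch of $P$ at the prescribed point, and similarly for the other branch. To enforce (v), I would perform the construction equivariantly under $-\Id$ by repeating the same procedure around $-P=(-2p,-2p)$ (obtaining an analogous eye in the third quadrant) and averaging, if necessary, the bump functions so that the resulting $\hat F$ commutes with $-\Id$. Finally, multiply by a radial cut-off function as in Section~\ref{seccampoF} to make $\hat F$ compactly supported without destroying the configuration.

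The time-one map $\hat F_1$ then satisfies (i), (ii) and (v), and since the map is linear $\operatorname{Diag}(\sigma^{-2},\sigma)$ (resp.\ its translate) near $O$ (resp.\ $P$), for $n$ large enough $\hat F_1^{n}(\tilde S_n)=\tilde U_{2n}$, and for a further iteration count $k_0$ determined by the rotational angle chosen for $Y$, $\hat F_1^{k_0}(\tilde U_{2n})$ lands inside the linear region near $P$, close to $W^s_{\mathrm{loc}}(P)$ at scale $\sigma^{-2n}$; in the coordinates centred at $P$, this region is exactly $s_{(p,p)}(\tilde S_{2n})$. To upgrade this to an exact affine identification (items (iii)--(iv)), I would perturb $\hat F_1$ in the rotational region by Lemma~\ref{pegandodifeos}, following the same template as Section~\ref{S:connectingtowers}: pre-compose by a diffeomorphism $\phi$ supported in a disk contained in the connecting region (and disjoint from $\mathcal V$, $\mathcal V'$ and from the orbit arcs of the towers $\bigcup_n\tilde S_n$ before this visit) whose restriction to $\hat F^{i_0}([-\sigma^{-n_2}\tilde b,\sigma^{-n_2}\tilde b]\times\tilde I)$ equals the composition of the flow with the rotation $s_{(p,p)}\circ R$, where $R$ is the rotation by $\pi/2$ around $((a+b)/2,(a+b)/2)$. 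This forces $\hat f_0^{k_0}(\tilde U_n)=s_{(p,p)}(\tilde S_n)$ and $s_{(p,p)}\circ L_{2n}\circ \hat g\circ L_n^{-1}$ to be a rotation by $\pi/2$ of the square $[-1,1]^2$, from which the explicit formula in (iv) follows by composing with the linear map sending $\tilde S_n$ onto $\tilde U_{2n}$.

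The main obstacle will be arranging this last perturbation to be simultaneously $-\Id$-equivariant and supported off the tower orbits. I would resolve both issues by choosing the connecting regions sufficiently narrow (taking a smaller rotational angle if needed, thereby enlarging $k_0$) so that the supports of the perturbations in the two eyes are disjoint from each other and from the already-used pieces of the orbits of the $\tilde S_n$, and then defining the perturbation in the $(O,-P)$-eye as the $-\Id$-image of the one in the $(O,P)$-eye; by the remark in Lemma~\ref{pegandodifeos}, such an $\phi$ can be made to preserve $W^u(O)$ in each eye, which is all that is required to retain the heteroclinic structure after perturbation.
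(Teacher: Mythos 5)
Your proposal is correct and follows essentially the same route the paper uses: the paper explicitly presents Lemma~\ref{buildTowers2} as "a similar construction" to Proposition~\ref{buildTowers}, relying on the same gluing of a linear hyperbolic vector field with a rotational one (Section~\ref{seccampoF}), the same intermediate-value argument of Lemma~\ref{l.loop} to realise the connections, and the same post-perturbation by Lemma~\ref{pegandodifeos} (Section~\ref{S:connectingtowers}) to make the return to the boxes exactly affine. You have spelled out the details the paper elides, including the translated copy of the linear field at $P=(2p,2p)$ and the $-\Id$-equivariant second eye through $-P$, which is forced by item (v) and matches Figure~\ref{FigBowen}.

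One small remark: in item (ii) the equalities $W^s(O)=W^u(P)$ and $W^s(P)=W^u(O)$ should be read as statements about the branches forming the eye — with the full $-\Id$-symmetry you describe, the complementary branches of $W^{s/u}(O)$ connect to $-P$ rather than $P$, which your two-eye picture correctly captures. This is a feature of the paper's informal statement, not of your argument.
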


\begin{figure}
	\begin{center}
		\begin{tikzpicture}[scale=1.5]
		\fill[color=gray, opacity=.1] (0,0) -- (1.5,0) arc (-90:0:1.5) -- (3,3) -- (1.5,3) arc (90:180:1.5) -- cycle;
		\draw (-.1,0) -- (1.5,0) arc (-90:0:1.5) -- (3,3.1);	
		\draw (3.1,3) -- (1.5,3) arc (90:180:1.5) -- (0,-.1);
		\draw[->] (1.6,0) -- (1,0);
		\draw[->] (1.6,0) -- (1.05,0);
		\draw[->] (0,0) -- (0,1);
		\draw[->] (1.5,3) -- (2,3);
		\draw[->] (1.5,3) -- (2.05,3);
		\draw[->] (3,3) -- (3,2);
		\draw (3,1) node[right]{$W^s(O) = W^u(P)$};
		\draw (0,2) node[left]{$W^u(O) = W^s(P)$};
		\draw (0,0) node{$\bullet$} node[above left]{$O$};
		\draw (3,3) node{$\bullet$} node[below right]{$P$};
		
		\foreach \i in {1,...,5}
		{\foreach \j in {1,...,5}
			{\draw[fill=blue!20!white] (0.5^\i,0.5^\j) rectangle (0.5^\i*1.5,0.5^\j*1.5);
			\draw[fill=blue!20!white] (3-0.5^\i,3-0.5^\j) rectangle (3-0.5^\i*1.5,3-0.5^\j*1.5);}}
		
		\draw (1.5,1.5) node{$\bullet$};
		\draw[<-] (1.7,1.7) arc (45:225:.29);
		\draw (1.3,1.7) node[left]{$s_{(p,p)}$};
		
		\end{tikzpicture}
		\caption{The alternative diffeomorphism $\hat f_0$, of Bowen-eye type}\label{FigBowen}
	\end{center}
\end{figure}
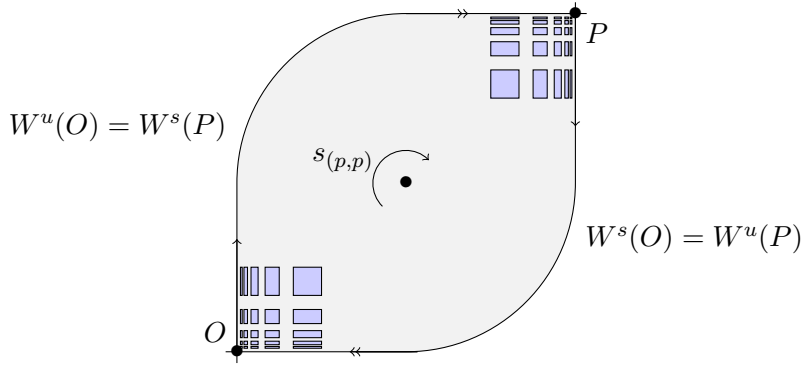

The configuration of a heteroclinic connection with two dissipative saddle fixed points is known as \emph{Bowen's eye attractor}. In this case, any point starting sufficiently close to the union of stable/unstable manifolds inside the heteroclinic loop has historic behaviour\footnote{A point $x\in M$ is said to have \emph{historic behaviour} if the sequence $\frac{1}{n}\sum_{j=0}^{n-1}\delta_{f^{j}(x)}$ does not converge in the weak-* topology.} (this can be seen by reasoning as in Lemma \ref{suf}). See the grounding papers \cite{MR1182135} and \cite{MR1274765} for a complete proof. Since then, the Bowen's eye attractor has been quite widely studied (see \cite{MR2399948}, \cite{MR2237469}, \cite{MR2947933}, \cite{MR1901073} and the references therein).

\section{Orbit exclusion I: determining the basin}\label{sec.orbitexcum}

In this section we shall perform the first step of our \emph{orbit exclusion procedure}. It consists in introducing a modification $f_1\in\difp$ of $f_0$, where $f_0\in\difp$ is the diffeomorphism given by Proposition~\ref{buildTowers}, so that $f_1=h_1\circ h_2\circ f_0$ and the composition map $h_1\circ h_2$ pushes points away from the stable manifold of $O$ while carefully preserving the tower of wandering boxes $S_n$. This will create a periodic trapping region, but the resulting map $f_1$ will still have a Dirac physical measure at $O$, so that we shall have a finer description of the statistical basin: it will be composed of the union of a ``wandering tower" (the stable tower $\cS$ defined in Definition~\ref{def.towers}), the stable manifold of $O$, plus some orbits inside a nowhere dense zero Lebesgue measure set (see Proposition \ref{p.bluetrichotomy}). The map $f_1$ will present much more complicated dynamics: it will have positive topological entropy and infinitely many periodic orbits (see Section \ref{SecGeom}). This section and the next one are certainly the most delicate and technical of the present paper.

\subsection{Description of the perturbations}\label{SecDesc}
We shall obtain the perturbation $f_1$ of $f_0$ from the composition of two diffeomorphisms: $h_1$, which pushes points horizontally towards an attractive fixed vertical segment, and $h_2$ which pushes points vertically towards an attractive fixed horizontal segment\footnote{It may be that the diffeomorphism $h_1$ is useless to get the desired result; however it simplifies significantly the proofs.}. 

\subsubsection{Notations from Proposition~\ref{buildTowers}}
Recall that we have fixed $\sigma>1$ and numbers $1<\tilde{a}<a<b<\tilde{b}$, so that the intervals $I=[a,b]\subset\tilde{I}=[\tilde{a},\tilde{b}]$ have the same center. The diffeomorphism $f_0$ is given by Proposition~\ref{buildTowers}.

\subsubsection{Choice of parameters}\label{subsub.escolhasparaavida}
We choose $\eps_1>0$ small enough so that $[a-5\eps_1,b+5\eps_1] \subset \tilde I$, and also
\begin{equation}\label{e.oitoepsilon}
\eps_1<\min\left(\frac{b-a}{8},\ \sigma^{-n_0}\frac{a-\sigma^{-1}b}{10},\,\frac{(\sigma-\tilde{b})(b-a)}{4}.\right).
\end{equation}
We also fix $\delta_1\eqdef\eps_1/10$. The functions we are going to define will depend on these parameters.

\subsubsection{Bump function notation}
As we shall need different bump functions, it will be more convenient to have a convention for them. Therefore, in the sequel, given $\ell_0<\ell_1<r_1<r_0$ we denote by $\varphi_{\ell_0,r_0}^{\ell_1,r_1} : \R\to \R$\nomenclature{$\varphi_{\ell_0,r_0}^{\ell_1,r_1}$}{Bump function} a $C^\infty$ bump function satisfying:
\begin{enumerate}
	\item $\varphi_{\ell_0,r_0}^{\ell_1,r_1}(x)\in[0,1]$ for all $x\in\R$;
	\item $\varphi_{\ell_0,r_0}^{\ell_1,r_1}(x)=1$ for all $x\in [\ell_1,r_1]$;
	\item $\varphi_{\ell_0,r_0}^{\ell_1,r_1}(x)=0$ for all $x\in \R\setminus[\ell_0,r_0]$;
	\item $\varphi_{\ell_0,r_0}^{\ell_1,r_1}$ is increasing between $\ell_0$ and $\ell_1$, and decreasing between $r_0$ and $r_1$.
\end{enumerate}

\subsubsection{More boxes} 
\label{subsub.moreboxes}

We define also the \emph{$\eps_1$-boxes}\nomenclature{$C_n$}{$\eps_1$-boxes $C_n=[a-3\eps_1,b+3\eps_1]\times\sigma^{-n}[a-3\eps_1,b+3\eps_1]$, so that $S_n\subset C_n \subset \tilde S_n$}
$$C_n\eqdef[a-3\eps_1,b+3\eps_1]\times\sigma^{-n}[a-3\eps_1,b+3\eps_1],$$
and the \emph{$\eps_1$-tower} $\mathcal{C}\eqdef\bigcup_{n\geq n_0}\bigcup_{k\in\Z}f_0^k(C_n)$.\nomenclature{$\mathcal{C}$}{$\eps_1$-tower $\mathcal{C}=\bigcup_{n\ge n_0} \bigcup_{k\in\mathbb{Z}} f_0^k(C_n)$}

\subsubsection{The horizontal push}
Our horizontal perturbation is ruled by a diffeomorphism of the line which is the identity outside the interval $[b,\sigma^2 a]$ and has a unique attractive fixed point inside this interval. For the estimations that we are going to make, we shall need some other specific properties. 

Indeed, we consider any diffeomorphism $\xi_1\in\Diff^\infty(\R)$ satisfying
\begin{enumerate}
\item[(H1)] $\xi_1(x)=x$ for every $x\in\R\setminus[b,\sigma^2 a]$.
\item[(H2)] There exists $q_1\in(\sigma,\sigma^2)$ such that if $x\in(b,\sigma^2 a)$ satisfies $\xi_2(x)=x$ then $x=q_1$. Moreover, $0<\xi_1^{\prime}(q_1)<1$.
\item[(H3)]\label{H3} $\xi_1(x)=x-\sigma^2\eps_1$ if $x\in\sigma^2[a-3\eps_1,a-\eps_1]$ and $\xi(x)=x+\eps_1$ if $x\in[b+\eps_1,b+3\eps_1]$. In particular, $\xi_1(\sigma^2(a-\eps_1))= \sigma^2(a-2\eps_1)$ and $\xi_1(b+\eps_1)=b+2\eps_1$.
\end{enumerate}

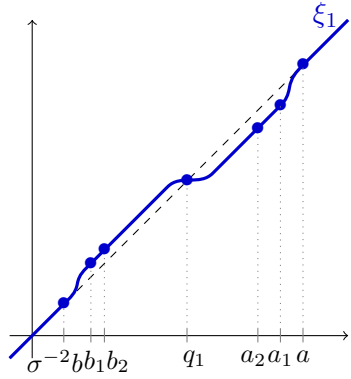
\begin{figure}
\begin{center}
\begin{tikzpicture}[scale=.6]
\draw[->] (-.5,0) -- (7,0);
\draw[->] (0,-.5) -- (0,7);
\draw[dashed] (-.5,-.5) -- (7,7);
\draw [very thick,blue!80!black] (-.5,-.5) -- (.7,.7) to [out=45,in=260] (1,1.15) to [out=80,in=225] (1.3,1.6) -- (3,3.3) to [out=45, in=225] (4,3.6) -- (5.5,5.1);
\draw[very thick, blue!80!black] (7,7) -- (6,6) to [out=225, in=80] (5.7,5.5) to [out=260, in=45] (5.5,5.1);
\draw (6,6) node[blue!80!black]{$\bullet$};
\draw[black!50!white] (6,-0.15)--(6,0.15);
\draw (6,-0.1) node[below]{\small $a$};
\draw[black!50!white,dotted] (6,0.15) -- (6,6);
\draw (5.5,5.1) node[blue!80!black]{$\bullet$};
\draw[black!50!white] (5.5,-0.15)--(5.5,0.15);
\draw (5.5,-0.1) node[below]{\small $a_1$};
\draw[black!50!white,dotted] (5.5,0) -- (5.5,5.1);
\draw[black!50!white] (5,-0.15)--(5,0.15);
\draw (4.9,-0.1) node[below]{\small $a_2$};
\draw[black!50!white,dotted] (5,0.15) -- (5,4.6);
\draw (5,4.6) node[blue!80!black]{$\bullet$};
\draw[black!50!white] (3.43,-0.15) -- (3.43,0.15);
\draw (3.6,-0.1) node[below]{\small $q_1$};
\draw[black!50!white,dotted] (3.43,0.15) -- (3.43,3.43);
\draw (3.43,3.43) node[blue!80!black]{$\bullet$};
\draw (.7,.7) node[blue!80!black]{$\bullet$};
\draw[black!50!white] (.7,-0.15) -- (.7,0.15);
\draw[black!50!white,dotted] (.7,0.15) -- (.7,.7);
\draw (.5,-0.1) node[below]{\small $\sigma^{-2}b$};
\draw[black!50!white] (1.3,-0.15) -- (1.3,0.15);
\draw (1.4,-0.1) node[below]{\small $b_1$};
\draw[black!50!white,dotted] (1.3,0.15) -- (1.3,1.6);
\draw (1.3,1.6) node[blue!80!black]{$\bullet$};
\draw[black!50!white] (1.6,-0.15) -- (1.6,0.15);
\draw (1.9,-0.1) node[below]{\small $b_2$};
\draw[black!50!white,dotted] (1.6,0.15) -- (1.6,1.9);
\draw (1.6,1.9) node[blue!80!black]{$\bullet$};
\draw (6.5,6.6) node[above,color=blue!80!black]{$\xi_1$};
\end{tikzpicture}
\caption{\label{f.grafxi1}The diffeomorphism $\xi_1$ of the line. In this drawing, $b_1=b+\eps_1$, $b_2=b+\eps_13$, $a_2=\sigma^{2}(a-3\eps_1)$ and $a_1=\sigma^{2}(a-\eps_1)$. (H3) says that $\xi_1$ is a translation when restricted to each interval $[b_1,b_2]$ and $[a_2,a_1]$.}
\end{center}
\end{figure}

Since it is rather elementary, we refrain from giving a explicit construction of a diffeomorphism satisfying the above properties (see Figure~\ref{f.grafxi1}).
Property (H3) implies that $\xi_1$ is a translation on the intervals $[a-\eps_1-\delta_1,a-\eps_1]$ and $\sigma^{-2}[b+\eps_1,b+\eps_1+\delta_1]$. This will simplify some calculations in the proof of our inclination lemma (Lemma~\ref{LemInclination}). 

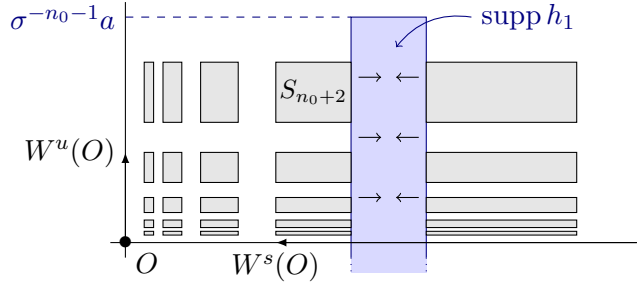
\begin{figure}
	\begin{center}
		\begin{tikzpicture}[scale=2]
		\fill[color=blue, opacity=.15] (1.5,-.2) rectangle (2,1.5);
		\draw[color=blue!50!black] (1.5,-.1) -- (1.5,1.5) -- (2,1.5) -- (2,-.1);
		\draw[color=blue!50!black, dotted] (1.5,-.1) -- (1.5,-.2);
		\draw[color=blue!50!black, dotted] (2,-.1) -- (2,-.2);
		\draw [->, color=blue!50!black] (2.3,1.5) node[right]{$\operatorname{supp} h_1$} to[bend right] (1.8,1.3);
		
		\draw[color=blue!50!black, dashed] (0,1.5) -- (1.5,1.5);
		\draw[color=blue!50!black] (0,1.5) node[left]{$\sigma^{-n_0-1}a$};
		\draw[->] (1.55,.3) -- (1.7,.3);
		\draw[->] (1.55,.7) -- (1.7,.7);
		\draw[->] (1.55,1.1) -- (1.7,1.1);
		\draw[->] (1.95,.3) -- (1.8,.3);
		\draw[->] (1.95,.7) -- (1.8,.7);
		\draw[->] (1.95,1.1) -- (1.8,1.1);
		\draw (-.1,0) -- (3.4,0);
		\draw (0,-.1) -- (0,1.6);
		\draw[->,>=latex] (3.4,0) -- (1,0) node[below]{$W^s(O)$};
		\draw[->,>=latex] (0,0) -- (0,.6) node[left]{$W^u(O)$};
		\draw (0,0) node{$\bullet$} node[below right]{$O$};
		
		\foreach \i in {0,...,4}
		{\foreach \j in {0,...,4}
			{\draw[fill=black!10!white] (2*0.5^\i,0.8*0.5^\j) rectangle (2*0.5^\i*1.5,0.8*0.5^\j*1.5);}}
		\draw  (1.25,1) node{\small $S_{n_0+2}$};
		
		\end{tikzpicture}
		\caption{\label{FigDefh1}The diffeomorphism $h_1$.}
	\end{center}
\end{figure}

With the map $\xi_1$ at hand we can now define our horizontal push. Consider $\varphi_1 = \varphi_{-\sigma^{-n_0-1}a,\sigma^{-n_0-1}a}^{-\sigma^{-n_0-2}(b+3\eps_1),\sigma^{-n_0-2}(b+3\eps_1)}$, so that $\supp \varphi_1 = [{-\sigma^{-n_0-1}a},\sigma^{-n_0-1}a]$ and
\[\varphi_1|_{[-\sigma^{-n_0-2}(b+3\eps_1),\sigma^{-n_0-2}(b+3\eps_1)]}\equiv 1.\]
Our horizontal perturbation is the map (see Figure~\ref{FigDefh1})
\[\begin{array}{rrcl}
h_1 : & \R^2 & \longrightarrow & \R^2\\
 & (x,y) & \longmapsto & \big( \varphi_1(y) \xi_1(x) + (1-\varphi_1(y))x \,,\ y\big).
\end{array}\]

Notice that, as $[\min\xi_1^{\prime},\max\xi_1^{\prime}]\subset (0,\infty)$ and since, for every $y\in\R$, the first coordinate of $h_1(x,y)$ is a convex combination between $1$ and $\xi_1(x)$, we can see that $h_1$ is injective and $Dh_1(x,y)$ is invertible for every $(x,y)\in\R^2$. Therefore, $h_1\in\dif^{\infty}(\R^2)$.  

Moreover, we have that {$\supp h_1 = [b,\sigma^2 a]\times [-\sigma^{-n_0-1}a,\sigma^{-n_0-1}a]$}, and if $(x,y)\in [b,\sigma^2 a] \times [-\sigma^{-n_0}(b+3\eps_1),\sigma^{-n_0}(b+3\eps_1)]$, then $h_1(x,y) = (\xi_1(x),y)$. Thus, $h_1$ fixes a vertical segment of points with abscissa $q_1$, and all the points with the same vertical coordinate as one of these fixed points is attracted towards it. See Figure~\ref{FigDefh1}.

\subsubsection{The vertical push}

As with the horizontal push, the core of our vertical perturbation is a special diffeomorphism of the line.
Indeed, we take any $\xi_2\in\dif^{\infty}(\R)$\nomenclature{$\xi_2$}{Diffeomorphism of $\R$ used to define $\varphi_2$; it pushes points up} such that all the following properties hold true (see Figure~\ref{f.grafxi2}):
\begin{itemize}
\item[(V1)] $\xi_2(y)=y$ if $y\notin[-\sigma^{-n_0-1}a,\sigma^{-n_0-1}a]$.
\item[(V2)] $\xi_2\left([-\sigma^{-n_0-1}(a-\eps_1),\sigma^{-n_0-1}(a-\eps_1)]\right)\subset\left(\sigma^{-n_0-2}(b+4\eps_1),\sigma^{-n_0-1}(a-3\eps_1)\right)$.
\item[(V3)] There exists $0<\beta<1$ and $c\in\R$ such that $\xi_2(y)=\beta y+c$ for any $y\in[-\sigma^{-n_0-1}(a-\eps_1),\sigma^{-n_0-1}(a-\eps_1)]$,
such that $\xi_2$ has $q_2 = \sigma^{-n_0-1}(a+\sigma^{-1}b)/2$ as a fixed point.
\end{itemize} 

\begin{figure}
\begin{minipage}[c]{.45\linewidth}
	\begin{center}
		\begin{tikzpicture}[scale=.35]
		\draw[->] (-7,0)--(9,0);
		\draw[->](0,-7)--(0,9);
		\draw[dashed, opacity=.6] (-7,-7)--(9,9);
		\draw[very thick, red!80!black] (-7,-7)--(-6,-6);
		\draw[very thick, red!80!black] (8,8)--(9,9);
		\draw[very thick, red!80!black] (-5,3)--(6,5.2);
		\draw[very thick, red!80!black] (-6,-6) .. controls (-5,-5) and (-6,2.8).. (-5,3);
		\draw[very thick, red!80!black] (6,5.2) .. controls (7,5.4) and (6,6) .. (8,8);
		\foreach \x in {-6,-5,5,6,8}
		{
			\draw (\x,-.1)--(\x,.1);
		}
		\foreach \x in {-5,5,6}
		{
			\draw[dotted, opacity=.6] (\x,.1)--(\x,0.2*\x+4); 
			\draw (\x,0.2*\x+4) node[color=red!80!black]{$\bullet$};
		} 
		\foreach \x in {-6,8}
		{
			\draw[dotted, opacity=.6] (\x,0)--(\x,\x);
			\draw (\x,\x) node[color=red!80!black]{$\bullet$};
		}
		\draw (-7.7,0) node[below, font=\footnotesize]{$-\sigma^{-n_0}a$};
		\draw (-2.5,0) node[below, font=\footnotesize]{$-\sigma^{-n_0}(a-\eps_1)$};
		\draw (5,0) node[below, font=\footnotesize]{$q_2$};
		\draw (6.2,-1.4) node[below, font=\footnotesize]{$\sigma^{-n_0-1}(a-\eps_1)$};
		\draw (8,0) node[below, font=\footnotesize]{$\sigma^{-n_0-1}a$};
		\foreach \x in {2.8,7}
		\draw (-.1,\x)--(.1,\x);
		\draw[->] (0,2.8) to[bend left] (-4,4)node[above, font=\footnotesize]{$\sigma^{-n_0-2}(b+4\eps_1)$};
		\draw (0,7) node[left, font=\footnotesize]{$\sigma^{-n_0-1}(a-3\eps_1)$};
		\end{tikzpicture}
		\caption{\label{f.grafxi2} The diffeomorphism $\xi_2$ of the line.}
	\end{center}
\end{minipage}
\hfill
\begin{minipage}[c]{.52\linewidth}
\begin{tikzpicture}[scale=2.4]
\draw[->] (-.2,0) -- (2.25,0);
\draw[->] (-.2,0) -- (-.2,1.2);
\draw (-.25,1) node[above left]{\footnotesize $1$} -- (-.15,1);
\draw[dotted, color=gray] (-.15,1) -- (.5,1) -- (.5,0);
\draw[dotted, color=gray] (-.15,.9) -- (1.6,.9);
\draw[dotted, color=gray] (1.5,1) -- (1.5,0);
\draw (-.25,.9) node[left]{\small $\frac{b+3\eps_1}{b+4\eps_1}$} -- (-.15,.9);
\draw (0,0) node{$|$} node[below]{\footnotesize $b+2\eps_1$};
\draw (.5,0) node{$|$};
\draw (.4,-.15) node[below]{\footnotesize $b+2\eps_1+\delta_1$};
\draw (1.5,0) node{$|$};
\draw (1.6,-.15) node[below]{\footnotesize $\sigma^{2}(a-2\eps_1-\delta_1)$};
\draw (2,0) node{$|$} node[below]{\footnotesize $\sigma^{2}(a-2\eps_1)$};

\draw[color=blue!80!black, thick] (-.2,0) -- (0,0)to[out=0,in=-90] (.4,.9)to[out=90,in=180](.5,1) -- (1.5,1) to[out=0,in=90] (1.6,.9)to[out=-90,in=180] (2,0) -- (2.2,0);
\draw (1,1) node[above, color=blue!80!black]{$\varphi_2$};

\draw [decorate,decoration={brace,amplitude=5pt}] (-.25,0) -- (-.25,.9) node [midway,xshift=-0.3cm, left] {\footnotesize $\varphi_2$ cvx.};

\end{tikzpicture}
\caption{\label{FigPhi2}The map $\varphi_2$. }
\end{minipage}
\end{figure}

The key dynamical feature about $\xi_2$ is (V2). Note that \eqref{e.oitoepsilon} implies that $8\eps_1 < a+\sigma^{-1}b$, and hence that $q_2 = \sigma^{-n_0-1}(a+\sigma^{-1}b)/2$ satisfies
\[\sigma^{-n_0-2}(b+4\eps_1) < q_2 < \sigma^{-n_0-1}(a-3\eps_1).\]

To form our vertical push over a point $(x,y)$ we shall choose a smooth bump function $\varphi_2$ so that the $x$-coordinate of a point will determine, according to the value $\varphi_2(x)$, the ``amount'' of $\xi_2$ that is going to be applied to $y$. This balance between the $x$-coordinate and the ``intensity'' of the vertical push will play a key role in our arguments. 
Hence, to perform important future estimations, we need to impose some technical assumptions on this bump function, which we now describe.


We choose $\varphi_2 = \varphi_{b+2\eps_1,\sigma^{2}(a-2\eps_1)}^{b+2\eps_1+\delta_1,\sigma^{2}(a-2\eps_1-\delta_1)}$ (recall that by \ref{subsub.escolhasparaavida}, $\delta_1=\eps_1/10$).
Moreover, we assume that $\varphi_2^{\prime\prime}>0$ in restriction to both intervals
\[\left(b+2\eps_1\,,\ (\varphi_2|_{[b+2\eps_1, b+2\eps_1+\delta_1]})^{-1}\big(\frac{b+3\eps_1}{b+4\eps_1}\big)\right)\]
and
\[\left((\varphi_2|_{[\sigma^{2}(a-\eps_1),\sigma^{2}(a-2\eps_1-\delta_1)]})^{-1}\big(\frac{b+3\eps_1}{b+4\eps_1}\big)\,,\ \sigma^{2}(a-2\eps_1)\right)\]
(see Figure~\ref{FigPhi2}).

Our vertical perturbation $h_2$\nomenclature{$h_2$}{Second perturbation of $f_0$, ``vertical push''} is then defined as (see Figure \ref{FigDefh2}) 
\begin{equation}\label{EqDefH2}
h_2(x,y) = \Big(x\,,\ \varphi_2(x) \xi_2(y) + (1-\varphi_2(x)) y\Big).
\end{equation}
By a similar reasoning as we did above for $h_1$, one can see that $h_2\in\dif^{\infty}(\R^2)$. 

\begin{remark}
\label{rem.convex}
The following argument will allow us to profit from the convexity assumption about $\varphi_2$.
If
\[(x,y)\in \Big(\big[b+2\eps_1, b+2\eps_1+\delta_1\big]\cup \big[\sigma^{2}(a-2\eps_1-\delta_1), \sigma^{2}(a-2\eps_1)\big]\Big)  \times \big[0,\sigma^{-n_0-2}(b+3\eps_1)\big],\]
then $\xi_2(y)\geq \sigma^{-n_0-2}(b+4\eps_1)$. Hence, if $\varphi_2(x)\geq \frac{b+3\eps_1}{b+4\eps_1}$ then the second coordinate of $h_2(x,y)$ is bigger than $\sigma^{-n_0-2}(b+3\eps_1)$. Thus, if the second coordinate of $h_2(x,y)$ is strictly smaller than $\sigma^{-n_0-2}(b+3\eps_1)$ then $\varphi_2(x)<\frac{b+3\eps_1}{b+4\eps_1}$, and so by our convexity assumption, the restriction of $\varphi_2$ to the interval $[x,\sigma^2(a-2\eps_1)]$ (or $[b+2\eps_1,x]$, depending on the context) is convex.  
\end{remark}

%

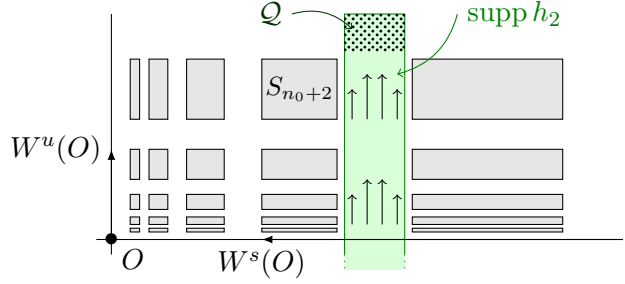
\begin{figure}
\begin{center}
\begin{tikzpicture}[scale=2]
\fill[color=green, opacity=.15] (1.55,-.2) rectangle (1.95,1.5);
\draw[color=green!50!black] (1.55,-.1) -- (1.55,1.5) -- (1.95,1.5) -- (1.95,-.1);
\draw[color=green!50!black, dotted] (1.55,-.1) -- (1.55,-.2);
\draw[color=green!50!black, dotted] (1.95,-.1) -- (1.95,-.2);
\fill[pattern=crosshatch dots] (1.55,1.25) rectangle (1.95,1.5);
\draw[->] (1.6,.1) -- (1.6,.3);
\draw[->] (1.7,.1) -- (1.7,.4);
\draw[->] (1.8,.1) -- (1.8,.4);
\draw[->] (1.9,.1) -- (1.9,.3);
\draw[->] (1.6,.8) -- (1.6,1);
\draw[->] (1.7,.8) -- (1.7,1.1);
\draw[->] (1.8,.8) -- (1.8,1.1);
\draw[->] (1.9,.8) -- (1.9,1);
\draw (-.1,0) -- (3.4,0);
\draw (0,-.1) -- (0,1.5);
\draw[->,>=latex] (3.4,0) -- (1,0) node[below]{$W^s(O)$};
\draw[->,>=latex] (0,0) -- (0,.6) node[left]{$W^u(O)$};
\draw (0,0) node{$\bullet$} node[below right]{$O$};

\foreach \i in {0,...,4}
{\foreach \j in {0,...,4}
{\draw[fill=black!10!white] (2*0.5^\i,0.8*0.5^\j) rectangle (2*0.5^\i*1.5,0.8*0.5^\j*1.5);}}
			
\draw [->, color=green!50!black] (2.3,1.5)node[right]{$\operatorname{supp} h_2$} to[bend left] (1.9,1.1);
\draw [->, color=green!20!black] (1.2,1.5)node[left]{$\mathcal{Q}$} to[bend left] (1.6,1.4);
\draw  (1.25,1) node{\small $S_{n_0+2}$};
\end{tikzpicture}
\caption{\label{FigDefh2}The diffeomorphism $h_2$ and its attracting region $\mathcal{Q}$.}
\end{center}
\end{figure}

Our perturbation map $h_2\circ h_1$ has an attracting region, as shown by the following lemma.

\begin{lema}
\label{rem.hdoisnoq}	
The diffeomorphism $h_2\circ h_1$ maps $[b+\eps_1+\delta_1,\sigma^2(a-\eps_1-\delta_1)] \times [-\sigma^{-n_0-1}(a-\eps_1),\sigma^{-n_0-1}(a-\eps_1)]$ inside 
\begin{equation}
\label{e.q}
\mathcal{Q} \eqdef [b+2\eps_1+\delta_1,\sigma^2(a-2\eps_1-\delta_1)] \times [\sigma^{-n_0-2}(b+4\eps_1), \sigma^{-n_0-1}(a-3\eps_1)].
\end{equation}
In particular, $\mathcal{Q}$ is a trapping region for $h_2\circ h_1$.
\end{lema}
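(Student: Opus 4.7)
My approach will be to verify the inclusion coordinate by coordinate, decomposing $h_2\circ h_1$ into its two one-dimensional pushes. First I will show that $h_1$ pulls the first coordinate of any point of the source rectangle $R\eqdef[b+\eps_1+\delta_1,\sigma^2(a-\eps_1-\delta_1)]\times[-\sigma^{-n_0-1}(a-\eps_1),\sigma^{-n_0-1}(a-\eps_1)]$ into the $x$-range of $\mathcal Q$; then I will use the fact that, on this smaller $x$-range, the vertical push simplifies and sends the second coordinate into the $y$-range of $\mathcal Q$. The trapping statement will follow immediately from the inclusion $\mathcal Q\subset R$.

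\textbf{Horizontal step.} By the compatibility conditions on $\eps_1$ imposed in~\ref{subsub.escolhasparaavida}, the $y$-interval of $R$ is contained in the plateau $\{\varphi_1\equiv 1\}$, so that $h_1(x,y)=(\xi_1(x),y)$ for every $(x,y)\in R$. I then invoke property (H3) of $\xi_1$: the left endpoint $b+\eps_1+\delta_1$ lies in the translation window $[b+\eps_1,b+3\eps_1]$, giving
\[\xi_1(b+\eps_1+\delta_1)=b+2\eps_1+\delta_1,\]
and similarly the right endpoint $\sigma^2(a-\eps_1-\delta_1)$ sits in $\sigma^2[a-3\eps_1,a-\eps_1]$, so that $\xi_1(\sigma^2(a-\eps_1-\delta_1))=\sigma^2(a-2\eps_1-\delta_1)$. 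Since $\xi_1$ is an orientation-preserving diffeomorphism of the line, it maps $[b+\eps_1+\delta_1,\sigma^2(a-\eps_1-\delta_1)]$ precisely onto $[b+2\eps_1+\delta_1,\sigma^2(a-2\eps_1-\delta_1)]$. Hence
\[h_1(R)\subset[b+2\eps_1+\delta_1,\sigma^2(a-2\eps_1-\delta_1)]\times[-\sigma^{-n_0-1}(a-\eps_1),\sigma^{-n_0-1}(a-\eps_1)].\]

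\textbf{Vertical step.} The $x$-range just obtained is exactly the plateau $\{\varphi_2\equiv 1\}$, so on $h_1(R)$ the second push reduces to $h_2(x',y)=(x',\xi_2(y))$. Property (V2) of $\xi_2$ then yields $\xi_2(y)\in(\sigma^{-n_0-2}(b+4\eps_1),\sigma^{-n_0-1}(a-3\eps_1))$ for every $y\in[-\sigma^{-n_0-1}(a-\eps_1),\sigma^{-n_0-1}(a-\eps_1)]$, which together with the previous step gives $(h_2\circ h_1)(R)\subset\mathcal Q$, as desired.

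\textbf{Trapping.} It now suffices to observe that $\mathcal Q\subset R$: the $x$-projection $[b+2\eps_1+\delta_1,\sigma^2(a-2\eps_1-\delta_1)]$ is strictly inside $[b+\eps_1+\delta_1,\sigma^2(a-\eps_1-\delta_1)]$, and the $y$-projection $[\sigma^{-n_0-2}(b+4\eps_1),\sigma^{-n_0-1}(a-3\eps_1)]$ is a positive subinterval of $[-\sigma^{-n_0-1}(a-\eps_1),\sigma^{-n_0-1}(a-\eps_1)]$. Consequently $(h_2\circ h_1)(\mathcal Q)\subset(h_2\circ h_1)(R)\subset\mathcal Q$, so $\mathcal Q$ is trapping for $h_2\circ h_1$. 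The only non-routine point in the whole argument is the parameter check that the $y$-range of $R$ lies entirely in the plateau of $\varphi_1$; once this is dispatched using the inequalities in~\ref{subsub.escolhasparaavida}, the remainder is simply substitution of the endpoint values in (H3) and a direct application of (V2).
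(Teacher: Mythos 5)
Your proof mirrors the paper's own argument: decompose $h_2\circ h_1$ into a horizontal translation (justified by the $\varphi_1$-plateau together with (H3)) followed by a vertical push (justified by the $\varphi_2$-plateau together with (V2)), then observe $\mathcal Q\subset R$ for the trapping conclusion. Your version is a bit more explicit about the plateau checks and the trapping deduction than the paper, which simply states that $h_1(x,y)=(\bar x,y)$ with $b+2\eps_1+\delta_1\le\bar x\le\sigma^2(a-2\eps_1-\delta_1)$ and that consequently $\varphi_2(\bar x)=1$; the content is the same.
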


\begin{proof}
Take a point $(x,y)\in [b+\eps_1+\delta_1,\sigma^2(a-\eps_1-\delta_1)]\times [0,\sigma^{-n_0}(a-\eps_1)]$. Then, by definition of $h_1$ and by (H3) we have that $h_1(x,y)=(\bar{x},y)$, with $b+2\eps_1+\delta_1\leq\bar{x}\leq\sigma^2(a-2\eps_1-\delta_1)$. But this implies that $\varphi_2(\bar{x})=1$ and so $h_2(\bar{x},y)=(\bar{x},\xi_2(y))$. Now, the conclusion follows from (V2) .
\end{proof}

\bigskip

%
%

\subsection{Description of the statistical basin}

We are now in position to define the new diffeomorphism $f_1\in\difp$ by
\begin{equation}
f_1 = h_2\circ h_1\circ f_0\label{Def.h}.
\end{equation}
Since both diffeomorphisms $h_1$ and $h_2$ have their supports disjoint from the origin $O$, $f_0$ and $f_1$ coincide in a neighbourhood of $O$. In particular, $O$ is a hyperbolic fixed point of saddle type for $f_1$. Also, observe that $h_1|_{\cS}=h_2|_{\cS}=\Id$, and so $f_1|_{\mathcal{S}}=f_0|_{\mathcal{S}}$. In particular, the stable tower $\mathcal{S}$ is totally invariant under $f_1$ and moreover, as it follows from Lemma~\ref{suf}, we have the inclusion $\mathcal{S}\subset\cB_{f_1}(\delta_O)$. In this section and the next one, we will prove that actually $\cB_{f_1}(\delta_O)$ \emph{coincides} with $\mathcal{S}$, up to a nowhere dense zero Lebesgue measure set (this is what we mean by describing the statistical basin $\cB_{f_1}(\delta_O)$). In the following statement, which is the main result of this section, we shall use the notation
\[W^s(\mathcal{Q})\eqdef\left\{p\in\R^2;\,\exists\:n>0;f_1^n(p)\in\mathcal{Q}\right\}.\]
Moreover, recall that we denote by $\mathcal{L}_r^i\subset\R_+^2$ the interior of the figure-eight attractor of $f_0$ in the first quadrant. Finally, recall that the stable boxes $S_n$ were defined in Section \ref{subsecboxes}, while the $\varepsilon_1$-boxes $C_n$ were defined in Section \ref{subsub.moreboxes}.

From this moment, we will only deal with points of $\mathcal{L}_r^i$, the dynamics of points of $\mathcal{L}_\ell^i$ being identical up to a symmetry, and the dynamics of points of $\mathcal{L}^e$ being similar. The specific moments where the arguments differ will be pointed out in Subsection~\ref{SubsecExt}.

\begin{prop}\label{p.bluetrichotomy}
There exists a set\, $\Gamma\subset\bigcup_{n \geq n_0}\big(C_n \setminus S_n\big)$ (described in Definition~\ref{defpersistent}) such that if the positive orbit of a point $p\in\mathcal{L}_r^i$ under $f_1$ meets $[a-3\eps_1,\,\sigma^2(a-3\eps_1)]\times [0,\sigma^{-n_0-2}b]$, then there are only three possible cases:
\begin{enumerate}
\item $p\in\mathcal S$;
\item $p\in W^s(\mathcal Q)$;
\item $p\in\bigcup_{n\in\Z}f_1^{n}(\Gamma)$.
\end{enumerate}
Moreover,
\[\mathcal{L}_r^i\cap(W_{f_1}^s(O)\cup\mathcal{S})\,\,\subset\,\,\cB_{f_1}(\delta_O)\cap\mathcal{L}_r^i\,\,\subset\,\,\ W_{f_1}^s(O)\cup\mathcal{S}\cup\bigcup_{n\in\Z} f_1^{n}(\Gamma).\]
\end{prop}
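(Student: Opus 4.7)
The proof splits into two parts: the trichotomy for points of $\mathcal{L}_r^i$ whose positive orbit visits the return strip $R \eqdef [a-3\eps_1,\sigma^2(a-3\eps_1)]\times[0,\sigma^{-n_0-2}b]$, and the deduction of the basin description from this trichotomy.

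For the trichotomy, the plan is to let $q=f_1^N(p)$ be the first iterate in $R$ and run a case analysis according to the fine position of $q$. The strip $R$ is designed to contain every ``return'' of the orbit to a small neighborhood of the local branch of $W^s(O)$ along the $x$-axis, which is guaranteed by property~\eqref{p5} of Proposition~\ref{buildTowers}. Three cases arise. (i) If $q\in S_n$ for some $n\geq n_0$, then $q\in\mathcal S$; since $f_1$ coincides with $f_0$ on $\mathcal S$ and the tower $\mathcal S$ is $f_0$-invariant, the whole orbit of $q$ lies in $\mathcal S$, hence $p\in \mathcal S$. (ii) If $q$ lies outside every $C_n$, then a direct check using the horizontal translation given by (H3) for $\xi_1$ together with the upward push given by (V2) for $\xi_2$ shows that after a controlled number of further $f_1$-iterates the orbit reaches the box $[b+\eps_1+\delta_1,\sigma^2(a-\eps_1-\delta_1)]\times[-\sigma^{-n_0-1}(a-\eps_1),\sigma^{-n_0-1}(a-\eps_1)]$; by Lemma~\ref{rem.hdoisnoq}, the next iterate lies in the trapping region $\mathcal Q$, so $p\in W^s(\mathcal Q)$. (iii) If $q\in C_n\setminus S_n$ for some $n\geq n_0$, the situation is intermediate: either the orbit eventually drifts outside the $C_n$ at a later return and rejoins case (ii), or it keeps landing in $\bigcup_{n\geq n_0}(C_n\setminus S_n)$ at every return to $R$. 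The latter ``persistent'' orbits are precisely what defines the set $\Gamma$ (to be formalised in Definition~\ref{defpersistent}), so in this case $p\in\bigcup_{k\in\Z}f_1^k(\Gamma)$.

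The main obstacle, and the reason case (iii) is delicate, is to follow the interplay between the hyperbolic stretching of $f_0$ and the vertical push of $h_2$ across successive visits to $R$. The key ingredients will be the rigidity of the first-return map $g_0$, which becomes a rotation by $\pi/2$ after the rescaling $L_n$ (item~\eqref{p3} of Proposition~\ref{buildTowers}), together with the convexity assumption on $\varphi_2$ exploited via Remark~\ref{rem.convex} to get a definite lower bound on the vertical displacement produced at each return. This precise geometric analysis is carried out in Section~\ref{SecGeom}, where $\Gamma$ is identified explicitly; for the present proof it is enough to declare $\Gamma$ to be the complement, in $\bigcup_{n\geq n_0}(C_n\setminus S_n)$, of the union of $\mathcal S$ and $W^s(\mathcal Q)$.

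Granted the trichotomy, both inclusions describing $\cB_{f_1}(\delta_O)\cap\mathcal L_r^i$ are short. The inclusion $\mathcal L_r^i\cap(W^s_{f_1}(O)\cup\mathcal S)\subset\cB_{f_1}(\delta_O)\cap\mathcal L_r^i$ is immediate: any point of $W^s_{f_1}(O)$ has its orbit converging to $O$, and any point of $\mathcal S$ is in $\cB_{f_0}(\delta_O)$ by Corollary~\ref{c.bacia}, which transfers to $f_1$ because $f_1|_{\mathcal S}=f_0|_{\mathcal S}$. For the converse, take $p\in\cB_{f_1}(\delta_O)\cap\mathcal L_r^i$; if $p\in W^s_{f_1}(O)$ we are done, and otherwise $\mu_k(p)\to\delta_O$ forces the orbit of $p$ to visit every neighborhood of $O$ with positive density, in particular to meet $R$. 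The trichotomy then places $p$ in $\mathcal S\cup W^s(\mathcal Q)\cup\bigcup_{n\in\Z}f_1^n(\Gamma)$. The possibility $p\in W^s(\mathcal Q)$ is excluded: since $\overline{\mathcal Q}$ is $f_1$-invariant and lies at positive distance from $O$, the orbit would eventually be trapped in $\mathcal Q$, and any continuous test function supported in a small neighborhood of $O$ and vanishing on $\overline{\mathcal Q}$ would witness that $\mu_k(p)\not\to\delta_O$. Hence $p\in\mathcal S\cup\bigcup_{n\in\Z}f_1^n(\Gamma)$, which gives the desired inclusion.
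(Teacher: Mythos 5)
Your proof has the right high-level architecture (a trichotomy for orbits crossing the fundamental strip, then the two basin inclusions), which is also the paper's plan, but the case analysis carrying the trichotomy has genuine gaps.

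The first gap is in case (ii). You assert that when the return point $q$ lies outside every $C_n$, a ``direct check'' shows the orbit reaches the box $[b+\eps_1+\delta_1,\sigma^2(a-\eps_1-\delta_1)]\times[-\sigma^{-n_0-1}(a-\eps_1),\sigma^{-n_0-1}(a-\eps_1)]$ and hence falls into $\mathcal Q$. This is not true in general: a point outside $\mathcal C$ that is close to the lateral boundary of a box $C_n$ lands, after the perturbation $h_2\circ h_1$, on the support of the bump and can be pushed back \emph{into} the $\eps_1$-tower $\mathcal C$ (into some $C_m\setminus S_m$, or even $S_m$). This is precisely what the paper's Lemma~\ref{l.bigbacia2} (Case 3) records: the alternative is ``$f_1(p)\in\mathcal C$ or $f_1^2(p)\in\mathcal C$'' or ``$p\in W^s(\mathcal Q)$'', not ``$p\in W^s(\mathcal Q)$'' alone. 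Consequently an orbit can oscillate between case (ii) and case (iii), and your dichotomy does not close.

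The second, and more structural, issue is that you resolve case (iii) by \emph{redefining} $\Gamma$ to be ``the complement, in $\bigcup_{n\geq n_0}(C_n\setminus S_n)$, of $\mathcal S\cup W^s(\mathcal Q)$.'' With this declaration the trichotomy is tautological, but the proposition is about the $\Gamma$ of Definition~\ref{defpersistent}, which is a thin subset of the \emph{blue tower} $\Ba$, itself only a sliver of $\bigcup(C_n\setminus S_n)$. The real content is to show that any orbit that lives forever in $\mathcal C$ without entering $\mathcal Q$ must eventually fall into $\Ba$ and stay there under the first-return map --- which is the colour cascade of Lemma~\ref{l.allerverslesbleu} (orange $\to$ green $\to$ blue, with red, pink, and the green-to-exterior transitions going to $W^s(\mathcal Q)$). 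The hard step is the orange region: one must prove that a point wedged strictly between $S_n$ and the blue rectangles is expelled after finitely many returns, using that $\xi_1$ has an attracting fixed point $q_1$ and that the cumulated horizontal push would exceed the width $2\tilde\eps_1$ otherwise. None of this analysis can be dispensed with, and it is this lemma --- not the geometry of Section~\ref{SecGeom} --- that your sketch needs to invoke to justify case (iii). (Section~\ref{SecGeom} is about the measure and topology of $\Gamma$, not about proving the trichotomy.)

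Two smaller points. The claim that ``$\overline{\mathcal Q}$ is $f_1$-invariant'' is false; Lemma~\ref{l.criandopoco} gives only $f_1^{n_0+k_0+1}(\mathcal Q)\subset\interior(\mathcal Q)$, so $\mathcal Q$ is a \emph{periodic} trapping region. The conclusion survives if you instead use the compact forward-invariant set $\bigcup_{j=0}^{n_0+k_0}f_1^j(\overline{\mathcal Q})$, whose closure is at positive distance from $O$, but as written the step is wrong. Finally, the fundamental-domain property of the strip follows from item~\eqref{p1} of Proposition~\ref{buildTowers} (the linear hyperbolic form of $f_1$ inside $\mathcal V$), not from item~\eqref{p5}, which is an image estimate for $f_0^{k_0}$.
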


Both inclusions in the final line of the statement of Proposition \ref{p.bluetrichotomy} are symmetric with respect to the first and third quadrant, as all constructions along the paper. In Section \ref{SecGeom} we will prove that the whole orbit $\mathcal{O}_{f_1}(\Gamma)=\bigcup_{n\in\Z}f_1^{n}(\Gamma)$ of $\Gamma$ under $f_1$ has zero Lebesgue measure and is nowhere dense (see Proposition \ref{l.persmedidanula}). Note that, having established this fact, we do not need to worry about the intersection $\cB_{f_1}(\delta_O)\cap\mathcal{O}_{f_1}(\Gamma)$ in order to prove Theorem \ref{main.exemplonovo}. The conclusion about $\cB_{f_1}(\delta_O)$ in the statement of Proposition \ref{p.bluetrichotomy} will follow from the fact (that will be proved in the next subsection) that $\mathcal{Q}$ is a trapping region for some iterate of $f_1$, \emph{i.e.}, $f_1^N(\mathcal{Q})\subset\interior(\mathcal{Q})$ for some $N>0$. This will make the three cases of the proposition mutually exclusive. Let us give a rough sketch of the proof of Proposition~\ref{p.bluetrichotomy}. First we prove that every point $p$ as in the statement which is outside the $\eps_1$-tower $\mathcal C$ (see paragraph \eqref{subsub.moreboxes}) belongs either to $W^s(\mathcal Q)$, or the pre-orbit of $\mathcal C$. The dynamics inside the $\eps_1$-tower $\mathcal C$ is more intricate. We shall subdivide $\mathcal C \setminus\mathcal S$ into several regions all of which but one is wandering. The remaining region is also a tower of smaller rectangles, which is very close to the places where our perturbations $h_1$ and $h_2$ are being dissolved. We shall prove that the sole possibility for a point in the pre-orbit of the $\eps_1$ tower not belonging to $W^s(\mathcal Q)$ is that it has a future iterate which enters in this tower of smaller rectangles (the blue tower $\Ba$ defined below in \eqref{e.bluetower}) and never leave it again for future iterations. A point with this property is what we shall call a \emph{persistent point} (see Definition~\ref{defpersistent}). The set $\Gamma$ in the statement of Proposition \ref{p.bluetrichotomy} is precisely this set of persistent points.

\subsection{A trapping region for $f_1$}
Following the above sketch, the first property about $f_1$ that we are going to prove is that $\mathcal{Q}$, defined in \eqref{e.q}, is a periodic trapping region. In the next lemma (both on its statement and its proof) we use the notation and results from Proposition \ref{buildTowers}.  

\begin{lema}
\label{l.criandopoco}
The set $\mathcal{Q}$ is a periodic trapping region for $f_1$, that is:\[f_1^{n_0+k_0+1}(\mathcal{Q})\subset\interior(\mathcal{Q}).\]
\end{lema}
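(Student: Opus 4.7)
Plan. The overall idea is threefold.

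\emph{Reduction via invariance of supports.} I would first observe that $f_1$ and $f_0$ agree on the iterates of $\mathcal{Q}$ up to the penultimate step. Indeed, $f_0$ is linear equal to $\operatorname{Diag}(\sigma^{-2},\sigma)$ on $\mathcal{V}\supset\mathcal{Q}$, so $f_0(\mathcal{Q})$ has first coordinate in $[\sigma^{-2}(b+2\eps_1+\delta_1),\,a-2\eps_1-\delta_1]$, hence strictly below $b$. This lies outside the horizontal $x$-range $[b,\sigma^2 a]$ of $\supp(h_1)\cup\supp(h_2)$, so $f_1(\mathcal{Q})=f_0(\mathcal{Q})$. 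Iterating $f_0$ further continues to contract $x$ and expand $y$ inside $\mathcal{V}$, eventually carrying the orbit out of $\mathcal{V}$ into the rotational part of $f_0$; property \eqref{p1'} tells us that this rotational excursion takes exactly $k_0-5$ iterates, and during it the orbit is bounded away from the positive $x$-axis strip containing the supports. Thus $f_1^{j}(\mathcal{Q})=f_0^{j}(\mathcal{Q})$ for $j=0,1,\dots,n_0+k_0$, and consequently
\[
f_1^{n_0+k_0+1}(\mathcal{Q}) \;=\; h_2\circ h_1\bigl(f_0^{n_0+k_0+1}(\mathcal{Q})\bigr).
\]

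\emph{Locating $f_0^{n_0+k_0+1}(\mathcal{Q})$.} I would then establish
\[
f_0^{n_0+k_0+1}(\mathcal{Q}) \;\subset\; R_1 := [b+\eps_1+\delta_1,\sigma^2(a-\eps_1-\delta_1)]\times[-\sigma^{-n_0-1}(a-\eps_1),\sigma^{-n_0-1}(a-\eps_1)],
\]
by decomposing $\mathcal{Q}$ into horizontal slices according to $y_0$ and processing each slice with the appropriate tool. Property \eqref{p5} sends $[-\sigma^{-2n_0},\sigma^{-2n_0}]\times[\sigma^{-1}\tilde b,\tilde a]$ inside $[\tilde b,\sigma^2\tilde a]\times[-\sigma^{-n_0-4},\sigma^{-n_0-4}]$ after $k_0$ further iterates, while property \eqref{p3} gives the explicit rotation formula $R(x,y)=(a+b-y,x)$ on $(-\sigma^{-2n_0},\sigma^{-2n_0})\times[\tilde a,\tilde b]$. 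Combining these with the linearity of $f_0$ on $\mathcal V$ yields $f_0^{n_0+k_0+1}(\mathcal{Q})\subset[\tilde b,\sigma^2\tilde a]\times[-\sigma^{-n_0-4},\sigma^{-n_0-4}]$ (up to a controlled enlargement). The inclusion into $R_1$ then follows from the standing choices $\tilde b\geq b+5\eps_1$, $\tilde a\leq a-5\eps_1$, and $\delta_1=\eps_1/10$, which comfortably yield $\tilde b>b+\eps_1+\delta_1$, $\sigma^2\tilde a<\sigma^2(a-\eps_1-\delta_1)$, and $\sigma^{-n_0-4}<\sigma^{-n_0-1}(a-\eps_1)$.

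\emph{Contraction into $\interior(\mathcal{Q})$.} Finally, Lemma~\ref{rem.hdoisnoq} shows that $h_2\circ h_1$ sends $R_1$ strictly inside $\mathcal{Q}$; combining the three steps yields $f_1^{n_0+k_0+1}(\mathcal{Q})\subset\interior(\mathcal{Q})$ as required.

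\emph{Main obstacle.} The delicate part is the middle step: the image $f_0^{n_0+1}(\mathcal{Q})$ has $y$-range $[\sigma^{-1}(b+4\eps_1),\,a-3\eps_1]$, which strictly contains the fundamental domain $[\sigma^{-1}\tilde b,\tilde a]$ of \eqref{p5} on both sides, while its $x$-range reaches $\sigma^{-2n_0}(a-2\eps_1-\delta_1)$, marginally exceeding $\sigma^{-2n_0}$. The argument therefore cannot invoke \eqref{p5} as a single black box; one must partition $\mathcal{Q}$ by $y_0$-level and, for each slice, interleave the linear dynamics on $\mathcal V$ with the explicit rotation formula of \eqref{p3} or with \eqref{p5}, verifying case by case that every resulting piece of $f_0^{n_0+k_0+1}(\mathcal{Q})$ lands inside $R_1$ so that the final application of $h_2\circ h_1$ can close the trap.
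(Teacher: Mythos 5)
Your proposal follows exactly the paper's strategy: reduce to $f_0$ on the first $n_0+k_0$ iterates (the supports of $h_1,h_2$ are disjoint from these), compute $f_0^{n_0+k_0+1}(\mathcal{Q})$ by combining the linearity on $\mathcal{V}$ with items~\eqref{p3} and~\eqref{p5} of Proposition~\ref{buildTowers}, and close the trap via Lemma~\ref{rem.hdoisnoq}. The ``main obstacle'' you flag (slices of $f_0^{n_0+1}(\mathcal{Q})$ whose $y$-range overspills the fundamental domain of~\eqref{p5}) is precisely what the paper handles by locating the two boundary segments with~\eqref{p3} and then invoking~\eqref{p5} for the interior vertical intervals, so your outline and the paper's argument match.
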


\begin{figure}
\begin{center}
\begin{tikzpicture}[scale=5]
\clip (-.35,-.2) rectangle (2.1,1.9);

\fill[color=blue!10!white] (0.98,0.98) rectangle (0.39, -.15);
\draw (0,1.6) -- (0,0) -- (1.6,0);
\draw[->,>=latex] (1.6,0) -- (.17,0) node[below]{$W^s(O)$};
\draw[->,>=latex] (0,0) -- (0,.44) node[left]{$W^u(O)$};
\draw (0,0) node{$\bullet$} node[below left]{$O$};
\draw[color=blue!50!black] (0.62,.65) node{\small $\operatorname{supp}(h_2)$};

\foreach \i in {0,...,3}
{\foreach \j in {0,...,6}
{\draw[fill=black!10!white] (0.25^\i,0.5^\j) rectangle (0.25^\i*1.5,0.5^\j*1.5);}}

\fill[pattern=crosshatch dots] (0.98,.98) rectangle (0.39, 0.76);
\draw[color=red, very thick] (0.98,0.98) -- (0.98,0.76);
\draw[color=blue, very thick] (0.98,0.76) -- (0.39,0.76);
\draw[color=green!70!black, very thick] (0.39,0.76) -- (0.39, 0.98);
\draw[color=yellow!90!black, very thick] (0.39, 0.98) -- (0.98,0.98);

\fill[pattern=crosshatch dots] (.24,1.8) rectangle (0.1,1.51);
\draw[color=red, very thick] (.24,1.8) -- (.24,1.51);
\draw[color=blue, very thick] (.24,1.51) -- (0.1,1.51);
\draw[color=green!70!black, very thick] (0.1,1.51) -- (0.1,1.8);
\draw[color=yellow!90!black, very thick] (0.1,1.8) -- (.24,1.8);

\fill[pattern=crosshatch dots] (.98,.24) to[out=180,in=-0] (0.39,.5) -- (0.39,0.2) to[in=180, out=-0] (.98,0.1) -- cycle;
\draw[color=yellow!90!black, very thick] (0.39,.5) -- (0.39,0.2);
\draw[color=red, very thick] (.98,.24) to[out=180,in=-0] (0.39,.5);
\draw[color=green!70!black, very thick] (0.39,0.2) to[in=180, out=-0] (.98,0.1);
\draw[color=blue, very thick] (.98,.24) -- (.98,0.1);

\draw[->] (0.38,0.85) to[bend left] (0.18,1.49);
\draw[->] (0.27,1.62) .. controls +(1.8,.8) and +(1.4,0.3) .. (1,0.2);
\draw[->] (0.85,0.3) to[bend right] node[midway, left]{$h_2\circ h_1$} (0.85,0.72);
\draw (0.17, 0.92) node{$f_0^{n_0}$};
\draw (1.97,1) node{$f_0^{k_0+1}$};

\draw (0.7,1.05) node{$\mathcal Q$};
\draw (1.25,1.25) node{$f_0^{-1}(S_{n_0})$};
\draw (1.25,.625) node{$f_0^{-1}(S_{n_0+1})$};

\draw (1,.02) -- (1,-.03) node[below]{$\sigma^2 a$};
\draw (.375,.02) -- (.375,-.03) node[below]{$b$};
\draw (.02,1) -- (-.03,1) node[left]{$\sigma^{-n_0-1}a$};
\draw (.02,.75) -- (-.03,.75) node[left]{$\sigma^{-n_0-2}b$};

\end{tikzpicture}
\caption{\label{FigTrapped}Trajectory of the trapped region $\mathcal Q$: one has $f_1^{n_0+k_0}(\mathcal Q) = h_2\circ h_1(f_0^{n_0+k_0+1}(\mathcal Q))\subset\mathcal{Q}$.}
\end{center}
\end{figure}

\begin{proof}
The reader may refer to Figure \ref{FigTrapped}. The idea is that the first $n_0+k_0$ iterates of $\mathcal{Q}$ under $f_0$ lie outside the supports of $h_1$ and $h_2$, while $f_0^{n_0+k_0+1}(\mathcal{Q})$ lies within the rectangle which is mapped by $h_2\circ h_1$ inside the interior of $\mathcal{Q}$ (as in Lemma~\ref{rem.hdoisnoq}).
	
Let us give the precise argument. First, as $\mathcal Q\subset\mathcal V$, by \eqref{p1} of Proposition~\ref{buildTowers}, one has 
\[f_0^{n_0+1}(\mathcal Q) = \big[\sigma^{-2n_0-2}(b+2\eps_1+\delta_1),\,\sigma^{-2n_0}(a-2\eps_1-\delta_1)\big] \times \big[\sigma^{-1} (b+4\eps_1),\, a-3\eps_1\big],\]
and the $n_0+1$ first iterates of $\mathcal Q$ do not meet the supports of $h_1$ and $h_2$. By a similar argument, combined with \eqref{p1'} of Proposition~\ref{buildTowers}, the next iterates up to time $n_0+k_0$ do not meet the supports of $h_1$ and $h_2$ either.

We  claim that 
\[f_0^{n_0+k_0+1}(\mathcal Q) \subset [b+3\eps_1,\sigma^2(a-4\eps_1)]\times (0,\sigma^{-n_0-4}).\]
Once this claim is established the lemma is proved for we have seen above that $f_0=f_1$ along the first $n_0+k_0$ iterates of $\mathcal{Q}$, and thus the claim implies that 
\[
f_1^{n_0+k_0+1}\big(\mathcal{Q}\big)\subset h_2\circ h_1\big([b+3\eps_1,\sigma^2(a-4\eps_1)]\times (0,\sigma^{-n_0-4})\big),
\] 
and the right-hand side above is included in $\interior(\mathcal{Q})$ due to Lemma~\ref{rem.hdoisnoq}.

To prove the claim, we use \eqref{p3} of Proposition~\ref{buildTowers}, which says that the dynamics near the two horizontal sides of $f_0^{n_0+1}(\mathcal Q)$ by $f_0^{k_0}$ is a rotation of $\pi/2$ centred at $(\frac{a+b}{2},\frac{a+b}{2})$, to conclude that the boundary of the set $f_0^{n_0+k_0+1}(\mathcal Q)$ contains the vertical segments $\{b+3\eps_1\}\times \big[\sigma^{-2n_0-2}(b+2\eps_1+\delta_1),\,\sigma^{-2n_0}(a-2\eps_1-\delta_1)\big]$ and  $\{\sigma^2(a-4\eps_1)\}\times \big[\sigma^{-2n_0-5}(b+2\eps_1+\delta_1),\,\sigma^{-2n_0-3}(a-2\eps_1-\delta_1)\big]$ (the yellow and the blue segments in Figure~\ref{FigTrapped}, respectively).   

Therefore, applying now \eqref{p5} of Proposition~\ref{buildTowers}, we deduce that any vertical interval inside $f_0^{n_0+1}(\mathcal Q)$ is mapped by $f_0^{k_0}$ to a curve contained in $[b+3\eps_1,\sigma^2(a-4\eps_1)]\times (0,\sigma^{-n_0-4})$, concluding. 
\end{proof}

\subsection{Dynamics outside the $\eps_1$-tower}

Recall Subsection~\ref{subsub.moreboxes}, where we have defined the $\eps_1$-tower $\mathcal{C}$, which contains the whole orbit under $f_0$ of the stable boxes $S_n$ and the regions where we dissolve $h_1$ and $h_2$. In particular, $\mathcal{C}$ is an $f_0$-invariant set. The dynamics on the complement of $\mathcal{C}$ under $f_1$, as we shall see below, is quite simple: every point there which comes close to $O$ eventually hits the trapping region $\mathcal{Q}$. 

Recall that $\mathcal{V}$ is the neighbourhood of $O$ inside of which the dynamics of $f_0$ is linear (see Proposition~\ref{buildTowers}). 

\begin{lema}
\label{l.bigbacia2}
Let $\tilde p=(x,y)\in\R^2$ with $a-3\eps_1\leq x\leq\sigma^2 (a-3\eps_1)$ and $0<y<\sigma^{-n_0-2}(a-3\eps_1)$. Consider $p = f_0^{-1}(\tilde p)$. Then at least one of the following holds:
\begin{enumerate}
	\item $f_1(p)\in\mathcal{C}$ or $f_1^2(p)\in\mathcal{C}$;
	\item $p\in W^s(\mathcal{Q})$.
\end{enumerate}
\end{lema}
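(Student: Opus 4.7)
The plan is a case analysis on the horizontal coordinate $x$ of $\tilde p=(x,y)$. First observe that the hypothesis $y\in(0,\sigma^{-n_0-2}(a-3\eps_1))$ automatically places $y$ in the vertical slab $[-\sigma^{-n_0-1}(a-\eps_1),\sigma^{-n_0-1}(a-\eps_1)]$ of the rectangle $\mathcal R$ from Lemma~\ref{rem.hdoisnoq}, and that the inequality $\sigma^2(a-3\eps_1)<\sigma^2(a-\eps_1-\delta_1)$ (which follows from $\delta_1=\eps_1/10<2\eps_1$) excludes the subcase $x>\sigma^2(a-\eps_1-\delta_1)$. Hence only two $x$-subcases have to be treated.

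When $x\in[b+\eps_1+\delta_1,\sigma^2(a-3\eps_1)]$ (the \emph{good subcase}), $\tilde p$ belongs to $\mathcal R$, so Lemma~\ref{rem.hdoisnoq} gives $f_1(p)=h_2\circ h_1(\tilde p)\in\mathcal Q$, hence $p\in W^s(\mathcal Q)$, and conclusion~(2) holds.

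When $x\in[a-3\eps_1,b+\eps_1+\delta_1)$ (the \emph{left subcase}), combining (H1) and (H3) with the fact that $\varphi_2\equiv 0$ on $[a-3\eps_1,b+2\eps_1]$ shows that $h_1$ and $h_2$ act only mildly on $\tilde p$: when $x\le b$ both maps are the identity, and when $x\in(b,b+\eps_1+\delta_1)$ one has $\xi_1(x)\in(b,b+2\eps_1+\delta_1]\subset[a-3\eps_1,b+3\eps_1]$ with $\varphi_2(\xi_1(x))$ small (so that, by Remark~\ref{rem.convex}, the upward push of $h_2$ is controlled). In every case the horizontal coordinate of $f_1(p)$ remains in the $x$-slab $[a-3\eps_1,b+3\eps_1]$ common to all the boxes $C_n$. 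Provided the resulting vertical coordinate $\bar y$ falls in some $\sigma^{-n}[a-3\eps_1,b+3\eps_1]$ with $n\ge n_0$, one concludes $f_1(p)\in C_n\subset\mathcal C$ and conclusion~(1) holds.

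The delicate case, and the main obstacle, is the \emph{vertical-gap} case, in which the vertical coordinate lands in $(\sigma^{-n}(b+3\eps_1),\sigma^{-n+1}(a-3\eps_1))$ for some $n\ge n_0+3$. Because $f_0$ acts linearly as $\operatorname{Diag}(\sigma^{-2},\sigma)$ on $\mathcal V$ and preserves the multiplicative structure of the gap, no iterate $f_0^j(\tilde p)$ belongs to $\mathcal C$, so neither $f_1(p)$ nor $f_1^2(p)$ does: the first two iterates cannot witness conclusion~(1). To obtain conclusion~(2) one follows the orbit along its figure-eight excursion: as long as $\sigma^{-2j}x<b$ the iterates avoid $\operatorname{supp}(h_1)\cup\operatorname{supp}(h_2)$, and therefore $f_1^{j+1}(p)=f_0^j(\tilde p)$. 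Since $[\sigma^{-1}\tilde b,\tilde b]$ has multiplicative width exactly $\sigma$, there exists an integer $j\ge n_0+1$ such that $\sigma^j y\in[\sigma^{-1}\tilde b,\tilde a]$ and $\sigma^{-2j}x\in(-\sigma^{-2n_0},\sigma^{-2n_0})$. Item~(v) of Proposition~\ref{buildTowers} then places $f_0^{j+k_0}(\tilde p)$ in $[\tilde b,\sigma^2\tilde a]\times[-\sigma^{-n_0-4},\sigma^{-n_0-4}]\subset\mathcal R$, and a further application of $f_1$ sends the orbit into $\mathcal Q$, yielding $p\in W^s(\mathcal Q)$. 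The alternative sub-case where $\sigma^j y$ lands in $\tilde I\setminus[\sigma^{-1}\tilde b,\tilde a]$ (so item~(iii) of Proposition~\ref{buildTowers} applies in place of item~(v)) is handled by iterating the return map $g_0$ until the orbit either enters $\mathcal R$ or lands inside some $C_m$. The difficulty here is precisely that the local behaviour under one or two iterations of $f_1$ is not enough and one must use the fine-tuned global return dynamics of Proposition~\ref{buildTowers}.
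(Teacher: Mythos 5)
Your overall strategy matches the paper's: a case analysis on the abscissa $x$ of $\tilde p$, with Lemma~\ref{rem.hdoisnoq} sending the ``good'' abscissas directly into $\mathcal{Q}$, and the remaining abscissas handled by following the $f_0$-orbit around the figure-eight via Proposition~\ref{buildTowers} until it re-enters the fundamental domain. Your good subcase is correct (and in fact uses the full horizontal width of the rectangle in Lemma~\ref{rem.hdoisnoq}, which is slightly broader than the range the paper uses in its Case~1).

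However, the left subcase contains a genuine gap on the thin strip $x\in[b+\eps_1,b+\eps_1+\delta_1)$. There, (H3) gives $\xi_1(x)\in[b+2\eps_1,b+2\eps_1+\delta_1)$, which sweeps across the whole ramp of $\varphi_2$ from $0$ to (nearly) $1$; the assertion that ``$\varphi_2(\xi_1(x))$ is small'' is therefore false, and the vertical push $\varphi_2(\xi_1(x))\big(\xi_2(y)-y\big)$ can be of the same order as $\xi_2(y)-y$, which by (V2) is comparable to the whole height of $\mathcal{Q}$'s vertical slab. Remark~\ref{rem.convex} gives no bound on this push; it is a conditional statement about the convexity of $\varphi_2$ under a smallness hypothesis on the \emph{image}, not the other way around. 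As a consequence, $\bar y$ can jump into a completely different vertical gap (or into $\mathcal{Q}$'s vertical slab with $\xi_1(x)$ still strictly to the left of $\mathcal{Q}$), and your subsequent vertical-gap argument, which is built around the identity $f_1^{j+1}(p)=f_0^j(\tilde p)$ and the threshold $n\ge n_0+3$ derived from the original $y$, no longer applies: that identity requires $h_2\circ h_1$ to fix $\tilde p$, hence $x\le b$, whereas for $x\in[b+\eps_1,b+\eps_1+\delta_1)$ the orbit to follow is that of $(\xi_1(x),\bar y)\neq\tilde p$. The paper sidesteps this by supposing $p\notin\mathcal{C}$ from the outset and by treating the $y$-gap case only for abscissas at which $h_2\circ h_1$ literally fixes $\tilde p$, so that the first return to the fundamental domain is a pure $f_0$-return landing in its Case~1 (where Lemma~\ref{rem.hdoisnoq} applies). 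To close your gap you would have to track the actual $f_1$-orbit of $p$ through the perturbation in the intermediate strip — the kind of book-keeping the paper instead carries out later, in the proof of Lemma~\ref{l.allerverslesbleu}.
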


Remark that the set $[a-3\eps_1,\,\sigma^2 (a-3\eps_1)] \times [0,\, \sigma^{-n_0-2}(a-3\eps_1)]$ is a fundamental domain for the action of $f_0$: each point $p\in \mathcal{L}_r^i\subset\R_+^2$ that comes close enough to the stable manifold of $O$ has an iterate by $f_1$ whose image by $f_0$ crosses this set (and this happens only once at each return in $\mathcal V$).

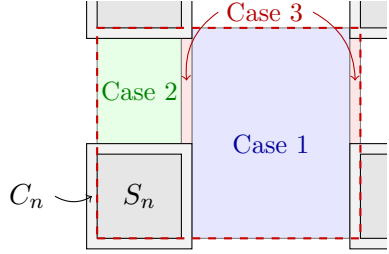
\begin{figure}
\begin{center}
\begin{tikzpicture}[scale=.7]
\clip (-2.5,-.5) rectangle (5.7,4.7);

\filldraw[color=gray, fill=blue!10!white] (2,.2) rectangle (5,4.2);
\draw[color=blue!60!black] (3.5,2) node{\small Case 1};
\filldraw[color=gray, fill=green!10!white] (.2,2) rectangle (1.8,4);
\draw[color=green!50!black] (1,3) node{\small Case 2};

\filldraw[color=gray, fill=red!10!white] (5,2) rectangle (5.2,4);
\filldraw[color=gray, fill=red!10!white] (1.8,2) rectangle (2,4);
\draw[color=red!70!black,<-] (5.1,3.2) to[bend right] (4.3,4.5) node[left]{\small Case 3};
\draw[color=red!70!black,<-] (1.9,3.2) to[bend left] (2.5,4.5);

\draw[fill=gray!10!white] (0,0) rectangle (2,2);
\draw[fill=gray!10!white] (5,0) rectangle (7,2);
\draw[fill=gray!10!white] (0,4) rectangle (2,6);
\draw[fill=gray!10!white] (5,4) rectangle (7,6);

\draw[fill=gray!20!white] (.2,.2) rectangle (1.8,1.8);
\draw[fill=gray!20!white] (5.2,.2) rectangle (6.8,1.8);
\draw[fill=gray!20!white] (.2,4.2) rectangle (1.8,5.8);
\draw[fill=gray!20!white] (5.2,4.2) rectangle (6.8,5.8);
\draw[color=red!80!black, thick, dashed] (.2,.2) rectangle (5.2,4.2);

\draw(1,1) node{$S_n$};
\draw[<-] (.1,1) to[bend left] (-.6,1) node[left]{$C_n$};
\end{tikzpicture}
\caption{\label{FigCasesLemBacia}The different cases of the proof of Lemma~\ref{l.bigbacia2}}
\end{center}
\end{figure}

\begin{proof}
There exists some (unique) integer $n\ge n_0+2$ such that $\sigma^{-n-1}a\le y < \sigma^{-n}a$ (see the red dashed rectangle of Figure~\ref{FigCasesLemBacia}). We suppose that $p\notin \mathcal C$, and break down the argument into several cases (depicted in Figure~\ref{FigCasesLemBacia}). 
\medskip

\paragraph{\textit{Case 1: $x\in[b+3\eps_1, \sigma^2(a-3\eps_1)]$}}
In this case, by Lemma \ref{rem.hdoisnoq}, one has $h_2\circ h_1(\tilde p)\in\mathcal Q$ and hence $f_1(p) \in \mathcal Q$.
\medskip

\paragraph{\textit{Case 2: $x\in[a, b]$ and $y\in [\sigma^{-n-1}(b+3\eps_1), \sigma^{-n}(a-3\eps_1)]$}}
In this case, $\tilde p$ is not in the support of $h_2\circ h_1$. Using Proposition~\ref{buildTowers} (as in the proof of Lemma~\ref{l.criandopoco}), one can see that the first return of $p$ in $[a-3\eps_1,\,\sigma^2 (a-3\eps_1)]\times [0,\sigma^{-n_0-2}(a-3\eps_1)]$ belongs to the set of Case 1, so $p\in W^s(\mathcal{Q})$.
\medskip

\paragraph{\textit{Case 3: $x\in[\sigma^2 (a-3\eps_1),\sigma^2 a]$ or $x\in[\sigma^2 b,\sigma^2 (b+3\eps_1)]$, and $y\in [\sigma^{-n-1}(b+3\eps_1), \sigma^{-n}(a-3\eps_1)]$}}
This case is a bit more complicated, as $\tilde p$ belongs to the support of $h_2\circ h_1$, but we do not know if $h_2(h_1(\tilde p))$ belongs to $\mathcal Q$ or not. Let us treat the case $x\in[\sigma^2 (a-3\eps_1),\sigma^2 a]$, the other being similar.

We know by the definitions of $h_2$ and $h_1$ that the first coordinate of $h_2(h_1(\tilde p))$ belongs to $[\sigma^2 (a-4\eps_1),\sigma^2 a]$, and that the second coordinate, denoted $\bar y$, is smaller that $\sigma^{-n_0-1} a$. We have three cases:
\begin{itemize}
\item $\bar y\ge \sigma^{-n_0-2}(b+4\eps_1)$. In this case, $h_2(h_1(\tilde p)) = f_1(p) \in\mathcal Q$, and hence $p\in W^s(\mathcal{Q})$.
\item $\bar y \in [\sigma^{-n}(a-3\eps_1),\, \sigma^{-n}(b+3\eps_1)]$ for some $n\ge n_0-1$. In this case, $h_2(h_1(\tilde p)) = f_1(p) \in f_0^{-1}(C_n)$, and hence $f_1^2(p)\in \mathcal C$.
\item $\bar y \in [\sigma^{-n-1}(b+3\eps_1),\, \sigma^{-n}(a-3\eps_1)]$ for some $n\ge n_0-1$. In this case, reasoning as in the proof of Lemma~\ref{l.criandopoco}, one can see that there is some $m\in\nt$ such that $f_0(f_1^m(p)) \in  [a-3\eps_1,\,\sigma^2 (a-3\eps_1)] \times [0,\, \sigma^{-n_0-2}(a-3\eps_1)]$, and that this point falls in Case 1 of the proof. Hence, $p\in W^s(\mathcal{Q})$.
\end{itemize}
\end{proof}

\subsection{Dynamics inside the $\eps_1$-tower and persistent points} In this subsection we prove the existence of a special subset $\Gamma\subset\bigcup_{n \geq n_0}\big(C_n \setminus S_n\big)$ so that any point which is not in the stable tower $\cS$ nor in the stable set $W^s(\mathcal{Q})$ of the trapping region belongs to the orbit of $\Gamma$. Similarly to in Definition~\ref{defgzero}, we shall introduce a first return map $g_1$ to $\bigcup_{n\in\nt} C_n$; the set $\Gamma$ will be invariant under this first return map. The set $\Gamma$ does contain points of the statistical basin $\cB_{f_1}(\delta_O)$, and that is why we \emph{cannot neglect it}: it is a natural technical difficulty arising from the strategy we adopted with the perturbations $h_1$ and $h_2$. In the subsequent section of the paper, we then move to study the dynamics, topology and measure of the set $\Gamma$ and its iterates under $f_1$ (see Propositions \ref{l.persmedidanula} and \ref{PropCoding}). The global picture of the dynamics inside the $\eps_1$-tower is described in Figure~\ref{FigShapesImgBox}.

As can be seen in Figure~\ref{FigShapesImgBox}, the box $f_0^{-1}(\tilde S_{n_0+1})$ has the special property that its left boundary meets the support of the perturbation $h_2\circ h_1$ (and is mapped inside $\mathcal Q$ by $h_2\circ h_1$), while its right boundary does not meet it (and its iterate by $f_1$ is equal to its iterate by $f_0$). This is the only box having such property, the other ones having both their left boundary and the image of their right boundary by $f_0$ meeting $\supp(h_2\circ h_1)$. This is the reason why we will define the number $n_1(h)$ below. We will do all the proofs for the boxes $\tilde S_n$ with $n\ge n_0+2$; the reader may check that these proofs still work in this special case $n=n_0+1$.

\begin{figure}
\begin{center}
\begin{tikzpicture}[scale=3.8]

\fill[color=blue!10!white] (0.1875, -.15) rectangle (.5,1);
\draw (2,.02) -- (2,-.03) node[below]{$\sigma^4 a$};
\draw (3,.02) -- (3,-.03) node[below]{$\sigma^4 b$};
\draw (.5,.02) -- (.5,-.03) node[below]{$\sigma^2 a$};
\draw (.75,.02) -- (.75,-.03) node[below]{$\sigma^2 b$};
\draw (0.1875,.02) -- (0.1875,-.03) node[below]{$b$};
\draw (.02,1) -- (-.03,1) node[left]{$\sigma^{-n_0-1}a$};
\draw (.02,.75) -- (-.03,.75) node[left]{$\sigma^{-n_0-2}b$};

\draw[color=blue!70!black,->] (.55,-.25) node[right]{$\operatorname{supp}(h_2\circ h_1)$} to[bend left] (.35,-.07);
\draw[->] (1.85,.13) to[bend left] (.8,.26);
\draw (1.25,.12) node {$f_1$};
\draw[->] (.45,.27) to[bend left] (.25,.47);
\draw (.35,.4) node {$f_1$};
\draw (.35,1.05) node{$\mathcal Q$};
\draw (1.35,.55) node{$f_0^{-1}(\mathcal Q)$};
\draw (0.25,1.7) node{$f_0(\mathcal Q)$};

\draw (0,2) -- (0,0) -- (3.1,0);
\draw[->,>=latex] (2.6,0) -- (1.7,0) node[below]{$W^s(O)$};
\draw[->,>=latex] (0,0) -- (0,.25) node[left]{$W^u(O)$};
\draw (0,0) node{$\bullet$} node[below left]{$O$};

\foreach \i in {0,...,4}
{\foreach \j in {0,...,7}
{\draw[fill=black!10!white] (0.25^\i*2,0.5^\j) rectangle (0.25^\i*1.5*2,0.5^\j*1.5);}}

\fill[color=white] (1.9,.9) rectangle (3.1,1.6);

\fill[pattern=crosshatch dots] (0.77, 0.405) rectangle (1.97,.47);
\fill[pattern=crosshatch dots] (0.2, .77) rectangle (.48,.98);
\fill[pattern=crosshatch dots] (0.06, 1.52) rectangle (.11,1.97);

\fill[color=red, opacity=.4] (1.9,0.11) rectangle (3.1,0.2);
\fill[color=red, opacity=.4] (0.46,.9) .. controls +(.02,0) and +(0,.2) .. (0.49,.7) .. controls +(0,-.2) and +(-.02,0) .. (0.5,0.4) -- (0.78,0.4) -- (0.78,0.22) -- (0.5,0.22) .. controls +(-.02,0) and +(0,-.2) .. (0.48,.6)
.. controls +(0,.2) and +(.02,0) ..
(0.46,0.85) -- cycle; 

\fill[color=red, opacity=.4] (0.1,1.9) .. controls +(.02,0) and +(-.02,0) .. (0.125,0.78) -- (0.19,0.78) .. controls +(.02,0) and +(-.02,0) .. (0.22,0.95) -- (0.22,0.9) .. controls +(-.02,0) and +(.02,0) .. (0.19,0.48) -- (0.125,0.48) .. controls +(-.03,0) and +(.02,0) .. (0.1,1.7) -- cycle ;

\draw[<-](.625,1.25)to[bend left] (.9,1.3) node[right]{$f_0^{-1}(S_{n_0})$};
\draw[<-](.625,.62)to[bend left] (.9,.8) node[right]{$f_0^{-1}(S_{n_0+1})$};
\draw[color=red!70!black] (1.92,.2) node[left] {$f_0^{-2}(C_{n_0+2})$};
\draw[color=red!70!black] (.78,.33) node[right] {$f_1f_0^{-2}(C_{n_0+2})$};
\draw[color=red!70!black, <-] (.15,.62) to[bend left] (-.05,.55) node[left] {$f_1^2f_0^{-2}(C_{n_0+2})$};

\end{tikzpicture}
\caption{\label{FigShapesImgBox}Dynamics of the $\eps_1$-boxes $C_n$.}
\end{center}
\end{figure}

\subsubsection{Decomposition of the $\eps_1$-tower}\label{subsubsec.lemadascores}
Let us recall some notations introduced in Proposition~\ref{buildTowers}: for $n\ge n_0$, $L_n$ is the affine identification between $S_n$ and $[-1,1]^2$ (see \eqref{e.ln}). Let $\tilde\eps_1 = 2\eps_1/(b-a)$\nomenclature{$\tilde \eps_1$}{Counterpart of $\eps_1$ in $[-1,1]^2$, that is, $\tilde\eps_1 = 2\eps_1/(b-a)$}, so that $L_n(C_n) = [-1-3\tilde\eps_1,1+3\tilde\eps_1]^2\subset L_n(\widetilde{S}_n)$. Similarly, we denote $\tilde\delta_1 = 2\delta_1/(b-a)$. We shall take adventage of these identifications to look at all boxes $C_n$ in a single drawing, as in Figure~\ref{FigSn}. Property (\ref{p3}) of Proposition \ref{buildTowers} implies that the restriction of $L_{2n} \circ f_0^{n+k_0} \circ L_n^{-1}$ to $L_n(C_n)$ is in fact the linear rotation of angle $\pi/2$ centred at the origin. Our next goal is to understand the action of $f_1^{n+k_0}$ on each box $C_n$. For this purpose, we first divide each $C_n$ into regions as depicted in Figure \ref{FigSn}, where the inside grey squares represent the image of the set $S_n$ under $L_n$. These regions are defined by the vertical lines of respective abscissa $a$, $a-\tilde\eps_1$, $a-2\tilde\eps_1$, $a-2\tilde\eps_1-\tilde\delta_1$, $a-3\tilde\eps_1$, $b$, $b+\tilde\eps_1$, $b+2\tilde\eps_1$, $b+2\tilde\eps_1-\tilde\delta_1$ and $b+3\tilde\eps_1$, and their images by the map $L_{2n} f_0^{n+k}(L_n)^{-1}$.

We shall subdivide this set into 5 groups, and we will analyse their dynamics separately. Along the next paragraphs, for the sake of simplicity, we shall consistently make reference to Figure~\ref{FigSn}. Let us give the precise definition of each one of these regions. We denote them $\cA^{C}_{h,v}(n)\subset C_n\setminus S_n$\nomenclature{$\cA^{C}_{h,v}$}{Regions of $L_n(C_n\setminus S_n)$}\nomenclature{$\cA^{C}_{h,v}(n)$}{Regions of $C_n\setminus S_n$, inverse images of the regions $\cA^{C}_{h,v}$ by $L_n$}, and they are defined from the sets $\cA^{C}_{h,v}$ by the formula
\[\cA^{C}_{h,v}(n) = L_n^{-1}\big(\cA^{C}_{h,v}\big) .\]
The upper script letter $C$ stands for the region's colour: $R$ for red, $B$ for blue, $G$ for green, $O$ for orange and $P$ for pink. For its part, $h\in \{\ell,r\}$ is either left or right, and $v\in\{t,b\}$ is either top or bottom. Finally, $n\ge n_0$ is the box's number.

These regions have special symmetry properties: denoting $s_v$ and $s_h$ the axial symmetries with axes respectively the $x$ and the $y$ axes, one has,
\begin{itemize}
\item $\cA^{B}_{r,t} = s_h(\cA^{B}_{\ell,t})$, $\cA^{B}_{\ell,b} = s_v(\cA^{B}_{\ell,t})$ and $\cA^{B}_{r,b} = s_v(\cA^{B}_{r,t})$;
\item for $C\in\{R,G\}$, $\cA^{C}_{v} = s_h(\cA^{C}_{v})$ and $\cA^{C}_{t} = s_v(\cA^{C}_{b})$;
\item $\cA^{P}_{h} = s_v(\cA^{P}_{h})$ and $\cA^{P}_{l} = s_h(\cA^{P}_{r})$;
\item $\cA^{O} = s_v(\cA^{O}) = s_h(\cA^{O})$.
\end{itemize} 
So it suffices to define the following boxes, as follows.

\medskip

\noindent\textbf{The red box} (cross-hatched rectangles in Figure~\ref{FigSn}):
\[\cA^{R}_{t} = [-1-3\tilde\eps_1,1+3\eps_1]\times[1+\tilde\eps_1+\tilde\delta_1, 1+3\tilde\eps_1].\]
\textbf{The blue box} (rectangles with tilted hatches in Figure~\ref{FigSn}):
\[\cA^B_{\ell,t} = [-1-3\tilde\eps_1,-1-2\tilde\eps_1]\times[1+\tilde\eps_1,1+\tilde\eps_1+\tilde\delta_1].\]
\textbf{The green box} (gridded rectangles in Figure~\ref{FigSn}):
\[\cA^{G}_{t} = [-1-2\tilde\eps_1,1+2\tilde\eps_1]\times[1+\tilde\eps_1,1+\tilde\eps_1+\tilde\delta_1].\]
\textbf{The orange region} (dotted region in Figure~\ref{FigSn}):
\[\cA^{O} = \Big([-1-2\tilde\eps_1,1+2\tilde\eps_1]\times[-1-\tilde\eps_1,1+\tilde\eps_1]\Big)\setminus\tilde S_n.\]
\textbf{The pink box} (hatched squares in Figure~\ref{FigSn}):
\[\cA^{P}_{\ell} = [-1-3\tilde\eps_1,-1-2\tilde\eps_1]\times[-1-\tilde\eps_1,1+\tilde\eps_1].\]
\medskip

\begin{figure}
\captionsetup{width=.48\linewidth}
\begin{minipage}[c]{.48\linewidth}
\resizebox{\textwidth}{!}{
\begin{tikzpicture}[scale=.5]

\draw[color=white] (4.5,-8) node [below]{\small $1+1.5\tilde\eps_1$}; 
\draw [color=white](-4.5,-8) node [below]{\small $-1-1.1\tilde\eps_1$} ;

\foreach \i in {-1,1}{
\foreach \j in {-1,1}{
\fill[color=blue, opacity=.4] (6*\i,4*\j) rectangle (5*\i,4.5*\j);
\fill[pattern=north east lines] (6*\i,4*\j) rectangle (5*\i,4.5*\j);
}}
\foreach \j in {-1,1}{
\fill[color=red, opacity=.4] (-6,4.5*\j) rectangle (6,6*\j);
\fill[pattern=crosshatch] (-6,4.5*\j) rectangle (6,6*\j);
\fill[color=orange, opacity=.4] (-5,3*\j) rectangle (5,4*\j);
\fill[pattern=crosshatch dots] (-5,3*\j) rectangle (5,4*\j);
\fill[color=orange, opacity=.4] (-5*\j,-3) rectangle (-3*\j,3);
\fill[pattern=crosshatch dots] (-5*\j,-3) rectangle (-3*\j,3);
\fill[color=magenta, opacity=.4] (5*\j,-4) rectangle (6*\j,4);
\fill[pattern=horizontal lines] (5*\j,-4) rectangle (6*\j,4);
\fill[color=green, opacity=.4] (-5,4*\j) rectangle (5,4.5*\j);
\fill[pattern=grid] (-5,4*\j) rectangle (5,4.5*\j);
}

\fill[color=black!10!white] (-3,-3) rectangle (3,3);

\draw[thick] (-7,-3) -- (7,-3);
\draw[thick] (-7,3) -- (7,3);
\draw[thick, color=red!80!black] (-7,-4) -- (7,-4);
\draw[thick, color=red!80!black] (-7,4) -- (7,4);
\draw[thick, color=green!60!black] (-7,-5) -- (7,-5);
\draw[thick, color=green!60!black] (-7,5) -- (7,5);
\draw[thick, color=blue!90!black] (-7,-6) -- (7,-6);
\draw[thick, color=blue!90!black] (-7,6) -- (7,6);
\draw[dotted] (-7,-4.5) -- (7,-4.5);
\draw[dotted] (-7,4.5) -- (7,4.5);

\draw[thick] (-3,-7) -- (-3,7);
\draw[thick] (3,-7) -- (3,7);
\draw[thick, color=red!80!black] (-4,-7) -- (-4,7);
\draw[thick, color=red!80!black] (4,-7) -- (4,7);
\draw[thick, color=green!60!black] (-5,-7) -- (-5,7);
\draw[thick, color=green!60!black] (5,-7) -- (5,7);
\draw[thick, color=blue!90!black] (-6,-7) -- (-6,7);
\draw[thick, color=blue!90!black] (6,-7) -- (6,7);
\node[draw,circle] at  (0,0) {$1$};

\draw(-3,-7) node [below]{$-1$};
\draw[color=red!80!black](-3,7) node [above]{\small $-1-\tilde\eps_1$};
\draw[color=green!60!black](-5.5,-7) node [below]{\small $-1-2\tilde\eps_1$} ;
\draw[color=blue!90!black](-5.8,7) node [above]{\small $-1-3\tilde\eps_1$};
\draw(3,-7) node [below]{$1$};
\draw[color=red!80!black](3.5,7) node [above]{\small $1+\tilde\eps_1$};
\draw[color=green!60!black](5.5,-7) node [below]{\small $1+2\tilde\eps_1$} ;
\draw[color=blue!90!black](5.8,7) node [above]{\small $1+3\tilde\eps_1$};

\end{tikzpicture}}
\caption{\label{FigSn} Regions of the set $L_n(C_n)$. \textcolor{white}{Lorem ipsum}}
\end{minipage}
\hfill
\begin{minipage}[c]{.48\linewidth}
\resizebox{\textwidth}{!}{
\begin{tikzpicture}[scale=.5]

\foreach \i in {-1,1}{
\foreach \j in {-1,1}{
\fill[color=blue, opacity=.4] (4*\i,6*\j) rectangle (4.5*\i,5*\j);
\fill[pattern=north east lines] (4*\i,6*\j) rectangle (4.5*\i,5*\j);
}}
\foreach \i in {-1,1}{
\fill[color=red, opacity=.4] (4.5*\i,-6) rectangle (6*\i,6);
\fill[pattern=crosshatch] (4.5*\i,-6) rectangle (6*\i,6);
\fill[color=orange, opacity=.4] (3*\i,-5) rectangle (4*\i,5);
\fill[pattern=crosshatch dots] (3*\i,-5) rectangle (4*\i,5);
\fill[color=orange, opacity=.4] (-3,-5*\i) rectangle (3,-3*\i);
\fill[pattern=crosshatch dots] (-3,-5*\i) rectangle (3,-3*\i);
\fill[color=magenta, opacity=.4] (-4,5*\i) rectangle (4,6*\i);
\fill[pattern=horizontal lines] (-4,5*\i) rectangle (4,6*\i);
\fill[color=green, opacity=.4] (4*\i,-5) rectangle (4.5*\i,5);
\fill[pattern=grid] (4*\i,-5) rectangle (4.5*\i,5);

}

\fill[color=black!10!white] (-3,-3) rectangle (3,3);

\draw[thick] (-7,-3) -- (7,-3);
\draw[thick] (-7,3) -- (7,3);
\draw[thick, color=red!80!black] (-7,-4) -- (7,-4);
\draw[thick, color=red!80!black] (-7,4) -- (7,4);
\draw[thick, color=green!60!black] (-7,-5) -- (7,-5);
\draw[thick, color=green!60!black] (-7,5) -- (7,5);
\draw[thick, color=blue!90!black] (-7,-6) -- (7,-6);
\draw[thick, color=blue!90!black] (-7,6) -- (7,6);

\draw[thick] (-3,-7) -- (-3,7);
\draw[thick] (3,-7) -- (3,7);
\draw[thick, color=red!80!black] (-4,-7) -- (-4,7);
\draw[thick, color=red!80!black] (4,-7) -- (4,7);
\draw[thick, color=green!60!black] (-5,-7) -- (-5,7);
\draw[thick, color=green!60!black] (5,-7) -- (5,7);
\draw[thick, color=blue!90!black] (-6,-7) -- (-6,7);
\draw[thick, color=blue!90!black] (6,-7) -- (6,7);
\draw[dotted] (-4.5,-7) -- (-4.5,7);
\draw[dotted] (4.5, -7) -- (4.5,7);
\node[draw,circle] at  (0,0) {$2$};

\draw[color=white](5.8,7) node [above]{\small $1+3\tilde\eps_1$};
\draw[color=white](-5.8,7) node [above]{\small $-1-3\tilde\eps_1$};

\draw (-4.5,-8) node [below]{\small $-1-\tilde\eps_1-\tilde\delta_1$} ;
\draw[->,>=latex](-4.5,-8) -- (-4.5,-7.1);
\draw (4.5,-8) node [below]{\small $1+\tilde\eps_1+\tilde\delta_1$}; 
\draw[->,>=latex](4.5,-8) -- (4.5,-7.1);
\end{tikzpicture}}
\caption{\label{Fig.umaiteracao}Image of the regions after the application of $f_0^{n+k_0}$.}
\end{minipage}

\vspace{10pt}

\begin{minipage}[c]{.48\linewidth}
\resizebox{\textwidth}{!}{
\begin{tikzpicture}[scale=.5]

\clip (-7,-9.1) rectangle (7,7);

\foreach \i in {-1,1}{
\foreach \j in {-1,1}{
\fill[color=blue, opacity=.4] (5*\i,6*\j) rectangle (5.5*\i,5*\j);
\fill[pattern=north east lines] (5*\i,6*\j) rectangle (5.5*\i,5*\j);
}}

\foreach \i in {-1,1}{
\fill[color=orange, opacity=.4] (3*\i,-5) rectangle (5*\i,5);
\fill[pattern=crosshatch dots] (3*\i,-5) rectangle (5*\i,5);
\fill[color=orange, opacity=.4] (-3,3*\i) rectangle (3,5*\i);
\fill[pattern=crosshatch dots] (-3,3*\i) rectangle (3,5*\i);
\fill[color=magenta, opacity=.4] (-5,5*\i) rectangle (5,6*\i);
\fill[pattern=horizontal lines] (-5,5*\i) rectangle (5,6*\i);
\fill[color=green, opacity=.4] (5*\i,-5) rectangle (5.5*\i,5);
\fill[pattern=grid] (5*\i,-5) rectangle (5.5*\i,5);
\fill[color=red, opacity=.4] (5.5*\i,-6) rectangle (7*\i,6);
\fill[pattern=crosshatch] (5.5*\i,-6) rectangle (7*\i,6);
}

\fill[color=black!10!white] (-3,-3) rectangle (3,3);

\draw[thick] (-7,-3) -- (7,-3);
\draw[thick] (-7,3) -- (7,3);
\draw[thick, color=red!80!black] (-7,-4) -- (7,-4);
\draw[thick, color=red!80!black] (-7,4) -- (7,4);
\draw[thick, color=green!60!black] (-7,-5) -- (7,-5);
\draw[thick, color=green!60!black] (-7,5) -- (7,5);
\draw[thick, color=blue!90!black] (-7,-6) -- (7,-6);
\draw[thick, color=blue!90!black] (-7,6) -- (7,6);

\draw[thick] (-3,-7) -- (-3,7);
\draw[thick] (3,-7) -- (3,7);
\draw[thick, color=red!80!black] (-4,-7) -- (-4,7);
\draw[thick, color=red!80!black] (4,-7) -- (4,7);
\draw[thick, color=green!60!black] (-5,-7) -- (-5,7);
\draw[thick, color=green!60!black] (5,-7) -- (5,7);
\draw[thick, color=blue!90!black] (-6,-7) -- (-6,7);
\draw[thick, color=blue!90!black] (6,-7) -- (6,7);
\node[draw,circle] at  (0,0) {$3$};

\draw[dotted] (-5.5,-7) -- (-5.5,7);
\draw[dotted] (5.5, -7) -- (5.5,7);
\draw (-5.5,-8) node [below]{\small $-1-2\tilde\eps_1-\tilde\delta_1$} ;
\draw[->,>=latex](-5.5,-8) -- (-5.5,-7.1);
\draw (5.3,-8) node [below]{\small $1+2\tilde\eps_1+\tilde\delta_1$}; 
\draw[->,>=latex](5.5,-8) -- (5.5,-7.1);
\end{tikzpicture}}
\caption{\label{Fig.afterh1} Image of the regions after the application of $(h_1\circ f_0)^{n+k_0}$.}
\end{minipage}
\hfill
\begin{minipage}[c]{.48\linewidth}
\resizebox{\textwidth}{!}{
\begin{tikzpicture}[scale=.5]

\clip (-7,-9.1) rectangle (7,7);

\foreach \i in {-1,1}{
\fill[color=orange, opacity=.4] (3*\i,-5) rectangle (5*\i,5);
\fill[pattern=crosshatch dots] (3*\i,-5) rectangle (5*\i,5);
\fill[color=orange, opacity=.4] (-3,3*\i) rectangle (3,5*\i);
\fill[pattern=crosshatch dots] (-3,3*\i) rectangle (3,5*\i);
\fill[color=magenta, opacity=.4] (-5,5*\i) rectangle (5,6*\i);
\fill[pattern=horizontal lines] (-5,5*\i) rectangle (5,6*\i);
\fill[color=blue, opacity=.4] (5*\i,5) -- (5*\i,6) ..controls +(\i,0) and +(0,-5).. (5.5*\i,21) -- (5.5*\i,20) ..controls +(0,-5) and +(\i,0).. (5*\i,5);
\fill[pattern=north east lines] (5*\i,5) -- (5*\i,6) ..controls +(\i,0) and +(0,-5).. (5.5*\i,21) -- (5.5*\i,20) ..controls +(0,-5) and +(\i,0).. (5*\i,5);
\fill[color=blue, opacity=.4] (5*\i,-5) -- (5*\i,-6) ..controls +(\i,0) and +(0,-5).. (5.5*\i,10) -- (5.5*\i,11) ..controls +(0,-5) and +(\i,0).. (5*\i,-5);
\fill[pattern=north east lines] (5*\i,-5) -- (5*\i,-6) ..controls +(\i,0) and +(0,-5).. (5.5*\i,10) -- (5.5*\i,11) ..controls +(0,-5) and +(\i,0).. (5*\i,-5);
\fill[color=green, opacity=.4] (5*\i,5) -- (5*\i,-5) ..controls +(\i,0) and +(0,-5).. (5.5*\i,10) -- (5.5*\i,20) ..controls +(0,-5) and +(\i,0).. (5*\i,5);
\fill[pattern=grid] (5*\i,5) -- (5*\i,-5) ..controls +(\i,0) and +(0,-5).. (5.5*\i,10) -- (5.5*\i,20) ..controls +(0,-5) and +(\i,0).. (5*\i,5);
}

\fill[color=black!10!white] (-3,-3) rectangle (3,3);

\draw[thick] (-7,-3) -- (7,-3);
\draw[thick] (-7,3) -- (7,3);
\draw[thick, color=red!80!black] (-7,-4) -- (7,-4);
\draw[thick, color=red!80!black] (-7,4) -- (7,4);
\draw[thick, color=green!60!black] (-7,-5) -- (7,-5);
\draw[thick, color=green!60!black] (-7,5) -- (7,5);
\draw[thick, color=blue!90!black] (-7,-6) -- (7,-6);
\draw[thick, color=blue!90!black] (-7,6) -- (7,6);

\draw[thick] (-3,-7) -- (-3,7);
\draw[thick] (3,-7) -- (3,7);
\draw[thick, color=red!80!black] (-4,-7) -- (-4,7);
\draw[thick, color=red!80!black] (4,-7) -- (4,7);
\draw[thick, color=green!60!black] (-5,-7) -- (-5,7);
\draw[thick, color=green!60!black] (5,-7) -- (5,7);
\draw[thick, color=blue!90!black] (-6,-7) -- (-6,7);
\draw[thick, color=blue!90!black] (6,-7) -- (6,7);
\node[draw,circle] at  (0,0) {$4$};

\draw[color=white] (-4.5,-8) node [below]{\small $-1-1.5\tilde\eps_1$} ;
\draw[color=white] (4.5,-8) node [below]{\small $1+1.5\tilde\eps_1$}; 
\end{tikzpicture}}
\caption{\label{Fig.afterh2}Image of the regions after the application of $(h_2\circ h_1\circ f_0)^{n+k_0}$.}
\end{minipage}
\end{figure}

We define the \emph{blue region}, as the union of the blue boxes that intersect the support of the perturbation $h_2\circ h_1$. More precisely, we set, for $h=\ell,r$,
\[n_1(h) = \begin{cases}
n_0+1 & \quad \text{if } h=\ell\\
n_0+2 & \quad \text{if } h=r,
\end{cases}\]
and
\begin{equation}
\label{e.bluetower}
\Ba = \bigcup_{\substack{h\in\{\ell,r\}\\ v\in\{t,b\}}}\, \bigcup_{n\ge n_1(h)}\cA^B_{h,v}(n)
\end{equation}

As it has a special place in our arguments, we shall also call the blue region as the \textit{blue tower}.

\subsubsection{Returns and persistent points}
Before moving to the main statement of this paragraph, we need some definitions.

\begin{definition}[First return map]\label{defg1} We define the \emph{first return map} $g_1$ of $f_1$ as the map $g_1:\bigcup_{n\geq n_0}\tilde{S}_n\to\R^2$ with $g_1|_{\tilde{S}_n}=f_1^{n+k_0}$. 
\end{definition}

This definition plays, for $f_1$, the role of Definition~\ref{defgzero} for $f_0$. The main difference is that some points in $\tilde{S}_n$ do not actually return to the extended stable tower\footnote{This phenomenon has already been shown in Lemma~\ref{l.bigbacia2}.}. This is just a reflect of the fact that the dynamics of $f_1$ is intrinsically more complicated than that of $f_0$. The next definition is devised precisely to deal with this.  

\begin{definition}[Persistent points]\label{defpersistent}
A point $p\in\mathcal{A}_{h,v}^{B}(n)$ in the blue tower $\Ba$ (meaning that $n\ge n_1(h)$) is called \emph{$1$-persistent} if $g_1(p)\in\Ba$.
We denote the set of 1-persistent points by $P_1(h,v,n)\subset \cA^B_{\ell,t}(n)  \subset\Ba$, and define the set $P_{k}(h,v,n)$ of \emph{$k$-persistent points} by induction:
$$P_{k}(h,v,n) = \big\{p\in \mathcal{A}_{h,v}^{B}(n) \,;\ \textrm{the}\:1^{st}\:\textrm{return of $p$ is $k-1$-persistent}\big\}.$$
Finally, we consider $\Gamma\eqdef\bigcup_{h,v,n}\bigcap_{k\in\nt}P_k(h,v,n)$ the set of \emph{persistent points}.
\end{definition}


By its very definition, $\Gamma$ is invariant under the first return map $g_1$.

\begin{definition}
	We define the map $\overline g$ on the triplets $(h,v,n)$ with $n\ge n_1(h)$ (or equivalently on the sets $\cA^{B}_{h,v}(n)$) that corresponds to the application of $g_1$ (see Figures~\ref{FigSn} to \ref{Fig.afterh2}):
	\[\overline g(h,v,n) = \left\{\begin{array}{ll}
	(\ell,t, 2n) & \text{ if } (h,v) = (r,t)\\
	(\ell,b, 2n) & \text{ if } (h,v) = (\ell,t)\\
	(r,t, 2n) & \text{ if } (h,v) = (\ell,b)\\
	(r,b, 2n) & \text{ if } (h,v) = (r,b).
	\end{array}\right.\]
Denoting $\overline g(h,v,n) = (\overline h,\overline v, \overline n)$, this allows to define a relation on the triplets
\[\big\{(h,v,n) \in \{l,r\}\times\{t,b\}\times \nt;\, n\ge n_1(h)\big\}\]
(or equivalently on the sets $\cA^{B}_{h,v}(n)$) by 
\[(h',v',n') \prec (h,v,n) \iff h'=\overline h \text{ and } \big(\overline n<n' \text{ or } n'=\overline n \text{ and } \overline v= b \big).\]
\end{definition}

This relation morally means ``$(h',v',n') \prec (h,v,n)$ if the image $f_1^{n+k_0}(\cA^{B}_{h,v}(n))$ intersects the rectangle $\cA^{B}_{h',v'}(n')$''. More precisely, we have the following lemma.

\begin{lema}\label{LemIntersecBlue}
For any $(h,v,n)$ and $(h',v',n')$, one has 
\[(h',v',n') \prec (h,v,n) \quad \iff\quad
\cA^{B}_{h',v'}(n') \cap f_1^{n+k_0}(\cA^{B}_{h,v}(n))\neq\emptyset.\]
\end{lema}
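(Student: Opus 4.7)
I would proceed by a direct case analysis on the four possibilities $(h,v) \in \{\ell,r\} \times \{t,b\}$, computing the image $f_1^{n+k_0}(\cA^B_{h,v}(n))$ explicitly in each case and comparing with the target blue boxes.

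The starting point is the formula from Proposition~\ref{buildTowers}(iv), which gives $f_0^{n+k_0}(x,y) = (a+b-\sigma^n y, \sigma^{-2n}x)$ on $\tilde S_n$; this acts as a $\pi/2$ rotation after rescaling by $L_n$ and $L_{2n}$, and turns each thin horizontal blue rectangle $\cA^B_{h,v}(n)$ into a thin vertical rectangle of $\tilde S_{2n}$ whose position (top/bottom, left/right) is determined by $(h,v)$. This is a purely linear computation from the explicit coordinates in Section~\ref{subsubsec.lemadascores}.

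Next, I would identify the intermediate iterates at which the perturbations $h_1$ and $h_2$ modify the orbit. Using property (p1) of Proposition~\ref{buildTowers}, the first $n$ iterates of a point $p \in \cA^B_{h,v}(n)$ stay in $\mathcal V$ and have $x$-coordinate shrinking by $\sigma^{-2}$ at each step, so they remain below $b$ and thus outside $\supp h_1 \cup \supp h_2$; by (p1'), the middle iterates lie in the rotational region $\mathcal U$ and so also avoid the supports. Only at the last one or two steps (either $n+k_0-1$ or $n+k_0$, depending on whether $v=t$ or $v=b$) does the orbit re-enter $\supp h_1$, at which point (H3) prescribes an explicit translation of the $x$-coordinate by $\pm\eps_1$, and (V3) together with the convexity of $\varphi_2$ (Remark~\ref{rem.convex}) controls the modification of the $y$-coordinate.

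With these modifications applied, the final image $f_1^{n+k_0}(\cA^B_{h,v}(n))$ is a thin deformed strip: its $x$-range is shifted by the translation of $h_1$, forcing $h' = \overline h$ for any non-empty intersection with a blue box; its $y$-range, parameterized by $\varphi_2 \in [0,1]$, sweeps from an ``unpushed'' value of order $\sigma^{-2n}$ (lying close to level $\overline n = 2n$) to a ``fully pushed'' value governed by the fixed point $q_2$ of $\xi_2$. Intersecting this strip with each candidate $\cA^B_{h',v'}(n')$ then yields exactly the set of triples satisfying $\prec$: the boundary behaviour is that when $\overline v = b$, the unpushed end of the image already lands in $\cA^B_{\overline h,\overline v}(\overline n)$ (including $n' = \overline n$), while when $\overline v = t$ the image misses this box and meets blue boxes only at deeper levels $n' > \overline n$.

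The main difficulty is the four-fold bookkeeping and the delicate interaction of $h_1$ and $h_2$ near the final step, since one must keep track of the precise intervals on which (H3) and (V3) apply and use the convexity of $\varphi_2$ to control the boundary case; fortunately, the $s_h$ and $s_v$ symmetries among the blue boxes reduce the number of independent computations to essentially one, the others following by symmetry.
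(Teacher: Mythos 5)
Your strategy matches the paper's proof exactly: compute $f_0^{n+k_0}(\cA^B_{h,v}(n))$ as an explicit rectangle via Proposition~\ref{buildTowers}\eqref{p3}, note that $h_1$ acts by a pure translation on that rectangle by (H3), and observe that $h_2$ turns each horizontal segment $\{y=y_0\}$ into a graph whose second coordinate sweeps monotonically from $y_0$ to $\xi_2(y_0)$ as $\varphi_2$ rises from $0$ to $1$. You also correctly locate the iterate at which the perturbation acts ($n+k_0$ for $v=b$, $n+k_0-1$ for $v=t$).

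However, the final conclusion has the direction backwards. Since $\xi_2(y_0)>y_0$ for small $y_0>0$, the sweep moves \emph{upward}, so the image meets boxes $\cA^B_{h',v'}(n')$ at \emph{shallower} levels $n'\le\overline n$ (i.e.\ $n'<2n$, plus $n'=2n$ exactly when $\overline v=b$), not at ``deeper levels $n'>\overline n$'' as you write. You appear to have copied the inequality from the printed definition of $\prec$, but that statement is inconsistent with both your own description of the sweep and with the paper's later use of $\prec$ (e.g.\ in the Inclination Lemma, $(\ell,b,n)\prec(\ell,t,m)$ gives $2m\ge n$, and Proposition~\ref{PropCoding} requires finitely many transitions, both of which force $n'\le\overline n$). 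Relatedly, the unpushed end does not literally ``land in'' $\cA^B_{\overline h,\overline v}(\overline n)$: it lies just outside it (in the red region at level $2n$), and it is only the upward sweep that passes through it. Finally, neither (V3) nor the convexity of $\varphi_2$ (Remark~\ref{rem.convex}) is needed here — monotonicity of $\varphi_2$ on the transition interval together with (V2) suffices; convexity is reserved for the inclination and size estimates.
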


\begin{proof}
We still denote $\overline g(h,v,n) = (\overline h,\overline v, \overline n)$. It can be easily checked, using (iv) of Proposition~\ref{buildTowers} that the set $f_0^{n+k_0}(\cA^{B}_{h,v}(n))$ is a rectangle. For instance (see Figure~\ref{Fig.umaiteracao}), 
\[f_0^{n+k_0}(\cA^{B}_{\ell,b}(n)) = [b+\eps_1,b+\eps_1+\delta_1] \times [\sigma^{-2n}(a-3\eps_1),\sigma^{-2n}(a-2\eps_1)].\]
Hence, by the definition of $h_1$ (see Figure~\ref{Fig.afterh1}), 
\[(h_1\circ f_0)^{n+k_0}(\cA^{B}_{\ell,b}(n)) = [b+2\eps_1,b+2\eps_1+\delta_1] \times [\sigma^{-2n}(a-3\eps_1),\sigma^{-2n}(a-2\eps_1)].\]
The application of $h_2$ is a bit more complicated. The image of any horizontal sub-segment of this set by $h_2$ is a graph over $[b+2\eps_1, b+2\eps_1+\delta_1]$: for any $y_0\in [\sigma^{-2n}(a-3\eps_1),\sigma^{-2n}(a-2\eps_1)]$, and any $x\in [b+\eps_1, b+\eps_1+\delta_1]$, by \eqref{EqDefH2},
\begin{equation*}
h_2(x,y_0) = \Big(x\,,\  y_0 + \varphi_2(x) \big(\xi_2(y_0)-y_0\big) \Big).
\end{equation*}
But in restriction to $[b+2\eps_1, b+2\eps_1+\delta_1]$, the map $\varphi_2$ increases from 0 to 1, hence when $x$ goes from $b+2\eps_1$ to $b+2\eps_1+\delta_1$, the second coordinate of $h_2(x,y_0)$ increases from $y_0$ to $\xi_2(y_0)$.

Hence, the image by $h_2$ of any horizontal sub-segment of $(h_1\circ f_0)^{n+k_0}(\cA^{B}_{\ell,b}(n))$ meets all the rectangles $\cA^{B}_{h',v'}(n')$ for $(h',v',n') \prec (h,v,n)$, but none of the rectangles $\cA^{B}_{h'',v''}(n'')$ for $(h'',v'',n'') \not\prec (h,v,n)$. The proof of the lemma is similar in the other cases for $h$ and $v$.
\end{proof}

Most of this section and next one is devoted to a detailed understanding of the set of persistent points. We establish below a set-theoretic equation that comes naturally from the recursive character of the definition. Later in the paper we shall refine this lemma in topological and geometrical terms.

\begin{lema}
\label{l.perkmais1}
For any $k\in\nt$, any $(h,v)\in\{\ell,r\}\times \{t,b\}$ and any $n\ge n_1(h)$, denoting $\theta = (h,v,n)$, one has
\begin{equation*}
\label{e.didactique}
P_{k+1}(\theta)=\bigcup_{\eta\prec\theta}f_1^{-n-k_0}\left(P_k(\eta)\cap f_1^{n+k_0}(\cA^B_{h,v}(n))\right).
\end{equation*}
\end{lema}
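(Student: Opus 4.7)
The plan is to unfold the inductive definition of $k$-persistence and use Lemma~\ref{LemIntersecBlue} to constrain which triplets $\eta$ can actually appear. By definition, $p\in P_{k+1}(\theta)$ means that $p\in\cA^B_{h,v}(n)$ and that its first return $g_1(p)=f_1^{n+k_0}(p)$ is $k$-persistent, i.e.\ $g_1(p)\in P_k(\eta)$ for some triplet $\eta=(h',v',n')$ with $n'\ge n_1(h')$. So the main task is to turn this characterisation into the stated formula.

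For the forward inclusion, if $p\in P_{k+1}(\theta)$ and $\eta$ is as above, then $g_1(p)\in P_k(\eta)\subseteq \cA^B_{h',v'}(n')$, while simultaneously $g_1(p)\in f_1^{n+k_0}(\cA^B_{h,v}(n))$. Hence $\cA^B_{h',v'}(n')\cap f_1^{n+k_0}(\cA^B_{h,v}(n))\neq\emptyset$, and Lemma~\ref{LemIntersecBlue} forces $\eta\prec\theta$. Applying $f_1^{-n-k_0}$ (which makes sense since $f_1$ is a diffeomorphism) then yields $p\in f_1^{-n-k_0}(P_k(\eta)\cap f_1^{n+k_0}(\cA^B_{h,v}(n)))$, which is one of the sets on the right-hand side.

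For the reverse inclusion, suppose $p\in f_1^{-n-k_0}(P_k(\eta)\cap f_1^{n+k_0}(\cA^B_{h,v}(n)))$ for some $\eta\prec\theta$. Then $f_1^{n+k_0}(p)$ belongs to $f_1^{n+k_0}(\cA^B_{h,v}(n))$, and since $f_1^{n+k_0}$ is injective this means $p\in \cA^B_{h,v}(n)$, so that $g_1(p)=f_1^{n+k_0}(p)$ is well defined. Moreover $g_1(p)\in P_k(\eta)$, so $g_1(p)$ is $k$-persistent, and therefore $p\in P_{k+1}(\theta)$ by definition.

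I do not expect any serious obstacle: the content is purely bookkeeping and follows from combining the recursive definition of $P_{k+1}$ with Lemma~\ref{LemIntersecBlue}, which already did the geometric work of identifying the triplets $\eta$ whose box is met by $f_1^{n+k_0}(\cA^B_{h,v}(n))$. The only mild subtlety is to remember that $g_1$ really is $f_1^{n+k_0}$ on $\cA^B_{h,v}(n)$ (Definition~\ref{defg1}), and that the formula genuinely picks out the preimage under this \emph{specific} iterate rather than under $g_1$ itself, so one has to restrict to the image $f_1^{n+k_0}(\cA^B_{h,v}(n))$ when inverting.
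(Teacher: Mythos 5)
Your proof is correct and follows essentially the same route as the paper: both directions rest on unpacking the recursive definition of $P_{k+1}$, the identification $g_1=f_1^{n+k_0}$ on $\cA^B_{h,v}(n)$, and Lemma~\ref{LemIntersecBlue} to restrict attention to triplets $\eta\prec\theta$. The only difference from the paper's exposition is the order in which the two inclusions are presented, which is immaterial.
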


\begin{proof}
Assume that $y\in f_1^{-n-k_0}\left(P_k(\eta)\cap f_1^{n+k_0}(\cA^B_{h,v}(n))\right)$, for some $\eta\prec\theta$. Notice that $y\in \cA^B_{h,v}(n)$ and that $g_1(y)=f_1^{n+k_0}(y)\in P_k(\eta)$, and so $g_1(y)$ is $k$ persistent. This proves that $y$ is $k+1$ persistent. Reciprocally, assume $y\in P_{k+1}(\theta)$. Then, by definition $g_1(y) = f_1^{n+k_0}(y)$ is $k$ persistent. In particular, there must exist some $\eta=(h',v',n')$ such that $g_1(y)\in P_k(\eta)\subset\cA^B_{h',v'}(n')$. Since moreover, $g_1(y)\in f_1^{n+k_0}(\cA^B_{h,v}(n))$, we deduce by Lemma~\ref{LemIntersecBlue} that $\eta\prec\theta$ and $g_1(y)\in P_k(\eta)\cap f_1^{n+k_0}(\cA^B_{h,v}(n))$, concluding.  
\end{proof}

\subsubsection{Dynamics of non-persistent points} The main result of this paragraph is the lemma below, which completely determines the dynamics inside the $\eps_1$-tower $\mathcal C$. The final conclusion is that the only points in this tower which are not in $W^s(\mathcal Q)$ are the persistent points. The complete statement is summarized in Figure~\ref{GraphTrans}: it describes all possible transitions between the ``coloured" regions under forward iteration of $f_1$. 

The proof consists in analysing separately the iterations of each ``coloured" region, and is depicted in Figures~\ref{FigSn}, \ref{Fig.umaiteracao}, \ref{Fig.afterh1} and \ref{Fig.afterh2}. 

\begin{lema}\label{l.allerverslesbleu} Every point in the extended tower $\mathcal C$ which does not belongs to $W^s(\mathcal Q)$ nor to the stable tower $\mathcal S$ has a forward iterate which is a persistent point:
$$\mathcal{C}\setminus\mathcal{S}\ \subset\ W^s(\mathcal Q)\cup\Big[\bigcup_{n\in\nt}f_1^{-n}(\Gamma)\Big].$$More precisely, for every $h\in\{\ell,r\}$ and $v\in\{t,b\}$, if $n\geq n_0$ then (see Figure \ref{GraphTrans}):
\begin{enumerate}
\item  
\emph{The red region is contained in $W^s(\mathcal{Q})$, i.e., for any $n\ge n_0$,}
$$\cA^R_{v}(n)\subset W^s(\mathcal{Q}).$$ 
\item 
\emph{Every element of the pink region is eventually mapped inside the red region, and in particular is contained in $W^s(\mathcal{Q})$: for any $n\ge n_0$,}
$$\cA^P_{h}(n)\subset\bigcup_{\tilde{v}\in\{t,b\}} g_1^{-1}\left(\cA^R_{\tilde{v}}(2n)\right)\subset W^s(\mathcal{Q}).$$
\item
\emph{Every element of the orange region is eventually mapped either inside the green region or inside the red region:}
$$\cA^O(n)\subset\bigcup_{\tilde{v}\in\{t,b\}}\bigcup_{m\in\mathbb{N}} g_1^{-m}\left(\cA^G_{\tilde{v}}(2^m n)\cup \cA^R_{\tilde{v}}(2^m n)\right).$$ 
\item 
\emph{Every element of the green region is eventually mapped either inside the blue tower, or in $W^s(\mathcal{Q})$}:
$$\cA^G_{v}(n)\subset W^s(\mathcal{Q})\cup\bigcup_{m>0}f_1^{-m}(\Ba).$$
\item\label{azullemadascores} \emph{Every element of the blue tower which is not in $W^s(\mathcal Q)$ is a persistent point, i.e.}
$$\Ba\subset W^s(\mathcal{Q})\cup\Gamma.$$
\end{enumerate}
\end{lema}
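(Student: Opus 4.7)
The proof analyses each coloured region separately using the factorisation $f_1^{n+k_0}|_{C_n} = h_2 \circ h_1 \circ f_0^{n+k_0}|_{C_n}$, which holds since, by Proposition~\ref{buildTowers}\eqref{p1}--\eqref{p1'}, the first $n+k_0-1$ iterates of a point of $C_n$ under $f_0$ lie either in the linear region $\mathcal V$ or in the rotational region, both of which are disjoint from $\supp(h_1)\cup\supp(h_2)$. Combining the explicit rotation formula of Property \eqref{p3} (which sends $C_n$ onto $C_{2n}$ via $(x,y)\mapsto(a+b-\sigma^{n}y,\sigma^{-2n}x)$) with the translation property H3 of $\xi_1$ and the image property V2 of $\xi_2$ makes the image of every coloured region explicitly computable, as illustrated in Figures~\ref{Fig.umaiteracao}--\ref{Fig.afterh2}.

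For item (1) on the red region: the ``outer'' red strip (the one whose rotated image lies in the support of $h_1$) has its $x$-coordinate translated by property H3 of $\xi_1$ into $[b+2\eps_1+\delta_1,b+4\eps_1]$, an interval on which $\varphi_2\equiv 1$; property V2 of $\xi_2$ then places the $y$-coordinate inside $(\sigma^{-n_0-2}(b+4\eps_1),\sigma^{-n_0-1}(a-3\eps_1))$, so the image lies in $\mathcal Q$. The ``inner'' red strip rotates to a vertical strip of $C_{2n}$ disjoint from $\supp(h_1)\cup\supp(h_2)$; a direct computation shows that one more application of $g_1$ places it into the first case, modulo a possible detour through a region of $C_{4n}$ handled by items (2)--(5). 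Items (2)--(4) follow the same scheme: the pink strip rotates to a horizontal strip contained in the red region of $C_{2n}$, giving (2) by reduction to (1); the orange annulus is pushed by $h_2\circ h_1$ partly into the red region of $C_{2n}$ (hence to $\mathcal Q$) and partly into the green region; the green strip is either pushed by $h_2$ (where $\varphi_2\equiv 1$) into $\mathcal Q$, or mapped by the non-constant portion of $\varphi_2$ into the blue tower $\Ba$, the boundary between these two behaviours being controlled by the convexity of $\varphi_2$ recorded in Remark~\ref{rem.convex}.

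Item (5) is then essentially a matter of definition: a point of $\Ba$ is either persistent under $g_1$, hence in $\Gamma$, or admits a smallest forward $g_1$-iterate lying outside $\Ba$; that iterate belongs to a red, pink, green or orange region of some $C_{2^m n}$ and, by items (1)--(4), eventually enters $\mathcal Q$. The global inclusion at the top of the lemma then follows by combining (1)--(5) with Lemma~\ref{l.bigbacia2}, which handles points whose orbit initially lies outside the $\eps_1$-tower. The most delicate step in the argument is the green region, where the nonlinear portion of the bump function $\varphi_2$ could \emph{a priori} push an image back into $\mathcal S$; ruling this out requires the precise convexity property of $\varphi_2$ stated in Remark~\ref{rem.convex}, and is the main technical obstacle of the proof.
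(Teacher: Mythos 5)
Your factorisation $f_1^{n+k_0}|_{C_n}=h_2\circ h_1\circ f_0^{n+k_0}|_{C_n}$ is not valid on the whole of $C_n$, and this hides the main mechanism of the paper's proof of item (1). What the paper actually uses is that, for the strip $\cA^R_t(n)$, the perturbations already act at the \emph{intermediate} iterate $n-1+k_0$: one computes
\[
f_0^{n-1+k_0}\big(\cA^R_t(n)\big)=\big[\sigma^2(a-3\eps_1),\sigma^2(a-\eps_1-\delta_1)\big]\times\sigma^{-(2n+1)}\big[a-3\eps_1,b+3\eps_1\big],
\]
which lies inside the region to which Lemma~\ref{rem.hdoisnoq} applies, so that $f_1^{n-1+k_0}(\cA^R_t(n))=h_2\circ h_1\circ f_0^{n-1+k_0}(\cA^R_t(n))\subset\mathcal Q$ already. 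Your ``inner red strip'' therefore never reaches $C_{4n}$ at all. The alternative you offer --- that one more return ``places it into the first case, modulo a possible detour through a region of $C_{4n}$ handled by items (2)--(5)'' --- does not work, for two reasons: it is geometrically false, since the perturbation acts during the first return and traps the set in $W^s(\mathcal Q)$; and even if it were geometrically true it would be circular, because items (2) and (4) are proved by reduction to item~(1), and item~(3) relies on the red/green behaviour.

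A second genuine gap is item (3). Saying that ``the orange annulus is pushed by $h_2\circ h_1$ partly into the red region and partly into the green region'' merely restates the transition diagram; it does not rule out an orange point that stays in the orange region for all returns. The heart of the paper's argument is a drift estimate: if a point remained in the orange annulus forever, the accumulated horizontal displacement produced by the repeated action of $\xi_1$ would grow without bound (since $\xi_1^j(\tilde y)\to q_1$), while the orange annulus has bounded width controlled by the choice of $\eps_1$ in \eqref{e.oitoepsilon}, giving a contradiction. This step is entirely absent from your sketch.

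Two smaller issues: in item (4) the image of the green strip partly leaves the $\eps_1$-tower $\mathcal C$, and the paper invokes Lemma~\ref{l.bigbacia2} to absorb that part into $W^s(\mathcal Q)$, a step your sketch omits; and the convexity of $\varphi_2$ (Remark~\ref{rem.convex}) is \emph{not} what controls the green region here --- it is only used later, in Lemma~\ref{LemInclination} and Lemma~\ref{LemEstSize}, for the inclination and size estimates on the persistent set --- so attributing the main technical obstacle of this proof to that convexity bound is misdirected.
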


\begin{figure}[ht]
\begin{center}
\begin{tikzpicture}[scale=.55]
\node[draw,circle,minimum height=1.2cm, fill=gray!25!white] (A) at (8,4) {$W^s(\mathcal{Q})$};
\node[draw,circle,minimum height=1.2cm, fill=orange!25!white] (B) at (0,0) {$\cA^{O}$};
\node[draw,circle,minimum height=1.2cm, fill=green!25!white] (C) at (4,0) {$\cA^{G}$};
\node[draw,circle,minimum height=1.2cm, fill=blue!25!white] (D) at (8,0) {$\cA^{B}$};
\node[draw,circle,minimum height=1.2cm, fill=magenta!20!white] (E) at (0,4) {$\cA^{P}$};
\node[draw,circle,minimum height=1.2cm, fill=red!30!white] (F) at (4,4) {$\cA^{R}$};

\draw[->,>=latex,shorten >=3pt, shorten <=3pt] (B) to (C);
\draw[->,>=latex,shorten >=3pt, shorten <=3pt] (C) to (D);
\draw[->,>=latex,shorten >=3pt, shorten <=3pt] (E) to (F);
\draw[->,>=latex,shorten >=3pt, shorten <=3pt] (F) to (A);
\draw[->,>=latex,shorten >=3pt, shorten <=3pt] (D) to (E);
\draw[->,>=latex,shorten >=3pt, shorten <=3pt] (D) to (F);
\draw[->,>=latex,shorten >=3pt, shorten <=3pt] (D) to (A);
\draw[->,>=latex,shorten >=3pt, shorten <=3pt] (C) to (E);
\draw[->,>=latex,shorten >=3pt, shorten <=3pt] (C) to (F);
\draw[->,>=latex,shorten >=3pt, shorten <=3pt] (C) to (A);
\draw[->,>=latex,shorten >=3pt, shorten <=3pt] (B) to (F);
\draw[->,>=latex,shorten >=3pt, shorten <=3pt] (B) to[loop left,looseness=8,min distance=10mm]node[midway, left]{finite} (B);
\draw[->,>=latex,shorten >=3pt, shorten <=3pt] (D) to[loop right,looseness=8,min distance=10mm] (D);
\end{tikzpicture}
\end{center}
\caption{Lemma \ref{l.allerverslesbleu}: any point eventually falls in $W^s(\mathcal{Q})$, or remains in the union of the blue rectangles $\cA^B$. The arrows represent the action of the first return map $g_1$.}\label{GraphTrans}
\end{figure}
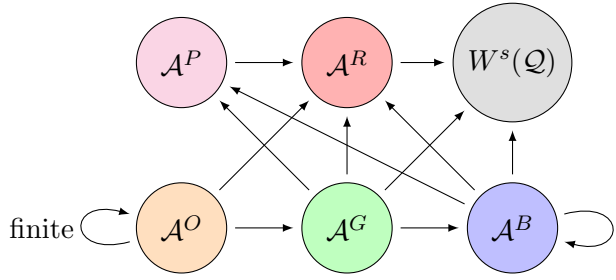

\begin{proof}[Proof of Lemma~\ref{l.allerverslesbleu}]
As the proof is similar for some regions, we will only give it for points (1), (3) and (4).
\begin{enumerate}
\item[(1)] Let us give a proof for a set $\cA^{R}_{t}(n)$, with $n\ge n_0$. As in the proof of Lemma~\ref{l.criandopoco}, we see that the sets $f_1^j(\cA^{R}_{t}(n))$ are disjoint from the supports of $h_1$ and $h_2$, for $j=1,\dots,n-2+k_0$. As can be seen in Figure~\ref{Fig.umaiteracao}, one has
\[f_0f_1^{n-2+k_0}(\cA^{R}_{t}(n)) = [\sigma^2(a-3\eps_1),\sigma^2(a-\eps_1-\delta_1)] \times [\sigma^{2n}(a-3\eps_1),\sigma^{2n}(b+3\eps_1)].\]Lemma~\ref{rem.hdoisnoq} then implies that 
\[h_2h_1f_0f_1^{n-2+k_0}(\cA^{R}_{t}(n)) = f_1^{n-1+k_0}(\cA^{R}_{t}(n)) \subset\mathcal Q.\]
The argument is analogous for $\cA^{R}_b(n)$.
\item[(2)] Reasoning as above one sees that $f_1^{n_0+k_0}\big(\cA^P_{h}(n)\big)\subset \cA^R_t(2n)\cup\cA^R_b(2n)$ (see Figure~\ref{Fig.afterh2}) and thus item (1) implies item (2).
\item[(3)] By the same arguments as before, as can be seen in Figures~\ref{FigSn} to~\ref{Fig.afterh2}, the orange region is mapped by $g_1$ into the union of the orange region with the green and the red regions. So it suffices to prove that any point of the orange region eventually leaves the orange region. Note that this is the only region which is not closed: any point of it is at a positive distance to $S_n$. Take $(x,y)\in \cA^O$ and $n\ge n_0$, and suppose by contradiction that the positive orbit of $L_n^{-1} (x,y)$ under $g_1$ stays forever in the orange region. In particular, it implies that $g_1^k(L_n^{-1} (x,y)) \in C_{2^k n}$ for every $k\ge 0$. Let us treat the case where $y<-1$, the case $y>1$ being similar and the remaining case $|x|>1$ being deduced from this one by an application of the return map $g_0$ associated to $f_0$.
We have
\[L_{2n} g_0 L_n^{-1} (x,y) = (-y,x) \eqdef L_{2n}(\tilde y,\tilde x).\]
By the definition of $h_1$, we deduce that 
\[L_{2n} g_1 L_n^{-1} (x,y) - L_{2n} g_0 L_n^{-1} (x,y) = (\xi_1(\tilde y)-\tilde y,0).\]
Iterating this process, we can easily prove that for any $j\ge 0$, the first coordinate of $L_{2^{1+4j}n} g_1^{1+4j} L_n^{-1} (x,y) - L_{2^{1+4j}n} g_0^{1+4j} L_n^{-1} (x,y)$ is bigger than $\xi_1^j(\tilde y)-\tilde y$. This is because every 4 returns in a box $C_m$, the point suffers the same perturbation as we have just seen, and at every return in such a box, the perturbation do not decrease neither the $x$-distance nor the $y$-distance to $S_m$. However, since $g_0$ (after rescaling) only rotates by $\pi/2$ and since $h_1$ only slides horizontally pushing points away from the stable box, we deduce from $g_1^k(L_n^{-1} (x,y)) \in C_{2^k n}$ for every $k\ge 0$, that the first coordinate of$$L_{2^{1+4j}n} g_1^{1+4j} L_n^{-1} (x,y) - L_{2^{1+4j}n} g_0^{1+4j} L_n^{-1} (x,y)$$ must be smaller than $2\tilde{\eps}_1=4\eps_1/(b-a)$, which by our choice of $\eps_1$ in \eqref{e.oitoepsilon}, is smaller than $\sigma-\tilde{b}<q_1-\tilde{y}$, where $q_1\in(\sigma,\sigma^2)$ is the unique attractive fixed point of $\xi_1$ (see hypothesis (H2) page \pageref{H3}). Therefore, we have proved that 
\[
\xi_1^j(\tilde{y})-\tilde{y}<\sigma-\tilde{b}<q_1-\tilde{y},
\] 
for every $j>0$, which is a contradiction because $\xi_1^j(\tilde{y})\to q_1$.

\item[(4)] By the same arguments as above in this lemma, and as in the proof of Lemma~\ref{LemIntersecBlue} (see also Figure~\ref{Fig.afterh2}), one can see that 
\[f_1^n(\cA^G_{b}(n)) \subset \bigcup_{m\ge n_0} \bigcup_{v=t,b}\left(\cA^R_{v}(m) \cup \cA^P_{r}(m) \cup \cA^B_{r,v}(m)\right) \ \cup\ \mathcal C^\complement.\]
The same arguments as in the proof of Lemma~\ref{l.bigbacia2} show that
\[f_1^n(\cA^G_{b}(n)) \cap C^\complement \subset W^s(\mathcal Q).\]
Note that even if the points of this intersection seem to enter Case 3 of the proof of Lemma~\ref{l.bigbacia2}, this lemma deals with what happens before the application of the perturbation $h_2\circ h_1$ while here this perturbation has already been applied, so that the first return of a point of this intersection in the region analysed by Lemma~\ref{l.bigbacia2} enters Case 1.

The same kind of arguments work for $\cA^G_{t}(n)$ and point (5) of the lemma.
\end{enumerate}	
\end{proof}

Lemma~\ref{l.allerverslesbleu} is the final piece which allows us to conclude Proposition~\ref{p.bluetrichotomy}.

\begin{proof}[Proof of Proposition~\ref{p.bluetrichotomy}]
Let $p\in\mathcal{L}_r^i\setminus\big(\mathcal{S}\cup W^s(\mathcal Q)\big)$ and assume that its positive orbit under $f_1$ meets the set $[a-3\eps_1,\sigma^2(a-3\eps_1)]\times[0,\sigma^{-n_0-2}b]$. By Lemma~\ref{l.bigbacia2} we deduce that $p$ has a positive iterate under $f_1$ inside the $\eps_1$-tower $\mathcal{C}$. Since $p\notin W^s\big(\mathcal{Q}\big)$ we deduce that $p$ has a positive iterate which is a persistent point. Thus, $p$ belongs to the orbit of $\Gamma$, concluding. 
\end{proof}

\subsection{Dynamics of points of $\mathcal{L}^e$\label{SubsecExt}}

We now explain how results of this section actually translate for points of the exterior $\mathcal{L}^e$ of the figure-eight attractor.

A statement similar to Proposition~\ref{p.bluetrichotomy} also holds for the points $p\in \mathcal{L}^e$ meeting $[a-3\eps_1,\,\sigma^2(a-3\eps_1)]\times [-\sigma^{-n_0-2}b,0]$, replacing the tower $\mathcal{S}$ by the exterior tower $\mathcal {S}^e$ and $\Gamma$ by another set $\Gamma^e$ (whose definition we give below): there is the trichotomy
\begin{enumerate}
\item $p\in\mathcal S^e$;
\item $p\in W^s(\mathcal Q) \cup W^s(-\mathcal Q)$;
\item $p\in\bigcup_{n\in\Z}f_1^{n}(\Gamma^e)$.
\end{enumerate}
Remark that we had to add the symmetric of $\mathcal{Q}$ with respect to $O$, because the points of $\mathcal{L}^e$ that come in the neighbourhood $\mathcal{V}$ of $O$ return alternatively to $\mathcal{V} \cap (\R_+\times \R_-)$ and $\mathcal{V} \cap (\R_-\times \R_+)$. From now on we will make the abuse of language that every point always returns to $\mathcal{V} \cap (\R_+\times \R_-)$, and we will quotient implicitly by the relation $x\sim -x$ on $\R^2$. 

We now define the set $\Gamma^e$ appearing in the previous trichotomy. First, we define the set $\Ba^e$ as the symmetric of $\Ba$ with respect to the horizontal axis. However, the dynamics on it is a bit different: one has to replace the order $\prec$ by the order $\preceq$ defined as follows. Consider the map $L^e_n$ defined by $L^e_n = L_n \circ \operatorname{diag}(1,-1)$, and the blue exterior boxes
\[\cA^{B}_{h,v}(n)^e = (L_n^e)^{-1}\big(\cA^{C}_{h,v}\big)\]
(remark that with this definition, the top rectangles are below the bottom rectangles). Then, $\preceq$ is defined by
\[(h',v',n') \preceq (h,v,n) \iff h'=\overline h \text{ and } \big(\overline n>n' \text{ or } n'=\overline n \text{ and } \overline v= t \big).\]
This allows to define the set $\Gamma^e$, similarly to the set $\Gamma$, as the set of points whose whole positive orbit under the first return map is included in $\Ba$. 

Nevertheless, with this new order, one gets a lemma similar to Lemma~\ref{l.allerverslesbleu} for regions of $\mathcal{L}^e$, the points exiting the extended towers $C_n^e\notin\Gamma^e$ being mapped either in $W^s_{f_1}(O)$, or in $\mathcal {L}^i_r \cup \mathcal {L}^i_\ell$ (and in particular, some of these points being mapped in $\mathcal{Q}$). Altogether, we conclude that the statistical basin $\mathcal{B}_{f_1}(O)$ is contained in the union of the stable tower with the orbit of all persistent points (those in the interior of $\mathcal{L}$, the set $\Gamma$ and its symmetric copy in the third quadrant) and those in the exterior of $\mathcal{L}$ (the set $\Gamma^e$). In the next section, we devote our attention to the geometry, topology and dynamics of the sets of persistent points.

\section{The set of persistent points}\label{SecGeom}

As its title indicates, this section is devoted to understand in some detail the set $\Gamma$ of \emph{persistent points} (see Definition \ref{defpersistent}). The main reason for this study is that some points that belong to $\Gamma$ also belong to the statistical basin $\cB_{f_1}(\delta_O)$ (but not all of them, for $\Gamma$ has infinitely many periodic points as we will see in Proposition \ref{PropCoding} below). Therefore, a difficulty we face is that the orbit under $f_1$ of this family of points (both forward and backward) could be dense in some open set of the plane. In this section we will show that this is not the case, by proving that the whole orbit of $\Gamma$ is a nowhere dense set in $\R^2$ of zero Lebesgue measure. In addition, we will be able to describe the action of $f_1$ in $\Gamma$ by means of symbolic dynamics (see Proposition \ref{PropCoding}). The main result of this section is the following. 

\begin{prop}\label{l.persmedidanula}
The set\, $\mathcal{O}_{f_1}(\Gamma)=\bigcup_{n\in\Z}f_1^{n}(\Gamma)$ has zero Lebesgue measure and is nowhere dense.
\end{prop}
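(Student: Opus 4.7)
The plan is to split the proof into a measure-theoretic part and a topological part. First I would show $m(\Gamma)=0$, and then conclude $m(\mathcal{O}_{f_1}(\Gamma))=0$ from $\sigma$-additivity together with the fact that $f_1$ is a $C^1$ diffeomorphism (hence preserves null sets). Second, I would show that the complement of $\mathcal{O}_{f_1}(\Gamma)$ contains a dense open set, whence $\mathcal{O}_{f_1}(\Gamma)$ is nowhere dense.

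\paragraph{Zero Lebesgue measure.} The key is to combine the recursive description of Lemma~\ref{l.perkmais1} with two quantitative ingredients applied to the first return $g_1=f_1^{n+k_0}$ on each blue rectangle $\cA^B_\theta$, $\theta=(h,v,n)$. The first is a uniform bounded distortion estimate: by item~(iv) of Proposition~\ref{buildTowers}, $f_0^{n+k_0}|_{\cA^B_\theta}$ is affine with explicit expression, and the composition with $h_1\circ h_2$ is a bounded $C^\infty$ perturbation with Jacobian controlled on the relevant image, so that there exists $D>1$ independent of $\theta$ with
\[
D^{-1}\,\frac{m(A)}{m(\cA^B_\theta)}\ \le\ \frac{m(g_1(A))}{m(g_1(\cA^B_\theta))}\ \le\ D\,\frac{m(A)}{m(\cA^B_\theta)}\qquad\text{for every measurable } A\subset\cA^B_\theta.
\]
The second ingredient is the geometric bound
\[
\frac{m\big(g_1(\cA^B_\theta)\cap\Ba\big)}{m\big(g_1(\cA^B_\theta)\big)}\ \le\ \alpha\ <\ 1,\qquad\text{uniformly in }\theta,
\]
which follows from the explicit description of the image strip $g_1(\cA^B_\theta)$ used in the proof of Lemma~\ref{LemIntersecBlue}: its $y$-midpoint sweeps from order $\sigma^{-2n}$ up to order $q_2$ while keeping vertical thickness $\sim\sigma^{-2n}\eps_1$, whereas the union of blue rectangles $\cA^B_\eta$ with $\eta\prec\theta$ intersects $g_1(\cA^B_\theta)$ in a set of total area bounded by a geometric series in $\sigma^{-n'}\delta_1^2$. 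Feeding these into the set-theoretic identity of Lemma~\ref{l.perkmais1} yields a recursion of the form $\alpha_{k+1}\le D\alpha\cdot\alpha_k$ for $\alpha_k=\sup_\theta m(P_k(\theta))/m(\cA^B_\theta)$, provided the uniform-density-in-$\cA^B_\eta$ of $P_k(\eta)$ is propagated by bounded distortion. Tuning $\sigma,\eps_1,\delta_1$ so that $D\alpha<1$ gives $\alpha_k\to 0$ and hence $m(\Gamma\cap\cA^B_\theta)=0$ for every $\theta$. Summing over the countable family of blue rectangles produces $m(\Gamma)=0$.

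\paragraph{Nowhere denseness.} Consider the open set
\[
U\ \eqdef\ \mathcal{S}\cup\mathcal{S}^e\cup W^s_{f_1}(\mathcal{Q})\cup W^s_{f_1}(-\mathcal{Q}),
\]
which is $f_1$-invariant because $\mathcal{Q}$ is a trapping region for some iterate of $f_1$ (Lemma~\ref{l.criandopoco}). By Proposition~\ref{p.bluetrichotomy} and its counterpart for $\mathcal{L}^e$ described in Subsection~\ref{SubsecExt}, the three cases of the trichotomy are mutually exclusive, and therefore $U\cap\mathcal{O}_{f_1}(\Gamma)=\emptyset$. Density of $U$ follows from two observations: the stable tower $\mathcal{S}$ together with $\mathcal{S}^e$ accumulates on every point of the figure-eight $\mathcal{L}$, and any open set $V$ meeting the fundamental domain $[a-3\eps_1,\sigma^2(a-3\eps_1)]\times[0,\sigma^{-n_0-2}b]$ either already intersects $\mathcal{S}\cup W^s_{f_1}(\mathcal{Q})$ by Lemma~\ref{l.bigbacia2}, or, if it lies in the $\eps_1$-tower, contains non-persistent points that enter $W^s_{f_1}(\mathcal{Q})$ via the transitions catalogued in Lemma~\ref{l.allerverslesbleu} (points in the red, pink, orange, or green regions, which form an open dense subset of each extended stable box). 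Hence $U$ is open and dense, so $\mathcal{O}_{f_1}(\Gamma)\subset\R^2\setminus U$ is contained in a closed set with empty interior, i.e.\ $\mathcal{O}_{f_1}(\Gamma)$ is nowhere dense.

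\paragraph{Main obstacle.} The delicate step is the uniform ratio bound $\alpha<1$ in the measure argument: the curved strip $g_1(\cA^B_\theta)$ intersects an infinite family of blue rectangles at many different scales, and the computation requires a careful interplay between the horizontal scale $\delta_1$, the vertical scales $\sigma^{-n'}\delta_1$ of the blue rectangles, and the behaviour of the bump function $\varphi_2$ (in particular the convexity imposed in Remark~\ref{rem.convex}). A related subtlety is propagating approximate uniform density of $P_k(\eta)$ inside $\cA^B_\eta$ through the recursion, which is needed to turn the one-step bound $\alpha_{k+1}\le D\alpha$ into a geometric decay $\alpha_{k+1}\le D\alpha\cdot\alpha_k$.
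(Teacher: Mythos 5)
Your split into a measure part and a topological part is the right global structure, and the one-step ratio bound $m\big(g_1(\cA^B_\theta)\cap\Ba\big)/m\big(g_1(\cA^B_\theta)\big)\le\alpha<1$ is indeed plausible from the geometry of the figures. However, there is a genuine gap in the measure-zero part, and it is precisely the step you flag as a ``subtlety'': the recursion $\alpha_{k+1}\le D\alpha\cdot\alpha_k$ does not follow from bounded distortion together with $m(P_k(\eta))\le\alpha_k\,m(\cA^B_\eta)$ alone. Knowing only the \emph{measure} of $P_k(\eta)$ says nothing about how it is \emph{distributed} inside $\cA^B_\eta$; in the worst case $P_k(\eta)$ could be concentrated exactly where the thin curved strip $g_1(\cA^B_\theta)$ crosses $\cA^B_\eta$, so that $m\big(g_1(\cA^B_\theta)\cap P_k(\eta)\big)$ is comparable to $m\big(g_1(\cA^B_\theta)\cap\cA^B_\eta\big)$ rather than to $\alpha_k$ times it, and the recursion collapses to the useless $\alpha_{k+1}\le D\alpha$. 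This is exactly the problem the paper's machinery is built to solve: Lemma~\ref{lemQuasiRec} (via the Inclination Lemma~\ref{LemInclination}) shows that each component of $P_k(\theta)$, after rescaling by $L_n$, is a quasi-rectangle with maximal inclination $\le 1/2$ (hence roughly \emph{horizontal}), while Lemma~\ref{LemInclination}(2) shows that the image strip is roughly \emph{vertical}. It is this transversality, combined with the convexity of $\varphi_2$ and the slope inequality exploited in Lemma~\ref{LemEstSize}, that yields the one-dimensional height estimate $\sum_j d_\theta(k)_j/h_\theta\le 2^{-k}$, from which the measure bound follows by Fubini. So ``propagating uniform density'' is not a side technicality to be tuned away by adjusting $\sigma,\eps_1,\delta_1$ (which are already constrained, e.g.\ $\delta_1=\eps_1/10$); it is the heart of the problem, and your area-ratio argument has no mechanism to produce it.

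For the nowhere denseness, your idea is sound but the claim as stated is wrong: the set $U=\mathcal{S}\cup\mathcal{S}^e\cup W^s_{f_1}(\mathcal{Q})\cup W^s_{f_1}(-\mathcal{Q})$ is not dense in $\R^2$ (any open set outside the support of $f_1$, or inside the trapping disks $Q_L,Q_R$, meets none of these four sets). What you actually need, and what your argument does supply, is the local statement: every open $V$ that meets $\mathcal{O}_{f_1}(\Gamma)$ also meets the complement of $\overline{\mathcal{O}_{f_1}(\Gamma)}$. Once rephrased this way, your argument is essentially the paper's Lemma~\ref{lemaPA} (a pull-back criterion) applied to $\Gamma^+=\bigcup_{n\ge0}f_1^n(\Gamma)$, but it invokes a different verification: you conclude that an open set inside a blue rectangle must meet $W^s_{f_1}(\mathcal{Q})$ because $\Gamma$ has Lebesgue measure zero, whereas the paper's Lemma~\ref{l.stromae} derives the same fact purely structurally, from the explicit expression \eqref{e.gamamais} for $\Gamma^+$ and the periodic trapping of $\mathcal{Q}$ (Lemma~\ref{l.criandopoco}), without any measure input. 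Both routes work; the paper's is more self-contained, yours leans on the measure-zero result (which is fine as a dependency, since you prove that first, but only if that first step is repaired as above). Also note that $\mathcal{S}$ and $\mathcal{S}^e$ are built from \emph{closed} boxes, so $U$ is not literally open; one must pass to interiors, or simply argue, as the paper does, that $\overline{\Gamma^+}\setminus\Gamma^+\subset W^s_{f_0}(O)\cup W^u_{f_0}(O)$ and that the latter is nowhere dense (Lemma~\ref{maislemaPA}).
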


The same statement holds for the symmetric copy of $\Gamma$ in the third quadrant and for $\Gamma^e$. In both cases, the proof is similar to the one we present in this section. For the set $\Gamma^e$, only one step needs adjustment, which is the Inclination Lemma~\ref{LemInclination} and we point out precisely how to obtain the Inclination Lemma for the set $\Gamma^e$.

 Let us give a rough sketch of why Proposition~\ref{l.persmedidanula} is true. Remember that a persistent point is a point which never leaves the blue tower under forward iteration of the \emph{first return map}. The main idea, which is depicted in Figure~\ref{fig.interesting}, is that the positive iteration of the rectangles that compose the blue tower has a Markovian structure, so that a point which never leaves it lies in a decreasing intersection of subsets of some rectangle. Section \ref{SubsecGeom} is then devoted to make this statement rigorous; it involves (among some other reasoning) an estimation of the inclination of the images of the small rectangles by the perturbation. This gives a topological description of the set of persistent points. Additional work is performed in Section \ref{SubsecSize} to estimate the size of the set of persistent points. More precisely, we obtain distortion estimates in order to show simultaneously that this nested sequence has measure which goes to zero and converges to a nowhere dense set. Finally, in Section \ref{secnowhere} we prove that $\bigcup_{n\in\Z}f_1^{n}(\Gamma)$ is nowhere dense. Incidentally, as a by-product of our arguments, we obtain a semi-conjugacy of the dynamics in the set of persistent points with a shift ``of finite type'' over an infinite alphabet, as given by the following result.

\begin{prop}[Coding]\label{PropCoding}
The set $\Gamma$ has zero Lebesgue measure and its closure
\[\overline\Gamma = \Gamma \cup \big([a-3\eps_1,a-2\eps_1] \times\{0\}\big) \cup \big([b+2\eps_1,b+3\eps_1] \times\{0\}\big)\]
is homeomorphic to the product of a Cantor set with a segment. Moreover, there is a continuous map:
\[\Phi : \Gamma \longrightarrow \big\{(h,v,n);\, h\in\{\ell,r\},v\in\{t,b\},n\ge n_1(h)\big\}\]
such that for every $x\in \Gamma$, if we denote $\Phi(x) = (h_k,v_k,n_k)_{k\in\nt}$, one has $g_1^k(x) \in \cA^{B}_{h_k,v_k}(n_k)$ for any $k\in\nt$ (recall that $g_1$ is the first return map of $f_1$ in $\Gamma$, see Definition~\ref{defpersistent}). This coding $\Phi$ semi-conjugates $g_1$ with the one-sided subshift $\sigma$ over the alphabet $\{\ell,r\}\times\{t,b\}\times\nt$ given by the (finite) transitions:
	\[(h,v,n) \to (h',v',n') \quad \text{iff}\quad (h',v',n')\prec (h,v,n).\]
Finally, for any $(h_k,v_k,n_k)_{k\in\nt}$ in this subshift, the preimage $\Phi^{-1}(h_k,v_k,n_k)$ is homeomorphic to a segment, and is the graph of a Lipschitz map over $[a-3\eps_1,a-2\eps_1] \times\{0\}\big)$ or $\big([b+2\eps_1,b+3\eps_1] \times\{0\}$ (with Lipschitz constant smaller than $1/2$ when renormalizing by $L_n$).
\end{prop}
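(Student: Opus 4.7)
\medskip

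\noindent\textbf{Proof plan.} The argument combines the recursive formula of Lemma~\ref{l.perkmais1} with a graph-transform style analysis of the first return map $g_1$. The plan is to show by induction on $k$ that for every admissible finite word $\theta_0,\ldots,\theta_k$ (meaning $\theta_{j+1}\prec\theta_j$), the set of points in $\cA^B_{h_0,v_0}(n_0)$ whose successive $g_1$--images land in the rectangles $\cA^B_{h_j,v_j}(n_j)$ is a Lipschitz graph over the horizontal base, with Lipschitz constant uniformly less than $1/2$ after renormalization by $L_{n_0}$.

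First, I would establish an inclination lemma: for any admissible transition $\theta'=(h',v',n')\prec\theta=(h,v,n)$, the set $\cA^B_{h',v'}(n')\cap f_1^{n+k_0}\bigl(\cA^B_{h,v}(n)\bigr)$ is an ``almost vertical'' strip inside $\cA^B_{h',v'}(n')$, namely the graph of a smooth Lipschitz function of the vertical coordinate with Lipschitz constant controlled by the size of the perturbations $h_1,h_2$. The shape of this strip is controlled by the explicit form of the first return map from Proposition~\ref{buildTowers}~(iv) (an affine rotation at scale $\sigma^{-n}$) composed with the affine translations from (H3) and the convex bump $\varphi_2$; the convexity assumption of Remark~\ref{rem.convex} is what keeps the graph Lipschitz with a small constant. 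Pulling this strip back by $f_1^{-(n+k_0)}$, which after rescaling is essentially a rotation by $-\pi/2$ composed with contractions, converts the almost vertical strip inside $\cA^B_{h',v'}(n')$ into an almost horizontal Lipschitz graph inside $\cA^B_{h,v}(n)$, whose Lipschitz constant (after renormalization by $L_{n_0}$) is bounded by $1/2$ provided $\eps_1$ was chosen small enough (which is why the bound \eqref{e.oitoepsilon} appears).

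Iterating this graph transform via Lemma~\ref{l.perkmais1}, one obtains that for each admissible word of length $k$, the associated subset of $P_k$ is a Lipschitz graph with a uniform bound on its constant; distinct words produce disjoint graphs. Setting $\Phi(x)=(h_k,v_k,n_k)_{k\ge 0}$ to be the sequence of blue rectangles visited by $x$ under $g_1$ gives a well-defined continuous map onto the admissible words of the announced subshift, with $\Phi\circ g_1=\sigma\circ\Phi$ by construction. Each fiber $\Phi^{-1}(h_k,v_k,n_k)$ is the decreasing intersection of Lipschitz graphs with a uniform Lipschitz constant, hence itself a Lipschitz graph over the horizontal base, which proves the last statement of the proposition. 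The horizontal direction plays no role in the contracting iteration (the contraction is in the vertical direction under $g_1$), so along each horizontal slice transverse to these graphs, successive refinements produce a nested sequence of Cantor-like gaps, and the union of all graphs is globally homeomorphic to the product of a Cantor set with a segment. The two ``boundary'' horizontal segments appearing in $\overline{\Gamma}$ correspond to the limiting cases $n_k\to\infty$, where the successive blue rectangles accumulate on the stable manifold of $O$.

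Finally, for the Lebesgue measure: the graph transform contracts vertical lengths by a definite factor at each step (coming from the hyperbolicity of $f_0$ near $O$ together with the uniform estimates of the inclination lemma), so each individual Lipschitz graph has zero two-dimensional measure, and the countable union over all admissible words of a fixed length is a finite-area set whose area decays geometrically in $k$. Thus $\Gamma=\bigcap_k P_k$ has zero Lebesgue measure, and $\mathcal{O}_{f_1}(\Gamma)$ likewise since it is a countable union of sets of measure zero. The main technical obstacle is the inclination estimate: one has to track precisely how the non-linear part of $h_2$ tilts vertical fibers and check that this tilt, after the strong vertical contraction by $f_0^n$, stays well below the threshold that would allow Lipschitz constants to blow up along iteration. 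This is exactly where the convexity assumption on $\varphi_2$ from Section~\ref{SecDesc} and the quantitative smallness of $\eps_1$ in \eqref{e.oitoepsilon} are used in an essential way.
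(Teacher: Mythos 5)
Your plan follows the paper's own proof essentially verbatim: the paper simply cites Lemma~\ref{lemQuasiRec} (quasi-rectangle structure with inclination $\le 1/2$, itself resting on the Inclination Lemma~\ref{LemInclination}) and Lemma~\ref{LemEstSize} (exponential decay of the rescaled heights with rate $1/2$), then passes to the nested intersection to get the Lipschitz graphs, the zero measure, and the Cantor-times-segment structure, exactly as you describe. The one misattribution in your outline is the source of the contraction factor: it does not come from the hyperbolicity of $f_0$ near $O$, but from the ratio between the renormalized height $\delta_1$ of a blue rectangle and the renormalized gap $\ge 8\delta_1$ separating consecutive blue rectangles, combined with the convexity of $\varphi_2$ — this is precisely the mechanism of Lemma~\ref{LemEstSize}.
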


In particular, we obtain the following interesting result.

\begin{coro} The map $f_1|_{\Gamma}$ has positive topological entropy and infinitely many periodic orbits.
\end{coro}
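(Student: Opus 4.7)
Both statements will be consequences of the semi-conjugacy given by Proposition~\ref{PropCoding}. The coding $\Phi$ factors the first return map $g_1|_{\Gamma}$ onto the one-sided subshift $(\Sigma,\sigma)$ over the infinite alphabet $\{\ell,r\}\times\{t,b\}\times\nt$ with transition rule $\prec$, and each fibre of $\Phi$ is a Lipschitz graph of slope $<1/2$ (after rescaling by $L_n$) over a segment.

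For the positive topological entropy, the crucial observation is combinatorial: from every state $(h,v,n)$ of $\Sigma$ there are infinitely many admissible successors, namely every $(\overline h,v',n')$ with $n'\geq 2n$. Consequently $h_{\mathrm{top}}(\sigma)=+\infty$, which I would transfer to $h_{\mathrm{top}}(g_1|_{\overline{\Gamma}})=+\infty$ by the standard entropy inequality for continuous surjective semi-conjugacies between compact metric systems. The passage from $g_1$ to $f_1$ is then handled by Abramov's formula: picking any ergodic $g_1$-invariant measure of positive entropy supported on the portion of $\overline{\Gamma}$ where the $n$-index is bounded (so that the return time $n+k_0$ is integrable), one concludes $h_{\mathrm{top}}(f_1)>0$ on the $f_1$-orbit of $\overline{\Gamma}$.

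For the infinitely many periodic orbits, I would exploit the Markov-like structure of the blue-tower partition together with the hyperbolic graph description of the fibres of $\Phi$. For each admissible combinatorial loop obtained by concatenating an internal admissible word of $\Sigma$ with an ``excursion'' of $f_1$ outside $\Ba$ (either through the homoclinic loop of $O$ or the trapping region $\mathcal{Q}$), the contracting/expanding action of the suitable power of $g_1$ on the corresponding graph fibres allows an application of Brouwer's fixed point theorem to a properly chosen nested rectangle, producing a periodic point of $f_1$. Varying the combinatorial loop delivers infinitely many such orbits.

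The main obstacle lies in the periodic orbits part: since the sequence $(n_k)$ of indices along a forward $g_1$-itinerary in $\Sigma$ satisfies $n_{k+1}\geq 2n_k$ and is therefore strictly increasing, the subshift $\Sigma$ itself admits no periodic sequence, so no periodic orbit of $f_1$ can stay entirely inside $\Gamma$. Producing the required periodic orbits thus demands carefully concatenating admissible $\Sigma$-words with iterates of $f_1$ that leave $\Ba$ temporarily and come back at a lower $n$-scale; this closure must respect both the hyperbolicity of the Lipschitz graph structure of Proposition~\ref{PropCoding} and the explicit form of the perturbations $h_1,h_2$ from Section~\ref{SecDesc}, which is the delicate part of the argument.
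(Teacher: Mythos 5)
Your reading of the definition of $\prec$ is indeed the literal one, but that definition contains a sign typo, and the ``obstacle'' you then try to work around is an artefact of it rather than a genuine difficulty. The inequality should be $\overline n > n'$, forcing $n'\le\overline n=2n$, not $n'\ge 2n$: this is what is actually used in the proof of Lemma~\ref{LemInclination} (``Lemma~\ref{LemIntersecBlue} gives $(\ell,b,n)\prec(\ell,t,m)$ and so we must have $2m\geq n$''); it is the only reading under which the transitions are \emph{finite}, as asserted in Proposition~\ref{PropCoding}; it matches Subsection~\ref{SubsecExt}, which explicitly states that, \emph{contrary to what happens for $\Gamma$}, the set $\Gamma^e$ contains no periodic point; and it is what the geometry dictates, since the perturbation $h_2$ pushes points \emph{upward} toward $\mathcal{Q}$, that is toward boxes of \emph{smaller} index $n'$ (the two inequalities in the definitions of $\prec$ and $\preceq$ have simply been swapped).

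With the correct relation, both conclusions are immediate from Proposition~\ref{PropCoding}, and your excursion construction is not only superfluous but off-target: a periodic orbit whose first-return itinerary exits the blue tower $\Ba$ is, by Definition~\ref{defpersistent}, not a point of $\Gamma$, so it could not prove the corollary as stated. For the periodic orbits: for any $n\ge n_0+2$ the constant word $(r,b,n)^\infty$ is admissible (since $\overline g(r,b,n)=(r,b,2n)$ and $n<2n$), so $\Phi^{-1}\big((r,b,n)^\infty\big)$ is a nonempty compact Lipschitz graph over $[b+2\eps_1,b+3\eps_1]\times\{0\}$ which the semi-conjugacy shows is mapped into itself by $g_1=f_1^{\,n+k_0}$; the intermediate value theorem on this arc produces a fixed point of $f_1^{\,n+k_0}$ in $\Gamma$, and distinct $n$'s give distinct orbits. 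For the entropy: the sub-subshift with constant index $n$ is a subshift of finite type on the four symbols $\{\ell,r\}\times\{t,b\}$ in which $v'$ is a free choice at each step, so its transition matrix has spectral radius $2$ and the entropy equals $\log 2$; since $g_1$ coincides with the fixed power $f_1^{\,n+k_0}$ on the corresponding compact invariant part of $\Gamma$, one gets $h_{\mathrm{top}}(f_1|_\Gamma)\ge(\log 2)/(n+k_0)>0$ directly, without Abramov. Note also that your entropy argument would fail even under the literal reading: if $n_{k+1}\ge 2n_k$ along every itinerary, then no $g_1$-invariant probability measure is supported on a region of bounded $n$-index, so the measure you propose to feed into Abramov's formula does not exist.
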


A statement similar to Proposition~\ref{PropCoding} holds for points of the exterior set $\Gamma^e$, replacing the order $\prec$ by the order $\preceq$ (see Subsection~\ref{SubsecExt}). Note that, contrary to what happens for $\Gamma$, the set $\Gamma^e$ contains no periodic point. In fact, the dynamics of $f_1$ on $\Gamma^e$ is wandering: the return by $g_0$ plus the perturbation $h_2\circ h_1$ make the orbit closer and closer to the stable manifold $W^{s}_{f_1}(O)$. In particular, $\Gamma^e\subset \mathcal{B}_{f_1}(\delta_O)$; this is the reason why this set cannot be neglected in our study (in the same way $\Gamma$ cannot be neglected).  
\medskip

The proof of Proposition~\ref{PropCoding} will be given in Section~\ref{SubsecSize}, as a consequence of the arguments we use in the proof of Proposition~\ref{l.persmedidanula}.  


\begin{figure}[bht]

\begin{minipage}[c]{.48\linewidth}
\resizebox{\textwidth}{!}{
\begin{tikzpicture}[scale=.6]
	
\clip (-7,-7) rectangle (7,7);

\fill[color=blue, opacity=.4] (5,4) rectangle (6,4.5);
\fill[pattern=north east lines] (5,4) rectangle (6,4.5);

\fill[color=green, opacity=.4] (5,5) -- (5,6) ..controls +(1,0) and +(0,-5).. (5.5,21) -- (5.5,20) ..controls +(0,-5) and +(1,0).. (5,5);
\fill[pattern=north west lines] (5,5) -- (5,6) ..controls +(1,0) and +(0,-5).. (5.5,21) -- (5.5,20) ..controls +(0,-5) and +(1,0).. (5,5);

\fill[color=red, opacity=.4] (-5,4) rectangle (-6,4.5);
\fill[pattern=north east lines] (-5,4) rectangle (-6,4.5);

\fill[color=yellow, opacity=.4] (5,-5) -- (5,-6) ..controls +(1,0) and +(0,-5).. (5.5,10) -- (5.5,11) ..controls +(0,-5) and +(1,0).. (5,-5);
\fill[pattern=north west lines] (5,-5) -- (5,-6) ..controls +(1,0) and +(0,-5).. (5.5,10) -- (5.5,11) ..controls +(0,-5) and +(1,0).. (5,-5);

\fill[color=yellow, opacity=.4] (-5,-4) rectangle (-6,-4.5);
\fill[pattern=north east lines] (-5,-4) rectangle (-6,-4.5);

\fill[color=blue, opacity=.4] (-5,5) -- (-5,6) ..controls +(-1,0) and +(0,-5).. (-5.5,21) -- (-5.5,20) ..controls +(0,-5) and +(-1,0).. (-5,5);
\fill[pattern=north west lines] (-5,5) -- (-5,6) ..controls +(-1,0) and +(0,-5).. (-5.5,21) -- (-5.5,20) ..controls +(0,-5) and +(-1,0).. (-5,5);

\fill[color=green, opacity=.4] (5,-4) rectangle (6,-4.5);
\fill[pattern=north east lines] (5,-4) rectangle (6,-4.5);

\fill[color=red, opacity=.4] (-5,-5) -- (-5,-6) ..controls +(-1,0) and +(0,-5).. (-5.5,10) -- (-5.5,11) ..controls +(0,-5) and +(-1,0).. (-5,-5);
\fill[pattern=north west lines] (-5,-5) -- (-5,-6) ..controls +(-1,0) and +(0,-5).. (-5.5,10) -- (-5.5,11) ..controls +(0,-5) and +(-1,0).. (-5,-5);

\fill[color=black!10!white] (-3,-3) rectangle (3,3);

\draw[thick] (-7,-3) -- (7,-3);
\draw[thick] (-7,3) -- (7,3);
\draw[thick, color=red!80!black] (-7,-4) -- (7,-4);
\draw[thick, color=red!80!black] (-7,4) -- (7,4);
\draw[thick, color=green!60!black] (-7,-5) -- (7,-5);
\draw[thick, color=green!60!black] (-7,5) -- (7,5);
\draw[thick, color=blue!90!black] (-7,-6) -- (7,-6);
\draw[thick, color=blue!90!black] (-7,6) -- (7,6);

\draw[thick] (-3,-7) -- (-3,7);
\draw[thick] (3,-7) -- (3,7);
\draw[thick, color=red!80!black] (-4,-7) -- (-4,7);
\draw[thick, color=red!80!black] (4,-7) -- (4,7);
\draw[thick, color=green!60!black] (-5,-7) -- (-5,7);
\draw[thick, color=green!60!black] (5,-7) -- (5,7);
\draw[thick, color=blue!90!black] (-6,-7) -- (-6,7);
\draw[thick, color=blue!90!black] (6,-7) -- (6,7);
\end{tikzpicture}}
\captionsetup{width=.95\linewidth}
\caption{\label{fig.interesting}Markovian structure implying the coding described in Proposition~\ref{PropCoding}.}
\end{minipage}
\hfill
\begin{minipage}[c]{.48\linewidth}
\resizebox{\textwidth}{!}{
\begin{tikzpicture}[scale=1]
\usetikzlibrary{calc}

\filldraw[fill=yellow!30!white] (-2,-1) rectangle (2,1);

\fill[color=green, opacity=.4, cm={0 ,-0.5 ,2 ,0 ,(0 cm,0 cm)}] (-1.4,1) -- (-1.1,1) to[in=93, out=-60] (-.8,-1) -- (-1.1,-1) to[out=93, in=-60] (-1.4,1);
\fill[pattern=north east lines, cm={0 ,-0.5 ,2 ,0 ,(0 cm,0 cm)}] (-1.4,1) -- (-1.1,1) to[in=93, out=-60] (-.8,-1) -- (-1.1,-1) to[out=93, in=-60] (-1.4,1);
\fill[color=blue, opacity=.4, cm={0 ,-0.5 ,2 ,0 ,(0 cm,0 cm)}] (-.4,1) -- (-.2,1) to[in=91, out=-89] (-.1,-1) -- (-.3,-1) to[out=91, in=-89] (-.4,1);
\fill[pattern=north east lines, cm={0 ,-0.5 ,2 ,0 ,(0 cm,0 cm)}] (-.4,1) -- (-.2,1) to[in=91, out=-89] (-.1,-1) -- (-.3,-1) to[out=91, in=-89] (-.4,1);

\fill[color=green, opacity=.4, cm={0 ,-0.5 ,2 ,0 ,(0 cm,0 cm)}] (0.5,1) -- (.55,1) -- (.6,-1) -- (.5,-1) -- (0.5,1);
\fill[pattern=north east lines, cm={0 ,-0.5 ,2 ,0 ,(0 cm,0 cm)}] (0.5,1) -- (.55,1) -- (.6,-1) -- (.5,-1) -- (0.5,1);
\fill[color=blue, opacity=.4, cm={0 ,-0.5 ,2 ,0 ,(0 cm,0 cm)}] (0.7,1) rectangle (.75,-1);
\fill[pattern=north east lines, cm={0 ,-0.5 ,2 ,0 ,(0 cm,0 cm)}] (0.7,1) rectangle (.75,-1);

\fill[color=green, opacity=.4, cm={0 ,-0.5 ,2 ,0 ,(0 cm,0 cm)}] (0.9,1) rectangle (.93,-1);
\fill[pattern=north east lines, cm={0 ,-0.5 ,2 ,0 ,(0 cm,0 cm)}] (0.9,1) rectangle (.93,-1);
\fill[color=blue, opacity=.4, cm={0 ,-0.5 ,2 ,0 ,(0 cm,0 cm)}] (0.95,1) rectangle (.97,-1);
\fill[pattern=north east lines, cm={0 ,-0.5 ,2 ,0 ,(0 cm,0 cm)}] (0.95,1) rectangle (.97,-1);

\fill[color=green, opacity=.4, cm={0 ,-0.5 ,2 ,0 ,(0 cm,0 cm)}] (1.05,1) rectangle (1.06,-1);
\fill[pattern=north east lines, cm={0 ,-0.5 ,2 ,0 ,(0 cm,0 cm)}] (1.05,1) rectangle (1.06,-1);
\fill[color=blue, opacity=.4, cm={0 ,-0.5 ,2 ,0 ,(0 cm,0 cm)}] (1.08,1) rectangle (1.09,-1);
\fill[pattern=north east lines, cm={0 ,-0.5 ,2 ,0 ,(0 cm,0 cm)}] (1.08,1) rectangle (1.09,-1);

\draw (-2,-1) rectangle (2,1);

\end{tikzpicture}}
\captionsetup{width=.95\linewidth}
\caption{\label{FigP1}Intersection between $\cA^{B}_{\ell,b}(n)$ (yellow) and $f^{-n-k_0}(\bigcup_{m\ge 2n,v\in\{t,b\}}\cA^{B}_{r,v}(m))$ (the preimages of top rectangles in blue and of bottom rectangles in green, see the right of Figure \ref{fig.interesting})}
\end{minipage}
\end{figure}

\subsection{Geometry of the set of $k$-persistent points}\label{SubsecGeom}
The goal of this subsection is to understand the geometry of the sets $P_k(\theta)$ of $k$-persistent points (defined in Definition~\ref{defpersistent}). A key idea for this is given in Lemma~\ref{l.perkmais1} which furnishes an inductive procedure: starting with the ``$0$-persistent'' points, which are just the points in some blue rectangle $\cA^B_{h,v}(n)$ one forms $P_1(\theta)$, for $\theta=(h,v,n)$, by taking the pre-images under $f_1^{n+k_0}$ of each intersection of $f_1^{n+k_0}(\cA^B_{h,v}(n))$ with some blue rectangle. We shall prove that these pre-images are precisely the connected components of $P_1(\theta)$, which will also be proven to have a special geometry (after the rescaling $L_n$ they are what we call quasi-rectangles, see Definition~\ref{DefQuasiRect}). We then show that the connected components of $2$-persistent points are the pre-images under $f_1^{n+k_0}$ of the intersections of the image $f_1^{n+k_0}(\cA^B_{h,v}(n))$ of a blue rectangle with some connected component of the set of $1$-persistent points, and the same geometric characterization (being a quasi-rectangle after rescaling) holds. We then proceed by induction. This inductive procedure is depicted in Figure~\ref{Fig.lefttop} below, where we complete our analysis by estimating the size of the connected components of $k$-persistent points     

\subsubsection{Quasi-rectangles}
Let us begin by introducing the notion of quasi-rectangle.

\begin{definition}\label{DefQuasiRect}
A \emph{quasi-rectangle} is a topological disk $R\subset\R^2$ whose boundary is made of 4 smooth curves: two that are parallel to the vertical axis, and two ``\emph{horizontal}'' that are graphs of the form $y=\gamma_j(x)$ ($x\in[x_m,x_M]$ and $j\in\{1,2\}$), with $\gamma_j : [x_m,x_M] \to \R$ a $C^1$ map and $\gamma_1(x)<\gamma_2(x)$ for all $x$. The \emph{maximal inclination} of such a quasi-rectangle is defined as 
\[\operatorname{maxincl}(R) \eqdef \max \big\{|\gamma_j'(x)|\,;\, j\in\{1,2\}, x\in [x_m,x_M]\big\}.\]
The \emph{height} of such a rectangle is the maximal vertical distance between the two pieces of the boundary defined by the $\gamma_j$:
\[\operatorname{height}(R) \eqdef \sup_{x\in [x_m,x_M]} \big|\gamma_1(x) - \gamma_2(x)\big|.\]
\end{definition}

Notice that the supremum is attained at some point $x_R\in[x_m,x_M]$. We denote $I_R=[\gamma_1(x_R),\gamma_2(x_R)]$. This choice is going to be used later in the proof of Lemma~\ref{LemEstSize}.

The result below will be used in the proof of Lemma~\ref{LemEstSize} (where we will estimate the measure of the set of persistent points) to bound the effects of the non-linearities of both the map $h_2$ and the quasi-rectangles that will appear (see Figure \ref{FigLemNonLin}).

\begin{lema}\label{LemNonLin}
	Consider $R$ a quasi-rectangle such that $\operatorname{maxincl}(R) \le 1/2$ and $\operatorname{height}(R) \le d $, whose bottom and top sides are respectively curves $\gamma_1$ and $\gamma_2$ (see Definition \ref{DefQuasiRect}). Let also be $\zeta \in C^1([x_m,x_M] , \R)$ be a map such that $|\zeta'(x)|\ge 2$ for all $x\in [x_m,x_M]$. Denote $\gr \gamma_1 \cap \gr \zeta = \{(x^1,y^1)\}$ and $\gr \gamma_2 \cap \gr \zeta = \{(x^2,y^2)\}$ (in particular, suppose that these intersections are nonempty). Then 
	\[|x^1-x^2|\le \frac{2d}{3}\quad \text{and} \quad |y^1-y^2| \le \frac{4d}{3}.\]
\end{lema}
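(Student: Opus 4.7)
The plan is to exploit the identity $\zeta(x^1)=\gamma_1(x^1)=y^1$ and $\zeta(x^2)=\gamma_2(x^2)=y^2$, which at once gives
\[\zeta(x^2)-\zeta(x^1)=\gamma_2(x^2)-\gamma_1(x^1).\]
This single equation will furnish both estimates: the left-hand side grows at rate at least $2$ in $|x^2-x^1|$ (because $|\zeta'|\ge 2$), while the right-hand side is controlled by the height $d$ of $R$ plus a slow drift coming from $\operatorname{maxincl}(R)\le 1/2$. Everything else is a well-organized triangle inequality, so I do not anticipate any serious obstacle.

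More precisely, to bound $|x^1-x^2|$, I would apply the mean value theorem on the left to get $|\zeta(x^2)-\zeta(x^1)|\ge 2|x^2-x^1|$, and on the right decompose
\[\gamma_2(x^2)-\gamma_1(x^1)=\big(\gamma_2(x^2)-\gamma_2(x^1)\big)+\big(\gamma_2(x^1)-\gamma_1(x^1)\big),\]
estimating the first summand by $\tfrac{1}{2}|x^2-x^1|$ (mean value theorem applied to $\gamma_2$) and the second by $\operatorname{height}(R)\le d$. Combining,
\[2|x^2-x^1|\le \tfrac{1}{2}|x^2-x^1|+d,\]
which yields $|x^1-x^2|\le \tfrac{2d}{3}$.

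For the second bound, I would write
\[y^1-y^2=\gamma_1(x^1)-\gamma_2(x^2)=\big(\gamma_1(x^1)-\gamma_2(x^1)\big)+\big(\gamma_2(x^1)-\gamma_2(x^2)\big),\]
bound the first term in absolute value by $d$, and the second by $\tfrac{1}{2}|x^1-x^2|\le \tfrac{1}{2}\cdot\tfrac{2d}{3}=\tfrac{d}{3}$ using the bound just obtained and $\operatorname{maxincl}(R)\le 1/2$. Adding, $|y^1-y^2|\le d+\tfrac{d}{3}=\tfrac{4d}{3}$, as required. Note that this approach avoids any need for an upper bound on $|\zeta'|$, which is important since none is provided in the hypotheses.
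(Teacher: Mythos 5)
Your proof is correct and derives the same key inequality $2|x^1-x^2|\le\tfrac12|x^1-x^2|+d$ as the paper, only via the mean value theorem and the triangle inequality rather than the paper's geometric construction (the two auxiliary lines of slopes $-2$ and $-1/2$ and their intersection point $(a,b)$, whose distances to $(x^1,y^1)$ play the roles of your $|y^1-y^2|$ and $|x^1-x^2|$). Your analytic version just makes the paper's ``easy geometry'' step explicit, and is arguably cleaner in that it needs no auxiliary figure and no tacit sign choices for $\zeta'$ and $\gamma_j'$.
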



\begin{proof}[Proof of Lemma \ref{LemNonLin}]
	
	\begin{figure}
		\begin{tikzpicture}[scale=1.6]
		\draw[dotted](0,0) rectangle (1,2);
		\fill[color=blue, opacity=.05] (0,2) -- (1,1.6) -- (1,0) -- (0,0.2);
		\draw[color=blue!60!black] (0,2) -- (1,1.6);
		\draw[color=blue!60!black] (1,0) -- (0,0.2);
		\draw[thick] (0,2) -- (1,0);
		\draw[color=red!50!black] (1,0) .. controls (0.6,1)  and (0.5,1) .. (0.2,2);
		\draw[<->] (1.1,1.6) -- (1.1,0);
		\draw (1.1,0.8) node[right]{$d$};
		\draw[<->] (0,-.1) -- (1,-.1);
		\draw (0.5,-.1) node[below]{$d'$};
		\draw[<->] (-.1,2) -- (-.1,0);
		\draw (-.1,1) node[left]{$\delta$};
		\draw[color=red!50!black, ->] (1.1,1.9)node[right]{$\gr \zeta$} to[bend right] (0.5,1.3);
		\draw[color=green!50!black] (1,0) node{$\bullet$} node[below right] {$(x^1,y^1)$};
		\draw[color=green!50!black] (0.24,1.89) node{$\bullet$} node[above right] {$(x^2,y^2)$};
		\draw (0,2) node[above left]{$(a,b)$};
		\end{tikzpicture}
		\caption{\label{FigLemNonLin}Notations of Lemma \ref{LemNonLin}.}
	\end{figure}
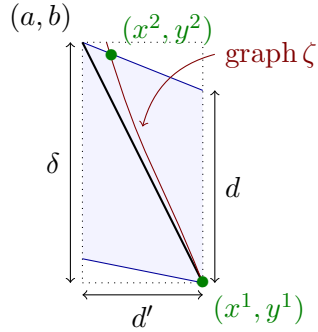
	
Consider the line passing through $(x^1,y^1)$ of slope $-2$ and the line passing through $(x^2,y^2)$ of slope $-1/2$ (see Figure~\ref{FigLemNonLin}). These two lines meet at a point $(a,b)$. Let us denote $\delta=|b-y^1|$ and $d' = |a-x^1|$. As $|\zeta'|\geq 2$ we have that $|y^1-y^2|\le \delta$ and $|x^1-x^2| \le d'$. 
Easy geometry then leads to 
	\[2d' = \delta \leq d+\frac{d'}{2},\]
	from which we obtain the bound $d'\leq d/(2-2^{-1})$. Combining the inequality $\delta\leq d+d'/2$ with the bound for $d'$ that we just obtained, one obtains 
\[\delta\leq d\left(1+\frac{1}{2^2-1}\right),\]
concluding.
\end{proof}

The usefulness of the notion of a quasi-rectangle to our arguments is two-fold. First, as we shall see in the next lemma, the connected components of $k$-persistent points are quasi-rectangles when rescaled by the maps $L_n$, and this will provide a clean strategy to estimate their Lebesgue measure: a simple Fubini argument shows that the area of quasi-rectangle $R$ is smaller than $\operatorname{height}(R)\times|x_M-x_m|$. Secondly, the inductive structure of the sets $P_k(\theta)$ will allow to estimate the height of the components of $P_{k+1}(\theta)$ from the estimation of the heights of all $P_{k}(\eta)$ with $\eta\prec\theta$.

Let us state in precise terms the fact that the connected components of $P_{k}(\theta)$ are quasi-rectangles after rescaling. Given $\theta=(h,v,n)$, with $h\in\{\ell,r\}$, $v\in\{t,b\}$ and $n\geq n_1(h)$, recall that $P_k(\theta)$ is the set of $k$-persistent points (see Definition \ref{defpersistent}). We will consider $\{\tilde{P}_k(\theta)_j\}_{j\in J_{\theta}(k)}$ the decomposition of $P_k(\theta)$ into connected components.

\begin{definition}\label{DefLP}
For any $k\in\nt$, $n\ge n_0$ and $(h,v)\in\{\ell,r\}\times \{t,b\}$, writing $\theta = (h,v,n)$, we shall denote 
\[LP_k(\theta)\eqdef L_n(P_k(\theta)).\]
\end{definition}

Notice that we can write $\{L\tilde P_k(\theta)_j\}_{j\in J_\theta(k)}$ for the decomposition of $LP_k(\theta)$ into connected components, since $L_n$ is a homeomorphism. Our goal in this subsection is to establish the following.

\begin{lema}\label{lemQuasiRec}
For any $k\in\nt$, and $\theta=(h,v,n)$, with $(h,v)\in\{\ell,r\}\times\{t,b\}$ and $n\ge n_1(h)$, the set $J_{\theta}(k)$ is finite, and the map $f_1^{n+k_0}$ induces a bijection between the collection $\{\tilde{P}_{k+1}(\theta)_j\}_{j\in J_{\theta}(k+1)}$ and the set of all intersections $\tilde{P}_k(\eta)_i\cap f_1^{n+k_0}(\cA^B_{h,v}(n))$, for $i\in J_{\eta}(k)$ and $\eta\prec\theta$. Each of these intersections $\tilde{P}_k(\eta)_i\cap f_1^{n+k_0}(\cA^B_{h,v}(n))$ is nonempty and Markovian.

Moreover, each connected component $L\tilde P_k(\theta)_j$ is a quasi-rectangle, with vertical sides that are subintervals of the vertical sides of $\cA^B_{h,v}$, and with maximal inclination smaller than $1/2$.
\end{lema}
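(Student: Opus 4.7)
I would prove the lemma by induction on $k$, the main geometric input being a separately-stated inclination lemma (let me call it Lemma~\ref{LemInclination}) describing the rescaled first-return map
\[G_{\theta,\eta}\eqdef L_{\bar n}\circ f_1^{n+k_0}\circ L_n^{-1}\,:\,L_n(\cA^B_{h,v}(n))\longrightarrow L_{\bar n}(\cA^B_{\bar h,\bar v}(\bar n))\]
for any pair $\eta=(\bar h,\bar v,\bar n)\prec\theta=(h,v,n)$. The base case $k=1$ uses Lemma~\ref{l.perkmais1} to write $P_1(\theta)$ as a disjoint union of preimages of the rectangles $\cA^B_\eta$ intersected with the image strip $f_1^{n+k_0}(\cA^B_{h,v}(n))$; the Markovian character of each such intersection and the rectangular (hence quasi-rectangular) shape of its preimage in $L_n(\cA^B_{h,v}(n))$ have essentially been established in the proof of Lemma~\ref{LemIntersecBlue}. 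The finiteness of $J_\theta(1)$ follows from the fact that, after the vertical push $h_2$, the $y$-coordinate of any point in the image strip is bounded away from $0$ by a positive constant independent of $n$ (from (V3), $\xi_2(y)\to q_2(1-\beta)>0$ as $y\to 0$), so only finitely many blue rectangles $\cA^B_{\bar h,\bar v}(\bar n)$ can be intersected.

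The decisive technical ingredient for the inductive step is the inclination lemma, which asserts that $G_{\theta,\eta}$ carries any $C^1$ graph of slope $\le 1/2$ (over the horizontal direction of $L_n(\cA^B_{h,v})$) to a $C^1$ graph of slope $\ge 2$ (over the vertical direction of $L_{\bar n}(\cA^B_{\bar h,\bar v})$), and in particular sends the vertical sides of $L_n(\cA^B_{h,v}(n))$ to steep curves crossing $L_{\bar n}(\cA^B_{\bar h,\bar v}(\bar n))$ from top to bottom. The proof of this estimate is where the concrete design of Section~\ref{SecDesc} pays off: hypothesis (H3) makes $\xi_1$ a pure translation on the intervals $[b+\eps_1,b+3\eps_1]$ and $\sigma^2[a-3\eps_1,a-\eps_1]$; the affinity in (V3) linearises $\xi_2$; and the convexity assumption on $\varphi_2$ together with Remark~\ref{rem.convex} bounds from below the vertical derivative of $h_2$ on the critical region. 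I expect this estimate to be the main obstacle of the induction: although $h_2\circ h_1$ is huge in the $C^1$ topology of $\R^2$, the renormalisation $L_{\bar n}$ on the target (which stretches the vertical direction by $\sigma^{\bar n}$) has to absorb this blow-up and deliver the clean constants $2$ and $1/2$ uniformly in $\theta$, $\eta$ and $k$.

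Granting the inclination lemma, the inductive step unfolds cleanly. Assume the conclusion at level $k$: each $L\tilde P_k(\eta)_i$ is a quasi-rectangle of maximal inclination $\le 1/2$ with vertical sides on the vertical sides of $\cA^B_{\bar h,\bar v}$. The inclination lemma applied to the vertical sides of $L_n(\cA^B_{h,v}(n))$ shows that their $G_{\theta,\eta}$-image crosses $L_{\bar n}(\cA^B_{\bar h,\bar v}(\bar n))$ from top to bottom; combined with the slope bound $\le 1/2$ on the top and bottom of $L\tilde P_k(\eta)_i$, the intersection $L\tilde P_k(\eta)_i\cap G_{\theta,\eta}(L_n(\cA^B_{h,v}(n)))$ is a non-empty ``curvilinear quadrilateral'' of Markovian type whose two non-horizontal sides are pieces of the top and bottom of $L\tilde P_k(\eta)_i$. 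Pulling this quadrilateral back via $G_{\theta,\eta}^{-1}$ and applying the inclination lemma in the reverse sense (preimages of graphs of slope $\ge 2$ are graphs of slope $\le 1/2$) shows that $L\tilde P_{k+1}(\theta)_j$ is a quasi-rectangle whose vertical sides are sub-segments of the vertical sides of $\cA^B_{h,v}$ and whose top and bottom are graphs of slope $\le 1/2$. The bijection between connected components and Markov intersections is built into this construction, and the finiteness of $J_\theta(k+1)$ passes from level $k$ via the same bounded-image argument as in the base case.
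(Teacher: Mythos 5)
Your proposal follows the same overall architecture as the paper's proof --- induction on $k$, the Markov structure established by Lemma~\ref{LemIntersecBlue}, and the inclination estimate --- and correctly identifies the inclination lemma as the decisive technical input. However, your formulation of the inclination lemma is off in a way that matters at the pull-back step. You state it as ``$G_{\theta,\eta}$ carries any $C^1$ graph of slope $\le 1/2$ to a $C^1$ graph of slope $\ge 2$.'' Taken literally this \emph{contradicts} Part 1 of the paper's Lemma~\ref{LemInclination}: Part 1 says the rescaled \emph{inverse} return map $DL_m\circ Df_1^{-m-k_0}\circ DL_n^{-1}$ preserves the cone $\{|\lambda|\ge 2|\mu|\}$ of inclination $\le 1/2$, which forces $DG_{\theta,\eta}$ to map that cone onto a \emph{superset} of itself, not into the complementary steep cone. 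The paper's Part~2 is a separate statement about the rescaled perturbation $f_0\circ h_2\circ f_0^{-1}$ sending $e_1$ strictly inside the steep cone; it is what controls the shape of $\partial G_{\theta,\eta}(\cA^B_{h,v}(n))$, not the images of already-sloped curves.

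This mis-statement then propagates into the inductive step: when you pull back the ``curvilinear quadrilateral'' via $G_{\theta,\eta}^{-1}$, you invoke ``preimages of graphs of slope $\ge 2$ are graphs of slope $\le 1/2$.'' But the steep sides of the quadrilateral are pieces of $\partial G_{\theta,\eta}(\cA^B_{h,v}(n))$ and pull back trivially to vertical segments of $\cA^B_{h,v}(n)$, with no cone estimate needed; the curves that actually need control are the \emph{near-horizontal} pieces coming from the top and bottom of $L\tilde P_k(\eta)_i$, and for those the statement required is precisely Part 1 of the paper's lemma (preimages of slope $\le 1/2$ stay of slope $\le 1/2$), not the one you wrote. (There is also a slip of wording: those two sides are the near-horizontal ones, not ``non-horizontal.'') With the inclination lemma corrected to the paper's two-part form, your argument is essentially the paper's; in its current form, the pull-back step rests on an estimate that neither the paper's lemma provides nor could provide, since it is incompatible with Part 1.
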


An example of what can look like the set $L P_k(\theta)$ is depicted in Figure~\ref{FigP1}.


As mentioned at the beginning of this subsection, this lemma is a strong refinement of Lemma~\ref{l.perkmais1}: while we could only say which rectangles the trajectory of a $k$-persistent point can visit (giving ``admissible'' sequences of rectangles), Lemma \ref{lemQuasiRec} states precisely that to any of these ``admissible'' sequences of rectangles is associated a non-empty set of $k$-persistent points, and that this set is a quasi-rectangle (and in particular, is connected). The next subsection will be devoted to the estimation of the height of these quasi-rectangles.

\begin{remark} It is important for this lemma to ``rescale" the connected components of $P_k(\theta)$ by the map $L_n$. That said, nonetheless, to avoid an overload of notations in some proofs, we shall indiscriminately work with either $P_k(\theta)$ or $LP_k(\theta)$, whichever is more convenient. The important point is that, since we eventually want to estimate the \textit{ratio between the height of the sets $\tilde P_k(\theta)_j$ and the height of the rectangles $\cA^B_{h,v}(\theta)$}, our results will be invariant under the maps $L_n$.        
\end{remark}

\subsubsection{An inclination estimate} Notice that $g_0(\cA^B_{h,v}(\theta))$ is also a rectangle, since $g_0$ is an affine map (see Figure~\ref{Fig.umaiteracao}). Also, observe that $h_1|_{g_0(\cA^B_{h,v}(\theta))}$ acts as an horizontal translation (as depicted in Figure~\ref{Fig.afterh1}). The core of the proof of Lemma~\ref{lemQuasiRec} is to bound the distortion caused by the application of the perturbation map $h_2$, as illustrated in Figure~\ref{Fig.afterh2}. As a result of this we shall prove that the \emph{rescaled} first return map of $f_1^{-1}$ sends almost horizontal vectors to almost horizontal vectors.

For the statement, we recall the affine maps $L_n:\R^2\to\R^2$, defined in \eqref{e.ln}, which provide an identification between the extended stable box $\tilde{S}_n$ and a square $[\alpha,\zeta]^2$, where $\alpha<-1$ and $\zeta>1$. For every $(x,y)\in\R^2$, we have 
\[
DL_n(x,y)=\begin{bmatrix}
\frac{2}{b-a}&0\\
0&\frac{2\sigma^n}{b-a}
\end{bmatrix}.
\]
We will denote $s_v$ and $s_h$ the orthogonal symmetries with respect to respectively the $x$ (horizontal) and $y$ (vertical) axes.

\begin{lema}[Inclination]\label{LemInclination}
For every $m,n \geq n_0+1$ the following holds:
\begin{enumerate}
\item the first return map of $f_1^{-1}$ sends almost horizontal vectors to almost horizontal vectors: Let $(x,y)\in \cA^B_{\ell,b}(n) \cap f_1^{m+k_0}(\cA^B_{\ell,t}(m))$. Then, the linear map 
\[
DL_m\circ Df_1^{-m-k_0}(x,y)\circ DL^{-1}_n
\]
preserves the cone $\{(\lambda,\mu)\in\R^2;\,|\lambda|\geq 2|\mu|\}$ of vectors with inclination smaller than $1/2$.
\item the perturbation map $h_2\circ h_1$ sends horizontal vectors to almost vertical ones: Suppose that $(x,y)\in f_0\circ h_1\circ f_0^{m+k_0-1}\left(\cA^B_{\ell,t}(m)\right)$ and $(x,y^{\prime})= f_0\circ h_2\circ f_0^{-1}(x,y)\in\cA^B_{\ell,v}(n)$, for some choice of $(\ell,v,n)\prec (\ell,t,m)$. Then, the linear map
\[
DL_n\circ D(f_0\circ h_2\circ f_0^{-1})(x,y)\circ DL_{2m}^{-1}
\]
sends the vector $e_1=(1,0)$ strictly inside the cone $\{(\lambda,\mu);\,|\mu|\geq 2|\lambda|\}$ of vectors with inclination bigger than $2$.
\end{enumerate}
The same statement holds for other combinations of regions $\cA^{B}_{h,v}(n)$.

Moreover, the same statement as (1) holds for boxes of $\mathcal{L}^e$:  Let $(x,y)\in s_h\big(\cA^B_{\ell,b}(n)\big) \cap f_1^{m+k_0}\big(s_v(\cA^B_{\ell,t}(m))\big)$. Then, the linear map 
\[DL_m\circ Ds_v \circ Df_1^{-m-k_0}(x,y)\circ Ds_h \circ DL^{-1}_n =
DL_m \circ Df_1^{-m-k_0}(x,y) \circ DL^{-1}_n\]
preserves the cone $\{(\lambda,\mu)\in\R^2;\,|\lambda|\geq 2|\mu|\}$.
\end{lema}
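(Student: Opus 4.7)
The plan is to decompose the first-return map and analyse each piece separately in rescaled coordinates. Before anything else, I would prove that on each relevant blue rectangle one has the equality $f_1^{m+k_0}|_{\cA^B_{h,v}(m)} = h_2\circ h_1\circ f_0^{m+k_0}|_{\cA^B_{h,v}(m)}$. This is because during the first $m$ iterates the orbit lies in $\mathcal V$ with first coordinate of order $\sigma^{-2i}$, well below $b$; during the following $k_0$ rotational steps the second coordinate is of order $1$, well above $\sigma^{-n_0-1}a$; and at the penultimate step (v) of Proposition~\ref{buildTowers} puts the first coordinate in $[\sigma^2\tilde a, \sigma^2\tilde b]$, above $\sigma^2 a$. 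In all intermediate steps we miss $\supp(h_1)\cup\supp(h_2)$. Consequently $f_1^{-m-k_0}(x,y) = f_0^{-m-k_0}\circ h_1^{-1}\circ h_2^{-1}(x,y)$ in a neighbourhood of any $(x,y)\in f_1^{m+k_0}(\cA^B_{h,v}(m))$.

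Next I would apply the chain rule and factor the rescaled derivative as $R\cdot A_1\cdot A_2$, where
\[ R = DL_m\circ Df_0^{-m-k_0}(z)\circ DL_{2m}^{-1},\ A_1 = DL_{2m}\circ Dh_1^{-1}(w)\circ DL_{2m}^{-1},\ A_2 = DL_{2m}\circ Dh_2^{-1}(x,y)\circ DL_n^{-1}\]
with $w=h_2^{-1}(x,y)$ and $z=h_1^{-1}(w)$. Property (iv) of Proposition~\ref{buildTowers} identifies $R$ with the matrix $\bigl(\begin{smallmatrix}0 & 1\\ -1 & 0\end{smallmatrix}\bigr)$; hypothesis (H3) (the translation range of $\xi_1$) forces $\xi_1'(z_1)=1$ so that $A_1=I$; finally, writing $Dh_2 = \bigl(\begin{smallmatrix}1 & 0\\ \mathbf{A} & \mathbf{B}\end{smallmatrix}\bigr)$ with $\mathbf{A}=\varphi_2'(x)(\xi_2(y)-y)$ and $\mathbf{B}=1+\varphi_2(x)(\xi_2'(y)-1)\in[\beta,1]$, an explicit diagonal conjugation yields $A_2 = \bigl(\begin{smallmatrix}1 & 0\\ -\mathbf{A}\sigma^{2m}/\mathbf{B} & \sigma^{2m-n}/\mathbf{B}\end{smallmatrix}\bigr)$. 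Multiplying, the full rescaled derivative equals $\bigl(\begin{smallmatrix}-\mathbf{A}\sigma^{2m}/\mathbf{B} & \sigma^{2m-n}/\mathbf{B}\\ -1 & 0\end{smallmatrix}\bigr)$.

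The main obstacle is a uniform lower bound for $|\mathbf{A}|\sigma^{2m}/\mathbf{B}$ on the intersection $\cA^B_{h',v'}(n)\cap f_1^{m+k_0}(\cA^B_{h,v}(m))$. The constraint that the second coordinate of $(x,y)$ belongs to $\sigma^{-n}\tilde I$, together with the linear form of $\xi_2$ near $0$ from (V3), forces $\varphi_2(x)$ to be of order $\sigma^{-n+2m}$. The convexity of $\varphi_2$ on the relevant sub-interval (Remark~\ref{rem.convex}) then yields $\varphi_2'(x)\ge \varphi_2(x)/(x-(b+2\eps_1))$, while $x-(b+2\eps_1)\le\delta_1$ and $\xi_2(y)-y\ge c/2$ for small $y$. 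Combining these ingredients, and using $\sigma^{2m-n}\le 1$ for $n\ge 2m$, produces $|\mathbf{A}|\sigma^{2m}/\mathbf{B}\gtrsim 1/(\delta_1\beta)\gg 2$ by the choice $\delta_1=\eps_1/10$. A direct computation then shows that if $|\lambda|\ge 2|\mu|$ then the image $(-\mathbf{A}\sigma^{2m}\lambda/\mathbf{B}+\sigma^{2m-n}\mu/\mathbf{B},\,-\lambda)$ satisfies the same inequality, giving the cone preservation for part (1).

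For part (2), since $f_0^{-1}$ is the linear map $\operatorname{Diag}(\sigma^2,\sigma^{-1})$ on $\mathcal V$, a direct chain rule for $f_0\circ h_2\circ f_0^{-1}$, followed by conjugation by $DL_n$ and $DL_{2m}^{-1}$, produces a matrix of the form $\bigl(\begin{smallmatrix}1 & 0\\ \mathbf{A}' & \mathbf{B}'\end{smallmatrix}\bigr)$ whose $(2,1)$-entry carries an additional factor $\sigma^{n+3}$ on top of $\varphi_2'\cdot c$; the same convexity-based bound then makes this entry dominate in absolute value, so $e_1=(1,0)$ is sent strictly inside the cone $\{|\mu|\ge 2|\lambda|\}$. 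For the exterior statement, conjugation by $s_h$ reduces the inclination estimate to the interior case already proved, because $s_v$ and $s_h$ intertwine the linear parts of $f_0$ and of the perturbations on the symmetric blue boxes, and the cones $\{|\lambda|\ge 2|\mu|\}$ are invariant under these symmetries.
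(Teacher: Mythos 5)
Your proposed decomposition of the first-return map is incorrect, and this is a fatal gap. You write $f_1^{m+k_0}|_{\cA^B_{\ell,t}(m)} = h_2\circ h_1\circ f_0^{m+k_0}|_{\cA^B_{\ell,t}(m)}$, claiming that the penultimate $f_0$-iterate lies outside $\supp(h_2\circ h_1)$ because its first coordinate is ``above $\sigma^2 a$''. This misreads property~(v) of Proposition~\ref{buildTowers}: that statement places the penultimate image in $[\tilde b,\sigma^2\tilde a]\times\cdots$, and since $b<\tilde b$ and $\sigma^2\tilde a<\sigma^2 a$, this lies \emph{inside} $[b,\sigma^2 a]$, not above it. A direct computation confirms this: $g_0(\cA^B_{\ell,t}(m))=[a-\eps_1-\delta_1,a-\eps_1]\times\sigma^{-2m}[a-3\eps_1,a-2\eps_1]$, so $f_0^{m+k_0-1}(\cA^B_{\ell,t}(m))$ has first coordinate in $[\sigma^2(a-\eps_1-\delta_1),\sigma^2(a-\eps_1)]\subset(b,\sigma^2 a)$, squarely inside $\supp(h_2\circ h_1)$. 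Meanwhile, the final return $f_0^{m+k_0}(\cA^B_{\ell,t}(m))$ has first coordinate in $[a-\eps_1-\delta_1,a-\eps_1]<b$, which \emph{is} outside the support. The correct decomposition (the one the paper uses) is therefore $f_1^{m+k_0}=f_0\circ h_2\circ h_1\circ f_0^{m+k_0-1}$ on $\cA^B_{\ell,t}(m)$: the perturbation acts one $f_0$-step \emph{before} the return. (As the paper's closing remark notes, your decomposition is the correct one for the \emph{other} departure boxes $\cA^B_{\ell,b}$ and $\cA^B_{r,t}$, where $h_1,h_2$ act \emph{after} the return; you have applied the wrong case to the statement.)

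The error propagates fatally into the cone estimate. Because the target point $(x,y)\in\cA^B_{\ell,b}(n)$ has $x\in[a-3\eps_1,a-2\eps_1]$, which lies strictly below $b$ and hence outside $\supp\varphi_2$, your entry $\mathbf{A}=\varphi_2'(x)(\xi_2(y)-y)$ is identically zero; the lower bound $|\mathbf{A}|\sigma^{2m}/\mathbf{B}\gg 2$ you invoke is therefore impossible, and your rescaled matrix $\bigl(\begin{smallmatrix}0 & \sigma^{2m-n}/\mathbf{B}\\ -1 & 0\end{smallmatrix}\bigr)$ does not preserve the cone. With the correct decomposition, one must write $(\bar x,\bar y)=f_0^{-1}(x,y)$, where $\bar x=\sigma^2 x\in[\sigma^2(a-3\eps_1),\sigma^2(a-2\eps_1)]$ does sit at the boundary of $\supp\varphi_2$, use the identity $\bar y-y'=\varphi_2(\bar x)(\xi_2(y')-y')$ to bound $\varphi_2(\bar x)$ from below, and then transfer this to $\varphi_2'(\bar x)$ via convexity; the $\delta_1$-factors cancel and one obtains the uniform estimate $|\sigma^{n+3}\gamma|\ge 9$ (note the extra $\sigma^3$ contributed by the $f_0$-conjugation, which your factorization drops). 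You also have the domination inequality backwards: $(\ell,b,n)\prec(\ell,t,m)$ forces $n\le 2m$, hence $\sigma^{2m-n}\ge 1$, whereas you argue with $n\ge 2m$.
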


Let us first show how to deduce Lemma \ref{lemQuasiRec} from Lemma \ref{LemInclination}.

\begin{proof}[Proof of Lemma \ref{lemQuasiRec}]
For simplicity of notation we assume $h=\ell$ and $v=t$ through this proof. Clearly the other cases are analogous. In the proof of the lemma, we will use implicitly and repeatedly the fact that the map $h_1$ is a translation in restriction to the set we are presently interested in.
We argue by induction. For $k=0$, there is nothing to say.

By the induction hypothesis, any connected component of some $LP_k(h',v',m)$ is a quasi-rectangle of inclination $\le 1/2$. Moreover, by the form of the perturbations $h_1$ and $h_2$ (see Figure \ref{fig.interesting}), the boundary of the set $f_1^{n+k_0}(\cA^B_{\ell,b}(n))$ is made of two small vertical curves, and two others which are convex graphs above the interval $[b+2\eps_1,b+2\eps_1+\delta_1]$. Also, by part 2 of Lemma~\ref{LemInclination}, after rescaling, the intersection of these graphs with each rectangle $\cA^B_{h,v}(m)$, for $\eta=(h,v,m)\prec\theta = (\ell,b,n)$, have inclination $>2$. Using the fact that two curves of respective inclinations $<1$ and $>1$ have at most one intersection, we deduce that there is exactly one connected component of
$$L_m\Big(f_1^{n+k_0}(\cA^B_{\ell,b}(n)) \cap P_k(h,v,m)\Big)$$
for each connected component of $P_k(h,v,m)$ (see Figure \ref{FigP1}). 

The boundary of each connected component of 
$$f_1^{n+k_0}(\cA^B_{\ell,b}(n)) \cap P_k(h,v,m),$$
is made of four curves, two of which are part of the respective top and bottom boundary curves of a connected component of $P_k(h,v,m)$; after rescaling these components become almost horizontal with inclination smaller than $1/2$, by induction hypothesis. By part 1 of Lemma~\ref{LemInclination}, the map $f_1^{-n-k_0}$ sends these almost horizontal curves into almost horizontal curves (after rescaling), with the same bound on the inclination. Moreover, the pre-images of the other two parts of the boundary are vertical intervals contained in the vertical boundary components of $\cA^B_{\ell,b}(n)$. This proves that the pre-image of each intersection 
$$f_1^{n+k_0}(\cA^B_{\ell,b}(n)) \cap P_k(h,v,m),$$ 
is a quasi-rectangle. This establishes the induction and finishes the proof.
\end{proof}

\begin{proof}[Proof of Lemma~\ref{LemInclination}]
We shall begin by proving the first point.
Notice that, as in the proof of Lemma~\ref{l.allerverslesbleu}, $f_1^{m+k_0}|_{\cA^B_{\ell,t}(m)}=f_0\circ h_2\circ h_1\circ f_0^{m+k_0-1}|_{\cA^B_{\ell,t}(m)}$. So, consider the point $(\bar{x},\bar{y})=f_0^{-1}(x,y)$. As $f_0^{m+k_0-1}\left(\cA^B_{\ell,t}(m)\right)=f_0^{-1}\circ g_0\left(\cA^B_{\ell,t}(m)\right)$ we can write (see Figure~\ref{Fig.umaiteracao})  	
\[
f_0^{m+k_0-1}\left(\cA^B_{\ell,t}(m)\right)=[\sigma^2(a-\eps_1-\delta_1),\sigma^2(a-\eps_1)]\times \sigma^{-2m-1}[a-3\eps_1,a-2\eps_1].
\]
Recall the bump function $\varphi_1 = \varphi_{-\sigma^{-n_0-1}a,\sigma^{-n_0-1}a}^{-\sigma^{-n_0-2}(b+3\eps_1),\sigma^{-n_0-2}(b+3\eps_1)}$ and the diffeomorphism $\xi_1:\R\to\R$ used in the definition of $h_1$. Since $m\geq n_0$, by (H3) of the definition of $\xi_1$ we have that $h_1|_{f_0^{m+k_0-1}\left(\cA^B_{\ell,t}(m)\right)}$ is a horizontal translation (by $-\sigma^2\eps_1$). This enables us to write $(\bar{x},\bar{y})=h_2(\bar{x},y^{\prime})$ and $f_1^{-1}(x,y)=(\bar{x}-\sigma^2\eps_1,y^{\prime})$. In particular, we have that
\begin{equation}
\label{e.deltaum}
|\bar{x}-\sigma^2(a-2\eps_1)|<\sigma^2\delta_1.
\end{equation}
The observation that $f_1^{m+k_0}|_{\cA^B_{\ell,t}(m)}=f_0\circ h_2\circ h_1\circ f_0^{m+k_0-1}|_{\cA^B_{\ell,t}(m)}$ also allows us to write 
\begin{eqnarray}
& DL_m\ Df_1^{-m-k_0}(x,y)\ DL^{-1}_n = & \nonumber\\
&\left(DL_n\ Df_0(\bar{x},\bar{y})\  Dh_2(\bar{x},y^{\prime})\  Dh_1(\bar{x}-\sigma^2\eps_1,y^{\prime})\  Df_0^{m+k_0-1}(f_1^{-m-k_0}(x,y))\  DL^{-1}_{m}\right)^{-1}\nonumber
\end{eqnarray}
We proceed now to compute the matrix of this linear map. Notice that $Dh_1(\bar{x}+\sigma^2\eps_1,y^{\prime})=\Id$ and consider $R_{\pi/2}:\R^2\to\R^2$ the counter-clockwise rotation of angle $\pi/2$. From Proposition~\ref{buildTowers} we can write $L_{2m}\circ f_0^{m+k_0}\circ L_m^{-1}=R_{\pi/2}$ and thus in restriction to $[\alpha,\zeta]^2$,
\[
f_0^{m+k_0-1}\circ L_m^{-1}=f_0^{-1}\circ L_{2m}\circ R_{\pi/2}.
\]
Observe that, in this equality, $f_0^{-1}$ is the linear map $(x,y)\mapsto\operatorname{Diag}(\sigma^{2}x,\sigma^{-1}y)$. Let us compute $Dh_2(\bar{x},y^{\prime})$.  
 
With this purpose denote along this proof
	\begin{equation}\label{EqDefalphaBeta}
	\gamma = \varphi_2'(\bar x) \big(\xi_2(y^{\prime})- y^{\prime}\big) \qquad \text{and}\qquad \beta = \varphi_2(\bar x) \xi_2'(y^{\prime}) + 1 - \varphi_2(\bar x).
	\end{equation} 
	Recall from the definition of $h_2$ (\eqref{EqDefH2} page \pageref{EqDefH2}) that $$h_2(\bar x,y^{\prime})=\left(\bar x,\varphi_2(\bar x) \xi_2(y^{\prime}) + (1 - \varphi_2(\bar x))y^{\prime}\right)$$and\[Dh_2(\bar x,y^{\prime}) = \begin{pmatrix} 
	1 & 0\\
	\gamma & \beta
	\end{pmatrix}
	\qquad \text{and} \qquad
	Dh_2^{-1}(\bar x,\bar y) = \begin{pmatrix} 
	1 & 0\\
	-\frac{\gamma}{\beta} & \frac{1}{\beta}
	\end{pmatrix}.\]Therefore, the matrix we are seeking to compute is (we omit the base points as they are now clear):
	\begin{align*}
	R_{-\frac{\pi}{2}}\,DL_{2m}\,f_0\,Dh_2^{-1}\,f_0^{-1}\,DL_n^{-1} 
	& =
	R_{-\frac{\pi}{2}}\begin{pmatrix} 
	\frac{2}{b-a} & 0\\
	0 & \frac{2\sigma^{2m}}{b-a}\\
	\end{pmatrix}
	f_0
	\begin{pmatrix} 
	1 & 0\\
	-\frac{\gamma}{\beta} & \frac{1}{\beta}
	\end{pmatrix}
	\begin{pmatrix} 
	\frac{\sigma^2(b-a)}{2} & 0\\
	0 & \frac{b-a}{2\sigma^{n+1}}\\
	\end{pmatrix}
	\\
	& =
	R_{-\frac{\pi}{2}}\begin{pmatrix} 
	\frac{2}{b-a} & 0\\
	0 & \frac{2\sigma^{2m}}{b-a}\\
	\end{pmatrix}
	\begin{pmatrix}
	\sigma^{-2}&0\\
	0&\sigma
	\end{pmatrix}
	\begin{pmatrix} 
	\frac{\sigma^2(b-a)}{2} & 0\\
	-\frac{\sigma^2(b-a)}{2}\frac{\gamma}{\beta} & \frac{b-a}{2\sigma^{n+1}}\frac{1}{\beta}
	\end{pmatrix}\\
	\\
	& = 
	R_{-\frac{\pi}{2}}\begin{pmatrix} 
	1 & 0\\
	-\sigma^{2m+3} \frac{\gamma}{\beta} & \frac{\sigma^{2m-n}}{\beta}
	\end{pmatrix}
	\\
	& =
	\begin{pmatrix} 
	-\sigma^{2m+3} \frac{\gamma}{\beta} & \frac{\sigma^{2m-n}}{\beta}\\
	-1 & 0
	\end{pmatrix}.
	\end{align*}
	It remains to prove that this matrix preserves the cone of vectors with inclination smaller than $1/2$. Thus take $(\lambda,\mu)\in\R^2$ such that $|\lambda|\geq 2|\mu|$. 
The ratio between components of the vector $R_{-\frac{\pi}{2}}\,DL_{2m}\,f_0\,Dh_2^{-1}\,f_0^{-1}\,DL_n^{-1} (\lambda,\mu)$ (\emph{i.e.}, its inclination) is given by
	\[
	\eta\eqdef\left|\frac{-\lambda\sigma^{2m+3} \frac{\gamma}{\beta} + \sigma^{2m-n}\frac{\mu}{\beta}}{-\lambda}\right|,
	\]
and thus we have to control the terms $\gamma$ and $\beta$. This estimation is based on the localization of the points $\bar x,\bar y$ and $y^{\prime}$, and the relation $h_2(\bar x,y^{\prime})=(\bar x,\bar y)$. Indeed, by the definition of $h_2$ this equality implies that 
\begin{equation}
\label{e.cefet}
\bar y- y^{\prime}=\varphi_2(\bar x)(\xi_2(y^{\prime})-y^{\prime}) 
\end{equation}

As $f^{-m-k_0}(x,y)\in\cA_{\ell,t}(m)$, we have that $y^{\prime}\in\sigma^{-2m-1}[a-3\eps_1,a-2\eps_1]$, which implies $0<\xi_2^{\prime}(y^{\prime})<1$ (recall the graph of $\xi_2$ in  Figure~\ref{f.grafxi2}). As $\varphi_2(\bar x)\in [0,1]$ (see Figure~\ref{FigPhi2}) we conclude that  $0<\beta\leq 1$.

Also, Lemma~\ref{LemIntersecBlue} gives $(\ell,b,n)\prec(\ell,t,m)$ and so we must have $2m\geq n$. 
These considerations are enough to give us the bound
\begin{equation}\label{EqMinEta}
\eta\geq\frac{1}{\beta}\left|\sigma^{2m+3} \gamma - \sigma^{2m-n}\frac{\mu}{\lambda}\right|
\ge \frac{\sigma^{2m-n}}{\beta}\left(\sigma^{n+3} |\gamma|- \left|\frac{\mu}{\lambda}\right|\right).
\end{equation}

We proceed now to estimate the term $\sigma^{n+3}\gamma$. Since $(\bar x,\bar y)=f_0^{-1}(x,y)$, and since 
\[
f_0^{-1}(\cA^B_{\ell,b}(n))=[\sigma^2(a-3\eps_1),\sigma^2(a-2\eps_1)]\times \sigma^{-n-1}[a-\eps_1-\delta_1,a-\eps_1],
\]
we can estimate that (recall that $2m\geq n$)
\[
\bar y - y^{\prime}\geq \sigma^{-n-1}(a-\eps_1-\delta_1)-\sigma^{-2m-1}(a-2\eps_1)\geq \sigma^{-n-1}(\eps_1-\delta_1).
\]
Since $\delta_1=\frac{\eps_1}{10}$, we conclude that
\begin{equation}\label{Eqyybar}
y-\bar{y}\geq 9\delta_1\sigma^{-n-1}.
\end{equation}
Notice now that $\xi_2(y^{\prime})-y^{\prime}>0$, because $y^{\prime}>0$ (see Figure~\ref{f.grafxi2}). Then, \eqref{e.cefet} with the above inequality yield
	\[\varphi_2(x)\ge \frac{9\delta_1\sigma^{-n-1}}{\xi_2(y^{\prime})-y^{\prime}}.\]
	
Now, we apply the convexity of $\varphi_2$. Observe that $\bar{y}\leq\sigma^{-n-1}(a-\eps_1)$, and since $n\geq n_0+1$, we have $\bar{y}<\sigma^{-n_0-2}(b+3\eps_1)$. By Remark~\ref{rem.convex}, the restriction $\varphi_2|_{[\bar x,\sigma^2(a-2\eps_1)}$ is convex. With the previous bound on $\varphi_2(\bar x)$ this allows to say that	
\begin{equation}\label{EqVarPhi2Prim}
|\varphi_2'(\bar x)| \ge \frac{9\delta_1\sigma^{-n-1}}{\left(\xi_2(y^{\prime})-y^{\prime}\right)|\bar x-\sigma^2(a-2\eps_1)|}.
\end{equation}
By the definition \eqref{EqDefalphaBeta} of $\gamma$ and by \eqref{e.deltaum} we then have
\begin{equation}
\label{e.negodrama}
|\sigma^{n+3}\gamma| \geq \sigma^{n+3}\frac{9\sigma^{-n-1}\delta_1}{|\bar x-\sigma^2(a-2\eps_1)|}\geq \sigma^{n+3}\frac{9\sigma^{-n-1}\delta_1}{\sigma^2\delta_1} \ge 9.
\end{equation}
By assumption, $|\frac{\mu}{\lambda}|\leq\frac{1}{2}$, and therefore estimations \eqref{e.negodrama} and \eqref{EqMinEta} combined give $\eta> 2$, as desired.
\bigskip

Let us now explain hot to get the result about points of $\mathcal{L}^e$. By composing left and right by the symmetries $s_h$ and $s_v$, one can see that it amounts to prove the case \textit{(1)}, where the map $\xi_2$ is replaced by the map $\xi_2^e : y\mapsto -\xi_2(-y)$ (the map whose graph is obtained from the graph of $\xi_2$ by a symmetry around the origin). We now explain how this change affects the proof.

As in our case the order $\prec$ is replaced by $\preceq$, the condition $(\ell,b,n)\preceq(\ell,t,m)$ implies $2m\leq n$, and hence the bound \eqref{EqMinEta} becomes
\[\eta\geq\left|\frac{1}{\beta}\left(\sigma^{2m+3} \gamma - \sigma^{2m-n}\frac{\mu}{\lambda}\right)\right|
\ge\frac{1}{|\beta|}\left( |\sigma^{2m+3}\gamma| -|\frac{\mu}{\lambda}|\right).\]

The same computation as before shows that $-1\le\beta \le 1$, hence 
\[\eta\geq |\sigma^{2m+3}\gamma| -|\frac{\mu}{\lambda}|.\]
Because now $2m \le n$, the inequality \eqref{Eqyybar} becomes
\begin{equation*}
|y-\bar{y}|\geq 9\delta_1\sigma^{-2m-1},
\end{equation*}
and the convexity estimation holds as before: \eqref{EqVarPhi2Prim} becomes
\begin{equation*}
|\varphi_2'(\bar x)| \ge \frac{9\delta_1\sigma^{-2m-1}}{\left|\xi_2(y^{\prime})-y^{\prime}\right||\bar x-\sigma^2(a-2\eps_1)|},
\end{equation*}
and implies the counterpart of \eqref{e.negodrama}:
\begin{equation*}
|\sigma^{2m+3}\gamma| \geq \sigma^{2m+3}\frac{9\sigma^{-2m-1}\delta_1}{|\bar x-\sigma^2(a-2\eps_1)|}\geq \sigma^{2m+3}\frac{9\sigma^{-2m-1}\delta_1}{\sigma^2\delta_1} \ge 9,
\end{equation*}
As $|\frac{\mu}{\lambda}|\leq\frac{1}{2}$ this proves as before that $\eta>2$.
\bigskip

For the second point of the lemma, one computes, similarly to what has been done for the first point,
	\[DL_{n}Df_0Dh_2Df_0^{-1}DL_{2m}^{-1} 
	= \begin{pmatrix} 
	1 & 0\\
	\sigma^{n+3}\gamma & \beta \sigma^{n-2m}
	\end{pmatrix},\]
	thus using \eqref{e.negodrama} we deduce that the image of $e_1$ under this matrix is a vector whose inclination is 
\[
\gamma\sigma^{n+3}\geq 9,
\]
concluding the proof.
\end{proof}

\begin{remark}
The proof of the inclination lemma in the case $\cA^B_{r,b}(m)$ is identical to the above. The cases $\cA^B_{\ell,b}(m)$ and $\cA^B_{r,t}(m)$ are analogous but even simpler: one can compute directly with the points $(x,y)$, and there is no need to consider $(\bar x,\bar y)$. The reason is that in these cases the perturbations $h_1,h_2$ act only \emph{after} the first return $g_0$. In the case we explained above, they act one iteration before the first return, and this introduces some additional terms in the matrix computation which is necessary for the proof.   
\end{remark}

\subsection{Size estimation of the set of persistent points}\label{SubsecSize}

We shall now conclude the main step of our analysis of the set of persistent points. We have seen that the connected components $\{P_{k}(\theta)_j\}_{i\in J_{\theta}(k)}$ of $k$-persistent points, for $\theta=(h,v,m)$, after been rescaled by $L_m$, become the quasi-rectangles $\{L\tilde{P}_k(\theta)_j\}_{j\in J_{\theta}(k)}$. Fix $j\in J_{\theta}(k)$ and denote $\tilde{d}_{\theta}(k)_j\eqdef\operatorname{height}(L\tilde{P}_k(\theta)_j)$. Fix also a vertical interval $I_{L\tilde{P}_k(\theta)_j}\subset L\tilde{P}_k(\theta)_j$ so that $\tilde{d}_{\theta}(k)_j=\operatorname{lenght}(I_{L\tilde{P}_k(\theta)_j})$, as we explained right after Definition~\ref{DefQuasiRect}. We denote by $d_{\theta}(k)_j$ the length of $L_m^{-1}({I_{L\tilde{P}_k(\theta)_j}})$. Notice that, being the pre-image of a quasi-rectangle by $L_m$, the set $P_{k}(\theta)_j$ can be described as a union of vertical segments and the number $d_{\theta}(k)_j$ is the maximal length of such intervals.  
To prove that $\Gamma=\bigcup_\theta\bigcap_{k}P_k(\theta)$ has zero Lebesgue measure, the main step will be to establish that the proportion of the \emph{height of the blue rectangle $\cA^B_{h,v}(m)$} occupied by the sum of the numbers $d_{\theta}(k)_j$ decreases exponentially to zero as $k\to\infty$, with rate 2. Notice that $\operatorname{height}(\cA^B_{h,v}(m))=\sigma^{-m}\delta_1$. For simplicity we denote this numbers by $h_{\theta}$. 

\begin{lema}\label{LemEstSize}
For every $k\in\nt$, $m\ge n_0$ and $(h,v)\in\{\ell,r\}\times \{t,b\}$, denoting $\theta = (h,v,m)$, 
we have
\begin{equation}\label{EqLemEstSize}
\sum_{j\in J_\theta(k)} \frac{d_\theta(k)_j}{h_{\theta}}\leq 2^{-k}.
\end{equation}
\end{lema}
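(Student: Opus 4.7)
Plan. Following the inductive structure suggested by Lemma~\ref{l.perkmais1}, I would prove the estimate by induction on $k$. The base case $k=0$ is immediate: $P_0(\theta) = \cA^B_{h,v}(m)$ has a single connected component of height $h_\theta$, so the sum is exactly $1 = 2^0$. For the inductive step, I apply Lemma~\ref{lemQuasiRec} to identify the connected components of $P_{k+1}(\theta)$ with pairs $(\eta,i)$, where $\eta = (h',v',n') \prec \theta$ and $i \in J_\eta(k)$: each such component is the preimage under $f_1^{m+k_0}$ of the intersection $P_k(\eta)_i \cap f_1^{m+k_0}(\cA^B_{h,v}(m))$. The problem then reduces to estimating each $d_\theta(k+1)_{(\eta,i)}$ in terms of $d_\eta(k)_i$ and summing.

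The local height estimate is obtained in the rescaled coordinates of Definition~\ref{DefLP} via the map $\Phi = L_{n'}\circ f_1^{m+k_0}\circ L_m^{-1}$. Part~(2) of the Inclination Lemma~\ref{LemInclination} guarantees that $\Phi$ sends vertical segments in the source to curves of inclination $\geq 2$ in the target, and Lemma~\ref{LemNonLin} then bounds the vertical extent in the target of each intersection with the quasi-rectangle $L_{n'}(P_k(\eta)_i)$ (which has inclination $\leq 1/2$ and height $\tilde d_\eta(k)_i$) by $\tfrac{4}{3}\tilde d_\eta(k)_i$. Pulling this back to the source using the $y$-stretching $\partial \Phi_2/\partial y = -\sigma^{n'+3}\gamma$ read off from the matrix computation in the proof of the Inclination Lemma — for which the uniform lower bound $|\sigma^{n'+3}\gamma| \geq 9$ holds throughout the relevant region — yields $\tilde d_\theta(k+1)_{(\eta,i)} \leq \tfrac{4}{27}\tilde d_\eta(k)_i$. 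After accounting for the different scaling factors between $L_m$ and $L_{n'}$, this translates to the scale-invariant inequality
\[\frac{d_\theta(k+1)_{(\eta,i)}}{h_\theta} \leq \frac{4}{27}\,\frac{d_\eta(k)_i}{h_\eta},\]
which by the inductive hypothesis gives $\sum_{i \in J_\eta(k)} d_\theta(k+1)_{(\eta,i)}/h_\theta \leq \tfrac{4}{27}\cdot 2^{-k}$ for each fixed $\eta$.

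The main obstacle is to sum the local estimates over the infinitely many $\eta \prec \theta$ and recover the required $2^{-(k+1)}$. The per-$\eta$ contraction by $\tfrac{4}{27}$ alone is insufficient, since the sum over $n'\geq 2m$ of $2^{-k}$ diverges. I expect the argument to handle this by combining two ingredients. First, a refinement of the lower bound on $|\sigma^{n'+3}\gamma|$ using the convexity assumption on $\varphi_2$ (Remark~\ref{rem.convex}): a point whose image lands in a deep blue box $\cA^B_{h',v'}(n')$ must satisfy $\varphi_2(\bar x) \sim \sigma^{n_0 - n'}$, which by convexity forces $\bar x$ close to the endpoint of its transition interval where $|\varphi_2'|$ (and hence the stretching $|\sigma^{n'+3}\gamma|$) grows, producing a geometric decay in $n'-2m$. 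Second, the trivial disjointness bound $\sum_j d_\theta(k+1)_j \leq h_\theta$ that follows from the full-width property of the components in Lemma~\ref{lemQuasiRec} (the quasi-rectangles have disjoint $y$-projections inside $\cA^B_{h,v}(m)$). Delicately balancing the local contraction, the refined scaling in $n'$, and the global disjointness against the inductive hypothesis then closes the induction at the desired rate $2^{-(k+1)}$.
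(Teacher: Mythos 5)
Your inductive framing and choice of tools (Lemma~\ref{lemQuasiRec}, Lemma~\ref{LemNonLin}, the Inclination Lemma) are the right ones, and the local bound $\frac{4}{3}\tilde d_\eta(k)_i$ on the vertical extent of the intersection in the target matches the paper's inequality~\eqref{e.dansemacabre}. The gap is precisely where you flag it: the summation over the infinitely many $\eta\prec\theta$ — and the mechanism you sketch is not the one that works.

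Your plan is to get per-$\eta$ decay in $n'-2m$ from a refinement of the lower bound $|\sigma^{n'+3}\gamma|\ge 9$. But the paper does not need nor use any such refinement; a uniform bound suffices. The actual mechanism is to trade the $y$-extent in the target for the $x$-extent of $\gr(\vartheta)\cap P_k(\eta)_j$ in the intermediate coordinates (where $\vartheta$ is the composed image of a horizontal segment through $h_2$), and then to invoke two facts simultaneously: the \emph{slope inequality} for the convex curve $\vartheta$, which bounds the ratio
\[
\left|\frac{x^1_\eta - x^2_\eta}{X^1_\eta - X^2_{\eta-1}}\right|\le
\left|\frac{y^1_\eta - y^2_\eta}{Y^1_\eta - Y^2_{\eta-1}}\right|\le\frac{\tfrac43\lambda_\eta(k)_j h_\eta}{8h_\eta},
\]
and the observation that the ``gap'' $x$-intervals $[X^2_{\eta-1},X^1_\eta]$ are pairwise disjoint subintervals of an interval of total length $\delta_1$, so that $\sum_{\eta\prec\theta}|X^1_\eta - X^2_{\eta-1}|\le\delta_1$. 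Combined with the inductive hypothesis applied for each fixed $\eta$, this produces the uniform $\tfrac12$ contraction in one stroke. Your ``trivial disjointness bound'' $\sum_j d_\theta(k+1)_j\le h_\theta$ is too weak (it gives $1$, not $2^{-k-1}$), and without the slope inequality plus the disjointness of the $G_\eta$'s your sum over $n'$ does not close. There is also a genuine technical point you omit entirely: the quantities $X^i_\eta(y)$ depend on $y$, and the paper proves a separate monotonicity claim (Claim~\ref{claim.alorsalors}) to pass to the extremal $y$, which you would also need.
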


Notice that this lemma implies that $\bigcap_{k=1}^{\infty}P_k(\theta)$ has zero Lebesgue measure: it suffices to combine the estimation \eqref{EqLemEstSize}, the fact that each $P_k(\theta)$ is a quasi-rectangle and a Fubini argument. A statement analogous to Lemma~\ref{LemEstSize} (with the same proof) holds for the \emph{exterior} persistent points and thus $m(\Gamma^e)$ is also zero, using the part of Lemma~\ref{LemInclination} relative to exterior boxes.

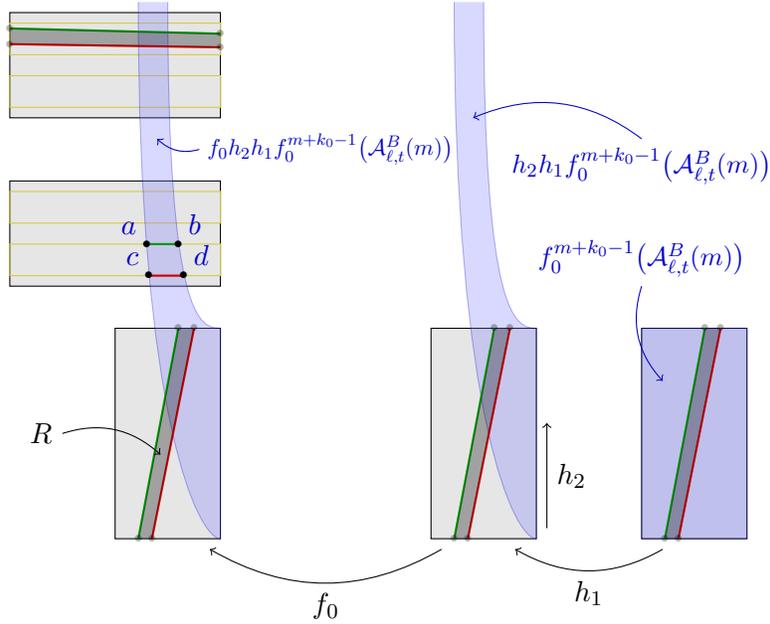
\begin{figure}
\begin{center}
\begin{tikzpicture}[scale=1.4]
\filldraw[fill=black!10!white,draw=black] (1,0) rectangle (2,2);
\filldraw[fill=black!10!white,draw=black] (4,0) rectangle (5,2);
\filldraw[fill=black!10!white,draw=black] (6,0) rectangle (7,2);
\filldraw[color=blue!80!black, fill=blue!70!white, opacity=.25](6,0) rectangle (7,2);
\filldraw[fill=black!10!white,draw=black] (0,2.4) rectangle (2,3.4);
\filldraw[fill=black!10!white,draw=black] (0,4) rectangle (2,5);
\draw[draw=yellow!80!black] (0,2.5) rectangle (2,2.8);
\draw[draw=yellow!80!black] (0,3) rectangle (2,3.3);
\draw[draw=yellow!80!black] (0,4.1) rectangle (2,4.4);
\draw[draw=yellow!80!black] (0,4.6) rectangle (2,4.9);

\filldraw[color=blue!80!black, fill=blue!50!white, opacity=.3] (1.22,5.1) .. controls +(0,-4) and +(-.2,0) .. (2,0) -- (2,2).. controls +(-.2,0) and +(0,-3).. (1.5,5.1);

\filldraw[color=blue!80!black, fill=blue!50!white, opacity=.3](4.22,5.1) .. controls +(0,-4) and +(-.2,0) ..  (5,0)-- (5,2).. controls +(-.2,0) and +(0,-3).. (4.5,5.1);

\draw[color=green!60!black, thick] (1.3,2.8) -- (1.6,2.8);
\draw[color=red!80!black, thick] (1.3,2.5) -- (1.65,2.5);

\draw (1.3,2.8) node[scale=.6]{$\bullet$} node[above left, color=blue!80!black]{$a$};
\draw (1.6,2.8) node[scale=.6]{$\bullet$} node[above right, color=blue!80!black]{$b$};
\draw (1.32,2.5) node[scale=.6]{$\bullet$} node[above left, color=blue!80!black]{$c$};
\draw (1.65,2.5) node[scale=.6]{$\bullet$} node[above right, color=blue!80!black]{$d$};

\draw[color=green!60!black, thick] (1.22,0) -- (1.6,2);
\draw[color=red!80!black, thick] (1.35,0) -- (1.75,2);
\fill[black, opacity=.3] (1.22,0)node[scale=.6]{$\bullet$} -- (1.6,2)node[scale=.6]{$\bullet$} -- (1.75,2)node[scale=.6]{$\bullet$} --(1.35,0)node[scale=.6]{$\bullet$};

\draw[<-,color=black!80!black] (1.43,.8) to[bend right] (0.5,1) node[left]{$R$};

\draw[color=green!60!black, thick] (4.22,0) -- (4.6,2);
\draw[color=red!80!black, thick] (4.35,0) -- (4.75,2);
\fill[black, opacity=.3] (4.22,0)node[scale=.6]{$\bullet$} -- (4.6,2)node[scale=.6]{$\bullet$} -- (4.75,2)node[scale=.6]{$\bullet$} --(4.35,0)node[scale=.6]{$\bullet$};

\draw[color=green!60!black, thick] (6.22,0) -- (6.6,2);
\draw[color=red!80!black, thick] (6.35,0) -- (6.75,2);
\fill[black, opacity=.3] (6.22,0)node[scale=.6]{$\bullet$} -- (6.6,2)node[scale=.6]{$\bullet$} -- (6.75,2)node[scale=.6]{$\bullet$} --(6.35,0)node[scale=.6]{$\bullet$};

\draw[color=red!80!black, thick] (2,4.67) -- (0,4.7);
\draw[color=green!60!black, thick] (2,4.8) -- (0,4.85);
\fill[black, opacity=.3] (2,4.67)node[scale=.6]{$\bullet$} -- (0,4.7)node[scale=.6]{$\bullet$} -- (0,4.85)node[scale=.6]{$\bullet$} -- (2,4.8)node[scale=.6]{$\bullet$};

\draw[->] (6.2,-.1) to[bend left] node[midway, below]{$h_1$} (4.8,-.1);
\draw[->] (5.1,.1) to node[midway, right]{$h_2$} (5.1,1.1);
\draw[->]  (4.1,-.1) to[bend left] node[midway, below]{$f_0$} (1.9,-.1);
\draw[<-,color=blue!80!black] (6.2,1.5) to[bend left] (6,2.4)node[above,scale=.9]{$f_0^{m+k_0-1}\big(\cA^B_{\ell,t}(m)\big)$};
\draw[<-,color=blue!80!black] (4.4,4) to[bend left] (6,3.8)node[below,scale=.9]{$h_2 h_1 f_0^{m+k_0-1}\big(\cA^B_{\ell,t}(m)\big)$};
\draw[<-,color=blue!80!black] (1.4,3.8) to[bend right] (1.8,3.7)node[right,scale=.78]{$f_0h_2 h_1 f_0^{m+k_0-1}\big(\cA^B_{\ell,t}(m)\big)$};
\end{tikzpicture}
\end{center}
\caption{\label{Fig.lefttop}Schematic representation of the construction of $P_2(\theta)$ from the $1$-persistent points: the yellow rectangles are the connected components of $1$-persistent points. The image of the blue rectangle $\cA_{\ell,t}^B(m)$ crosses all the components $P_1(\eta)_j$, with $\eta\prec\theta$. One of these crosses is represented with two boundary components in green and red. The inverse image under $f_1^{m+k_0}$ of this intersection gives the components of $P_2(\theta)$ represented in dark grey.}  
\end{figure}

Let us give a schematic idea of the proof. The global strategy is to argue by induction: we use the convexity property of the perturbation $h_2$ (see Remark~\ref{rem.convex}) and take advantage from the space below each blue rectangle.

We shall make reference to Figure~\ref{Fig.lefttop}. Using arguments similar to that of Lemma~\ref{LemIntersecBlue}, one can see that the set $f_1^{m+k_0}(\cA^B_{\ell,t}(m))$ (the blue set in the left-hand side of Figure~\ref{Fig.lefttop}) intersects all components $P_k(\eta)_j$, for $\eta\prec\theta$ (represented as yellow rectangles). One of these intersections is the set bounded by the cycle $abcd$, marked in the figure. By Lemma~\ref{lemQuasiRec}, the pre-image of these intersections under $f_1^{m+k_0}$ gives all the connected components of $P_{k+1}(\theta)$. Thus, the goal of the proof is to analyse the size of the pre-image under $f_1^{m+k_0}$ of this intersection. 

For this, one initially takes the pre-image by $f_0\circ h_2$, which yields the dark grey ``almost rectangle''(with almost vertical sides in green and in red in Figure~\ref{Fig.lefttop}) in the bottom-center of the figure. Using the fact that the map $(h_1)^{-1}$ is a similitude in restriction to this set, we obtain the set in the right-hand side of the figure, with almost vertical sides in green and red. Notice that all three grey rectangles with green and red boundaries in the bottom of Figure~\ref{Fig.lefttop} are similarly equivalent. Taking the pre-image under $f_0^{m+k_0-1}$ doesn't change the similarity class, because $f_0^{-m-k_0}$ is a rotation in restriction to the set we are interested in and $f_0$ is a diagonal matrix. One thus obtains the component of $P_{k+1}(\theta)$ represented in dark grey at the left top of the figure. By our analysis, this component is similar to the set $R$.  

Therefore, we need to estimate the \emph{shape} of the set $R$. In fact, it is its \emph{width} that we estimate, because the component of $k+1$-persistent is obtained from $R$ essentially rotating it by $-\pi/2$. 
 
As we shall see in the proof, the blue set on the left-hand side is the image of the grey rectangle in the left corner of Figure~\ref{Fig.lefttop} by the map $f_0\circ h_2\circ f_0^{-1}$.  Using our convexity assumption (see Remark~\ref{rem.convex}), we shall prove that the image under this map of each horizontal segment contained in $R$ is a convex curve joining the bottom and the top of a yellow rectangle (a component of $k$-persistent points), like the segment $ca$ which corresponds to the image of the bottom side of $R$.  




So, let us give an idea of how we use convexity to estimate the width of the set $R$. This idea is depicted in Figure~\ref{FigZoom}. 

To simplify the explanation, suppose that the yellow quasi-rectangles $LP_k(\theta)_j$ are actual rectangles (as in the figure), and rescale everything by $L_n$. 
As we explained above, the height of the rectangles at step $k+1$ are the horizontal size of the rectangle $R$ of Figure~\ref{Fig.lefttop}, which are the numbers $d_1$ and $d_2$ of Figure~\ref{FigZoom}. Our goal is to bound their sum from above, using the sum of the heights $z_1+z_2$ at step $k$. First, by convexity of the green curve, one has $d_1+d_2\le d'$. Again, by convexity, one has the inequality on the inclination:
\[\frac{z_1+z_2}{d'} \ge \frac{9\delta_1}{d''}.\]
Recall that after the rescaling the size of a blue rectangle is $\delta_1=\eps_1/10$ and the space between two consecutive blue rectangles is at least $\eps_1-\delta_1=9\delta_1$ (this was used already in the Inclination Lemma~\ref{LemInclination}). This implies
\[\frac{d'}{d'+d''} \le \frac{1}{9} \frac{z_1+z_2}{\delta_1}\ d''\frac{1}{d'+d''} \le \frac{1}{9} \frac{z_1+z_2}{\delta_1}.\]
The last term $\frac{z_1+z_2}{\delta_1}$ represents the left term of \eqref{EqLemEstSize}; it is bounded by the induction hypothesis. The factor $1/9$ corresponds to the ratio between the height of a blue rectangle and the space below it; it will give a rate of exponential decreasing. A rigorous version of the \emph{shape invariance} analysis, described above, shows that this bound on $d'/(d'+d'')$ (the width of $R$) implies a bound on the left term of \eqref{EqLemEstSize} at step $k+1$.

In the actual proof we will have to deal with the fact that the sets $P_k(\theta)_j$ are quasi-rectangles and not actual rectangles. To do this, we will use the inclination estimate given by Lemma~\ref{LemInclination} (which gives information after rescaling by $L_n$) together with a simple geometry lemma (Lemma~\ref{LemNonLin}).

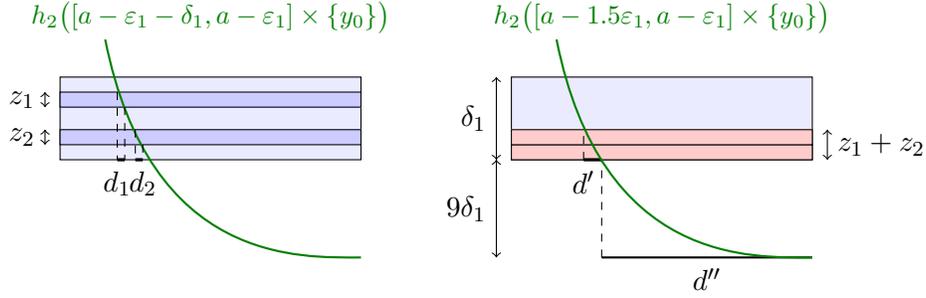
\begin{figure}
\begin{tikzpicture}[scale=2]

\filldraw[fill=blue!8!white] (-2,2.45) rectangle (0,3);
\filldraw[fill=blue!20!white] (-2,2.55) rectangle (0,2.65);
\filldraw[fill=blue!20!white] (-2,2.8) rectangle (0,2.9);
\draw[color=green!50!black, thick] (0,1.8) .. controls +(-.4,0) and +(.3,-1.5) .. (-1.7,3.25);
\draw[color=green!50!black] (-1,3.4) node{\small $h_2\big([a-\eps_1-\delta_1,a-\eps_1]\times\{y_0\}\big)$};
\draw[<->] (-2.1,2.55) -- (-2.1,2.65)node[midway, left] {$z_2$};
\draw[<->] (-2.1,2.9) -- (-2.1,2.8)node[midway, left] {$z_1$};
\draw[dashed] (-1.57,2.8) -- (-1.57,2.45);
\draw[dashed] (-1.62,2.9) -- (-1.62,2.45);
\draw[very thick] (-1.57,2.45) -- (-1.62,2.45) node[below]{$d_1$};
\draw[dashed] (-1.5,2.65) -- (-1.5,2.45);
\draw[dashed] (-1.45,2.55) -- (-1.45,2.45);
\draw[very thick] (-1.5,2.45) -- (-1.45,2.45) node[below]{$d_2$};

\filldraw[fill=blue!8!white] (1,2.45) rectangle (3,3);
\filldraw[fill=red!20!white] (1,2.45) rectangle (3,2.55);
\filldraw[fill=red!20!white] (1,2.55) rectangle (3,2.65);

\draw[<->] (3.1,2.45) -- (3.1,2.65)node[midway, right] {$z_1+z_2$};
\draw[dashed] (1.48,2.45) -- (1.48,2.65);
\draw[very thick] (1.48,2.45) node[below]{$d'$} -- (1.6,2.45) ;
\draw[<->] (.9,2.45) -- (.9,3)node[midway, left] {$\delta_1$};
\draw[<->] (.9,2.45) -- (.9,1.8)node[midway, left] {$9\delta_1$};
\draw[thick] (1.6,1.8) -- (3,1.8) node[midway,below]{$d''$};
\draw[dashed] (1.6,1.8) -- (1.6,2.45);

\draw[color=green!50!black, thick] (3,1.8) .. controls +(-.4,0) and +(.3,-1.5) .. (1.3,3.25);
\draw[color=green!50!black] (2,3.4) node{\small $h_2\big([a-1.5\eps_1,a-\eps_1]\times\{y_0\}\big)$};

\end{tikzpicture}
\caption{\label{FigZoom}Zoom on Figure \ref{Fig.lefttop}, and idea of the proof of Lemma \ref{LemEstSize}. The darker blue rectangles are some $P_k(\theta)_j$. The argument uses the convexity of the map $\varphi_2$ to say that $d'\ge d_1+d_2$. The rest of the proof consists in an estimation of nonlinearities, which follows from Lemma~\ref{LemNonLin}. In the detail, $z_i = |y_{\eta}^2(k)_i - y_{\eta}^1(k)_i|$ and $d_i = |x_{\eta}^2(k)_i - x_{\eta}^1(k)_i|$ (defined in Figure \ref{FigEstSize}).}
\end{figure}

\begin{proof}[Proof of Lemma~\ref{LemEstSize}]
Let us begin assuming that $\theta=(\ell,t,m)$. Denote $\lambda_{\theta}(k)_j=\frac{d_\theta(k)_j}{h_{\theta}}$. We shall prove the announced inequality, for every $m\geq n_1(h)$, by induction on $k$. The case $k=0$ is clear, so assume this has been proved until step $k$. Following the idea we described, we shall see the intersection of $f_1^{m+k_0}(\cA^B_{\ell,t}(m))\cap P_k(\eta)_j$, for $\eta\prec\theta$ as a set bounded by a cycle (like the cycle $abcd$ in Figure~\ref{Fig.lefttop}) whose vertices are intersections of convex a graph (the map $\vartheta$ that we shall define below) with the boundary of $P_k(\eta)_j$. So, our first step is to make precise this assertion. After that, we shall use $\vartheta$ to estimate the numbers $d_{\theta}(k+1)_j$.
Notice that 
\[
f_0\circ h_1\circ f_0^{m+k_0-1}\left(\cA^B_{\ell,t}(m)\right)=[a-2\eps_1-\delta_1,a-2\eps_1]\times\sigma^{-2m}[a-2\eps_1,a-3\eps_1].
\]
For each $(x,y)\in f_0\circ h_1\circ f_0^{m+k_0-1}\big(\cA^B_{\ell,t}(m)\big)$ we denote $f_0\circ h_2\circ f_0^{-1}(x,y) = \big(x,\, \vartheta(x)\big)$ (corresponding to the green curve in Figure~\ref{FigEstSize}), so that
\[
\vartheta(x) = y+\sigma\varphi_2(\sigma^2x)(\xi_2(\sigma^{-1}y)-\sigma^{-1}y).
\] 
For simplicity of notation, we suppress the dependence of $\vartheta(x)$ on $y$.  Moreover, for every $y$, along the intersections $\gr(\vartheta)\cap\cA^B_{\ell,w}(n)$, with $\eta=(\ell,w,n)\prec\theta$, the map $x\mapsto\vartheta(x)$ is convex (Remark~\ref{rem.convex}).
By point (2) of the Inclination Lemma~\ref{LemInclination}, for every $\eta=(\ell,w,n)\prec\theta$ the curve $L_n(\operatorname{graph}(\vartheta)\cap\cA^B_{\ell,w}(n))$ is a curve of inclination $>2$. As each set $\{L\tilde{P}_k(\eta)_j\}_{j\in J_{\eta}(k)}$ is a quasi-rectangle with inclination $<1/2$, we see that $L_n(\operatorname{graph}(\vartheta)\cap\cA^B_{\ell,w}(n))$ crosses the boundary of $L\tilde{P}_k(\eta)_j$ in exactly two points, which we denote by $(\tilde{x}^1_{\eta}(k)_j,\tilde{y}_{\eta}(k)_j)$, for the bottom intersection and $(\tilde{x}^2_{\eta}(k)_j,\tilde{y}^2_{\eta}(k)_j)$ for the top intersection (see Figure~\ref{FigEstSize}). We let $(x^1_{\eta}(k)_j,y^2_{\eta}(k)_j)$ and $(x^1_{\eta}(k)_j,y^2_{\eta}(k)_j)$ denote the respective images of these points under $L_n^{-1}$. These are precisely the intersections of $\operatorname{graph}(\vartheta)$ with the boundary of the set $P_{k}(\eta)_j$. Notice moreover that these intersection points do depend on $y$, though we have made the choice of suppress this dependence (in the notation).

\begin{figure}
	\begin{tikzpicture}[scale=1.6]
	\filldraw[fill=blue!8!white] (-2,2) rectangle (0,3);
	\draw[color=blue!50!black] (-.6,3.2) node{$P_{\eta_1}(0)$};
	\filldraw[fill=blue!8!white] (-2,4.5) rectangle (0,5.5);
	\draw[color=blue!50!black] (-.6,5.7) node{$P_{\eta_2}(0)$};
	\filldraw[fill=blue!20!white] (-2,2.2) rectangle (0,2.35);
	\filldraw[fill=blue!20!white] (-2,2.6) rectangle (0,2.8);
	\filldraw[fill=blue!20!white] (-2,4.7) rectangle (0,4.85);
	\filldraw[fill=blue!20!white] (-2,5.1) rectangle (0,5.3);
	\draw[<-, color=blue!40!black] (-1.9,2.7) to[bend right] (-2.2,1.8) node[below]{$P_{\eta_1}(k)_1$};
	\draw[<-, color=blue!40!black] (-1.8,2.3) to[bend left] (-1.8,1.4) node[below]{$P_{\eta_1}(k)_2$};
	
	\draw[<-, color=blue!40!black] (-1.9,5.2) to[bend right] (-2.2,4.3) node[below]{$P_{\eta_2}(k)_1$};
	\draw[<-, color=blue!40!black] (-1.8,4.75) to[bend left] (-1.9,3.8) node[below]{$P_{\eta_2}(k)_2$};
	
	\draw[<->, color=blue!40!black] (-2.4,2) -- (-2.4,3) node[midway, left]{$h_{\eta_1}$};
	\draw[<->, color=blue!40!black] (-2.4,4.5) -- (-2.4,5.5) node[midway, left]{$h_{\eta_2}$};
	\draw[<->, color=brown!60!black] (1,-1.1) -- (2,-1.1) node[midway, below]{$\delta_1$};
	
	\filldraw[fill=yellow!30!white] (1,-1) rectangle (2,1);
	\draw[color=yellow!10!black] (1,-.8) node[left] {$f_0^{n+k_0}\left(\cA^B_{\ell,t}(m)\right)$};
	\draw[color=red!70!black, thick] (1,.4) -- (2,.4);
	\draw[dashed] (-2,1) -- (2.3,1) node[right]{$Y_{\eta_1-1}^2$};
	\draw[<-, color=red!70!black] (1.5,.35) to[bend left] (.4,-.3) node[left]{$[a-1.5\eps_1,a-\eps_1]\times\{y\}$};
	\draw[color=green!50!black, thick] (0,.4) .. controls +(-1,0) and +(.3,-2.5) .. (-1.7,5.8);
	\draw[<-, color=green!50!black] (-.68,.9) to[bend right] (-1.3,.4);
	\draw[color=green!50!black](0,.4) node[below left]{$\vartheta\big([a-1.5\eps_1,a-\eps_1]\times\{y\}\big)$};
	\draw[dotted, color=gray, thick] (0,.4)--(1,.4);
	
	\draw (-1.07,2) node{$\times$};
	\draw (-1.12,2.2) node{$\times$};
	\draw (-1.15,2.35) node{$\times$};5
	\draw (-1.2,2.6) node{$\times$};
	\draw (-1.25,2.8) node{$\times$};
	\draw (-1.3,3) node{$\times$};
	\draw (-1.53,4.5) node{$\times$};
	\draw (-1.56,4.7) node{$\times$};
	\draw (-1.58,4.85) node{$\times$};
	\draw (-1.61,5.1) node{$\times$};
	\draw (-1.64,5.3) node{$\times$};
	\draw (-1.66,5.5) node{$\times$};
	
	\draw[<-] (.04,2) to (.4,1.5) node[right]{$Y_{\eta_1}^1$};
	\draw[<-] (.04,2.2) to (.8,1.9) node[right]{$y_{\eta_1}^1(k)_2$};
	\draw[<-] (.04,2.35) to (1.3,2.3) node[right]{$ y_{\eta_1}^2(k)_2$};
	\draw[<-] (.04,2.6) to (1.3,2.7) node[right]{$ y_{\eta_1}^1(k)_1$};
	\draw[<-] (.04,2.8) to (.8,3.1) node[right]{$y_{\eta_1}^2(k)_1$};
	\draw[<-] (.04,3) to (.4,3.5) node[right]{$Y_{\eta_1}^2$};
	
	\draw[<-] (.04,4.5) to (.4,4) node[right]{$Y_{\eta_2}^1$};
	\draw[<-] (.04,4.7) to (.8,4.4) node[right]{$y_{\eta_2}^1(k)_2$};
	\draw[<-] (.04,4.85) to (1.3,4.8) node[right]{$y_{\eta_2}^2(k)_2$};
	\draw[<-] (.04,5.1) to (1.3,5.2) node[right]{$y_{\eta_2}^1(k)_1$};
	\draw[<-] (.04,5.3) to (.8,5.6) node[right]{$y_{\eta_2}^2(k)_1$};
	\draw[<-] (.04,5.5) to (.4,6) node[right]{$Y_{\eta_2}^2$};
	
	\end{tikzpicture}
	\caption{\label{FigEstSize}Notations of the proof of Lemma \ref{LemEstSize}. For simplicity, the sets $P_{k}(\eta)_j$ have been represented as rectangles while they are quasi-rectangles in reality. The crosses are the points of intersection $(x_{\eta}^i(k)_j, y_{\eta}^i(k)_j)$, their $y$ coordinate is indicated at the right.}
\end{figure}
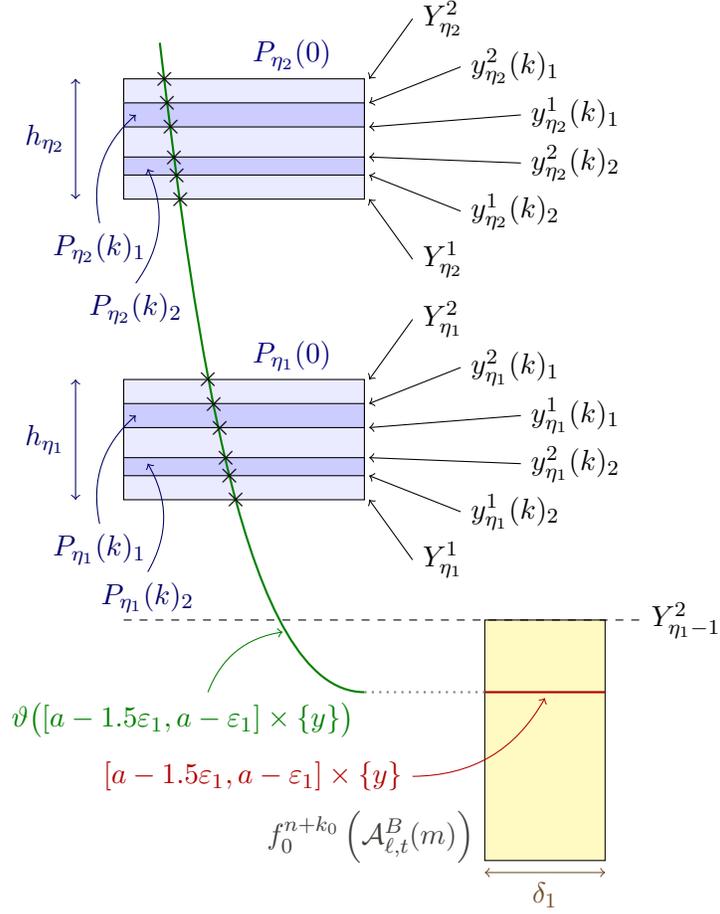

We claim that, for $i\in J_{\theta}(k+1)$ such that the connected component $P_{k+1}(\theta)_i$ corresponds to the pre-image under $f_1^{m+k_0}$ of the intersection $f_1^{m+k_0}\left(\cA^B_{\ell,t}(m)\right)\cap P_k(\eta)_j$, it holds 
\[
d_{\theta}(k+1)_i=\sup\left\{|x_{\eta}^1(k)_j-x^2_{\eta}(k)_j|;\,y\in \sigma^{-2m}[a-2\eps_1,a-3\eps_1]\right\}.
\]
To see this, notice that $P_{k+1}(\theta)_i$ is equal to the union of the images of the segments $[x^1_{\eta}(k)_j,x^2_{\eta}(k)_j]\times\{y\}$ under the map $f_0^{-m-k_0+1}\circ h_1^{-1}\circ f_0^{-1}$. Since $h_1$ is a translation and $f_0^{m+k_0}$ is a rotation (after rescaling), one deduces the formula. Moreover, the same reasoning says that the map $f_0^{-m-k_0+1}\circ h_1^{-1}\circ f_0^{-1}$ preserves proportions on the horizontal direction and thus 
\begin{equation}
\label{e.jevole}
\frac{d_{\theta}(k+1)_i}{h_{\theta}}=\frac{\sup\left\{|x_{\eta}^1(k)_j-x^2_{\eta}(k)_j|;\,y\in \sigma^{-2m}[a-2\eps_1,a-3\eps_1]\right\}}{\delta_1}.
\end{equation}
We proceed now to estimate $|x_{\eta}^1(k)_j-x^2_{\eta}(k)_j|$. For this, we shall consider also the points $(X^1_{\eta}(y),Y^1(\eta))$ and $(X^2_{\eta}(y),Y^2_{\eta}(y))$ the intersections of $\operatorname{graph}(\vartheta)$ with (respectively) the bottom and the top parts of $\cA^B_{\ell,w}(n)$ (see Figure~\ref{FigEstSize}).
Using the fact that $\prec$ is a total order on a finite set, for any $\eta$ but the smallest one, one can define its precursor and denote it by $\eta-1$. If $\eta$ is the smallest one, we denote $\big( X^2_{\eta-1}(y),\, Y^2_{\eta-1}(y) \big)$ the intersection point of $\operatorname{graph}(\vartheta)$ with
$\R\times \sigma^{-2m}(a-2\eps_1)$.
Since $DL_n$ is a diagonal matrix we have that 
\[
\frac{|y^1_{\eta}(k)_j-y^2_{\eta}(k)_j|}{d_{\eta}(k)_j}=\frac{|\tilde{y}^1_{\eta}(k)_j-\tilde{y}^2_{\eta}(k)_j|}{\tilde{d}_{\eta}(k)_j}.
\]
Moreover, as $L\tilde{P}_{\eta}(k)_j$ is a quasi-rectangle with inclination $<1/2$ we may use Lemma~\ref{LemNonLin} to deduce that the right-hand side of the above equality is bounded by $4/3$. Thus, we can write 
\begin{equation}
\label{e.dansemacabre}
|y^1_{\eta}(k)_j-y^2_{\eta}(k)_j|\leq\frac{4}{3}\lambda_{\eta}(k)_jh_{\eta}.
\end{equation} 
Now, the slope inequality for convex maps applied to $\vartheta$ gives us 
\[\left| \frac{y^1_\eta(k)_j - y^2_\eta(k)_j}{x^1_\eta(k)_j -  x^2_\eta(k)_j} \right| \ge 
\left| \frac{Y^1_\eta(y) - Y^2_{\eta-1}(y)}{X^1_\eta(y) - X^2_{\eta-1}(y)} \right|,\]
and so
\[\left| \frac{x^1_\eta(k)_j - x^2_\eta(k)_j}{X^1_\eta(y) - X^2_{\eta-1}(y)} \right| \le 
\left| \frac{y^1_\eta(k)_j - y^2_\eta(k)_j}{Y^1_\eta(y) - Y^2_{\eta-1}(y)} \right|.\]
On the other hand, as $h_{\eta}=\sigma^{-n}\delta_1$ and since $8\delta_1<\eps_1$ we have
$$|Y^1_\eta(y) - Y^2_{\eta-1}(y)| \ge 8h_\eta.$$
These two inequalities combined with \eqref{e.dansemacabre} lead to 
\begin{equation}\label{EqQuotientB}
\left| \frac{x^1_\eta(k)_j - x^2_\eta(k)_j}{X^1_\eta(y) - X^2_{\eta-1}(y)} \right| \le 
\frac{4}{3} \frac{\lambda_\eta(k)_jh_\eta}{8h_\eta}\le 
\frac{\lambda_\eta(k)_j}{4}.
\end{equation}

Our next goal is to prove the following bound: 
\[
|X^1_{\eta}(y)-X^2_{\eta-1}(y)|\leq |X^1_{\eta}(\sigma^{-2m}(a-2\eps_1))-X^2_{\eta-1}(\sigma^{-2m}(a-2\eps_1))|.
\]
Noticing that $X^{i}_{\eta}(y)=\vartheta^{-1}(Y^i_{\eta}(y))$, it will be obtained as a consequence of the following.

\begin{claim}
	\label{claim.alorsalors}
For every $r,t$ such that $\sigma^{-n_0-1}(b+2\eps_1)>r>t\geq\sigma^{-2m}(a-2\eps_1)$ the function $y\mapsto\vartheta^{-1}(t)-\vartheta^{-1}(r)$ is increasing.
\end{claim}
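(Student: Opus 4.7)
The claim reduces to an explicit computation once one makes the $y$-dependence of $\vartheta$ explicit. For $y\in\sigma^{-2m}[a-3\eps_1,a-2\eps_1]$ with $m\geq n_0+1$, the argument $\sigma^{-1}y$ lies inside the linearization interval of $\xi_2$, so by (V3), $\xi_2(\sigma^{-1}y)=\beta\sigma^{-1}y+c$ with $c=(1-\beta)q_2$. Writing $\vartheta_y$ for $\vartheta$ to emphasize the parameter, a short computation gives
\[\vartheta_y(x)=y+A(y)\,\Psi(x),\qquad A(y)=(1-\beta)(\sigma q_2-y),\qquad \Psi(x)=\varphi_2(\sigma^2 x),\]
where $A(y)>0$ and $A$ is affine and decreasing in $y$. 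On $\sigma^2 x\in[\sigma^2(a-2\eps_1-\delta_1),\sigma^2(a-2\eps_1)]$ the function $\varphi_2$ is decreasing; moreover, the upper bound $r<\sigma^{-n_0-1}(b+2\eps_1)$ combined with Remark \ref{rem.convex} forces us inside the convex branch of $\varphi_2$, so $\varphi_2^{-1}$ on that branch is decreasing and convex.

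Inverting yields $\vartheta_y^{-1}(s)=\sigma^{-2}\varphi_2^{-1}(F(s,y))$ with $F(s,y)=(s-y)/A(y)$. The affineness of $A$ yields the clean identity
\[\frac{\partial F}{\partial y}(s,y)=\frac{B(s)}{A(y)^2},\quad\text{where}\quad B(s)=(1-\beta)(s-\sigma q_2).\]
The hypothesis $r<\sigma^{-n_0-1}(b+2\eps_1)$ together with \eqref{e.oitoepsilon} guarantees that $s<\sigma q_2$ for $s\in\{t,r\}$, so $B(t),B(r)<0$ with $|B(t)|>|B(r)|$. Differentiating the difference,
\[\frac{d}{dy}\bigl[\vartheta_y^{-1}(t)-\vartheta_y^{-1}(r)\bigr]=\frac{1}{\sigma^{2}A(y)^2}\Bigl[(\varphi_2^{-1})'(F(t,y))\,B(t)-(\varphi_2^{-1})'(F(r,y))\,B(r)\Bigr].\]

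To conclude I would show that the bracket is positive. Each summand $(\varphi_2^{-1})'(F(s,y))\,B(s)$ is a product of two negative numbers, hence positive. Since $F(t,y)<F(r,y)$ and $(\varphi_2^{-1})'$ is non-decreasing (by convexity of $\varphi_2^{-1}$), one has $|(\varphi_2^{-1})'(F(t,y))|\geq|(\varphi_2^{-1})'(F(r,y))|$; combined with $|B(t)|>|B(r)|$, multiplying these absolute-value inequalities gives $(\varphi_2^{-1})'(F(t,y))\,B(t)>(\varphi_2^{-1})'(F(r,y))\,B(r)$, which is exactly the desired strict positivity of the bracket. The only subtlety is verifying the parameter constraints so that the linearization of $\xi_2$ and the convexity of $\varphi_2^{-1}$ apply---both are immediate from the ranges of $y$, $t$, $r$ and Remark \ref{rem.convex}---after which the computation is routine.
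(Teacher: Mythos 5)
Your proof is correct and follows essentially the same path as the paper's: exploit the affine dependence of $\vartheta$ on $y$ (coming from the linearization (V3) of $\xi_2$), invert via $\varphi_2^{-1}$, differentiate in $y$, and use the convexity of $\varphi_2$ on the relevant branch (Remark \ref{rem.convex}). Your packaging is slightly cleaner — defining $A(y)$ and $B(s)$ and extracting the closed-form $\partial_y F = B(s)/A(y)^2$ makes the comparison $|B(t)|>|B(r)|$ immediate, whereas the paper derives the same sign information through an explicit quotient-rule inequality chain for $X_t'(y)$ and $X_t'(y)-X_r'(y)$; the final positivity step (you multiply two absolute-value inequalities, the paper rearranges $\tfrac{1}{\varphi_2'(x_t)}X_t'-\tfrac{1}{\varphi_2'(x_r)}X_r'>\tfrac{1}{\varphi_2'(x_r)}(X_t'-X_r')$) is the same computation in different clothing.
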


\begin{proof}[Proof of Claim~\ref{claim.alorsalors}]
First, recall that $\xi_2$ is affine on the interval $(0,q_2)$, and its derivative equal to $\beta\in(0,1)$, so that $\xi_2-\Id$ is decreasing on the interval $(0,q_2)$ (see Figure~\ref{f.grafxi2}) and thus $\xi(y)\eqdef \xi_2(\sigma^{-1}y)-\sigma^{-1}y$ has derivative equal to $\sigma^{-1}(\beta-1)<0$.  As $\vartheta(x)=y+\sigma\varphi_2(\sigma^2x)\xi(y)$ we see that 
\[
\vartheta^{-1}(t)=\sigma^{-2}\varphi_2^{-1}\left(\frac{t-y}{\sigma\xi(y)}\right),
\]
where we consider the inverse branch of $\varphi_2$ above the interval $[\sigma^2(a-2\eps_1-\delta_1),\sigma^2(a-2\eps_1)]$. In order to estimate the derivative of this map let us  consider, for every $\sigma^{-n_0-1}(b+2\eps_1)>t\geq\sigma^{-2m}(a-2\eps_1)$,
\[
X_t(y)=\frac{t-y}{\sigma\xi(y)}.
\] 
Since $\sigma\xi_2(\sigma^{-1}y)>\sigma^{-n_0-1}(b+4\eps_1)$, we have that
\[
X_t(y)<\frac{\sigma^{-n_0-1}(b+2\eps_1)-y}{\sigma^{-n_0-1}(b+4\eps_1)-y}<1,
\]
which shows that $\varphi_2^{-1}(X_t(y))$ is meaningful. Moreover, as the function $y\mapsto\frac{\sigma^{-n_0-1}(b+2\eps_1)-y}{\sigma^{-n_0-1}(b+4\eps_1)-y}$ is decreasing we deduce further that
\[
X_t(y)\leq\frac{\sigma^{-n_0-1}(b+2\eps_1)}{\sigma^{-n_0-1}(b+4\eps_1)}<\frac{b+3\eps_1}{b+4\eps_1}.
\]
Therefore, if $x_t=\varphi_2^{-1}(X_t(y))$ then $\varphi_2^{\prime\prime}(x_t)>0$, by our convexity assumption on $\varphi_2$ (see Figure~\ref{FigPhi2}).
We claim that $X_t(y)$ has negative derivative (with respect to $y$). To see this, first notice that 
\[
\sigma\xi_2\left(\sigma^{-1}y\right)>\sigma^{-n_0-1}(b+4\eps_1)>\sigma^{-n_0-1}(b+2\eps_1)>t,
\]
and thus $\sigma\xi_2\left(\sigma^{-1}y\right)-y>t-y$. Combining this with the $\sigma\xi^{\prime}(y)=\beta-1$ one deduces that
\[
X_t^{\prime}(y)=\frac{-\sigma\xi(y)-(t-y)\sigma\xi^{\prime}(y)}{\sigma^2\xi(y)^2}\leq\frac{-1}{\sigma^2\xi(y)^2}\times\left(\sigma\xi_2(\sigma^{-1}y)-y-(t-y)\right)<0.
\]
As a by-product we also obtain that
\[
X_t^{\prime}(y)-X_r^{\prime}(y)=\frac{-\sigma\xi^{\prime}(y)(t-r)}{\sigma^2\xi(y)^2}<0.
\]
Moreover, as $r>t$ the definition of $X_t(y)$ yields $X_r(y)>X_t(y)$. As $\varphi_2$ is decreasing on the interval $[\sigma^2(a-2\eps_1-\delta_1),\sigma^2(a-2\eps_1)]$, this implies $x_r<x_t$. By convexity, we deduce then $\varphi_2^{\prime}(x_r)<\varphi_2^{\prime}(x_t)<0$ (recall the graph of $\varphi_2$ in Figure~\ref{FigPhi2}), and therefore
\[
\frac{1}{\varphi_2^{\prime}(x_t)}<\frac{1}{\varphi_2^{\prime}(x_r)}.
\]
 Because $X_t^{\prime}(y)<0$, this yields
\[
\frac{1}{\varphi_2^{\prime}(x_t)}X_t^{\prime}(y)-\frac{1}{\varphi_2^{\prime}(x_r)}X_r^{\prime}(y)>\frac{1}{\varphi_2^{\prime}(x_r)}\left(X_t^{\prime}(y)-X_r^{\prime}(y)\right)>0.
\] 
Since the derivative of $y\mapsto\vartheta^{-1}(t)-\vartheta^{-1}(r)$ equals a positive constant (namely $\sigma^{-2}$) times the left-hand side above, the claim follows. 
\end{proof}

Notice that $X^{i}_{\eta}(y)=\vartheta^{-1}(Y^i_{\eta}(y))$. Thus, Claim~\ref{claim.alorsalors} yields
\[
|X^1_{\eta}(y)-X^2_{\eta-1}(y)|\leq |X^1_{\eta}(\sigma^{-2m}(a-2\eps_1))-X^2_{\eta-1}(\sigma^{-2m}(a-2\eps_1))|.
\]
Thus, it follows from \eqref{EqQuotientB} that 
\[
|x^1_{\eta}(k)_j-x^2_{\eta-1}(k)_j|\leq \big|X^1_{\eta}(\sigma^{-2m}(a-2\eps_1))-X^2_{\eta-1}(\sigma^{-2m}(a-2\eps_1))\big|\frac{\lambda_{\eta}(k)_j}{2}.
\]
Finally, from the equality \eqref{e.jevole} one obtains the bound
\[d_{\theta}(k+1)_j \le \big|X^1_\eta(\sigma^{-2m}(a-2\eps_1)) - X^2_{\eta-1}(\sigma^{-2m}(a-2\eps_1))\big|\frac{\lambda_\eta(k)_i}{2}\times \frac{h_{\theta}}{\delta_1}, \]
and so
\[\sum_{j\in J_\theta(k+1)}\lambda_{\theta}(k+1)_j \le \sum_{\eta\prec\theta} \sum_{i\in J_\eta(k)}\left|X^1_\eta(\sigma^{-2m}(a-2\eps_1)) - X^2_{\eta-1}(\sigma^{-2m}(a-2\eps_1))\right|\frac{\lambda_\eta(k)_i}{2 \delta_1}.\]

But the sum of the lengths of disjoint subintervals of an interval of length $\delta_1$ is smaller than $\delta_1$, so
\[\sum_{\eta\prec\theta}\big|X^1_\eta(\sigma^{-2m}(a-2\eps_1)) - X^2_{\eta-1}(\sigma^{-2m}(a-2\eps_1))\big| \le \delta_1.\]
Hence,
\[\sum_{j\in J_\theta(k+1)} \lambda_\theta(k+1)_j \le \frac{1}{2}\sup_{\eta\prec\theta} \left( \sum_{j\in J_\eta(k)} \lambda_\eta(k)_j\right) \le 2^{-k-1}.\]
This establishes the induction and completes the proof.
\end{proof}

\subsubsection{Coding} Let us show how to conclude Proposition~\ref{PropCoding} from Lemma~\ref{LemEstSize}.
\begin{proof}[Proof of Proposition \ref{PropCoding}]
The semi-conjugation stems from Lemma \ref{lemQuasiRec}, and in particular the fact that the intersections between quasi-rectangles and images of rectangles are Markovian. Then, Lemma \ref{LemEstSize} ensures that the connected components of $\Gamma$ (\emph{i.e.}, the nested intersections of quasi-rectangles) are not ``thick''. 

More precisely, we will use the fact that if we have a nested intersection of regions that are between two graphs of Lipschitz maps of Lipschitz constant $2$, such that the maximal height of the regions goes to zero, then the intersection is again the graph of a Lipschitz map with Lipschitz constant $2$. This comes from the following characterization of a Lipschitz map $\gamma$ with Lipschitz constant $2$: for any $x_0$, the graph of $\gamma$ is included in the cone
\[\big\{(x,y)\in\R^2 \,;\, |y-\gamma(x_0)| \le 2 |x-x_0|\big\}.\]

Now, combining Lemmas \ref{lemQuasiRec} and \ref{LemEstSize} we see that for any $\theta = (h,v,n)$, the set $\bigcap_{k\in\nt} L_n(P_k(\theta))$ is obtained as a decreasing intersection of quasi-rectangles whose Lebes\-gue measure decreases exponentially to 0 (this is obtained by applying Fubini's theorem). Hence, this set has zero measure, and is homeomorphic to the product of a Cantor set with a segment, and each of its connected components is the graph of a Lipschitz map with Lipschitz constant smaller than $2$. Using $L_n$, we see that the same holds for $\bigcap_{k\in\nt} P_k(\theta)$, with the additional property that the Lipschitz constant of each component goes to 0 as $n$ goes to infinity (denoting $\theta = (h,v,n)$). The only accumulation points of $\Gamma = \bigcup_\theta \bigcap_{k\in\nt} P_k(\theta)$ are $[a-3\eps_1,a-2\eps_1] \times\{0\}$ and $[b+2\eps_1,b+3\eps_1] \times\{0\}$. The property about the Lipschitz constant going to 0 then implies that $\overline\Gamma$ itself is homeomorphic to the product of a Cantor set with a segment.
\end{proof}

\subsection{The orbit of $\Gamma$ is nowhere dense}\label{secnowhere} To finish the proof of Proposition \ref{l.persmedidanula} (which is our main goal in Section \ref{SecGeom}), it only remains to prove the following fact.

\begin{prop}\label{p.gamanowheredense}
The set\, $\mathcal{O}_{f_1}(\Gamma)=\bigcup_{n\in\Z}f_1^{n}(\Gamma)$ is nowhere dense.
\end{prop}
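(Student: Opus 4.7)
The plan is to cover $\overline{\mathcal O_{f_1}(\Gamma)}$ by a countable union of closed nowhere-dense subsets of $\R^2$, and then invoke the Baire category theorem. By Proposition~\ref{PropCoding}, $\overline{\Gamma}$ is compact and homeomorphic to a Cantor set times a segment, so $\overline{\Gamma}$ (and hence every $f_1^n(\overline{\Gamma})$, $n\in\Z$) is nowhere dense in $\R^2$. Introducing the closed accumulation sets
\[\omega_\pm\eqdef\big\{p\in\R^2\,:\,\exists\,q_k\in\overline{\Gamma},\ n_k\to\pm\infty,\ f_1^{n_k}(q_k)\to p\big\},\]
compactness of $\overline{\Gamma}$ yields
\[\overline{\mathcal O_{f_1}(\Gamma)}\ \subseteq\ \bigcup_{n\in\Z} f_1^n(\overline{\Gamma})\ \cup\ \omega_+\ \cup\ \omega_-.\]
It therefore suffices to show that $\omega_+$ and $\omega_-$ are each contained in a closed nowhere-dense subset of $\R^2$; Baire's theorem applied to the resulting countable cover will then give that $\overline{\mathcal O_{f_1}(\Gamma)}$ has empty interior.

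For the forward direction, I claim $\omega_+\subseteq\mathcal L$. The supports of $h_1$ and $h_2$ are contained in $[b,\sigma^2 a]\times\R$. If $q\in\cA^B_{h,v}(m)\subset\Gamma$, then the first coordinate of $f_0(q)$ is $\sigma^{-2}$ times a number at most $b+3\eps_1<\sigma$, hence strictly less than $b$; an induction gives $f_1^j(q)=f_0^j(q)$ for all $j$ such that $f_0^j(q)$ remains out of $\supp h_1\cup\supp h_2$, which by item~\eqref{p1} of Proposition~\ref{buildTowers} covers at least the $m$ iterates staying in $\mathcal V$. On this range, the invariance of $xy^2$ under the linear map together with $|xy^2|=O(\sigma^{-m})$ on $\cA^B_{h,v}(m)$ gives $\min(|x|,|y|)\le\sqrt{|x||y|}=O(\sigma^{-m/2})$, so the orbit stays within $O(\sigma^{-m/2})$ of the coordinate axes $W^s(O)\cup W^u(O)\subset\mathcal L$. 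During the subsequent $k_0$ iterates in the rotational region $\mathcal U$, item~\eqref{p3} of Proposition~\ref{buildTowers} shows that the orbit remains in a strip of diameter $O(\sigma^{-m})$ around the figure-eight curve, and the perturbation applied in the last step of the return moves the image by only $O(\eps_1,\sigma^{-n_0-1})$, landing it in a blue rectangle at level $m'\ge 2m$ of height $O(\sigma^{-m'})$. Since the levels $m_i$ along the $g_1$-itinerary of any $q\in\Gamma$ satisfy $m_{i+1}\ge 2m_i$, they grow to infinity in bounded time uniformly on $\overline{\Gamma}$, and we obtain
\[\sup_{q\in\overline{\Gamma}}\dist(f_1^n(q),\mathcal L)\longrightarrow 0\qquad(n\to+\infty),\]
noting that the two horizontal segments in $\overline{\Gamma}\setminus\Gamma$ are already in $W^s(O)\subset\mathcal L$. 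This gives $\omega_+\subseteq\mathcal L$.

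For the backward direction, I claim $\omega_-\subseteq\partial Q_R\cup\partial Q_L\cup\partial Q_e$, the union of the three smooth closed curves produced in items~\eqref{p6} and \eqref{p8} of Proposition~\ref{buildTowers}. The supports of $h_1$ and $h_2$ lie in a small neighbourhood of $W^s(O)$, disjoint from $\partial Q_R\cup\partial Q_L\cup\partial Q_e$, so $f_1$ coincides with $f_0$ on neighbourhoods of these curves; in particular, each of them remains a periodic orbit of $f_1$, and a fixed neighbourhood of it is uniformly attracted to it in backward time. Combining this local attracting property with the compactness of $\overline{\Gamma}\subset\mathcal L_r^i\cup\mathcal L_\ell^i\cup\mathcal L^e$ and the global $\alpha$-limit statements in items~\eqref{p6},~\eqref{p8} for $f_0$-backward orbits, one obtains
\[\sup_{q\in\overline{\Gamma}}\dist\big(f_1^{-n}(q),\partial Q_R\cup\partial Q_L\cup\partial Q_e\big)\longrightarrow 0 \qquad(n\to+\infty),\]
so $\omega_-\subseteq\partial Q_R\cup\partial Q_L\cup\partial Q_e$, a finite union of smooth closed curves and therefore nowhere dense. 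The main technical obstacle is precisely this backward uniform estimate: along each backward loop the orbit crosses $\supp h_1\cup\supp h_2$ a bounded number of times, so one must check that the cumulative effect of these bounded perturbations does not prevent the orbit from entering, and remaining inside, a fixed attracting neighbourhood of $\partial Q_R\cup\partial Q_L\cup\partial Q_e$. This is resolved by observing that the bulk of each loop is spent in the region where $f_1=f_0$, so the local $f_0^{-1}$-contraction toward $\partial Q_R$ (and, by symmetry, $\partial Q_L$ and $\partial Q_e$) dominates the bounded perturbations produced while crossing the supports.
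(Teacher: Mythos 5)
Your approach is genuinely different from the paper's: you try to compute the forward and backward accumulation sets $\omega_\pm$ of $\overline\Gamma$ explicitly and invoke Baire's theorem, whereas the paper introduces an abstract criterion (Lemma~\ref{lemaPA}) and a structural Lemma~\ref{l.stromae} asserting that the full $f_1$-orbit of $\Gamma$ intersected with $\overline{\mathcal O}$ is already contained in $\Gamma^+ = \bigcup_{n\ge 0}f_1^n(\Gamma)$; together with the nowhere density of $\Gamma^+$ (from~\eqref{e.gamamais} and the nowhere density of $W^s(O)\cup W^u(O)$) this verifies the hypothesis of Lemma~\ref{lemaPA} without ever touching backward limit sets.

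The forward direction of your argument is plausible. The estimate contains a slip (on $\cA^B_{h,v}(m)$ one has $|xy^2|\asymp\sigma^{-2m}$, not $\sigma^{-m}$, and the bound $\min(|x|,|y|)\le\sqrt{|x||y|}$ requires controlling $|x||y|$, not $|xy^2|$; the product $|x||y|$ does decrease under $f_0|_{\mathcal V}$, so the conclusion survives), and you correctly account for the fact that levels at least double under $g_1$ on $\Gamma$, so the orbit indeed returns ever closer to $\mathcal L$.

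The backward direction is a genuine gap. You claim $\omega_-\subset\partial Q_R\cup\partial Q_L\cup\partial Q_e$, and you acknowledge in your last paragraph that this requires controlling the accumulated effect of crossing $\supp h_1\cup\supp h_2$ along backward loops, but the ``resolution'' you propose is an assertion, not a proof. The issue is substantive: each backward crossing of $\supp(h_2\circ h_1)$ perturbs the point by an amount bounded \emph{below} by a fixed constant of order $\eps_1$ (these perturbations are huge in $C^1$, not small), and the paper never establishes that the repelling invariant curve $\partial Q_R$ is a hyperbolic repeller for $f_0^{-1}$; it is produced by an intermediate-value argument as the lowest fixed point of a Poincar{\'e} map, so there is no a priori uniform contraction rate toward it. Without such a rate there is no argument that the backward $f_1$-orbit of $\Gamma$ converges to $\partial Q_R$ at all, let alone uniformly on $\overline\Gamma$ — it could, for instance, settle at distance $O(\eps_1)$ from $\partial Q_R$ and oscillate in a thick annulus. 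In addition, the two segments of $\overline\Gamma\setminus\Gamma$ lie in $W^s(O)=\mathcal L$, so their backward $\alpha$-limit is contained in $\mathcal L$ and not in $\partial Q_R\cup\partial Q_L\cup\partial Q_e$; this is easy to repair (add $\mathcal L$ to the claimed cover for $\omega_-$), but it shows the statement as written is already incorrect. The paper's route via Lemma~\ref{lemaPA} is precisely designed to avoid this backward analysis: it only needs that near any point of $\Gamma^+$ there is an open set that the full orbit misses, and this is furnished by the trapping region $\mathcal Q$ (Lemma~\ref{l.stromae}) rather than by any $\alpha$-limit computation.
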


The proof will be based on the following general criterion.

\begin{lema}\label{lemaPA} Let $X$ be a compact metric space and $f:X \to X$ a homeomorphism. Let\, $\Gamma^+\subset X$ be such that $f(\Gamma^+)\subset\Gamma^+$ satisfying the following property: any open set intersecting $\Gamma^+$ also intersects the interior of the complement of\, $\bigcup_{n\ge 0}f^{-n}(\Gamma^+)$. Then\, $\bigcup_{n\ge 0} f^{-n}(\Gamma^+)$ is nowhere dense.
\end{lema}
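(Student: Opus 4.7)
\medskip

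\textbf{Proof plan.} Set $A = \bigcup_{n\geq 0} f^{-n}(\Gamma^+)$, so the goal is to show $\overline{A}$ has empty interior. My plan is to argue by contradiction: assume there is a nonempty open set $U \subset \overline{A}$, propagate it forward by $f$ until it meets $\Gamma^+$, invoke the hypothesis to produce a point of $\operatorname{int}(A^c)$, and pull back to get a point of $U$ lying outside $\overline{A}$.

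The first key observation is that the forward-invariance $f(\Gamma^+)\subset \Gamma^+$ gives $\Gamma^+ \subset f^{-1}(\Gamma^+)$, so the sequence $\{f^{-n}(\Gamma^+)\}_{n\geq 0}$ is nondecreasing. Consequently
\[
f^{-1}(A) \;=\; \bigcup_{n\geq 1} f^{-n}(\Gamma^+) \;=\; \bigcup_{n\geq 0} f^{-n}(\Gamma^+) \;=\; A,
\]
i.e.\ $A$ is totally invariant under $f$. Since $f$ is a homeomorphism, $\overline{A}$ is also totally invariant, and hence so is its complement $X\setminus\overline{A}$. The second key (and purely topological) observation is the identity $\operatorname{int}(A^c) = X\setminus\overline{A}$, which lets us rewrite the hypothesis as: \emph{every open set meeting $\Gamma^+$ also meets $X\setminus\overline{A}$}.

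Now suppose $U\subset\overline{A}$ is a nonempty open set. Because $U$ is open and contained in $\overline{A}$, it must meet $A$, so there exists $n\geq 0$ with $U\cap f^{-n}(\Gamma^+)\neq \emptyset$, equivalently $f^{n}(U)\cap \Gamma^+\neq\emptyset$. The set $f^n(U)$ is open, so by hypothesis it meets $X\setminus\overline{A}$. Applying $f^{-n}$ and using the invariance of $\overline{A}$ under $f^{-n}$, we obtain
\[
U \cap (X\setminus\overline{A}) \;=\; U \cap f^{-n}(X\setminus\overline{A}) \;\neq\; \emptyset,
\]
which contradicts $U\subset \overline{A}$. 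Hence $\overline{A}$ has empty interior, completing the proof.

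I do not expect any real obstacle: the two ingredients are the total invariance of $A$ (immediate from $f(\Gamma^+)\subset \Gamma^+$ and $f$ being a homeomorphism) and the identity $\operatorname{int}(A^c)=X\setminus\overline{A}$. Compactness of $X$ is not used. The only thing to double check is that the hypothesis really says $\operatorname{int}\!\big(X\setminus\bigcup_{n\geq 0} f^{-n}(\Gamma^+)\big)=\operatorname{int}(A^c)$, which is exactly how it is phrased.
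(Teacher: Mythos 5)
Your proof is correct and follows essentially the same scheme as the paper's: propagate an open set forward by $f^{n}$ until it meets $\Gamma^+$, apply the hypothesis, and pull back by $f^{-n}$, using the backward invariance of $A=\bigcup_{n\ge 0}f^{-n}(\Gamma^+)$ (which, as you note, follows from the forward invariance of $\Gamma^+$). The only cosmetic difference is that you phrase it as a contradiction via the total invariance of $\overline{A}$ and the identity $\operatorname{int}(A^{c})=X\setminus\overline{A}$, whereas the paper argues directly by exhibiting an open subset of $V$ disjoint from $A$; these are equivalent.
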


\begin{proof}
Consider a non-empty open set $V$, and let us prove that $\bigcup_{n\ge 0} f^{-n}(\Gamma^+)$ is not dense in $V$. If there is no $n\in\nt$ such that $f^{-n}(\Gamma^+)\cap V\neq\emptyset$, then there is nothing to prove. Otherwise, $\Gamma^+\cap f^{n_0}(V) \neq\emptyset$ for some $n_0\ge 0$. Hence, by hypothesis, $f^{n_0}(V)$ intersects the interior of the complement of $\bigcup_{n\ge 0}f^{-n}(\Gamma^+)$: there exists a nonempty open set 
\[U \subset f^{n_0}(V) \setminus \bigcup_{n\ge 0}f^{-n}(\Gamma^+) = f^{n_0}(V) \setminus \bigcup_{n\ge -n_0}f^{-n}(\Gamma^+)\]
(the equality comes from the forward invariance of $\Gamma^+$). Hence, $f^{-n_0}(U)$ is a nonempty open set included in $V$ and disjoint from $\bigcup_{n\ge 0}f^{-n}(\Gamma^+)$.
\end{proof}

As an application of Lemma~\ref{lemaPA}, we prove the following.

\begin{lema}\label{maislemaPA} The set $W^s_{f_1}(O) \cup W^u_{f_1}(O)$ is nowhere dense.
\end{lema}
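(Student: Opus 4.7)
The plan is to apply Lemma \ref{lemaPA} twice---once with $(f,\Gamma^+)=(f_1,W^s_{\mathrm{loc}}(O))$ to conclude that $W^s(O)$ is nowhere dense, and once with $(f,\Gamma^+)=(f_1^{-1},W^u_{\mathrm{loc}}(O))$ to conclude that $W^u(O)$ is nowhere dense. Since the finite union of nowhere dense sets is nowhere dense, this will finish the proof. I will first choose a neighbourhood $V_0$ of $O$ small enough to be disjoint from $\supp(h_1)\cup\supp(h_2)$, so that $f_1|_{V_0}$ coincides with the linear map $\operatorname{Diag}(\sigma^{-2},\sigma)$; then $W^s_{\mathrm{loc}}(O)=V_0\cap\{y=0\}$ and $W^u_{\mathrm{loc}}(O)=V_0\cap\{x=0\}$ are smooth arcs, with $W^s_{\mathrm{loc}}(O)$ forward invariant under $f_1$ and $W^u_{\mathrm{loc}}(O)$ forward invariant under $f_1^{-1}$. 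Moreover $\bigcup_{n\ge 0}f_1^{-n}(W^s_{\mathrm{loc}}(O))=W^s(O)$ and $\bigcup_{n\ge 0}f_1^{n}(W^u_{\mathrm{loc}}(O))=W^u(O)$ by definition of the local stable and unstable manifolds.

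The key step is verifying the hypothesis of Lemma \ref{lemaPA} for the first application. Given an open set $V$ meeting $W^s_{\mathrm{loc}}(O)$, I will shrink it to a small open disk $V'\subset V\cap V_0$ centred at some $p_0\in W^s_{\mathrm{loc}}(O)$, and show that every $p=(x_p,y_p)\in V'$ with $y_p\neq 0$ satisfies $p\notin W^s(O)$. Under the linear dynamics $f_1^n(p)=(\sigma^{-2n}x_p,\sigma^{n}y_p)$ leaves $V_0$ after finitely many steps because $|\sigma^n y_p|\to\infty$. Once outside $V_0$, according to the quadrant and the sign of $y_p$, the point $p$ belongs to $\mathcal{L}_r^i$, $\mathcal{L}^i_\ell$, or $\mathcal{L}^e$, and its forward orbit eventually meets the return region of Proposition \ref{p.bluetrichotomy} (or its symmetric counterpart in the third quadrant, or the exterior analogue from Subsection \ref{SubsecExt}). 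The trichotomy then places the orbit of $p$ inside $\mathcal{S}\cup W^s(\mathcal{Q})\cup\mathcal{O}_{f_1}(\Gamma)$ (or the corresponding symmetric/exterior version). I will then check that none of these three sets contain orbits converging to $O$: orbits in $\mathcal{S}$ revisit boxes $S_n$ whose $x$-coordinate lies in $[a,b]$; orbits in $W^s(\mathcal{Q})$ are eventually trapped in $\mathcal{Q}$, whose $x$-coordinate is at least $b+2\eps_1+\delta_1$; and orbits in $\mathcal{O}_{f_1}(\Gamma)$ revisit the blue tower $\Ba$ infinitely often with each visit having $x$-coordinate in $[a-3\eps_1,a-2\eps_1]\cup[b+2\eps_1,b+3\eps_1]$, all bounded away from $0$. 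Hence $f_1^n(p)\not\to O$ and $p\notin W^s(O)$. It follows that $V'\cap W^s(O)=V'\cap W^s_{\mathrm{loc}}(O)$ is a smooth arc, so each of the two open half-disks of $V'\setminus W^s_{\mathrm{loc}}(O)$ is disjoint from $W^s(O)$, verifying the hypothesis of Lemma \ref{lemaPA}.

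The second application is symmetric. Since $V_0$ is disjoint from the perturbation supports, $f_1^{-1}|_{V_0}$ is also linear, equal to $\operatorname{Diag}(\sigma^{2},\sigma^{-1})$, and backward iterates of $p\in V'$ with $x_p\neq 0$ leave $V_0$ horizontally. The backward dynamics then sends the orbit into the basin of one of the repelling closed curves $\partial Q_R$, $\partial Q_L$, or $\partial Q_e$ provided by items (vii)--(viii) of Proposition \ref{buildTowers}, whose neighbourhoods are disjoint from $\supp(h_1)\cup\supp(h_2)$ and therefore retain the repelling character under $f_1$. Since these curves are bounded away from $O$, one has $f_1^{-n}(p)\not\to O$ and $p\notin W^u(O)$, so Lemma \ref{lemaPA} yields that $W^u(O)$ is nowhere dense. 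The main technical obstacle lies in the first application: one must account for every possible fate of the orbit after leaving $V_0$, but this cataloguing has already been carried out in Section \ref{sec.orbitexcum}, so the proof is mainly a matter of invoking those results in the correct combination.
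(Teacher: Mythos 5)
Your argument takes a genuinely different, and considerably heavier, route than the paper's, and it has a real gap in the $W^u$ half.

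The paper's proof is two sentences long: it chooses $\Gamma^+ = [-1,1]\times\{0\}\subset\mathcal{L}$ and observes that the two bounded components $\mathcal{L}_r^i, \mathcal{L}_\ell^i$ of $\R^2\setminus\mathcal{L}$ are forward-invariant under $f_1$ (this is elementary: $h_1$ only shears horizontally, $h_2$ preserves the sign of $y$ where it acts, and $f_0$ preserves $\mathcal{L}$). Since $\Gamma^+$ is disjoint from these components, so is every $f_1^{-n}(\Gamma^+)$, hence so is $W^s_{f_1}(O)$. Any open set meeting $\Gamma^+$ meets $\{xy>0\}\subset\mathcal{L}_r^i\cup\mathcal{L}_\ell^i$ near $O$, and Lemma~\ref{lemaPA} applies. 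Your proof for $W^s$ reaches the same conclusion but pays for it by invoking Proposition~\ref{p.bluetrichotomy} and cataloguing orbit fates; this is far more than what is needed here, and additionally it leaves two loose ends (the points with $x_p=0$, which lie on $\mathcal{L}$ and are handled by none of the three alternatives, and the unjustified assertion that the forward orbit of a generic nearby point must hit the return region of the trichotomy). Both can be patched, but they are avoidable entirely.

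The $W^u$ half is where the gap is genuine. You claim the backward $f_1$-orbit of $p$ with $x_p\neq 0$ ``sends the orbit into the basin of one of the repelling closed curves $\partial Q_R,\partial Q_L,\partial Q_e$''. That is Lemma~\ref{l.lemaum} (via Proposition~\ref{buildTowers}) for the unperturbed $f_0$. For $f_1^{-1}=f_0^{-1}\circ h_1^{-1}\circ h_2^{-1}$ this is not obvious: $h_2^{-1}$ pulls points with $y>0$ in its support \emph{towards} (and, near the axis, even across) the $x$-axis, so the backward $f_1$-orbit need not spiral out to $\partial Q_R$ the way the backward $f_0$-orbit does; the paper never analyses the $f_1^{-1}$ dynamics and your repelling character of $\partial Q_R$ for $f_1$ near those curves does not by itself tell you that a backward orbit started near $O$ ever reaches a neighbourhood of them. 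The fix is the dual of the paper's argument: the closed set $\overline{\mathcal{L}_r^i\cup\mathcal{L}_\ell^i}=\mathcal{L}\cup\mathcal{L}_r^i\cup\mathcal{L}_\ell^i$ is forward-invariant under $f_1$ (same elementary check, using that $h_2$ pushes $\mathcal{L}$ into $\mathcal{L}_r^i\cup\mathcal{L}_\ell^i$ where it acts) and contains the local unstable manifold $\{0\}\times[-1,1]$, so $W^u_{f_1}(O)=\bigcup_{n\ge 0}f_1^n(\{0\}\times[-1,1])\subset\overline{\mathcal{L}_r^i\cup\mathcal{L}_\ell^i}$, hence $W^u_{f_1}(O)\cap\mathcal{L}^e=\emptyset$. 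Any open set meeting $\{0\}\times[-1,1]$ meets the open exterior component $\mathcal{L}^e$, and Lemma~\ref{lemaPA} applies.
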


\begin{proof} As the proof is identical for both sets, we only prove that $W^s_{f_1}(O)$ is nowhere dense, using the criterion given by Lemma \ref{lemaPA}. For the set $\Gamma^+$, we choose $[-1,1]\times \{0\}$, which is a forward invariant subset of $W^s_{f_1}(O)$ satisfying $W^s_{f_1}(O) = \bigcup_{n\ge 0}f_1^{-n}(\Gamma^+)$. The fact that any open set intersecting $\Gamma^+$ also intersects the interior of the complement of $W^s_{f_1}(O)$ comes from the fact that it has to intersect the set $\big\{(x,y)\in\R^2;\,\,xy>0\big\}$ and that $W^s_{f_1}(O)$ is disjoint from this set (because of the form of the perturbations $h_1$ and $h_2$, the $f_1$-orbit of any point that lies inside one bounded component of the complement of the figure-$8$ loop of $W^s_{f_0}(O)$, stays inside this bounded component). 
\end{proof}

Now, consider the ($f_1$ forward invariant) set $\Gamma^+=\bigcup_{n \geq 0}f_{1}^{n}(\Gamma)$. The dynamics of $\Gamma^+$ under $f_1$, \emph{as a set}, combines linear hyperbolic iterates close to the origin, followed by rotational iterates outside a neighbourhood of the origin (of course, much more interesting dynamics occur \emph{inside} $\Gamma^+$, due to the action of $h_1$ and $h_2$, as it was carefully described in Proposition \ref{PropCoding} of this section). This observation enables us to obtain the following description of $\Gamma^+$:

\begin{equation}
\label{e.gamamais}
\Gamma^+=\bigcup_{\substack{h=\ell,r\\v=t,b}}\bigcup_{n\geq n_1(h)}\bigcup_{j=0}^{n+k_0-1}f_1^j\left(\Gamma\cap\cA^B_{h,v}(n)\right). 
\end{equation}
Indeed, recall that $\Gamma$ is invariant under the first return map $g_1$, and that $g_1|_{\cA^B_{h,v}(n)\cap\Gamma}=f_1^{n+k_0}$. These observations are enough to establish \eqref{e.gamamais}. From \eqref{e.gamamais} we deduce that $\overline{\Gamma^+}\setminus\Gamma^+$ is contained in $W_{f_0}^{s}(0) \cup W_{f_0}^{u}(0)$, and since both $W_{f_0}^{s}(0)$ and $W_{f_0}^{u}(0)$ are nowhere dense sets we can easily conclude that $\Gamma^+$ itself is nowhere dense. This fact will be combined with Lemma \ref{lemaPA} in order to conclude the proof of Proposition~\ref{p.gamanowheredense}. With this purpose, consider
\[
\mathcal{O}\eqdef\bigcup_{\substack{h=\ell,r\\v=t,b}}\,\bigcup_{n\geq n_1(h)}\bigcup_{j=0}^{n+k_0-1}\operatorname{int}\left[f_1^j\left(\cA_{h,v}^B(n)\right)\right].
\]

\begin{lema}
	\label{l.stromae}
We have
\[\left(\bigcup_{n\ge 0} f_1^{-n}(\Gamma^+) \right)\cap\overline{\mathcal{O}}\subset\Gamma^+.\]
\end{lema}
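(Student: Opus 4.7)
My plan is to split the analysis according to how $p\in\overline{\mathcal{O}}$ is approached. First I would observe that
$$\overline{\mathcal{O}}\;=\;\bigcup_{\substack{h\in\{\ell,r\}\\v\in\{t,b\}}}\,\bigcup_{n\ge n_1(h)}\,\bigcup_{j=0}^{n+k_0-1}f_1^j\big(\cA^B_{h,v}(n)\big)\;\cup\;E,$$
where $E$ denotes the accumulation points arising when $n\to\infty$. Since each $\cA^B_{h,v}(n)$ shrinks to a horizontal segment of $W^s_{f_0}(O)$ as $n\to\infty$, and since $f_1$ preserves $W^s_{f_1}(O)\cup W^u_{f_1}(O)$ (the coordinate axes, as both $h_1$ and $h_2$ map the axes into themselves), the set $E$ is contained in this invariant set. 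For $p\in E$ the whole forward $f_1$-orbit remains on the axes, which are disjoint from $\Gamma^+$ (every blue rectangle lies strictly inside an open quadrant), so the hypothesis $f_1^N(p)\in\Gamma^+$ cannot hold and the inclusion holds vacuously for such $p$.

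It remains to handle $p\in f_1^j\big(\cA^B_{h,v}(n)\big)$ for some $0\le j<n+k_0$. Setting $q\eqdef f_1^{-j}(p)\in\cA^B_{h,v}(n)\subset\Ba$, item~\eqref{azullemadascores} of Lemma~\ref{l.allerverslesbleu} gives $q\in\Gamma\cup W^s(\mathcal{Q})$. In the first case $p=f_1^j(q)\in\Gamma^+$ and we are done. To discard the second case, since $W^s(\mathcal{Q})$ is forward-invariant, $f_1^N(p)=f_1^{N+j}(q)$ would belong to $W^s(\mathcal{Q})\cap\Gamma^+$, so it suffices to prove
\begin{equation*}
W^s(\mathcal{Q})\cap\Gamma^+=\emptyset.
\end{equation*}

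The proof of this identity rests on the geometric key claim
\begin{equation*}
\Ba\cap f_1^k(\mathcal{Q})=\emptyset\quad\text{for every }k\ge 0.
\end{equation*}
Granting this, if $y=f_1^l(\gamma)\in W^s(\mathcal{Q})\cap\Gamma^+$ with $\gamma\in\Gamma$, $l\ge 0$, pick $m\ge 0$ with $f_1^m(y)\in\mathcal{Q}$. By persistence, the $g_1$-orbit of $\gamma$ stays in $\Ba$ and hence the forward $f_1$-orbit of $\gamma$ visits $\Ba$ at the return times $\tau_k$, which tend to infinity. Choosing $k$ with $\tau_k>l+m$ gives
\[f_1^{\tau_k}(\gamma)\;\in\;\Ba\cap f_1^{\tau_k-l-m}(\mathcal{Q}),\]
contradicting the key claim. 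The main obstacle is verifying the key claim itself: by Lemma~\ref{l.criandopoco} the forward orbit of $\mathcal{Q}$ satisfies $\bigcup_{k\ge 0}f_1^k(\mathcal{Q})=\bigcup_{k=0}^{n_0+k_0}f_1^k(\mathcal{Q})$, and on this finite range $f_1^k(\mathcal{Q})=f_0^k(\mathcal{Q})$ (as shown inside the proof of Lemma~\ref{l.criandopoco}, where the orbit avoids the supports of $h_1,h_2$). The explicit description of these iterates in Figure~\ref{FigTrapped} then shows that, along the linear, rotational, and ``return'' phases of the itinerary, the orbit of $\mathcal{Q}$ stays strictly inside the stable tower $\mathcal{S}$ and at positive distance from the edge rectangles forming $\Ba$ (for instance, at $k=0$ the $x$-range $[b+2\eps_1+\delta_1,\sigma^2(a-2\eps_1-\delta_1)]$ of $\mathcal{Q}$ is disjoint from the $x$-ranges $[a-3\eps_1,a-2\eps_1]\cup[b+2\eps_1,b+3\eps_1]$ of the blue rectangles; the remaining iterates are checked analogously using the rescalings by $L_n$ and the form of $g_0$).
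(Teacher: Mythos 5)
The skeleton of your argument matches the paper's: you pass to a point in a blue rectangle via $\overline{\mathcal{O}}$, invoke item~\eqref{azullemadascores} of Lemma~\ref{l.allerverslesbleu} to land in $\Gamma\cup W^s(\mathcal{Q})$, and derive a contradiction in the second case by showing the forward orbit of $\mathcal{Q}$ cannot meet the blue tower. Your explicit treatment of the accumulation set $E$ is a useful addition the paper glosses over, and your reformulation via $W^s(\mathcal{Q})\cap\Gamma^+=\emptyset$ is a clean way to package the contradiction; your key claim $\Ba\cap f_1^k(\mathcal{Q})=\emptyset$ is the same disjointness the paper extracts (somewhat tersely) from \eqref{e.gamamais}.

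However, your verification of the key claim contains two concrete errors. First, the orbit of $\mathcal{Q}$ does \emph{not} stay inside the stable tower $\mathcal{S}$; in fact $\mathcal{Q}=[b+2\eps_1+\delta_1,\sigma^2(a-2\eps_1-\delta_1)]\times[\sigma^{-n_0-2}(b+4\eps_1),\sigma^{-n_0-1}(a-3\eps_1)]$ is \emph{disjoint} from $\mathcal{S}$ and sits in a gap between consecutive stable boxes (look again at Figure~\ref{FigTrapped}), so the disjointness from $\Ba$ cannot be inferred this way. Second, your ``for instance'' at $k=0$ is false: the $x$-range of $\mathcal{Q}$ overlaps the right blue rectangles' $x$-range $[b+2\eps_1,b+3\eps_1]$ on $[b+2\eps_1+\delta_1,b+3\eps_1]$ (recall $\delta_1=\eps_1/10<\eps_1$). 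The actual disjointness at $k=0$ comes from the $y$-coordinate: the blue rectangle $y$-ranges are $\sigma^{-n}[b+\eps_1,b+\eps_1+\delta_1]$ (top) and $\sigma^{-n}[a-\eps_1-\delta_1,a-\eps_1]$ (bottom), and since $a-3\eps_1<a-\eps_1-\delta_1$ and $b+\eps_1+\delta_1<b+4\eps_1$, none of these intervals can intersect $[\sigma^{-n_0-2}(b+4\eps_1),\sigma^{-n_0-1}(a-3\eps_1)]$. You should correct these two points, since the $k=0$ case is the one you work out explicitly and it is the model for ``the remaining iterates are checked analogously.''
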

\begin{proof}
Take a point $z\in f_1^{-j}(\Gamma^+)\cap\overline{\mathcal{O}}$, for some $j\geq 0$. As $z\in\overline{\mathcal{O}}$, there must exist some $n\geq n_0$, some $m\geq 0$ and some choice of $h=\ell,r$ and $v=t,b$ such that $z^{\prime}=f_1^{-m}(z)\in\cA^B_{h,v}(n)$.
We claim that $z^{\prime}\in\Gamma$. As $z$ is a positive iterate of $z^{\prime}$, this implies the lemma. Let us now prove the claim. Assume by contradiction that $z^{\prime}\notin\Gamma$. Then, according to \eqref{azullemadascores} of Lemma~\ref{l.allerverslesbleu}, we must have $z^{\prime}\in W^s(\mathcal{Q})$. Since $\mathcal{Q}$ is a periodic trapping region for $f_1$ with period $n_0+k_0+1$ (see Lemma~\ref{l.criandopoco}), for every sufficiently large $k$ one has that 
\[  
f_1^k(z^{\prime}) \in \bigcup_{j=0}^{n_0+k_0}f_1^j(\mathcal{Q}). 
\]On the other hand, we have $f_1^{j+m}(z^{\prime})\in\Gamma^+$, and as $\Gamma^+$ is a forward invariant set, we conclude that every large iterate of $z^{\prime}$ belongs to $\Gamma^+\cap\left[  \bigcup_{j=0}^{n_0+k_0}f_1^j(\mathcal{Q})\right]$. However, expression \eqref{e.gamamais} for $\Gamma^+$ shows that this intersection is empty This contradiction proves the claim, and concludes the proof.
\end{proof}

We can now conclude the proof of Proposition~\ref{p.gamanowheredense}.

\begin{proof}[Proof of Proposition~\ref{p.gamanowheredense}] Let us verify that Lemma~\ref{lemaPA} can be applied to the set $\Gamma^+$. First, the map $f_1$ is compactly supported. Second, $\Gamma^+$ is forward invariant by definition. Finally, any open set $V$ intersecting $\Gamma^+$ also intersects $\mathcal O$ (as \eqref{e.gamamais} shows that $\Gamma^+\subset\mathcal{O}$). Combining Lemma~\ref{l.stromae} with the fact that $\Gamma^+$ is nowhere dense (as explained above, before the statement of Lemma \ref{l.stromae}), we deduce that $V$ intersects the interior of the complement of\, $\bigcup_{n\ge 0}f_1^{-n}(\Gamma^+)$. This allows to apply Lemma~\ref{lemaPA} and concludes the proof of the proposition.
\end{proof}

The same argument as above, with the obvious adaptation shows:

\begin{prop}\label{p.gamanowheredense2}
$O_{f_1}(\Gamma^e)$ is nowhere dense.
\end{prop}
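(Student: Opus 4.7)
The plan is to follow verbatim the strategy used for Proposition~\ref{p.gamanowheredense}, replacing the interior objects by their exterior counterparts defined in Subsection~\ref{SubsecExt}. Set $\Gamma^{e,+}\eqdef\bigcup_{n\geq 0}f_1^{n}(\Gamma^e)$, which is forward invariant by construction. The first step is to obtain the exterior analog of the decomposition \eqref{e.gamamais}: since $\Gamma^e$ is invariant under the first return map $g_1$ restricted to the exterior blue tower, and since this return map coincides on $\cA^{B}_{h,v}(n)^e$ with $f_1^{n+k_0}$, one gets
\[
\Gamma^{e,+}=\bigcup_{\substack{h=\ell,r\\ v=t,b}}\,\bigcup_{n\geq n_1(h)}\,\bigcup_{j=0}^{n+k_0-1} f_1^{j}\left(\Gamma^e\cap\cA^{B}_{h,v}(n)^e\right).
\]
Each of the finitely many pieces $f_1^{j}\left(\Gamma^e\cap\cA^{B}_{h,v}(n)^e\right)$ is a closed subset of the corresponding iterated rectangle, and the only accumulation points of $\Gamma^{e,+}$ that lie outside $\Gamma^{e,+}$ are contained in $W_{f_0}^{s}(O)\cup W_{f_0}^{u}(O)$. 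By Lemma~\ref{maislemaPA}, this latter set is nowhere dense, hence $\Gamma^{e,+}$ itself is nowhere dense.

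Next I would introduce the open set
\[
\mathcal{O}^{e}\eqdef\bigcup_{\substack{h=\ell,r\\ v=t,b}}\,\bigcup_{n\geq n_1(h)}\,\bigcup_{j=0}^{n+k_0-1}\operatorname{int}\left[f_{1}^{j}\left(\cA_{h,v}^{B}(n)^{e}\right)\right],
\]
which by the above decomposition contains $\Gamma^{e,+}$. The exterior analog of Lemma~\ref{l.stromae} reads
\[
\left(\bigcup_{n\geq 0}f_1^{-n}(\Gamma^{e,+})\right)\cap\overline{\mathcal{O}^{e}}\ \subset\ \Gamma^{e,+}.
\]
The proof is identical to that of Lemma~\ref{l.stromae}: if $z\in\overline{\mathcal{O}^{e}}$ has some forward iterate in $\Gamma^{e,+}$, write $z=f_1^{m}(z')$ with $z'\in\cA^{B}_{h,v}(n)^{e}$. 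By the exterior version of Lemma~\ref{l.allerverslesbleu} (described in Subsection~\ref{SubsecExt}), either $z'\in\Gamma^{e}$ (and then $z\in\Gamma^{e,+}$) or $z'\in W^{s}(\mathcal{Q})\cup W^{s}(-\mathcal{Q})$. In the latter case all large iterates of $z'$ belong to the finite union $\bigcup_{j=0}^{n_0+k_0}f_1^{j}(\mathcal{Q}\cup(-\mathcal{Q}))$, which is disjoint from $\Gamma^{e,+}$ by the decomposition formula above (the blue rectangles and their iterates do not meet the orbits of the trapping regions, by Lemma~\ref{l.criandopoco}). This contradicts the fact that $f_1^{m}(z')\in\Gamma^{e,+}$, proving the claim.

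Finally, Lemma~\ref{lemaPA} applies with $X=\R^{2}\cup\{\infty\}$ (one-point compactification, legitimate since $f_1$ is compactly supported) and $\Gamma^{+}=\Gamma^{e,+}$: any open set $V$ meeting $\Gamma^{e,+}$ also meets $\mathcal{O}^{e}$, whose intersection with $\Gamma^{e,+}$ is, by the displayed inclusion above, equal to the intersection with $\bigcup_{n\geq 0}f_{1}^{-n}(\Gamma^{e,+})$; since $\Gamma^{e,+}$ is nowhere dense, $V$ must meet the interior of the complement of $\bigcup_{n\geq 0}f_{1}^{-n}(\Gamma^{e,+})$. Lemma~\ref{lemaPA} then yields that $\bigcup_{n\geq 0}f_{1}^{-n}(\Gamma^{e,+})=\bigcup_{n\in\Z}f_{1}^{n}(\Gamma^{e})=\mathcal{O}_{f_{1}}(\Gamma^{e})$ is nowhere dense. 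The only nontrivial adaptation is checking that the exterior analog of Lemma~\ref{l.allerverslesbleu}(\ref{azullemadascores}) is available, which is exactly the content of the trichotomy recorded at the beginning of Subsection~\ref{SubsecExt}; the rest of the argument is mechanical bookkeeping.
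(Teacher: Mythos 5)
Your proposal is correct and follows exactly the route the paper intends: Proposition~\ref{p.gamanowheredense2} is explicitly stated as following from "the same argument as above, with the obvious adaptation", and you have carried out that adaptation faithfully — the exterior analog of the decomposition~\eqref{e.gamamais}, the exterior set $\mathcal{O}^e$, the exterior version of Lemma~\ref{l.stromae}, and the final application of Lemma~\ref{lemaPA}. Two small stylistic notes: the pieces $f_1^{j}\left(\Gamma^e\cap\cA^{B}_{h,v}(n)^e\right)$ are countably many (not finitely many, although locally finitely many away from the figure-eight loop); and the nowhere density of $W^s_{f_0}(O)\cup W^u_{f_0}(O)$ is immediate because it is the $1$-dimensional figure-eight loop $\mathcal{L}$, so invoking Lemma~\ref{maislemaPA} (which concerns $f_1$ rather than $f_0$) is unnecessary here, though harmless.
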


\section{Orbit exclusion II: proof of Theorem~\ref{main.exemplonovo}}\label{sec.orbitexcii}

Let $f_1\in\difp$ be the diffeomorphism constructed in Section~\ref{SecDesc}. The reader should keep in mind both Proposition \ref{p.bluetrichotomy} and Proposition \ref{l.persmedidanula} from the previous sections, whose statements can be summarized as follows: there exists a set $\Gamma\cup\Gamma^e\subset\bigcup_{n \geq n_0}\big(C_n \setminus S_n\cup C_n^e\setminus S^e_n\big)$, whose orbit $\bigcup_{n\in\Z}f_1^{n}(\Gamma\cup\Gamma^e)$ has zero Lebesgue measure and is nowhere dense, such that
$$\cB_{f_1}(\delta_O)\,\subset\, W_{f_1}^s(O)\cup\mathcal{S}\cup (-\mathcal{S})\cup\mathcal{S}^e\cup \mathcal{O}_{f_1}\big(\Gamma\cup(-\Gamma)\cup\Gamma^e\big)$$
and
$$\cB_{f_1}(\delta_O)\,\supset\, W_{f_1}^s(O)\cup\mathcal{S}\cup(-\mathcal{S})\cup\mathcal{S}^e\cup\mathcal{O}_{f_1}(\Gamma^e)\,.$$
From Lemmas~\ref{maislemaPA}, \ref{p.gamanowheredense} and \ref{p.gamanowheredense2} we know that $W_{f_1}^s(O)$, $\mathcal{O}_{f_1}\big(\Gamma\cup(-\Gamma)\big)$ and $\mathcal{O}_{f_1}(\Gamma^e)$ are nowhere dense and zero measure sets, from which we deduce that the statistical basin $\cB_{f_1}(\delta_O)$ coincides with the union of stable towers $\mathcal{S} \cup (-\mathcal{S}) \cup \mathcal{S}^e$ up to a nowhere dense zero Lebesgue measure set. We shall now prove Theorem~\ref{main.exemplonovo} by performing a perturbation of $f_1$ aimed to toss points of the boxes $S_n$ out of the basin of the origin, so that what will remain will be a nowhere dense set with positive Lebesgue measure. More precisely, our main result in this section is the following proposition, which immediately implies Theorem \ref{main.exemplonovo}.

\begin{prop}\label{propfinalf2}
There exists $h_3\in \Diff^1(\R^2)$, such that $f_2 = h_3\circ f_1$\nomenclature{$f_2$}{Second and final perturbation of $f_0$, such that $f_2=h_2\circ f_1$} is a compactly supported diffeomorphism that has a hyperbolic fixed point at the origin $O$ which is of saddle type and, moreover,  
$\cB_{f_2}(\delta_O)$ is nowhere dense and has positive Lebesgue measure.
\end{prop}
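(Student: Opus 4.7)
The plan is to build $h_3$ as a $C^1$-convergent infinite composition of small, pairwise disjointly-supported pushes, one for each deep stable box in the tower, each designed to deflect a carefully-chosen subset of the box into the trapping region $\mathcal Q$ of Lemma~\ref{l.criandopoco} (and into the analogous trapping regions for $-\mathcal{S}$ and $\mathcal{S}^e$). The crucial rigidity we exploit is the one recorded right after Definition~\ref{defgzero}: since $h_1$ and $h_2$ vanish on the stable tower, the return $g_0=g_1$ satisfies $g_0^{4d}(S_n)=S_{16^d n}$ and is conjugated by $L_n,L_{16^d n}$ to the \emph{identity} on $[-1,1]^2$. Consequently, a push applied in the rescaled chart of a very deep box $S_{16^d n}$ pulls back through $g_0^{-4d}$ to an identical push on the corresponding subset of $S_n$; ejecting $E\subset S_{16^d n}$ from the stable tower automatically ejects $g_0^{-4d}(E)\subset S_n$ from $\cB_{f_1}(\delta_O)$.

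For each $n\ge n_0$ and $d\ge 1$, I would choose an open subset $V_{n,d}\subset[-1,1]^2$ of Lebesgue area $\varepsilon_{n,d}$, the union $\bigcup_{d\ge 1}L_n^{-1}(V_{n,d})$ being chosen dense in $S_n$ while $\sum_d\varepsilon_{n,d}<1$. I would then let $h_3^{(n,d)}$ be supported in a neighbourhood of $L_{16^d n}^{-1}(V_{n,d})\subset S_{16^d n}$ and act by pushing this set vertically out of the stable box into the ``red'' or ``pink'' regions of Lemma~\ref{l.allerverslesbleu}, so that a bounded number of further iterates carry the displaced points into $\mathcal Q$; the same is done symmetrically for $-\mathcal{S}$ and $\mathcal{S}^e$. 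The composition
\[
h_3\;=\;\lim_{N\to\infty}\prod_{\substack{n\ge n_0,\, d\ge 1\\ n+d\le N}}h_3^{(n,d)}
\]
is then well defined as a homeomorphism because the supports are pairwise disjoint and compactly contained in the tower, and $f_2=h_3\circ f_1$ is a saddle-preserving modification of $f_1$.

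Two things then have to be checked. First, the measure and topological accounting: because the ejected set $\bigcup_{n,d}g_0^{-4d}(L_{16^d n}^{-1}(V_{n,d}))$ is dense in each $S_n$, its removal leaves a nowhere-dense residue; by a Fubini argument together with $\sum_d\varepsilon_{n,d}<1$, this residue has positive Lebesgue measure in each $S_n$. Combined with Proposition~\ref{p.bluetrichotomy} and Proposition~\ref{l.persmedidanula} (which already confine $\cB_{f_1}(\delta_O)$ to the towers modulo a nowhere-dense null set), and with Lemma~\ref{maislemaPA} which handles the invariant manifolds, this shows that $\cB_{f_2}(\delta_O)$ is nowhere dense yet has positive Lebesgue measure. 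Second, one must verify that $h_3\in\Diff^1(\R^2)$: each $h_3^{(n,d)}$ is smooth and the identity outside its small support, so the only delicacy is at accumulation points of the supports, which lie on $W^s_{f_1}(O)\cup W^u_{f_1}(O)$.

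The main obstacle, as anticipated in the introduction (and the reason why the resulting $f_2$ is only $C^1$, not $C^2$, per Lemma~\ref{l.cum}), is precisely this regularity check. The natural pushes needed to actually eject points have $C^1$-norm that does \emph{not} decay when measured in the unrescaled chart, because the rectangles $S_{16^d n}$ become arbitrarily thin in $y$. The key idea is to arrange the pushes so that their supports accumulate onto a smooth ``flat tangency'' curve along $W^s_{f_1}(O)$, and that the derivatives $Dh_3^{(n,d)}$ tend to the identity in $C^0$ on that curve while their second derivatives blow up; this gives a $C^1$ limit that fails to be $C^2$, which is exactly what is needed. Making this quantitative — choosing the magnitudes and shapes of $V_{n,d}$, the push directions, and the smoothing profiles so that the $C^1$ norms of the truncations of $h_3$ form a Cauchy sequence while still producing an ejection strong enough to reach the red region — is the technical heart of the construction.
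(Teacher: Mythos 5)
Your plan correctly identifies the rigidity of the affine first return map and the role of the trapping region $\mathcal Q$, but its central step fails for $C^1$ regularity. You propose to eject the set $L_{16^dn}^{-1}(V_{n,d})$ from the stable tower via a \emph{single} push $h_3^{(n,d)}$ supported inside $S_{16^dn}$, strong enough to reach the red or pink regions of Lemma~\ref{l.allerverslesbleu}. For a point to leave that box, the vertical displacement must be comparable to the box height $\sigma^{-16^dn}(b-a)$; since the push must vanish at the top and bottom of the box, the $\partial_y$-entry of $Dh_3^{(n,d)}-\Id$ is of order $1$, \emph{uniformly} in $(n,d)$. These supports accumulate on $W^s_{f_1}(O)$, so the infinite composition cannot have derivative tending to $\Id$ there: the limit is not differentiable on $W^s_{f_1}(O)$, let alone $C^1$. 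Your appeal to a flat-tangency profile with blowing-up second derivatives does not repair this, because the obstruction already lives at first order. A second, subtler gap: the residual set you keep in $S_n$ is not arranged to be invariant under the later pushes $h_3^{(n,d')}$, $d'>d$, so you have no mechanism guaranteeing it remains in the basin.

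The paper's proof in Section~\ref{sec.orbitexcii} resolves the $C^1$ tension by ejecting with \emph{infinitely many individually tiny pushes along the orbit} rather than one large push per orbit. It fixes a single positive-measure Cantor set $K\subset I$ and a smooth $\varphi$ with $\varphi^{-1}(0)=K\cup(\R\setminus I)$, and sets $h_3(x,y)=\bigl(x,\ y+\delta_n\,\varphi(x)\,\psi(\sigma^n y)\,\epsilon_n\bigr)$ on $\tilde S_n$, where $\epsilon_n=\kappa\sigma^{-n}/\log n$ and $\delta_n$ restricts the action to one in every four returns (so the $\pi/2$-rotation has realigned). The rescaled push size is $\sigma^n\epsilon_n=\kappa/\log n\to 0$, which makes $Dh_3\to\Id$ and $h_3\in\dif^1$ (Lemma~\ref{l.cum}); yet the $\log$-slow decay is non-summable along the orbit $m,16m,16^2m,\dots$ (a harmonic-type series in $d$), so every point with first coordinate $x_0\notin K$ is eventually ejected (Lemma~\ref{LemFinal}). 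Crucially, a point with $x_0\in K$ has $\varphi(x_0)=0$ and the fourth return preserves $x_0$, so its whole $f_2$-orbit equals its $f_0$-orbit: the surviving set $\mathcal K=\bigcup_n K\times\sigma^{-n}I$ lies automatically in $\cB_{f_2}(\delta_O)$, is nowhere dense because $K$ is, and has positive measure by Fubini. That single-Cantor-set bookkeeping is the idea that replaces (and rescues) your dense-open-cover scheme.
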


Unfortunately, our construction only allows us to work in $C^1$ regularity (see Lemma~\ref{l.cum} below).

\subsection{Description of the diffeomorphism $h_3$}
Let us give an informal idea of the proof of Proposition~\ref{propfinalf2}. We shall construct the diffeomorphism $h_3$ as a composition of an infinite number of $C^\infty$-diffeomorphisms with disjoint supports, in the $\eps_1$-stable boxes $C_n$. For this purpose, we fix a Cantor set on the segment $I$ and compose with a perturbation formed of small vertical pushes outside of this Cantor set, once at each 4 returns in a box $S_n$. 

We shall give the precise definition of $h_3$ only in the first quadrant. A similar construction is easily performed in the third, and by symmetry one can complete the definitions. For boxes of the exterior $\mathcal{L}^e$ of the figure eight attractor of $f_0$, one also can do a similar construction, by taking into account the fact that the return map of $f_1$ for $\bigcup_{n\ge n_0} s_v(S_n)$ is a rotation of angle $\pi$ after renormalization (and not $\pi/2$ as for the boxes $S_n$). Hence, the perturbations have to be performed every 2 returns, and not 4 as for boxes $S_n$.

\subsubsection{Orbit of a box $S_m$}
Recall from Proposition~\ref{buildTowers} that for each $m\geq n_0$ there exists a first return map $g_0=f_0^{m+k_0}:S_m\to S_{2m}$ (see Definition \ref{defgzero}). Therefore, each positive integer $m \geq n_0$ is either a starting point for an orbit of a box or a positive iterate of some box $S_k$, $n_0 \le k<m$, under $g_0$. From now on we fix a positive integer $m$ which is the starting point of an orbit. Notice that this amounts to say that either $n_0\leq m<2n_0$ or $m$ is an odd positive integer. Such an $m$ will be called a \emph{starting integer}.

\subsubsection{Choice of bump functions}
We fix a $C^{\infty}$ function $\psi:[1,\sigma]\to[0,1]$ such that $\psi^{-1}(0)=[1,\sigma]\setminus(a,b+\eps_1)$ and $\psi^{-1}(1)=[a+\eps_1,b]$. We further assume that $\psi$ is increasing over $[a,a+\eps_1]$.

Let $K\subset I=[a,b]$ be a Cantor set (\emph{i.e.}, a perfect subset of $I$ with empty interior) with positive one-dimensional Lebesgue measure. Just for simplicity, let us assume that the extremal points of $K$ are precisely $a$ and $b$ (in other words: $I$ is the smallest interval containing $K$). Applying Whitney's extension theorem \cite{whitney1934analytic}, there exists a $C^{\infty}$ function $\varphi:\R\to [0,1]$ such that $\varphi^{-1}(0)=K\cup(\R \setminus I)$, see Figure \ref{figphi}.\nomenclature{$\varphi$}{Map $\varphi:\R\to [0,1]$ such that $\varphi^{-1}(0)=K$} Finally, let $\delta:\nt\to\{0,1\}$ be the \emph{characteristic function} of the set $\Omega=\Omega(n_0)\subset\nt$ given by\footnote{For boxes $s_v(S_n)$ of the exterior $\mathcal{L}^e$ of the attractor, one has to change the powers of $16$ by powers of $4$, as the return map of $f_1$ to the union of such boxes is, after renormalization, a rotation of angle $\pi$.}
$$\Omega=\bigcup_{\substack{\text{$m$ starting}\\ \text{integer}}}\ \bigcup_{d\in\nt}\{16^dm\}=\left(\bigcup_{m=n_0}^{2n_0-1}\bigcup_{d\in\nt}\{16^dm\}\right)\bigcup\left(\bigcup_{m=n_0}^{+\infty}\bigcup_{d\in\nt}\{16^d(2m+1)\}\right).$$The set $\Omega$ represents the set of box indices in which we will perform a perturbation. It contains one index over four in each orbit under $g_0$ of a starting box.

\begin{figure}
\begin{tikzpicture}
\draw[->] (-.2,0) -- (6.8,0);
\draw[->] (0,-.2) node[below]{$a$} -- (0,1);
\draw (6.4,0) node{$|$} node[below]{$b$};

\draw[color=blue!80!black] (0,0)to[out=0,in=180] (.1,.1)to[out=0,in=180] (.2,0)to[out=0,in=180] (.4,.2)to[out=0,in=180] (.6,0)to[out=0,in=180] (.7,.1)to[out=0,in=180] (.8,0)to[out=0,in=180] (1.2,.4)to[out=0,in=180] (1.6,0)
to[out=0,in=180] (1.7,.1)to[out=0,in=180] (1.8,0)to[out=0,in=180] (2,.2)to[out=0,in=180] (2.2,0)to[out=0,in=180] (2.3,.1)to[out=0,in=180] (2.4,0)to[out=0,in=180] (3.2,.8);

\begin{scope}[xscale=-1,xshift=-6.4cm]
\draw[color=blue!80!black] (0,0)to[out=0,in=180] (.1,.1)to[out=0,in=180] (.2,0)to[out=0,in=180] (.4,.2)to[out=0,in=180] (.6,0)to[out=0,in=180] (.7,.1)to[out=0,in=180] (.8,0)to[out=0,in=180] (1.2,.4)to[out=0,in=180] (1.6,0)
to[out=0,in=180] (1.7,.1)to[out=0,in=180] (1.8,0)to[out=0,in=180] (2,.2)to[out=0,in=180] (2.2,0)to[out=0,in=180] (2.3,.1)to[out=0,in=180] (2.4,0)to[out=0,in=180] (3.2,.8);
\end{scope}
\end{tikzpicture}
\caption{\label{figphi} The map $\varphi$}
\end{figure}
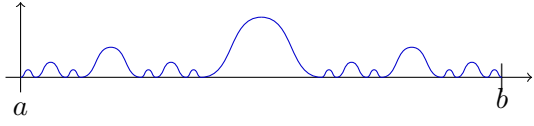

\subsubsection{Choice of the vertical push}
Let $c>0$ be such that $\|\varphi\|_{C^1}<c$ and $\|\psi\|_{C^1}<c$. Pick a small positive number $\kappa>0$ such that $\frac{c^2\kappa}{\log 2}<\eps_1$, and consider the sequence
\begin{equation}\label{EqDefEpsN}
\epsilon_n\eqdef\frac{\kappa\sigma^{-n}}{\log n}.
\end{equation}
Then, we have that $c\sigma^n \epsilon_n<1$ for every $n\geq 2$ and $\sigma^n\epsilon_n\to 0$ as $n\to\infty$.
The number $\epsilon_n$ represents the maximum amount with which our perturbation will push vertically each point in a box $S_n$. Notice that, at the scale of the vertical size of the box $S_n$ (which is $\sim\sigma^{-n}$) the sequence $\epsilon_n$ \emph{is not summable}. This is a crucial feature for our perturbation can really toss out point from the stable tower $\mathcal S$.

\subsubsection{Definition of the perturbation}

Recall that $\tilde{S}_n\eqdef \tilde{I}\times\sigma^{-n}\tilde{I}$. For each $(x,y)\in\R^2$, we set\nomenclature{$h_2$}{Diffeomorphism used to perturb $f_1$ and to throw some points of the boxes away from $\cB{\delta_O}$}
\[h_3(x,y)=\left\{\begin{array}{ll}
(x\,,\ y) & \text{if } (x,y)\notin \bigcup_{n\ge n_0} \tilde{S}_n,\\
(x\,,\ y+\delta_n\,\varphi(x)\,\psi(\sigma^ny)\,\epsilon_n) & \text{if } (x,y)\in\tilde{S}_n,\:\text{for some}\:n\geq n_0.
\end{array}\right.\]
Note that$$\supp(h_3)\subsetneq\bigcup_{n\in\Omega(n_0)}S_n\cup\big(I\times\sigma^{-n}(b,b+\varepsilon_1)\big)\subsetneq\bigcup_{n \geq n_0}C_n\subsetneq\bigcup_{n \geq n_0}\widetilde{S}_n\,.$$Moreover, from the definitions of $\varphi$ and $\psi$, we see that $h_3$ is $C^{\infty}$ on $\R^2\setminus(\tilde{I}\times\{0\})$ and is continuous on $\R^2$.

\begin{lema}\label{l.cum}
$h_3\in\dif^1(\R^2) \setminus \dif^2(\R^2)$.
\end{lema}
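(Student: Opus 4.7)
\medskip

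\noindent\textbf{Proof plan.} The key geometric feature is that the supports of the perturbation on distinct boxes $\tilde S_n$ are disjoint (because $\sigma^{-(n+1)}\tilde b<\sigma^{-n}\tilde a$ thanks to $\tilde b-\tilde a<(\sigma-1)/5$), and the only accumulation set of these supports is the horizontal segment $\tilde I\times\{0\}$. So the proof splits into four steps.

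\smallskip

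\emph{Step 1: smoothness away from $\tilde I\times\{0\}$.} On each $\tilde S_n$, the map $h_3$ is given by a $C^\infty$ formula. I will check that $h_3$ glues $C^\infty$-smoothly with the identity across $\partial\tilde S_n$ and with the identity on $\R^2\setminus\bigcup\tilde S_n$: indeed $\varphi\equiv 0$ on a neighbourhood of $\R\setminus I$ (hence in particular near $\tilde a$ and $\tilde b$), and $\psi\equiv 0$ on a neighbourhood of $[1,\sigma]\setminus(a,b+\eps_1)$ (hence near $\tilde a$ and $\tilde b$). So the perturbation $\delta_n\varphi(x)\psi(\sigma^n y)\epsilon_n$, together with all its derivatives, vanishes in a neighbourhood of $\partial\tilde S_n$, making $h_3$ of class $C^\infty$ on $\R^2\setminus(\tilde I\times\{0\})$.

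\smallskip

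\emph{Step 2: $C^1$ regularity across $\tilde I\times\{0\}$.} Differentiating gives, on $\tilde S_n$,
\[
Dh_3(x,y)-\Id=\begin{pmatrix}0 & 0\\ \delta_n\varphi'(x)\psi(\sigma^n y)\epsilon_n & \delta_n\varphi(x)\sigma^n\psi'(\sigma^n y)\epsilon_n\end{pmatrix}.
\]
Using $\|\varphi\|_{C^1},\|\psi\|_{C^1}<c$ and the definition $\epsilon_n=\kappa\sigma^{-n}/\log n$, this matrix has norm bounded by a constant times $\sigma^n\epsilon_n=\kappa/\log n\to 0$. Hence $Dh_3$ extends continuously to $\tilde I\times\{0\}$ as the identity, and a standard argument (fundamental theorem of calculus along vertical segments) shows $h_3$ is $C^1$ on all of $\R^2$ with $Dh_3\equiv\Id$ on $\tilde I\times\{0\}$.

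\smallskip

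\emph{Step 3: $h_3$ is a diffeomorphism.} Since $h_3$ preserves the horizontal coordinate and $y\mapsto(h_3)_2(x,y)$ is strictly increasing (because $|\partial_y(h_3)_2-1|\leq c\sigma^n\epsilon_n<1$ for $n\geq n_0$ large, which we may assume), $h_3$ is injective. The Jacobian determinant equals $1+\delta_n\varphi(x)\sigma^n\psi'(\sigma^n y)\epsilon_n>0$, and $h_3$ maps each $\tilde S_n$ onto itself (the vertical shear is an orientation preserving self-diffeomorphism of the interval $\sigma^{-n}\tilde I$ for each fixed $x$), so $h_3$ is a $C^1$-diffeomorphism.

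\smallskip

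\emph{Step 4: failure of $C^2$.} The computation of $\partial_y^2(h_3)_2$ on $\tilde S_n$ gives a term
\[
\delta_n\,\varphi(x)\,\sigma^{2n}\,\psi''(\sigma^n y)\,\epsilon_n.
\]
Pick $x_0\in I\setminus K$ (an open dense set) so that $\varphi(x_0)>0$, and $y_0\in(a,b+\eps_1)$ with $\psi''(y_0)\neq 0$. For $n\in\Omega$ (an infinite set) set $p_n=(x_0,\sigma^{-n}y_0)$. Then
\[
\bigl|\partial_y^2(h_3)_2(p_n)\bigr|=\varphi(x_0)\,|\psi''(y_0)|\,\sigma^{2n}\epsilon_n=\varphi(x_0)|\psi''(y_0)|\,\frac{\kappa\,\sigma^n}{\log n}\ \xrightarrow[n\to\infty]{}\ +\infty,
\]
while $p_n\to(x_0,0)$. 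Thus $D^2h_3$ is unbounded in every neighbourhood of $(x_0,0)$, so $h_3\notin\Diff^2(\R^2)$.

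\smallskip

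The principal subtlety is balancing Steps 2 and 4: the choice $\epsilon_n=\kappa\sigma^{-n}/\log n$ is exactly what makes $\sigma^n\epsilon_n\to 0$ (ensuring $C^1$ regularity and the desired estimates on the first derivatives) while $\sigma^{2n}\epsilon_n\to\infty$ (destroying $C^2$ regularity). Verifying that the vanishing conditions on $\varphi$ and $\psi$ are strong enough for $C^\infty$ gluing across $\partial\tilde S_n$ is the only technical check that requires attention in Step 1.
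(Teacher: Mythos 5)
Your proof is correct and follows essentially the same approach as the paper: both argue $C^1$ regularity across $\tilde I\times\{0\}$ from $\sigma^n\epsilon_n\to 0$ and non-$C^2$ regularity from $\sigma^{2n}\epsilon_n\to\infty$, via the same explicit Jacobian computation. The only cosmetic difference is in how you establish that $h_3$ is a global diffeomorphism (direct monotonicity in $y$ on each box, versus the paper's contraction-plus-inverse-function-theorem argument), and that you make explicit the $C^\infty$ gluing across $\partial\tilde S_n$ which the paper leaves implicit.
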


\begin{proof}[Proof of Lemma \ref{l.cum}]
Let $p = (x,y)\in\tilde{S}_n$ for some $n \geq n_0$. The Jacobian matrix of $h_3$ at $p$ is given by 
$$Dh_3(p)=\begin{bmatrix}
	1 & 0\\
	\delta_n\varphi^{\prime}(x)\psi(\sigma^ny)\epsilon_n & 1+\delta_n\varphi(x)\sigma^n\psi^{\prime}(\sigma^ny)\epsilon_n
	\end{bmatrix}.$$
From the definition of $\epsilon_n$ and the choice of $\kappa$, it follows that the map $\Phi:\R^2\to\R^2$ defined by $h_3=\operatorname{Id}+\Phi$ is a contraction (for $\|\cdot\|_{C^1}$). This proves that $h_3$ is a homeomorphism. Moreover, by definition of $\epsilon_n$, $\sigma^n\epsilon_n\to 0$ as $n\to+\infty$ (see \eqref{EqDefEpsN}), and then $Dh_3(p)\to \operatorname{Id}$ as $p\to\tilde{I}\times\{0\}$ (that is, as $n$ goes to infinity). Since $h_3$ is a homeomorphism, with the inverse function theorem we conclude that $h_3\in\dif^1(\R^2)$. Finally, we prove that $h_3$ is not $C^2$. Let $\beta(x,y)\eqdef\delta_n\varphi(x)\sigma^n\psi^{\prime}(\sigma^ny)\epsilon_n$. Then,
	$$\partial_y\beta(x,y) = \delta_n\varphi(x)\sigma^{2n}\psi^{\prime\prime}(\sigma^ny)\epsilon_n,$$
	and $\sigma^{2n}\epsilon_n=\frac{(b-a)\sigma^n}{\log n}\to\infty$ as $n\to\infty$.   
\end{proof}

\begin{remark}
	\label{rem.smoothonthewater}
This lemma shows that our example intrinsically $C^1$ but not $C^2$. Let us point out that there is a natural strategy emerging from our construction to get a possibly similar example of regularity $C^\infty$:  instead of performing perturbations as in Section \ref{sec.orbitexcii}, one could add a perturbation similar to $h_2\circ h_1$, but with support of type $[a',b']\times J$, where $a<a'<b'<b$ and $J$ an interval with 0 in its interior. Then, one could iterate the process by adding a perturbation with support $[a'',b'']\times J'$, with $a<a''<b''<a'$, etc. More precisely,  considering a fat Cantor set $K$, one could compose $f_0$ with  an infinite number of diffeomorphisms similar to $h_2\circ h_1$ \eqref{Def.h}, so that for each of them, the counterpart of $\varphi_2$ is supported in one of the holes of $K$. Choosing carefully the counterparts of the $\xi_2$ (that is, so that their norm decrease sufficiently fast), one can ensure that the resulting perturbation is $C^\infty$. Unfortunately, this construction complicates a lot the studies of both the countable number of attractive regions appearing (the counterpart of the region $\mathcal Q$) and the Markov structure arising (that is, the counterpart of $\Gamma$, see Definition \ref{defpersistent}).		
\end{remark}


\subsection{Proof of Proposition~\ref{propfinalf2}} 
We define $f_2\in\dif^1(\R^2)$ as
$$f_2 = h_3\circ f_1=(h_3 \circ h_2 \circ h_1) \circ f_0\,.$$
Recall that, by definition of $h_3$, we have
$$\supp(h_3)\subsetneq\bigcup_{n\in\Omega(n_0)}S_n\cup\big(I\times\sigma^{-n}(b,b+\varepsilon_1)\big)\subsetneq\bigcup_{n \geq n_0}C_n\subsetneq\bigcup_{n \geq n_0}\widetilde{S}_n\,.$$
In particular, $f_2$ coincides with $f_1$ outside the tower of boxes
$$f_{1}^{-1}\big(\bigcup_{n \geq n_0}\widetilde{S}_n\big)=f_{0}^{-1}\big(\bigcup_{n \geq n_0}\widetilde{S}_n\big)=\bigcup_{n \geq n_0}\sigma^{-2}\tilde{I}\times\sigma^{-(n+1)}\tilde{I}\,.$$

\subsubsection{Dynamics of $f_2$}\label{UltimaObsefva} Note that the set $\mathcal{Q}$ (recall Lemma \ref{l.criandopoco}) is also a trapping region for $f_2$: the iterations $f^\ell_1\left(\mathcal Q\right)$ are always disjoint from $\supp(h_3)$. We denote 
\[
W^s_{f_2}(\mathcal Q)\eqdef\{z\in\R^2;\,\exists n\geq 0: f_2^n(z)\in\mathcal Q\}.
\]

The key difference between $f_2$ and $f_1$, of course, occurs inside $S_n$. There are points $z\in S_n$ such that $h_3(z)\notin S_n$. Nevertheless, our choice of $\kappa$ implies that if $z \in f_{1}^{-1}(S_n)$ then $f_2(z)\in C_n$ and moreover if $f_2(z)\notin S_n$ then it belongs to the \emph{orange region} (defined in \eqref{subsubsec.lemadascores}). Therefore, just as in Lemma~\ref{l.allerverslesbleu}, every $z \in f_{1}^{-1}(S_n)$ such that $f_2(z)\notin S_n$ satisfies $z\in W^s_{f_2}(\mathcal Q) \cup \mathcal{O}_{f_2}(\Gamma)$.
Moreover, Item \eqref{azullemadascores} of Lemma \ref{l.allerverslesbleu} also holds for $f_2$: every element of the blue tower which is not in $W^s_{f_2}(\mathcal{Q})$, belongs to $\Gamma$ (this last fact will be used in the proof of Lemma \ref{l.nuncadenso} below, in the same way that it was used during the proof of Lemma \ref{l.stromae}). We summarize this discussion in the following lemma.

\begin{lema}
\label{l.dinamicadefdois}
The set $\mathcal Q$ is a periodic trapping region for $f_2$, whose stable set $W^s_{f_2}(\mathcal Q)$ satisfies
\begin{enumerate}[(a)]
	\item $\Ba\setminus W^s_{f_2}(\mathcal{Q})\subset\Gamma$
	\item If $z \in f_{1}^{-1}(S_n)$ and $f_2(z)\notin S_n$, then $f_2(z)\in C_n$ and $z\in W^s_{f_2}(\mathcal{Q}) \cup \mathcal{O}_{f_2}(\Gamma)$
\end{enumerate} 
\end{lema}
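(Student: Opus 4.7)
The plan is to formalize the informal observations made in Subsection~\ref{UltimaObsefva}. The key inputs are three structural properties of $h_3$: its support is contained in $[a,b]\times\bigcup_n\sigma^{-n}(a,b+\eps_1)$; it acts purely by a non-negative vertical push of amplitude at most $\epsilon_n\le\kappa\sigma^{-n}/\log n$, which by the choice of $\kappa$ is much smaller than $\eps_1\sigma^{-n}$; and it preserves each $\tilde S_n$.

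To establish the trapping property, I would first check that the orbit segment $\{f_1^\ell(\mathcal Q)\}_{\ell=0}^{n_0+k_0+1}$ is disjoint from $\supp(h_3)$. Reading off the proof of Lemma~\ref{l.criandopoco} (see Figure~\ref{FigTrapped}), each such iterate lies either in the rotational region $\mathcal U$, or in the linear region $\mathcal V$ with first coordinate outside $[a,b]$, or in $\mathcal Q$ itself after the final application of $h_2\circ h_1$; none of these meets the vertical strip $[a,b]\times\bigcup_n\sigma^{-n}(a,b+\eps_1)$. Consequently $f_2^\ell(\mathcal Q)=f_1^\ell(\mathcal Q)$ for every $\ell\le n_0+k_0+1$, and Lemma~\ref{l.criandopoco} gives the conclusion.

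For part~(a), I would argue in two steps. First, a direct inspection of $L_n^{-1}$ shows that the $x$-coordinate on any blue rectangle $\cA^B_{h,v}(n)$ lies in $[a-3\eps_1,a-2\eps_1]\cup[b+2\eps_1,b+3\eps_1]$, hence outside $\supp\varphi=[a,b]$; combined with the linear expression $f_0(x,y)=(\sigma^{-2}x,\sigma y)$ valid in $\mathcal V$, which ensures that as long as the orbit stays in $\mathcal V$ its first coordinate remains outside $[a,b]$, this shows that the full forward $f_1$-orbit of any $z\in\Gamma$ avoids $\supp(h_3)$. In particular $f_1$ and $f_2$ coincide along such an orbit, so persistence for $f_1$ is equivalent to persistence for $f_2$. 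Second, for $z\in\Ba\setminus\Gamma$, the strategy is to reprove Lemma~\ref{l.allerverslesbleu}\eqref{azullemadascores} verbatim for $f_2$: since the red, pink, green and blue regions are all disjoint from $\supp(h_3)$ (by the same $x$-coordinate argument), the only transitions of the graph in Figure~\ref{GraphTrans} that could potentially be altered involve the orange region, and there the vertical perturbation of size $\epsilon_n$ is so small compared to the $\delta_1\sigma^{-n}$ margins separating the colour boundaries that the transitions are preserved. This yields $z\in W^s_{f_2}(\mathcal Q)$.

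For part~(b), write $f_1(z)=(x,y)\in S_n$. Since $h_3$ pushes vertically and non-negatively by at most $\epsilon_n<\eps_1\sigma^{-n}$, the assumption $f_2(z)\notin S_n$ forces $f_2(z)\in[a,b]\times\sigma^{-n}(b,b+\eps_1)$, which is precisely the top component of the orange region $\cA^O(n)\subset C_n$. Applying the extension of Lemma~\ref{l.allerverslesbleu} to $f_2$ obtained in part~(a), $f_2(z)$ either belongs to $W^s_{f_2}(\mathcal Q)$, or has a forward iterate under $f_2$ entering $\Ba\setminus W^s_{f_2}(\mathcal Q)\subset\Gamma$; in either case $z\in W^s_{f_2}(\mathcal Q)\cup\mathcal O_{f_2}(\Gamma)$. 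The main obstacle throughout is the transition-by-transition verification that Lemma~\ref{l.allerverslesbleu} extends from $f_1$ to $f_2$; the quantitative smallness $\epsilon_n\ll\eps_1\sigma^{-n}$ built into the definition of $\epsilon_n$ is precisely the input that makes this extension go through.
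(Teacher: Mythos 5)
Your proposal is correct and takes essentially the same approach as the paper, which presents Lemma~\ref{l.dinamicadefdois} as a summary of the informal discussion immediately preceding it (Subsection~\ref{UltimaObsefva}). Your version spells out the same verifications more explicitly: disjointness of $\supp(h_3)\subset(a,b)\times\bigcup_n\sigma^{-n}(a,b+\eps_1)$ from the $f_1$-orbit of $\mathcal Q$ and from the red/pink/green/blue regions, the bound $\eps_n<\eps_1\sigma^{-n}$ that keeps $f_2(z)\in C_n$, and the observation that the orange-region argument in Lemma~\ref{l.allerverslesbleu}(3) tracks only the first coordinate via $\xi_1$ and is therefore unaffected by the purely vertical push $h_3$.
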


Proposition~\ref{propfinalf2} will be a straightforward consequence of the following three lemmas.

\begin{lema}\label{LemFinal} For any $n\ge n_0$ and any $(x,y)\in S_n$ we have the following dichotomy.
\begin{itemize}
\item If $x\in K$, then $(x,y)\in \cB_{f_2}(\delta_O)$;
\item If $x\notin K$, then there exists $k\in\nt$ such that $f_2^k(x,y) \in \bigcup_{n \geq n_0}(\tilde{S}_n \setminus S_n)$.
\end{itemize}
\end{lema}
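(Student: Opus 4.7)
The plan hinges on two structural features of $f_2 = h_3\circ f_1$ on the stable tower $\mathcal{S}$. Since $h_1$ and $h_2$ vanish on $\mathcal{S}$, we have $f_1=f_0$ there, so by property~(\ref{p3}) of Proposition~\ref{buildTowers} together with the computation around equation~\eqref{EqTau0d}, the first return map satisfies the rescaling identity $L_{16m}\circ g_0^{4}\circ L_m^{-1}=\Id_{[-1,1]^2}$, and in particular four successive returns preserve the rescaled coordinates. By construction of $\Omega$, from a point $(x,y)\in S_n$ with $n$ a starting integer the perturbation $h_3$ fires precisely at the subsequent visits to $S_{16^d n}$ with $d\in\nt$, and the $x$-coordinate stays equal to $x$ at these moments.

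I first handle the alternative $x\in K$, focusing on $n$ starting (the general $n\ge n_0$ is reduced to this by iterating $g_0$ at most three times to enter the starting orbit, during which $f_2=f_0$ since none of the intermediate boxes lie in $\Omega$). Then $\varphi(x)=0$ at every perturbation step, so $h_3$ fixes the point there; combined with the fact that $h_3$ is the identity at non-$\Omega$ visits, the full forward $f_2$-orbit of $(x,y)$ coincides with its $f_0$-orbit. Corollary~\ref{c.bacia} (via Lemma~\ref{suf}) then yields $(x,y)\in \cB_{f_0}(\delta_O)$, and since the orbits agree, also $(x,y)\in\cB_{f_2}(\delta_O)$.

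For the alternative $x\notin K$, let $m$ be the first $\Omega$-box reached on the forward $g_0$-orbit of $(x,y)$, write $P_d$ for the point of the $f_2$-orbit at the $d$-th subsequent $\Omega$-visit, and set $\tilde y_d\eqdef\bigl(L_{16^d m}(P_d)\bigr)_y\in[-1,1]$. Since $g_0^4$ descends via $L$ to the identity, all the motion in $\tilde y_d$ is carried by $h_3$, and a direct computation gives
\[\tilde y_{d+1}-\tilde y_d \;=\; \frac{2\sigma^{16^d m}}{b-a}\,\varphi(x)\,\psi\!\bigl(\tfrac{b-a}{2}\tilde y_d+\tfrac{a+b}{2}\bigr)\,\epsilon_{16^d m}.\]
By~\eqref{EqDefEpsN}, $\sigma^{16^d m}\epsilon_{16^d m}=\kappa/\log(16^d m)$, so each positive increment is of order $\varphi(x)/\log(16^d m)\asymp 1/d$. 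Since $\varphi(x)>0$ for $x\notin K$ and the increments are non-negative, $(\tilde y_d)$ is non-decreasing, and the divergence of the harmonic-type series forces $\tilde y_d>1$ after finitely many $d$. Unwinding the rescaling, this is exactly to say that $f_2^k(x,y)\in\tilde S_{16^d m}\setminus S_{16^d m}$ for some $k\in\nt$.

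The main technical point is to keep $\psi$ bounded below along the orbit so that the lower bound on the increments really holds. As every push is upward and $g_0^4$ is the identity after rescaling, $\tilde y_d$ is non-decreasing; once the very first push is strictly positive, $\tilde y_d$ stays strictly above $-1$, and $\psi$ (which vanishes only at $a$ and on $[b+\varepsilon_1,\sigma]$) is bounded below by a positive constant until $\tilde y_d$ exceeds $1$. The degenerate initial condition $\sigma^n y=a$, on which the very first push vanishes, gives a zero-measure exceptional set that can be safely absorbed and does not affect the subsequent use of the lemma in the proof of Proposition~\ref{propfinalf2}.
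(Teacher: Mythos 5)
Your proof follows the same strategy as the paper's: for $x\in K$ you argue that $h_3$ never moves the orbit since $\varphi$ vanishes at the (constant) $x$-coordinate of every $\Omega$-return, so the $f_2$- and $f_0$-orbits agree; for $x\notin K$ you track the rescaled vertical coordinate at every fourth return, observe that the increments are non-negative and of order $1/d$, and conclude from the divergence of the harmonic-type sum. This is precisely the paper's argument, with the cosmetic difference that you work with the $L_n$-rescaled coordinate $\tilde y_d\in[-1,1]$ while the paper works with $\tau_2^d=\sigma^{16^d m}y_d\in[a,b]$ (they are affinely related).

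One point you raise deserves comment. You flag the degenerate initial condition $\sigma^n y=a$ (the bottom edge of $S_n$), on which $\psi(\tau_2^0)=0$ and the very first push — and hence \emph{every} subsequent push — vanishes. This is a genuine observation: for such $(x,y)$ with $x\notin K$, the second alternative of the lemma \emph{as stated} fails, since the full forward $f_2$-orbit stays on the bottom edges $I\times\{\sigma^{-16^dm}a\}$ and never enters $\tilde S_n\setminus S_n$. The paper's own proof has the same issue: its lower bound $\beta_d\ge \varphi(x_0)\psi(\tau_2^0)\kappa(b-a)/\log(16^{d+1}m)$ is vacuous when $\psi(\tau_2^0)=0$. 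In the end this is harmless precisely because the set $\mathcal{K}$ introduced in Lemma~\ref{c.basindescription} explicitly includes the bottom edges $I\times\{\sigma^{-n}a\}$, so the conclusion $p\in\mathcal{O}_{f_2}(\mathcal{K})$ still holds for these points. You would make your write-up cleaner by saying outright that the lemma requires this exceptional set to be excluded (or its statement slightly weakened), rather than describing it as something to be "safely absorbed"; but your diagnosis of the issue and of why it does not propagate is correct and goes beyond what the paper itself records.
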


\begin{proof}[Proof of Lemma \ref{LemFinal}] For simplicity, and without loss of generality, fix some starting integer $m \geq n_0$ and some initial point $(x_0,y_0) \in S_m$. If $x_0\in K$, then $\varphi(x_0)=0$ and then $f_2^k(x_0,y_0)=f_1^k(x_0,y_0)=f_0^k(x_0,y_0)$ for every $k\ge 0$ (indeed, recall from the computations after Definition \ref{defgzero} that each point $g_0^{4d}(x_0,y_0)$ has first coordinate equal to $x_0$, and so $h_3$ does not affect it). Thus $(x_0,y_0)\in\cB_{f_2}(\delta_O)$, which proves the first point of Lemma \ref{LemFinal}. So let us move to the second point: assume that $(x_0,y_0) \in S_m$ is such that $x_0\notin K$, and suppose, by contradiction, that for any $k\in\nt$,\[f_2^k(x_0,y_0) \in \left(\bigcup_{n\ge n_0} S_n \right) \cup \left(\bigcup_{n\ge n_0} \tilde S_n \right)^\complement.\]By (b) of Lemma~\ref{l.dinamicadefdois} we deduce that for any $k\in\nt$, $f_2^k(x_0,y_0) \in \mathcal S$ (an orbit under $f_2$ that leaves $\mathcal S$ has to meet $\tilde S_n\setminus S_n$ for some $n$).
Following Section \ref{sec.figoito} (see Definitions \ref{defgzero} and \ref{defg1}) consider the map $g_2:\bigcup_{n \geq n_0}S_n\to\R^2$ given by $g_2=f_{2}^{n+k_0}$ on $S_n$\, for each $n \geq n_0$. Since we are assuming that $f_2^k(x_0,y_0)\in\mathcal S$, we can consider for each $d\in\nt$ the returning point$$(x_d,y_d)=g_{2}^{4d}(x_0,y_0) \in S_{16^dm}\,.$$Moreover, if just as in \eqref{EqTau0d} we consider $\tau_{2}^{d}=\sigma^{16^dm}\,y_d$\,, we have that $\tau_{2}^{d}\in[a,b]$ for all $d\in\nt$. However, a straightforward calculation shows that
$$x_d=x_0 \quad\mbox{and}\quad \tau_{2}^{d+1}=\tau_{2}^{d}+\beta_d\quad\mbox{for all $d\in\nt$,}$$
where each $\beta_d \geq 0$ is given by
$$\beta_d=\frac{\delta_{16^{d+1}m}\,\varphi(x_0)\,\psi(\tau_{2}^{d})\,\kappa\,(b-a)}{\log(16^{d+1}m)}\,.$$
As already explained, since we are assuming that $f_2^k(x_0,y_0)\in\mathcal S$, and hence that $g_2^d(x_0,y_0)\in\bigcup_{n\ge n_0} S_n$, we have that $\tau_{2}^{d}\in[a,b]$ for all $d\in\nt$, which implies that the sequence $\{\beta_d\}_{d\in\nt}$ is summable. This last assertion is absurd for the recursive formula above implies that $\tau_{2}^{d}$ is increasing in $d$. As $\psi$ was chosen to be increasing in $[a,b]$,
$$\beta_d \ge \frac{\,\varphi(x_0)\,\psi(\tau_{2}^{0})\,\kappa\,(b-a)}{\log m + (d+1)\log(16)}$$
(remark that $\varphi(x_0)>0$, because $x_0\notin K$). Thus, $\beta_d$ is not summable, concluding.
\end{proof}

We define the set $\widetilde{\mathcal{K}}$ of points of the boxes that will eventually stay in the stable towers as
\[\widetilde{\mathcal{K}} \eqdef \mathcal{K} \cup (-\mathcal{K}) \cup s_v(\mathcal{K}),\]
with $\mathcal{K}\subset\bigcup_{n \geq n_0}S_n$ defined by
\[\mathcal{K}\eqdef\bigcup_{n=n_0}^{+\infty}\big(K\times\sigma^{-n}I) \cup \big(I\times \{\sigma^{-n} a\}\big).\]
Remark that the set $\widetilde{\mathcal{K}}$  is nowhere dense but has positive Lebesgue measure (by Fubini's Theorem).

\begin{lema}\label{c.basindescription}
One has
\begin{equation}\label{EqDescBasin}
\cB_{f_2}(\delta_O)\ =\ 
W_{f_2}^{s}(O)
\cup \mathcal{O}_{f_2}(\widetilde{\mathcal{K}})
\cup \mathcal{O}_{f_2}(\Gamma^e)
\cup \big(\mathcal{O}_{f_2}(\Gamma \cup -\Gamma) \cap \cB_{f_2}(\delta_O)\big).
\end{equation}
\end{lema}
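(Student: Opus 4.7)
My plan is to establish the two inclusions separately.

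For the inclusion $\supset$, I would check the four terms one at a time. The first ($W^s_{f_2}(O)$) and last ($\mathcal{O}_{f_2}(\Gamma\cup -\Gamma)\cap \cB_{f_2}(\delta_O)$) are immediate. For $\mathcal{O}_{f_2}(\widetilde{\mathcal{K}}) \subset \cB_{f_2}(\delta_O)$, I would argue that $h_3$ is the identity along the whole orbit of any point in $\widetilde{\mathcal{K}}$: on the piece $K\times\sigma^{-n}I$, the factor $\varphi(x)$ vanishes, and the affine return formula (iv) of Proposition~\ref{buildTowers} preserves the $x$-coordinate under $g_0^4$, so $\varphi$ keeps vanishing at every return time in $\Omega$ (returns outside $\Omega$ trivially kill $h_3$ via $\delta_{n'}=0$); on the bottom lines $I\times\{\sigma^{-n}a\}$, a direct computation yields $g_0^4(x,\sigma^{-n}a)=(x,\sigma^{-16n}a)$, so $\psi(\sigma^{n'}\,\cdot\,)$ evaluates to $\psi(a)=0$ at every return. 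Lemma~\ref{LemFinal} and Corollary~\ref{c.bacia} then give the claim, and symmetry covers $-\mathcal{K}$ and $s_v(\mathcal{K})$. For $\mathcal{O}_{f_2}(\Gamma^e) \subset \cB_{f_2}(\delta_O)$, I would show that $\supp(h_3)\subset\bigcup_n C_n$ is disjoint from every $f_1$-orbit in $\Gamma^e$: at first-return times the orbit sits in an exterior blue rectangle whose $x$-coordinate lies in $[a-3\eps_1,a-2\eps_1]\cup[b+2\eps_1,b+3\eps_1]$ where $\varphi\equiv 0$, and at intermediate times it sits either in $\mathcal{V}\setminus\bigcup_n\tilde{S}_n$ or in the rotational region. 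Hence $f_2^k=f_1^k$ along this orbit, and Section~\ref{SubsecExt}'s observation that $\Gamma^e\subset\cB_{f_1}(\delta_O)$ transfers to $f_2$.

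For the converse, I would fix $p\in\cB_{f_2}(\delta_O)\setminus W^s_{f_2}(O)$. Since $\delta_O\in\cM(p)$, the positive $f_2$-orbit of $p$ visits $\mathcal{V}$ infinitely often and hence meets one of the fundamental domains of Lemma~\ref{l.bigbacia2} (or its symmetric/exterior analog). The core of the plan is to transfer the analysis of Section~\ref{sec.orbitexcum} from $f_1$ to $f_2$: on the blue tower $\Ba$ the perturbation $h_3$ is the identity (as above, $\varphi$ vanishes on $[a-3\eps_1,a-2\eps_1]\cup[b+2\eps_1,b+3\eps_1]$), so the first return map $g_2$ agrees with $g_1$ on $\Ba$ and the set $\Gamma$ of persistent points is unchanged; outside $\mathcal{S}$ and the thin strips $I\times\sigma^{-n}(b,b+\eps_1)$, the map $h_3$ is likewise the identity, so Lemmas~\ref{l.bigbacia2} and~\ref{l.allerverslesbleu} hold for $f_2$ with identical proofs. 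Invoking the resulting trichotomy, I obtain that either the forward $f_2$-orbit of $p$ eventually enters $\mathcal{S}\cup(-\mathcal{S})\cup\mathcal{S}^e$, or $p\in W^s_{f_2}(\mathcal{Q})\cup W^s_{f_2}(-\mathcal{Q})$, or $p\in\mathcal{O}_{f_2}(\Gamma\cup -\Gamma\cup\Gamma^e)$.

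To conclude, I would eliminate the second case using Lemma~\ref{l.dinamicadefdois}: $\mathcal{Q}$ (resp. $-\mathcal{Q}$) is a periodic trapping region bounded away from $O$, so its stable set carries asymptotic measures supported on the periodic orbit of $\mathcal{Q}$, which cannot equal $\delta_O$. The third case places $p$ in the third term of the RHS (when $p\in\mathcal{O}_{f_2}(\Gamma^e)$, by the $\supset$ direction) or in the fourth. In the first case, some iterate $f_2^\ell(p)$ lies in $S_n$ (the cases of $-\mathcal{S}$ and $\mathcal{S}^e$ are symmetric/analogous), and Lemma~\ref{LemFinal} splits into: either $f_2^\ell(p)\in\mathcal{K}$, so $p\in\mathcal{O}_{f_2}(\widetilde{\mathcal{K}})$; or the orbit leaves $S_n$ and lands in some $\tilde{S}_{n'}\setminus S_{n'}$, whereupon Lemma~\ref{l.dinamicadefdois}(b) places it in $W^s_{f_2}(\mathcal{Q})\cup\mathcal{O}_{f_2}(\Gamma)$, reducing to an already-handled alternative. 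I expect the main obstacle to be precisely the systematic verification that $\supp(h_3)$ is geometrically decoupled from the blue tower, from the iterates of the trapping region, and from the $f_1$-orbit of $\Gamma^e$; once that is established, the remainder is a mechanical assembly of Lemma~\ref{LemFinal}, the (adapted) Proposition~\ref{p.bluetrichotomy}, and Lemma~\ref{l.dinamicadefdois}.
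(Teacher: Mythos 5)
Your proposal is essentially the paper's own argument, but unpacked: the paper's proof is a two-sentence pointer to Lemma~\ref{LemFinal} and Proposition~\ref{p.bluetrichotomy}, and you spell out the verifications that the paper leaves implicit, most importantly that $\operatorname{supp}(h_3)$ is disjoint from the blue tower, from the periodic orbit of the trapping region $\mathcal{Q}$, and from the $f_1$-orbit of $\Gamma^e$, so that the trichotomy machinery of Sections~4--5 transfers verbatim to $f_2$. You also correctly identify (and patch) a small gap that the paper's wording hides: Lemma~\ref{LemFinal}(2) as stated does not actually exclude bottom-line points $(x,\sigma^{-n}a)$ with $x\notin K$ from staying in $\mathcal S$ forever, which is precisely why $\mathcal{K}$ is defined to contain the bottom edges $I\times\{\sigma^{-n}a\}$; your computation that $\psi(\sigma^{n'}y')=\psi(a)=0$ persists under $g_0^4$ along these edges is the right fix. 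Same decomposition, same key lemmas, just more care.
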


Remark that the sets $W_{f_2}^{s}(O)$, $\widetilde{\mathcal{K}}$ and $\Gamma^e$ are actually contained in $\cB_{f_2}(\delta_O)$, but the dynamics in $\bigcup_{n\in\Z}f_2^{n}(\Gamma)$ is a bit more intricated and only a (nonempty) part of it belongs to $\cB_{f_2}(\delta_O)$.

To simplify the exposition, we will consider the case of points of the right interior component of the figure eight attractor $\mathcal{L}_r^i$ (and hence only the set $\mathcal{K}$), the other cases being identical.

\begin{proof}[Proof of Lemma \ref{c.basindescription}]
From the first item in Lemma \ref{LemFinal} we know that $\mathcal{K}\subset\cB_{f_2}(\delta_O)$, and then
$$W_{f_2}^{s}(O)\cup \mathcal{O}_{f_2}(\mathcal{K})\subset\cB_{f_2}(\delta_O),$$
since $W_{f_2}^{s}(O)$ is obviously contained in $\cB_{f_2}(\delta_O)$. Conversely, by putting together Proposition \ref{p.bluetrichotomy} and the second item in Lemma \ref{LemFinal}, we deduce that if
$$p\in\cB_{f_2}(\delta_O)\cap\mathcal{L}_r^i\setminus\left[W_{f_2}^{s}(O)\cup\mathcal{O}_{f_2}(\Gamma)\right],$$
then $\displaystyle p\in\mathcal{O}_{f_2}(\mathcal{K})$.
\end{proof}

\begin{lema}\label{l.nuncadenso}
The four sets
$$W_{f_2}^{s}(O),\quad \mathcal{O}_{f_2}(\widetilde{\mathcal{K}}),\quad \mathcal{O}_{f_2}(\Gamma) \quad\mbox{and}\quad \mathcal{O}_{f_2}(\Gamma^e)$$
are nowhere dense in $\R^2$.
\end{lema}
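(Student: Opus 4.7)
The four cases split into three ``template'' cases and one genuinely new case. For $W^s_{f_2}(O)$, the plan is to repeat the argument of Lemma \ref{maislemaPA} with $f_1$ replaced by $f_2$: the additional perturbation $h_3$ only shifts vertically within $\bigcup_n \widetilde{S}_n$, and by its symmetric construction it preserves each open quadrant around $O$; hence $W^s_{f_2}(O) \subset \{xy \le 0\}$, so Lemma \ref{lemaPA} applied to $\Gamma^+ = [-1,1] \times \{0\}$ concludes. For $\mathcal{O}_{f_2}(\Gamma)$ and $\mathcal{O}_{f_2}(\Gamma^e)$, I would replay the proofs of Propositions \ref{p.gamanowheredense} and \ref{p.gamanowheredense2} verbatim: the two key hypotheses they invoke (that $\mathcal{Q}$ is a periodic trapping region and that $\Ba \setminus W^s(\mathcal{Q}) \subset \Gamma$) remain valid for $f_2$ thanks to Lemma \ref{l.dinamicadefdois}.

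The genuinely new case is $\mathcal{O}_{f_2}(\widetilde{\mathcal{K}})$; by symmetry it suffices to treat $\mathcal{O}_{f_2}(\mathcal{K})$. The key structural observation is that $h_3$ is the identity on $\mathcal{K}$ itself: on the horizontal Cantor strips $K \times \sigma^{-n} I$ one has $\varphi(x) = 0$ for $x \in K$, and on the horizontal lines $I \times \{\sigma^{-n} a\}$ one has $\psi(\sigma^n y) = \psi(a) = 0$. I would then apply Lemma \ref{lemaPA} to the forward-invariant set $\Gamma^+ = \bigcup_{k \ge 0} f_2^k(\mathcal{K})$; since the forward $f_2$-orbit of each point of $\mathcal{K}$ stays in the stable tower $\mathcal{S}$ (this underlies the proof of Lemma \ref{LemFinal}), one has $\Gamma^+ \subset \mathcal{S}$ and $\bigcup_{n\ge 0} f_2^{-n}(\Gamma^+) = \mathcal{O}_{f_2}(\mathcal{K})$.

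To verify the criterion of Lemma \ref{lemaPA}, fix any open $U$ meeting $\Gamma^+$; iterating backward under $f_2$ and using that $f_2$ agrees with $f_0$ outside the supports of $h_1, h_2, h_3$, I may assume $U$ is contained in some $S_n$ and contains an open sub-rectangle $(x_1,x_2) \times (y_1,y_2)$. Exploiting the four-periodicity (after rescaling by $L_n$) of $g_0$ together with the fact that $h_3$ leaves $x$-coordinates invariant, I would show that all ``horizontal-Cantor-strip'' contributions to $\mathcal{O}_{f_2}(\mathcal{K}) \cap S_n$ have $x$-projection contained in $K \cup (a+b-K)$, a closed nowhere-dense subset of $I$. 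Picking $x_0 \in (x_1, x_2) \setminus (K \cup (a+b-K))$ and a narrow vertical band $V_1 = (x_0 - \eta, x_0 + \eta) \times (y_1, y_2)$ eliminates all horizontal-strip contributions. The vertical-strip contributions meeting $V_1$ form a countable family whose $y$-projections lie close to the basic Cantor sets $\sigma^{-n} K$ and $\sigma^{-n}(a+b-K)$, up to $h_3$-perturbations of amplitude at most $\epsilon_n$. Selecting $(y_1', y_2') \subset (y_1, y_2)$ inside a gap common to these basic Cantor sets, of length smaller than the gap but larger than the sum of the relevant perturbations, produces an open $V = (x_0 - \eta, x_0 + \eta) \times (y_1', y_2') \subset U$ disjoint from $\mathcal{O}_{f_2}(\mathcal{K})$.

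The hard part is precisely this last step: $\mathcal{O}_{f_2}(\mathcal{K}) \cap S_n$ is an infinite union of closed nowhere-dense sets, and a bare application of Baire's theorem only guarantees meagreness, which is weaker than nowhere-denseness. Circumventing this requires the specific ``finite set of shape types, plus controlled perturbations'' structure of the iterates, which itself follows from the four-periodicity of $g_0$ after rescaling together with the crucial fact that the $h_3$-perturbations within $S_n$ are bounded by $\epsilon_n \to 0$.
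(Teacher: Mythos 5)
Your treatment of the three ``template'' cases ($W^s_{f_2}(O)$, $\mathcal{O}_{f_2}(\Gamma)$ and $\mathcal{O}_{f_2}(\Gamma^e)$) matches the paper's proof in substance: the paper observes $W^s_{f_2}(O)=W^s_{f_1}(O)$ (since $\operatorname{supp} h_3$ is disjoint from the figure-eight loop) and that $\Gamma^+$ and Lemma \ref{l.stromae} carry over verbatim to $f_2$; you reprove the same things directly, which is fine.

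For $\mathcal{O}_{f_2}(\mathcal{K})$ you make the right first observation --- that $h_3$ restricts to the identity on $\mathcal{K}$ --- but then you stop and launch a direct verification of the criterion of Lemma \ref{lemaPA} (gap-selection in Cantor sets, controlling vertical strips ``up to $h_3$-perturbations of amplitude at most $\epsilon_n$'', etc.), and you frankly flag the resulting Baire-type difficulty as unresolved. The paper's proof avoids that difficulty entirely by pushing your observation one step further. Because the perturbation $h_3$ acts only inside boxes $S_m$ with $m\in\Omega$, and because $\Omega$ is visited exactly once every four returns at which moment the first coordinate of the iterate of a point in $K\times\sigma^{-n}I$ is back in $K$ (this is the computation after Definition \ref{defgzero}, used again in the proof of Lemma \ref{LemFinal}), the \emph{entire} $f_2$-orbit of $\mathcal K$ coincides with its $f_0$-orbit: $\mathcal{O}_{f_2}(\mathcal K)=\mathcal{O}_{f_0}(\mathcal K)$. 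In particular there are no ``$h_3$-perturbations of amplitude $\epsilon_n$'' to control at all. And once the reduction to $f_0$ is made, the 4-periodicity of the rescaled return map $L_{2n}\circ g_0\circ L_n^{-1}$ (a rotation by $\pi/2$) implies that $\mathcal{O}_{f_0}(\mathcal K)\cap S_n$ is only a \emph{finite} union of closed nowhere-dense sets (rotated images of $K\times I$ and of the segment $I\times\{a\}$), so your invocation of Baire --- and the worry that a countable union only yields meagreness --- is moot. Combined with the fact that the $f_0$-iterates of the boxes are pairwise disjoint and accumulate only on the nowhere-dense figure-eight $\mathcal L$ (item \eqref{p6} of Proposition \ref{buildTowers}), this gives nowhere-density cleanly, without any fine analysis of the geometry of the orbit inside a box.

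In summary, your proposal is on the right track and correct in spirit, but for $\mathcal{O}_{f_2}(\mathcal K)$ you choose a genuinely harder route whose hard step, as you yourself note, you cannot close; the missing idea is precisely the equality $\mathcal{O}_{f_2}(\mathcal K)=\mathcal{O}_{f_0}(\mathcal K)$, which your initial observation already implies and which makes the problem trivial.
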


\begin{proof}[Proof of Lemma \ref{l.nuncadenso}] From the definition of $\supp h_3$, we see at once that $W_{f_2}^{s}(O)=W_{f_1}^{s}(O)$, and then Lemma \ref{maislemaPA} implies that $W_{f_2}^{s}(O)$ is nowhere dense. From the definition of $h_1$, $h_2$ and $h_3$ we see that if $p\in\mathcal{K}$, then $f_2^n(p)=f_0^n(p)$ for all $n\in\Z$. Therefore, the fact that $\mathcal{O}_{f_2}(\mathcal{K})$ is nowhere dense follows from Section~\ref{sec.figoito} (more precisely, from Item \eqref{p6} of Proposition \ref{buildTowers}). Finally, the proof of the fact that $\mathcal{O}_{f_2}(\Gamma)$ is nowhere dense follows the same lines as in Section~\ref{secnowhere}. Indeed, note first that if $p\in\Gamma$, then $f_2^n(p)=f_1^n(p)$ for all $n\in\nt$. Therefore, $\mathcal{O}_{f_2}(\Gamma)$ equals the set $\Gamma^{+}$ from Section~\ref{secnowhere}, and in particular is nowhere dense. Moreover, Lemma \ref{l.stromae} still holds for $f_2$ (with the same proof; recall here that every element of the blue tower which is not in $\Gamma$ converges to $\mathcal{Q}$ under $f_2$, as mentioned in Section~\ref{UltimaObsefva}) and then we conclude that $\mathcal{O}_{f_2}(\Gamma)$ is nowhere dense with the help of Lemma \ref{lemaPA}, just as we did in the proof of Proposition~\ref{p.gamanowheredense}. The proof for $\Gamma^e$ is identical.
\end{proof}

\begin{remark}
Since the forward dynamics of $f_2$ in $\Gamma$ is the same as those for $f_1$, we deduce from Proposition \ref{PropCoding} that $f_2|_\Gamma$ has positive topological entropy as well as infinitely many periodic orbits.
\end{remark}

With Lemma \ref{c.basindescription} and Lemma \ref{l.nuncadenso} at hand, we are ready to prove Proposition~\ref{propfinalf2}, our final step in proving Theorem \ref{main.exemplonovo}.

\begin{proof}[Proof of Proposition~\ref{propfinalf2}]
As all the sets appearing in the right part of \eqref{EqDescBasin} --- that is, $W_{f_2}^{s}(O)$, $\mathcal K$, $\Gamma^e$ and $\Gamma$ --- have nowhere dense full orbit under $f_2$ (by Lemma~\ref{l.nuncadenso}), we deduce that $\cB_{f_2}(\delta_O)$ is nowhere dense. Moreover, it has positive Lebesgue measure as it contains $\mathcal K$.
\end{proof}

Adapting the whole construction to the figure-eight attractor (Lemma \ref{buildTowers2}), one gets the following counterpart of Proposition \ref{propfinalf2}.

\begin{prop}\label{FORA2!}
There exists $\hat f_2$ with two hyperbolic fixed points $O$ and $P$ which are of saddle type, such that the set of points with historic behaviour for $\hat f_2$ is nowhere dense and has positive Lebesgue measure.
\end{prop}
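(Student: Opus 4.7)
The plan is to mimic the entire construction of Sections \ref{sec.figoito}--\ref{sec.orbitexcii}, but starting from the Bowen-eye diffeomorphism $\hat f_0$ of Lemma~\ref{buildTowers2} instead of $f_0$, producing $\hat f_2 = \hat h_3\circ \hat h_2\circ \hat h_1\circ \hat f_0$. The crucial input supplied by $\hat f_0$ is that for every point $z$ belonging to the analogue $\hat{\mathcal S}$ of the stable tower, the forward orbit alternately spends long excursions near $O$ and near $P$ (because the first return $\hat g$ sends $\tilde S_n$ to $s_{(p,p)}(\tilde S_{2n})$, so returns to a neighbourhood of $P$ alternate with returns to a neighbourhood of $O$). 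By the classical argument for the Bowen-eye attractor recalled after Lemma~\ref{buildTowers2} (see \cite{MR1182135,MR1274765}), reasoning exactly as in Lemma~\ref{suf} but applied alternatively to the local charts at $O$ and at $P$, every such point has the sequence of empirical measures accumulating at \emph{both} $\delta_O$ and $\delta_P$; in particular, every point of $\hat{\mathcal S}$ has historic behaviour.

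The first stage is to define the horizontal and vertical pushes $\hat h_1,\hat h_2$ in the heteroclinic window near $O$ exactly as in Section~\ref{sec.orbitexcum}, together with their symmetric copies near $P$ (obtained by conjugating through $s_{(p,p)}$). Since $\hat f_0$ commutes with $-\operatorname{Id}$ and the first return map is, after rescaling by $L_n$, a rotation by $\pi/2$ composed with $s_{(p,p)}$, the inclination lemma (Lemma~\ref{LemInclination}) and the size estimation (Lemma~\ref{LemEstSize}) carry over essentially verbatim. The conclusion is the exact analogue of Proposition~\ref{p.bluetrichotomy}: denoting $\hat f_1 = \hat h_2\circ \hat h_1\circ \hat f_0$, there is a set $\hat\Gamma$ of persistent points such that any orbit in the interior of the heteroclinic loop falls into $\hat{\mathcal S}$, $W^s(\hat{\mathcal Q})\cup W^s(s_{(p,p)}(\hat{\mathcal Q}))$, or $\mathcal O_{\hat f_1}(\hat\Gamma)$; and $\mathcal O_{\hat f_1}(\hat\Gamma)$ is nowhere dense and of zero Lebesgue measure, by the same coding argument as in Proposition~\ref{l.persmedidanula}.

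The second stage is the analogue of Section~\ref{sec.orbitexcii}. We fix the same fat Cantor set $K\subset I$ and the same smooth function $\varphi$ vanishing exactly on $K$, and define $\hat h_3$ by vertical pushes of amplitude $\epsilon_n = \kappa\sigma^{-n}/\log n$ inside boxes $\tilde S_n$ (and their $s_{(p,p)}$-images). Because the first return of $\hat g$ composes a rotation of $\pi/2$ with $s_{(p,p)}$, the orbit under $\hat g$ of a box $\tilde S_n$ comes back to a box similarly placed with respect to $W^s(O)$ every \emph{two} iterations (rather than every four as for the interior boxes in Section~\ref{sec.figoito}); accordingly, the set $\Omega$ indexing the perturbed boxes is taken along orbits of period $2$ under $\hat g$, exactly as we already had to do for the exterior boxes of $\mathcal L^e$. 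The very same algebraic computation used in the proof of Lemma~\ref{LemFinal} then shows that, for any starting point $(x_0,y_0)\in S_m$ with $x_0\notin K$, the renormalised vertical coordinate $\tau^d_2 = \sigma^{2^d m} y_d$ satisfies a recursion $\tau^{d+1}_2 = \tau^d_2 + \beta_d$ with $\beta_d\gtrsim 1/\log(2^d m)$, hence is non-summable, forcing the orbit to eventually exit $\hat{\mathcal S}$ and fall into $W^s(\hat{\mathcal Q})\cup W^s(s_{(p,p)}(\hat{\mathcal Q}))\cup\mathcal O_{\hat f_2}(\hat\Gamma)$. Conversely, if $x_0\in K$, the orbit of $(x_0,y_0)$ under $\hat f_2$ coincides with its orbit under $\hat f_0$ and therefore has historic behaviour by the discussion in the first paragraph.

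The final step is the bookkeeping: the set of points with historic behaviour for $\hat f_2$ contains the orbit of $\widetilde{\mathcal K} = \bigcup_n (K\times \sigma^{-n}I)\cup\cdots$ (together with its $s_{(p,p)}$-symmetric copy), which has positive Lebesgue measure by Fubini, and, thanks to the exact analogues of Lemma~\ref{maislemaPA}, Proposition~\ref{p.gamanowheredense} and Lemma~\ref{l.nuncadenso}, is contained in the union of $W^s_{\hat f_2}(O)\cup W^s_{\hat f_2}(P)$, $\mathcal O_{\hat f_2}(\widetilde{\mathcal K})$ and $\mathcal O_{\hat f_2}(\hat\Gamma)$ (together with symmetric copies), each of which is nowhere dense. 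The step I expect to be most delicate is not the positivity of measure (which is a direct transcription of Lemma~\ref{LemFinal}) but rather verifying that \emph{every} orbit in $\mathcal O_{\hat f_2}(\widetilde{\mathcal K})$ really does exhibit historic behaviour, which requires checking that the Takens alternation argument between neighbourhoods of $O$ and $P$ is robust under the perturbations $\hat h_1,\hat h_2,\hat h_3$; this boils down to noting that each of these perturbations is supported away from $O$ and from $P$, so that the local linear structure used to apply the Hartman-Grobman argument of Lemma~\ref{suf} is unaffected, and that the sojourn times in each chart still grow to infinity along the orbit of a point in $\widetilde{\mathcal K}$, which follows from $\hat g|_{\widetilde{\mathcal K}} = \hat g_0|_{\widetilde{\mathcal K}}$ and the rigidity of $\hat g_0$.
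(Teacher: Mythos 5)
Your overall strategy is exactly what the paper intends when it says ``adapting the whole construction to the figure-eight attractor (Lemma~\ref{buildTowers2})'': redo Sections~\ref{sec.figoito}--\ref{sec.orbitexcii} starting from the Bowen--eye map $\hat f_0$, use the classical alternation argument to get historic behaviour on the stable tower, and run the same orbit-exclusion procedure with $\hat h_1, \hat h_2, \hat h_3$. Your first, second and final paragraphs, including the observation that the Takens alternation is unaffected by $\hat h_1,\hat h_2,\hat h_3$ since these are supported away from $O$ and $P$, are all correct.

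There is, however, a genuine error in the period bookkeeping in your third paragraph. You claim that the orbit of a box $\tilde S_n$ under $\hat g$ returns to a ``similarly placed'' box after two iterations, and accordingly take $\Omega$ along orbits of period $2$ (powers of $4$), ``exactly as for the exterior boxes of $\mathcal L^e$.'' But this is not the right analogy. On $\tilde S_n$ one has $\hat g = s_{(p,p)}\circ g_0$, while on $s_{(p,p)}(\tilde S_n)$ one has $\hat g = g_0\circ s_{(p,p)}$, so that $\hat g^2 = g_0^2$. Since $g_0^2(x,y)=(a+b-x,\,\sigma^{-4n}(a+b-\sigma^ny))$, after two iterations of $\hat g$ one is indeed back in a box near $O$, but with the first coordinate flipped to $a+b-x_0$; only $\hat g^4 = g_0^4$, which fixes the first coordinate and lands in $\tilde S_{16n}$, returns it to $x_0$. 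The argument of Lemma~\ref{LemFinal} requires the $\varphi$-factor to be evaluated at the \emph{same} abscissa $x_0$ along the orbit, so the correct choice is $\Omega=\{16^d m\}$ exactly as for the interior figure-eight boxes, and $\tau_2^d = \sigma^{16^d m}y_d$ with $(x_d,y_d)=\hat g_2^{4d}(x_0,y_0)$, not $\sigma^{2^d m}y_d$. The exterior tower $\mathcal L^e$ uses powers of $4$ because there the one-step return renormalises to $R_\pi$ and only doubles the index; in the Bowen-eye case it is the two-step return $\hat g^2$ that renormalises to $R_\pi$ and this already quadruples the index. With your $\Omega=\{4^d m\}$ the argument does not collapse, but the $\varphi$-factor is evaluated alternately at $x_0$ and at $a+b-x_0$: the historic set in a box then becomes $\{x_0\in K\cap(a+b-K)\}$, so you would additionally need $K$ symmetric about $\frac{a+b}{2}$ (or simply $m(K)>m(I)/2$) to guarantee positive measure. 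Taking $\Omega=\{16^d m\}$ avoids this extra hypothesis and reproduces Lemma~\ref{LemFinal} verbatim.
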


Moreover, one can see from the proof that the set of accumulation points of almost all points with historic behaviour is similar to that of Bowen's eye example: it is a segment contained in $[\delta_O, \delta_P]$ (that is: a \emph{convex combination} of the probability measures $\delta_O$ and $\delta_P$), which depends only on the eigenvalues at the hyperbolic fixed points.

\bibliographystyle{plain}
\bibliography{GGS_Refs}

\end{document}